\newcommand{\cF}{\mathcal{F}}
\newcommand{\cC}{\mathcal{C}}
\newcommand{\cI}{\mathcal{I}}
\newcommand{\cH}{\mathcal{H}}
\newcommand{\cG}{\mathcal{G}}
\newcommand{\N}{\mathbb{N}}
\newcommand{\id}{\mathord{\text{\rm id}}}
\newcommand{\C}{\mathbb{C}}
\newcommand{\G}{\mathbb{G}}
\newcommand{\al}{\alpha}
\newcommand{\R}{\mathbb{R}}
\newcommand{\Tr}{\operatorname{Tr}}
\newcommand{\ot}{\otimes}
\newcommand{\F}{\mathbb{F}}
\newcommand{\cB}{\mathcal{B}}
\newcommand{\1}{\mathbbm 1}
\newcommand{\cP}{\mathcal{P}}
\newcommand{\cQ}{\mathcal{Q}}
\newcommand{\be}{\beta}
\DeclareMathOperator{\spann}{span}
\DeclareMathOperator{\Rep}{Rep}
\DeclareMathOperator{\Irr}{Irr}
\DeclareMathOperator{\Obj}{Obj}
\DeclareMathOperator{\Pol}{Pol}
\DeclareMathOperator{\Mor}{Mor}
\DeclareMathOperator{\tr}{tr}
\DeclareMathOperator{\kr}{kr}
\DeclareMathOperator{\TLJ}{TLJ}
\numberwithin{equation}{section}
\title[\resizebox{5.0in}{!}{Free wreath product quantum groups and standard invariants of subfactors}]{Free wreath product quantum groups and standard invariants of subfactors}
\author{Pierre Tarrago}
\address{Pierre Tarrago
\newline Centro de Investigaci\`on en Matematicas
\newline Guanajuato, Mexico}
\email{pierre.tarrago@cimat.mx}
\author{Jonas Wahl}
\thanks{JW is supported by European Research Council Consolidator Grant 614195}
\address{Jonas Wahl
\newline KU Leuven, Department of Mathematics
\newline Celestijnenlaan 200B -- Box 2400, B-3001 Leuven, Belgium}
\email{jonas.wahl@kuleuven.be}
\newtheorem{thmx}{Theorem}
\newtheorem{corx}[thmx]{Corollary}
\newtheorem{thm}{Theorem}[section]
\newtheorem{lem}[thm]{Lemma}
\newtheorem{cor}[thm]{Corollary}
\newtheorem{prop}[thm]{Proposition}
\theoremstyle{definition}
\newtheorem{defi}[thm]{Definition}
\newtheorem{rem}[thm]{Remark}
\newtheorem{ex}[thm]{Example}
\begin{document}
\maketitle

\begin{abstract}
By a construction of Vaughan Jones, the bipartite graph $\Gamma(A)$ associated with the natural inclusion of $\C$ inside a finite-dimensional $C^*$-algebra $A$ gives rise to a planar algebra $\cP^{\Gamma(A)}$.
We prove that every subfactor planar subalgebra of $\cP^{\Gamma(A)}$ is the fixed point planar algebra of a uniquely determined action of a compact quantum group $\G$ on $A$. We use this result to introduce a conceptual framework for the free wreath product operation on compact quantum groups in the language of planar algebras/standard invariants of subfactors. Our approach will unify both previous definitions of the free wreath product due to Bichon and Fima-Pittau and extend them to a considerably larger class of compact quantum groups. In addition, we observe that the central Haagerup property for discrete quantum groups is stable under the free wreath product operation (on their duals).

\smallskip
\noindent \textbf{Keywords.} Free wreath product; planar algebra; approximation properties; free probability
\end{abstract}

\section{Introduction}
Soon after the study of compact quantum groups had been initiated in \cite{Wo87-1}, \cite{Wo87-2}, Woronowicz proved a powerful strengthening of the Tannaka-Krein reconstruction theorem \cite{Wo88} which has since then been an indispensable tool of compact quantum group theory.
The theorem asserts that, given a concrete rigid $C^*$-tensor category $\cC$ (see the book \cite{NT13} for a detailed definition), one can construct a compact quantum group $\G$ whose category of finite-dimensional unitary representations is precisely $\cC$. In other words, one describes the unitary representation theory one would like the compact quantum group to have, rather than the quantum group itself. Woronowicz's theorem has been used to study compact quantum groups from a categorical viewpoint and, perhaps even more frequently, to construct new examples from categorical data. Well-investigated results of such Tannaka-Krein constructions are Banica and Speicher's easy quantum groups \cite{BS09} (see also \cite{TW15}) whose representation categories are of a combinatorial nature.\\ 
Another way to produce new examples of compact quantum groups is to build them out of existing ones by performing binary operations such as the free product operation \cite{Wa95} and the free wreath product operation. The latter is a highly non-commutative version of the classical wreath product operation for groups and was introduced by Bichon in \cite{Bi04}. It will be the main focus of this paper. While Bichon's definition was given directly on the $C^*$-algebraic level, it has become apparent in the recent works \cite{LT16} and \cite{FP16} (see also \cite{FS15}) that the construction is at its core a categorical one. We emphasize this observation further by relating the free wreath product to another source of rigid $C^*$-tensor categories, namely Jones's theory of subfactors and their standard invariants. \\

The theory of subfactors emerged out of Jones's groundbreaking work \cite{J83} in which he introduced a notion of index $[M:N]$ as an invariant for unital inclusions of II$_1$-factors $N \subset M$ and classified the values that this index can take. Every subfactor inclusion $N \subset M$ of finite Jones index $[M:N] < \infty $ can be extended to a tower $ N= M_{-1} \subset M = M_0 \subset M_1 \subset M_2 \subset \dots$ of II$_1$-factors, satisfying $[M_{i+1}: M_i] = [M:N]$ for all $i \geq 0$ and having the property that every $M_{i+1}$ is generated by $M_i$ and a projection $e_i$ which commutes with $M_{i-1}$ (see \cite{J83} for details). The relative commutants $M_i^{'} \cap M_j$, with $i \leq j$, form a lattice of finite dimensional $C^*$-algebras called the standard invariant. The standard invariant has taken center stage in many aspects of subfactor theory such as the classification of subfactors of small index. In his seminal work \cite{Po95}, Popa found an abstract characterization of standard invariants of subfactor inclusions as $\lambda$-lattices, that is to say, he proved that any $\lambda$-lattice can be realized as the standard invariant of a subfactor $N \subset M$ with index $[M:N] = \lambda^{-1}$. A more diagrammatical axiomatization called subfactor planar algebras was given by Jones in \cite{J99}. In \cite{J98}, Jones constructed a class of (non-subfactor) planar algebras out of a bipartite graph and a function on the set of vertices of the graph. Such a bipartite graph can for instance be derived from any inclusion of finite-dimensional $C^*$-algebras. The class of bipartite graph planar algebras gained attention within the subfactor community due to its universality which has been highlighted in \cite{JP10} and \cite{Bu10}. \\ 

There are several articles of Banica, most prominently \cite{B01} and \cite{B05}, in which the relation between compact quantum groups and (standard invariants of) subfactors is explored. In \cite{B05}, Banica associates with every faithful action $\al$ of a compact quantum group $\G$ on a finite-dimensional $C^*$-algebra $A$ that is ergodic on the center of $A$ and that preserves the Markov trace $\tr$ of the inclusion $\C \subset A$, a subfactor planar subalgebra $\cP(\al)$ of the planar algebra $\cP^{\Gamma(A)}$. Here, $\cP^{\Gamma(A)}$ denotes the planar algebra associated with the bipartite graph $\Gamma(A)$ of the inclusion $\C \subset A$ (see Section \ref{bipartitepa}). Generalizing a result in \cite{B05-2}, we prove that the converse of Banica's result is also true. 

\begin{thmx}[Theorem \ref{main1}] \label{mainA}
Let $A$ be a finite-dimensional $C^*$-algebra.
Every subfactor planar subalgebra $\cQ$ of $\cP^{\Gamma(A)}$ is of the form $\cQ = \cP(\al)$ for a uniquely determined pair $(\G, \al)$ of a (universal) compact quantum group $\G$ and a faithful, centrally ergodic, $\tr$-preserving action $\al$ of $\G$ on $A$ (up to conjugacy).
\end{thmx}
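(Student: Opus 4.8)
The plan is to reconstruct $(\G,\al)$ from $\cQ$ by Tannaka--Krein duality \cite{Wo88} (see also \cite{NT13}), with the ambient planar algebra $\cP^{\Gamma(A)}$ supplying the concreteness that a fibre functor provides in the general theory. Recall from Section~\ref{bipartitepa} the explicit description of the bipartite graph planar algebra: since $\Gamma(A)$ has a single even vertex, $\cP^{\Gamma(A)}_{n,+}$ is canonically identified with $A^{\ot n}$; under this identification every planar tangle acts as an explicit linear map between tensor powers of a finite-dimensional Hilbert space $H$ carrying a faithful representation of $A$; and the multiplication $m$ of $A$, its unit, and their $\tr$-adjoints --- the data making $A$ a symmetric special Frobenius algebra --- are elements of $\cP^{\Gamma(A)}$ already lying in its Temperley--Lieb subalgebra, hence in $\cQ$. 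From $\cQ$ we build a concrete rigid $C^*$-tensor category $\cC$: the objects are tensor powers of a single self-dual generator (realised on tensor powers of $H$), and the morphism spaces are the subspaces of the relevant $B(H^{\ot r},H^{\ot s})$ read off, after the usual rotation, from the box spaces of $\cQ$. Closure of $\cQ$ under all planar tangles is exactly closure of $\cC$ under composition, tensor product, adjoints and the duality morphisms, and the subfactor normalisations $\cQ_{0,+}=\cQ_{0,-}=\C$ make the tensor unit of $\cC$ simple.

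Feeding $\cC$, with its embedding into $\mathrm{Hilb}$, into Woronowicz's reconstruction theorem yields a compact quantum group $\widetilde{\G}$ whose representation category is identified with $\cC$. Because $m$, the unit and the duality morphisms are morphisms of $\cC$, hence $\widetilde{\G}$-equivariant, unwinding this identification produces an (a priori non-faithful) $\tr$-preserving coaction of $\widetilde{\G}$ on the algebra $A$. Let $\G$ be the compact quantum group generated by this coaction; then $\al\colon A\to A\ot C(\G)$ is a \emph{faithful} action, and we take $C(\G)$ to be its universal $C^*$-completion. The inclusion $\cQ\subseteq\cP(\al)$ is Banica's construction \cite{B05}; for the reverse inclusion, Tannaka--Krein duality recovers the fixed-point spaces $\Fix(\al^{(n)})$ from the morphism spaces of $\cC$, i.e. from $\cQ$, and since $\cQ$ is already closed under all planar operations no further invariant vectors occur, so $\cP(\al)=\cQ$. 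Finally, central ergodicity of $\al$ means $\Fix(\al)\cap Z(A)=\C$, which under the cap tangle $\cP^{\Gamma(A)}_{1,+}\to\cP^{\Gamma(A)}_{0,-}$ translates precisely into the subfactor condition $\cQ_{0,-}=\C$. Hence $(\G,\al)$ has all the required properties.

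For uniqueness, let $(\G',\al')$ be another admissible pair with $\cP(\al')=\cQ$. Then $\Rep\G'$, realised on the tensor powers of $\al'$, has the same morphism spaces as $\cC$; applying the uniqueness part of Woronowicz's theorem to the Hopf $*$-algebras generated by the coefficients of $\al$ and of $\al'$, and passing to universal $C^*$-completions (this is where ``universal'' in the statement is used), yields an isomorphism $\G\cong\G'$ carrying $\al$ to $\al'$; thus the two pairs are conjugate.

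The step I expect to be the main obstacle is the translation between the two languages in the first two paragraphs: one must verify that the somewhat intricate identification of $\cP^{\Gamma(A)}$ with a concrete category on $H$ turns planar-tangle closure of $\cQ$ into genuine closure of $\cC$ under all categorical operations, so that Woronowicz's hypotheses are met and the reconstructed functor is monoidal; that the Frobenius structure maps of $A$ really do belong to every subfactor planar subalgebra --- this being what ensures that $\G$ acts on the \emph{algebra} $A$, not merely on $H$; and, on the analytic side, the exact dictionary identifying the subfactor normalisation $\cQ_{0,-}=\C$ with central ergodicity of $\al$. Granting this bookkeeping, the remaining assertions are formal consequences of Tannaka--Krein duality and Banica's construction.
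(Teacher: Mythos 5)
Your proposal is correct and follows essentially the same route as the paper: build a concrete rigid $C^*$-tensor category from $\cQ$ via (rotated) special tangles acting on $\cH^{\ot n}\cong L^2(\cP^{\Gamma(A)}_n,\Tr_n)$, apply Woronowicz's Tannaka--Krein theorem, observe that the multiplication, unit and trace of $A$ are Temperley--Lieb (hence $\cQ$-)morphisms so that the fundamental representation yields a $\tr$-preserving action on $A$, read central ergodicity off $\dim\cQ_-=1$, and get uniqueness from the equality of intertwiner spaces. The "bookkeeping" you flag is exactly what the paper's Lemmas 4.3 and 4.4 carry out with the concatenation unitaries $U_{k,m}$ and the conjugate tangle.
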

 In view of \cite{Bu10}, Theorem \ref{mainA} is telling us that every subfactor planar subalgebra of $\cP^{\Gamma(A)}$ is the fixed point algebra of a quantum action on the Jones tower of the inclusion $\C \subset A$. From a quantum group perspective, the theorem should be interpreted as a version of Woronowicz's Tannaka-Krein duality in the setting of planar algebras and it should be compared to \cite[Theorem B]{B99-2}. \\

 The correspondence between (actions of) compact quantum groups and subfactor planar algebras established in Theorem \ref{mainA} yields a new interpretation of the free wreath product construction. Building the free wreath product $\G \wr_* \F$ of two compact quantum groups $\F, \G$ that act appropriately on finite-dimensional $C^*$-algebras, corresponds to building the free product of the associated subfactor planar algebras in the sense of Bisch and Jones \cite{BJ95}.

\begin{thmx} \label{mainB}
Let $\al$ be an action of a compact quantum group $\F$ on $C(X)$ where $X$ is a set of $n$ points and let $\beta$ be an action of a compact quantum group $\G$ on a finite-dimensional $C^*$-algebra $B$. Assume that both actions are faithful, centrally ergodic and preserve the Markov trace. Then the free wreath action $\beta \wr_* \al$ of $\G \wr_* \F$ on $C(X) \ot B$ (see \ref{wreathaction}) enjoys the same properties and
\[ \cP(\beta \wr_* \al) \ = \ \cP(\al) * \cP(\beta) \ \subset \ \cP^{\Gamma(C(X) \ot B)}, \]
where $*$ denotes the free product operation on planar algebras in the sense of Bisch and Jones.
\end{thmx}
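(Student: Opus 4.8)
The plan is to verify the identity of planar subalgebras $\cP(\beta \wr_* \al) = \cP(\al) * \cP(\beta)$ inside $\cP^{\Gamma(C(X) \ot B)}$ by combining Theorem~\ref{mainA} with a direct comparison of representation categories. First I would record that, by Theorem~\ref{mainB}'s hypotheses and the definition of the free wreath action $\beta \wr_* \al$ (see~\ref{wreathaction}), the action $\beta \wr_* \al$ of $\G \wr_* \F$ on $C(X) \ot B$ is faithful, centrally ergodic, and Markov-trace-preserving; this is largely a book-keeping exercise using that $C(X) \ot B$ has center $C(X) \ot Z(B)$ and that the free wreath product is generated by a copy of $\F$ acting on the $C(X)$-factor and $n$ free copies of $\G$ acting on the $B$-factor in the appropriate magic-unitary-twisted way. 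Consequently $\cP(\beta \wr_* \al)$ is a well-defined subfactor planar subalgebra of $\cP^{\Gamma(C(X)\ot B)}$, and by Theorem~\ref{mainA} it corresponds to a uniquely determined compact quantum group action; so it suffices to identify that quantum group with $\G \wr_* \F$ and the planar algebra with the Bisch--Jones free product.

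The core of the argument is categorical. On one side, the box spaces of $\cP(\al) * \cP(\beta)$ are, by the very definition of the Bisch--Jones free product of planar algebras, spanned by alternating concatenations of elements of $\cP(\al)$ and $\cP(\beta)$ placed into a free-product diagram; on the other side, the box spaces of $\cP(\beta \wr_* \al)$ are the fixed-point (intertwiner) spaces of tensor powers of the defining action of $\G \wr_* \F$ on $C(X) \ot B$. The key step is to exhibit a canonical vector-space isomorphism between these two families that is compatible with all the planar-algebra operations (multiplication, inclusion, conditional expectation, rotation, and the Temperley--Lieb action coming from $\cP^{\Gamma}$). I would build this isomorphism by first describing the fusion category of $\G \wr_* \F$ in terms of the fusion categories of $\F$ and $\G$ — this is exactly the content of the categorical reformulations of the free wreath product in \cite{LT16} and \cite{FP16}, which describe $\Rep(\G \wr_* \F)$ via a free-product-type construction over the category of noncrossing partitions / the representation category of $\F$. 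Matching this combinatorial description of intertwiner spaces with the Bisch--Jones free-product diagrams (which are themselves organized by noncrossing pair partitions interleaving the two planar algebras) gives the desired bijection on each box space.

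The main obstacle I anticipate is the precise bookkeeping of how the $C(X)$-action of $\F$ is threaded through the free product of the $\G$-copies — equivalently, matching the "magic unitary" combinatorics on the $\F$-side with the annular/noncrossing structure of the Bisch--Jones construction. Concretely, in the Bisch--Jones free product one shades and interleaves the two planar algebras along a system of noncrossing strings, and one must check that this interleaving pattern is exactly the one dictated by the free wreath relations $v_{ij} u^{(k)}_{ab} = \delta_{jk} v_{ij} u^{(k)}_{ab}$ (and the resulting restriction on which tensor factors of $B^{\ot m}$ can be linked). I would handle this by passing through the generator-and-relation presentation: compute the intertwiner spaces $\Hom(1, (C(X)\ot B)^{\ot m})$ for $\G \wr_* \F$ using Tannaka--Krein and the known description of the Hom-spaces of a free wreath product as a "glued" free product of the Hom-spaces of $\F$ and $\G$ (the $\tilde *$ operation of \cite{LT16}), and then check termwise that this glued free product of linear maps is transported, under the identification of each factor's Hom-spaces with the corresponding box spaces of $\cP(\al)$ and $\cP(\beta)$ furnished by Banica's construction, onto the Bisch--Jones free-product box spaces. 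Once the isomorphism of filtered planar algebras is established on generators, uniqueness in Theorem~\ref{mainA} promotes it to the claimed equality $\cP(\beta \wr_* \al) = \cP(\al) * \cP(\beta)$, and the preservation of faithfulness, central ergodicity and the Markov trace was checked in the first step.
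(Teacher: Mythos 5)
There are two genuine gaps in your plan, both tied to the fact that the right-hand input $\F$ is an \emph{arbitrary} quantum subgroup of $S_n^+$ rather than $S_n^+$ or $\G_{aut}(A,\tr)$ itself.

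First, you dismiss the central ergodicity of $\beta \wr_* \al$ as ``largely a book-keeping exercise.'' It is not: central ergodicity asks that $Z(C(X)\ot B)\cap \Fix(\al_w)=\C 1$, i.e.\ that the quotient $C(\G\wr_*\F)$ of $C(\G)^{*n}*C(\F)$ has not collapsed too much, and there is no a priori lower bound on it. The paper explicitly flags this (see the remark after Proposition \ref{wreathaction}) and obtains central ergodicity only as a \emph{by-product} of the main argument: one first builds the quantum group $\mathbb H$ from the subfactor planar algebra $\cP(\al)*\cP(\beta)$ via Theorem \ref{main1} (where $\dim\cQ_-=1$ is automatic), then constructs a surjection $C(\G\wr_*\F)\to C(\mathbb H)$ intertwining $w$ and $u_\gamma$, which forces $\Fix(w)\subset\Fix(u_\gamma)$. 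Your proof order (establish the properties of the action first, then identify the planar algebra) cannot be carried out as stated.

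Second, your core categorical step relies on ``the known description of the Hom-spaces of a free wreath product as a glued free product'' from \cite{LT16} and \cite{FP16}. Those references only cover $\F=S_n^+$ and $\F=\G_{aut}(A,\tr)$, respectively; no such description exists in the literature for a general $\F\subset S_n^+$, and producing one is essentially equivalent to the theorem you are proving, so the argument is circular at this point. The paper circumvents this by a two-sided inclusion: it first proves the universal case (Theorem \ref{main2}, where the \cite{FP16} intertwiner calculus \emph{is} available and is matched to free-product tangles via the fattening map and the Kreweras-complement description of shaded regions), and then, for general $\F$, combines the surjection of Step 1 with an explicit verification (Step 2) that the generating subset $U_{\cP(\al)}(k)\ot\cP(\beta)_k$ and $\cP(\al)_k\ot S_{\cP(\beta)}(k)$ of $\cP(\al)*\cP(\beta)$ from Proposition \ref{generatTangle} consists of fixed vectors of $w^{\ot k}$, the second family requiring a direct computation with the magic-unitary relations of Proposition \ref{algebraicDescriptionFWP}. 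Your proposal correctly identifies the right objects to compare, but the bridge you invoke to compare them is exactly the missing ingredient.
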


We also prove an analogue of Theorem \ref{mainB} for the partial generalization of Bichon's free wreath product due to Fima and Pittau \cite{FP16}, see Theorem \ref{main2}. In particular, our approach allows us to unify both previous definitions and generalize them considerably (Definition \ref{wreathnewdef} and Remark \ref{defirem}). More precisely, we define a free wreath product $\G \wr_* (\F, \al)$ of a compact quantum group $\G$ with a pair $(\F, \al)$ consisting of a compact quantum group $\F$ and a faithful, centrally ergodic, Markov trace-preserving action $\al$ of $\F$ on a finite-dimensional $C^*$-algebra. One should note that $\F$ admits such an action whenever it is of Kac type and its category of finite-dimensional unitary representations is finitely generated (see Example \ref{exampleaction}).

Note that if $\chi_{\alpha}$ denotes the character of the action $\alpha$ of a compact quantum group $\G$, then $\chi_{\alpha}$ can be seen as a random variable in the non-commutative probability space $(C(\G),h_{\G})$, where $h_{\G}$ is the Haar state on $C(\G)$. Let us consider $\alpha$ and $\beta$ two faithful, centrally ergodic actions respectively on $C(X)$ and $C(Y)$, where $X$ and $Y$ are finite sets. Investigating connections between lexicographical products of graphs and free wreath products of quantum groups, Banica and Bichon have conjectured in \cite[Conjecture 3.4]{BB07} that the random variable $\chi_{\beta\wr_{*}\alpha}$  is distributed as the free multiplicative convolution of the random variables $\chi_\alpha$ and $\chi_\beta$. An application of Theorem \ref{mainB} yields a positive answer to this conjecture.

\begin{corx}\label{corD}
Let $(\F,\alpha),(\G,\beta)$ be two pairs of compact quantum groups and faithful, centrally ergodic actions acting respectively on $C(X)$ and $C(Y)$, where $X$ and $Y$ are finite sets. Then,
$$\chi_{\beta\wr_{*}\alpha}=\chi_{\alpha}\boxtimes\chi_{\beta},$$
where the equality holds in the distributional sense.
\end{corx}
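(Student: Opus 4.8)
The plan is to translate everything into the box-space dimensions of the planar algebras and, via Theorem~\ref{mainB}, to recognise the resulting identity as the moment formula for free multiplicative convolution. The starting observation is the elementary dictionary between moments of a character and dimensions of fixed-point spaces: for an action of a compact quantum group with associated representation $u$ one has $\chi_u^{\,n} = \chi_{u^{\ot n}}$, hence $h(\chi_u^{\,n}) = \dim\Fix(u^{\ot n})$ by the Peter--Weyl orthogonality relations. In Banica's construction, the $n$-box space $\cP(\al)_{n,+}$ of the planar algebra attached to a faithful, centrally ergodic action $\al$ of $\F$ on $C(X)$ is precisely $\Fix(u_\al^{\ot n})$ --- the loops of length $2n$ based at the distinguished vertex of $\Gamma(C(X))$ form the $\F$-module $(\C^{X})^{\ot n}$ --- so that $h_\F(\chi_\al^{\,n}) = \dim\cP(\al)_{n,+}$ for all $n \geq 0$, and similarly for $\be$ on $C(Y)$ and for the free wreath action $\be\wr_*\al$ on $C(X)\ot C(Y) = C(X\times Y)$. (The Markov-trace hypothesis of Theorem~\ref{mainB} is automatic here, since every action of a compact quantum group on a finite set preserves the uniform trace, which is the Markov trace of $\C\subset C(X)$.) Applying Theorem~\ref{mainB} then yields
\[ h_{\G\wr_*\F}\big(\chi_{\be\wr_*\al}^{\,n}\big) \ = \ \dim\big(\cP(\al)*\cP(\be)\big)_{n,+} \qquad (n\geq 0). \]
As $\chi_{\be\wr_*\al}$ is a bounded positive element of $C(\G\wr_*\F)$ --- the free wreath action being an action by quantum permutations of $X\times Y$ --- its distribution is the unique compactly supported probability measure with these moments, so it suffices to identify the right-hand side with the $n$-th moment of $\mu_{\chi_\al}\boxtimes\mu_{\chi_\be}$.

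For this I would use the Bisch--Jones description of the free product planar algebra from \cite{BJ95}. A linear basis of $(\cP*\cQ)_{n,+}$ is obtained from a Fuss--Catalan skeleton with $n$ through-strings --- equivalently, from a non-crossing partition $\pi\in NC(n)$ together with its Kreweras complement $\mathrm{Kr}(\pi)$ --- by inserting a basis vector of $\cP_{|V|,+}$ into the $\cP$-coloured cell of each block $V$ of $\pi$ and a basis vector of $\cQ_{|W|,+}$ into the $\cQ$-coloured cell of each block $W$ of $\mathrm{Kr}(\pi)$, after discarding, via M\"obius inversion, the Temperley--Lieb diagrams already present in the skeleton. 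I expect this bookkeeping to produce a formula of the shape
\[ \dim\big(\cP*\cQ\big)_{n,+} \ = \ \sum_{\pi\in NC(n)}\Big(\prod_{V\in\pi}\kappa^{\cP}_{|V|}\Big)\ \prod_{W\in\mathrm{Kr}(\pi)}\dim\cQ_{|W|,+}, \]
where $(\kappa^{\cP}_k)_{k\geq 1}$ is the sequence obtained from the moment sequence $(\dim\cP_{k,+})_{k\geq 1}$ by moment--cumulant inversion.

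On the free-probability side, since $\chi_\al$ and $\chi_\be$ are positive, the measure $\mu_{\chi_\al}\boxtimes\mu_{\chi_\be}$ is by definition the distribution of $\sqrt a\, b\, \sqrt a$ for free positive elements $a,b$ in some tracial $W^*$-probability space with $a\sim\mu_{\chi_\al}$ and $b\sim\mu_{\chi_\be}$; its $n$-th moment equals $\varphi\big((ab)^n\big)$, and the standard formula of Nica and Speicher for the expectation of an alternating product of two free families gives
\[ \varphi\big((ab)^n\big) \ = \ \sum_{\pi\in NC(n)}\Big(\prod_{V\in\pi}\kappa_{|V|}(a)\Big)\ \prod_{W\in\mathrm{Kr}(\pi)}\varphi\big(b^{|W|}\big). \]
Since $\varphi(a^k) = h_\F(\chi_\al^{\,k}) = \dim\cP(\al)_{k,+}$ and $\varphi(b^k) = h_\G(\chi_\be^{\,k}) = \dim\cP(\be)_{k,+}$, the free cumulants $\kappa_k(a)$ are exactly the $\kappa^{\cP(\al)}_k$ of the previous paragraph and the moments $\varphi(b^k)$ are exactly the $\dim\cP(\be)_{k,+}$; hence the two sums coincide term by term when applied to $\cP=\cP(\al)$ and $\cQ=\cP(\be)$. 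All moments of $\chi_{\be\wr_*\al}$ and of $\mu_{\chi_\al}\boxtimes\mu_{\chi_\be}$ therefore agree, which proves the corollary.

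I expect the only real difficulty to lie in the middle step: making the passage from the Bisch--Jones diagrammatics to the cumulant bookkeeping precise, that is, checking that removing the overcounted Temperley--Lieb skeleton from the natural spanning set of $(\cP*\cQ)_{n,+}$ is exactly the M\"obius inversion producing free cumulants (and tracking the normalisations attached to the moduli $\sqrt{|X|}$, $\sqrt{|Y|}$, which combine to the modulus $\sqrt{|X|\,|Y|}$ of $\cP(\be\wr_*\al)$). Heuristically this is the planar-algebraic counterpart of the multiplicativity of the $S$-transform: the Temperley--Lieb subalgebra, whose box-space dimensions are the Catalan numbers --- the moments of the free Poisson law --- sits freely inside every subfactor planar algebra, and it is this freeness of the skeleton that turns the naive product count into a free multiplicative convolution.
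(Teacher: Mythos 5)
Your overall architecture coincides with the paper's: both arguments convert moments of the character into dimensions of fixed-point spaces via Woronowicz's orthogonality relation $h(\chi_u^n)=\dim\Fix(u^{\ot n})$ and Banica's identification $\Fix(u_\al^{\ot n})=\cP(\al)_n$ (Theorem \ref{resultBanica}), then invoke the free wreath product theorem (the paper uses Theorem \ref{main3}, of which Theorem \ref{mainB} is the a posteriori rephrasing) to replace $\cP(\beta\wr_*\al)$ by $\cP(\al)*\cP(\beta)$, and finally identify the dimension sequence of the free product planar algebra with the moment sequence of $\mu_{\cP(\al)}\boxtimes\mu_{\cP(\be)}$. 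Your side remarks are also fine: the Markov trace of $\C\subset C(X)$ is the uniform trace and is automatically preserved, and determinacy of the distribution by moments holds since $\chi_{\beta\wr_*\al}$ is a bounded element of a $C^*$-algebra. The divergence is in the last step. The paper simply cites the identity $\mu_{\cP*\cQ}=\mu_{\cP}\boxtimes\mu_{\cQ}$ from Bisch--Jones \cite{BJXX} (or \cite[Chapter 7.3]{Ta15}), whereas you attempt to derive it from a claimed basis of $(\cP*\cQ)_n$ indexed by $NC(n)$ together with a M\"obius-inversion bookkeeping producing free cumulants.

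That middle step, which you yourself flag as the only real difficulty, is a genuine gap as written. The natural spanning set $\{Z_{T_\pi}(v)\ot Z_{T_{\kr'(\pi)}}(w)\}$ of $(\cP*\cQ)_n$ is far from linearly independent: the images attached to different partitions $\pi$ overlap whenever a label $v$ itself factors through a smaller tangle (Lemma \ref{orderPartitionCompoTangles}), and it is not at all clear that discarding the Temperley--Lieb skeleton by M\"obius inversion on $NC(n)$ is exactly the moment--cumulant inversion. The paper's Section \ref{basissection} shows how delicate this is: to extract an actual basis one must pass to the Boolean orthogonal subspaces $\cB_{\cP}(n)$ of Guionnet--Jones--Shlyakhtenko, organize the count via Boolean rather than free cumulants using the Belinschi--Nica identity \eqref{booleanFreeNB}, and even then the proof of Theorem \ref{main5} closes the dimension count only by feeding in the Bisch--Jones moment formula as an external input. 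So your proof is complete if you are willing to cite that formula, in which case it reduces to the paper's argument; your proposed self-contained derivation of it would require substantially more work than the sketch suggests.
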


The study of central approximation/rigidity properties of discrete quantum groups was initiated in \cite{dCFY14}. In \cite{PV15}, Popa and Vaes gave a systematic account of these properties in the setting of subfactor inclusions and in the setting of rigid $C^*$-tensor categories (see also \cite{NY15}, \cite{GJ16}). By combining the results of Popa-Vaes with our main theorems, we obtain the following corollary.

\begin{corx} \label{corC}
Let $(\F,\al), (\G, \beta)$ be pairs of compact quantum groups and faithful, centrally ergodic, Markov trace-preserving actions on finite-dimensional $C^*$-algebras. Consider their free wreath product $(\G \wr_* \F, \beta \wr_* \al)$. If the discrete duals of $\F$ and $\G$ have the central Haagerup property (the central ACPAP), the discrete dual of $\G \wr_* \F$ has the central Haagerup property (the central ACPAP) as well.
\end{corx}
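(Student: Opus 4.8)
The plan is to transfer the statement, via Theorems \ref{mainB} and \ref{main2}, into an assertion about the Bisch--Jones free product of subfactor planar algebras, and then to derive it from the permanence of approximation properties under free products in the Popa--Vaes framework \cite{PV15}. Concretely: by Theorem \ref{mainB} --- or, in the general setting of Definition \ref{wreathnewdef}, by Theorem \ref{main2} --- the action $\beta \wr_* \al$ is again faithful, centrally ergodic and Markov trace-preserving, and its associated subfactor planar algebra is the Bisch--Jones free product $\cP(\beta \wr_* \al) = \cP(\al) * \cP(\beta)$. Since $\beta$ is faithful, the rigid $C^*$-tensor category $\Rep\G$ is generated by the defining corepresentation of $\beta$ and is therefore recovered, up to monoidal equivalence, from $\cP(\beta)$; together with Theorem \ref{mainA} this identifies the planar-algebra and the quantum-group pictures. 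By \cite{dCFY14, PV15} the central Haagerup property (resp.\ the central ACPAP) of $\hat\G$ is an invariant of $\Rep\G$ and coincides with the Haagerup property (resp.\ the ACPAP), in the sense of \cite{PV15}, of the standard invariant attached to $\cP(\beta)$; likewise for $\hat\F$ with $\cP(\al)$ and for the discrete dual of $\G \wr_* \F$ with $\cP(\beta \wr_* \al)$. It therefore suffices to show: \emph{the Bisch--Jones free product of two subfactor planar algebras each having the Haagerup property (resp.\ the ACPAP) again has the Haagerup property (resp.\ the ACPAP).}

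\emph{Free product of central multipliers.} Put $\cP_1 = \cP(\al)$ and $\cP_2 = \cP(\beta)$. The irreducible objects of the rigid $C^*$-tensor category generated by $\cP_1 * \cP_2$ are labelled by alternating words $w = c_1 c_2 \cdots c_k$ whose letters are non-trivial irreducibles drawn alternately from $\cP_1$ and $\cP_2$. An approximation property of $\cP_j$ in the sense of \cite{PV15} is witnessed by a net of central (i.e.\ scalar on each irreducible), unital, completely positive multipliers $\varphi_i^{(j)}$ with $\varphi_i^{(j)} \to \id$ pointwise and with the required $c_0$-decay (resp.\ finite-support/summability) in the irreducible label. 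For each $i$ I would form the free-product multiplier $\varphi_i := \varphi_i^{(1)} * \varphi_i^{(2)}$ on $\cP_1 * \cP_2$, defined to act on the irreducible labelled by $w = c_1 \cdots c_k$ as multiplication by $\prod_{l=1}^{k} \varphi_i^{(\eps(l))}(c_l)$, where $\eps(l) \in \{1,2\}$ records the factor $c_l$ originates from, and to fix the trivial object. Since $|\varphi_i^{(j)}(c)| < 1$ for every non-trivial $c$ and, by the $c_0$-condition, $\delta_i := \sup_{c \neq \1}\max_j |\varphi_i^{(j)}(c)| < 1$, the value of $\varphi_i$ on a word of length $k$ is at most $\delta_i^{\,k}$; combined with the $c_0$-decay of the $\varphi_i^{(j)}$ this forces $\varphi_i$ into $c_0$ on the set of irreducibles (resp.\ yields the summability needed for the ACPAP), and $\varphi_i \to \id$ pointwise is immediate from $\varphi_i^{(j)} \to \id$.

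\emph{The main obstacle.} What remains --- and this is the heart of the matter --- is to show that $\varphi_i = \varphi_i^{(1)} * \varphi_i^{(2)}$ is again a \emph{completely positive} multiplier of $\cP_1 * \cP_2$, equivalently that the functional it induces on the fusion $*$-algebra (and on the tube algebra) of the free-product category is positive. This is the categorical counterpart of Boca's theorem that a free product of unital, state-preserving, completely positive maps is completely positive. I expect the most transparent route to be: realise $\cP_1 * \cP_2$ concretely through the Bisch--Jones free-product model \cite{BJ95} (or through Popa's symmetric enveloping inclusion), lift each central multiplier $\varphi_i^{(j)}$ to a genuine unital completely positive map on the ambient tracial von Neumann algebra, apply Boca's free-product construction there, and check that the resulting u.c.p.\ map restricts to $\varphi_i$ at the categorical level. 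Alternatively, one may appeal directly to the stability of the Haagerup property and of the ACPAP under free products of $\lambda$-lattices within the Popa--Vaes framework (see also the tube-algebra viewpoint of \cite{NY15, GJ16}). Granting this complete-positivity step, the reduction in the first paragraph concludes the proof; the decay and convergence bookkeeping of the second paragraph is then routine, and the ACPAP case runs along the same lines with the (almost central) finitely supported multipliers replacing the merely $c_0$ ones.
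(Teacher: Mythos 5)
Your reduction is exactly the paper's: Theorem \ref{mainB} (resp.\ Theorem \ref{main2}) identifies $\cP(\beta\wr_*\al)$ with the Bisch--Jones free product $\cP(\al)*\cP(\beta)$, and the corollary is then an immediate consequence of the stability of the central Haagerup property and the central ACPAP under free products of standard invariants, which is precisely \cite[Proposition 9.3]{PV15} --- the paper cites this and stops. Your hands-on construction of the free-product multiplier is therefore not needed (and, as you acknowledge, is incomplete at the complete-positivity step); the ``alternative'' you mention of appealing directly to the Popa--Vaes free-product stability is the intended and complete argument.
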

 
The article is organized as follows. Sections \ref{prequantum} and \ref{preplanar} will serve as a summary of basic facts on compact quantum groups and planar algebras which we will use throughout the paper. Section \ref{reconstruction} will contain the proof of Theorem \ref{mainA}. In Sections \ref{partitionsection} and \ref{freeproductsection} we will relate planar tangles to well-known tools from free probability theory which we will use to prove Theorem \ref{mainB} and Corollary \ref{corD} in Section \ref{ProductSection}. In the following paragraph \ref{approximation}, we will discuss Corollary \ref{corC} in detail. The last section contains a description of a basis of a free product of subfactor planar algebras in terms of Boolean vector space cumulants. The methods used in Section \ref{basissection} stem from free probability theory and are related to techniques developed in the article \cite{GJS10}. They rely on a rather surprising result on Boolean cumulants found in \cite{BN08}. 

\section*{Acknowledgements}
PT is grateful to Roland Vergnioux for interesting discussions on the subject. A part of the results in the present manuscript has been obtained when PT was doing his PhD, and he would also like to thank Roland Speicher and Philippe Biane for their supervision during this period.
JW is deeply grateful to Yuki Arano, Tim de Laat and Moritz Weber for valuable discussions/suggestions and to his PhD advisor Stefaan Vaes for his patient guidance during the development of this article. PT would like to thank Stefaan Vaes for his suggestions as well.

\section{Preliminaries on compact quantum groups} \label{prequantum}

We will denote the underlying unital $C^*$-algebra of a compact quantum group $\G$ by $C(\G)$, its comultiplication by $\Delta$ and its canonical dense Hopf-$*$-algebra by $\Pol(\G)$. The counit will be denoted by $\varepsilon: \Pol(\G) \to \C$ and the category of finite-dimensional unitary representations by $\Rep(\G)$.

\subsection{Actions on finite dimensional $C^*$-algebras} \label{actions}
A vector space action of a compact quantum group $\G$ on a finite-dimensional vector space $V$ is a linear map $\al: V \to V \ot \Pol(\G)$ which is coassociative, i.e\[ (\al \ot \id) \circ \al = (\id \ot \Delta) \circ \al \]
and counital, i.e. $(\id \ot \varepsilon) \circ \al = \al$. A representation of $\G$ on $V$ is an element $u \in \mathcal L(V) \ot \Pol(\G)$ such that $(\id \ot \Delta) u = u_{12}u_{13}$ and $(\id \ot \varepsilon)u = 1$. The space of intertwiners between two representations $u$ and $v$ will be denoted by $\Mor(u,v)$. \\
Recall that the identification $\mathcal L(V, V \ot \Pol(\G)) \cong \mathcal L(V) \ot \Pol(\G)$ establishes a one-to-one correspondence $\al \mapsto u_{\al}$ (with inverse $u \mapsto \al_u$) between vector space actions of $\G$ on $V$ and representations of $\G$ on $V$. Let $A$ be a finite-dimensional $C^*$-algebra with multiplication map $m: A \ot A \to A$ and unit map $\eta: \C \to A$. An action $\al$ of $\G$ on $A$ will be a vector space action on $A$ which is a unital $*$-homomorphism. If $\varphi: A \to \C$ is a positive functional on $A$, we will say that $\al$ is $\varphi$-preserving if
\[ (\varphi \ot \id) \circ \al = \varphi(\cdot) \1_{\Pol(\G)}. \]

\begin{lem}[\cite{B99}] \label{actionlemma}
Let $\al: A \to A \ot \Pol(\G)$ be a vector space action on $A$ and let $\tr$ be a positive tracial functional on $A$. Then $\al$ is
\begin{enumerate}
\item multiplicative if and only if $m \in \Mor(u_{\al} \ot u_{\al}, u_{\al})$,
\item unital if and only if $\eta \in \Mor(\1, u_{\al})$,
\item $\varphi$-preserving if and only if $\varphi \in \Mor(u_{\al}, \1)$. \\
If these conditions are satisfied, 
\item $\al$ is involutive if and only if $u_{\al}$ is unitary (w.r.t. the inner product on $A$ given by $\langle x,y \rangle = \tr(y^* x), \ x,y \in A$).
\end{enumerate}
\end{lem}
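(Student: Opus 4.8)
The plan is to reformulate each of the four conditions as a statement about intertwiners of $u_{\al}$ and then use the universal property of the space of intertwiners, exploiting the correspondence $\al \leftrightarrow u_{\al}$ recalled above. Throughout I write $u = u_{\al} \in \mathcal{L}(A) \ot \Pol(\G)$ and I use that, under the identification $\mathcal{L}(V, V \ot \Pol(\G)) \cong \mathcal{L}(V) \ot \Pol(\G)$, the action map $\al$ and the representation $u$ are related by $\al(x) = u(x \ot 1)$ in the appropriate leg-numbered sense. The key elementary fact I would invoke repeatedly is that for representations $v, w$ of $\G$ and a linear map $T$ between the underlying spaces, $T \in \Mor(v,w)$ if and only if $(T \ot 1) v = w (T \ot 1)$; applying this with the representations $\1$ (the trivial one-dimensional representation on $\C$), $u$, and $u \ot u$ will translate each algebraic property of $\al$ into the desired morphism condition.

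First I would treat multiplicativity. Writing out $\al(xy) = \al(x)\al(y)$ using $\al = u(\cdot \ot 1)$ and the fact that $u \ot u$ is the representation on $A \ot A$ whose associated action is $(\al \ot \id)(\id \ot \text{flip} \ot \id)(\id \ot \al)$ — equivalently, chasing the diagram through the multiplication map $m: A \ot A \to A$ — one sees that $\al \circ m = (m \ot \id) \circ (\al \ot \al)$ (the statement that $\al$ is a homomorphism) is exactly the equation $(m \ot 1)(u \ot u) = u (m \ot 1)$, i.e. $m \in \Mor(u \ot u, u)$. For unitality, $\al(1_A) = 1_A \ot 1$ says precisely that $(\eta \ot 1)\cdot 1_{\C \ot \Pol(\G)} = u (\eta \ot 1)$, since the trivial representation $\1$ acts as the identity; this is $\eta \in \Mor(\1, u)$. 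For the $\varphi$-preserving condition, $(\varphi \ot \id)\al(x) = \varphi(x) 1$ for all $x$ reads, after transposing, as $(\varphi \ot 1) u = 1_{\C\ot\Pol(\G)} (\varphi \ot 1)$, i.e. $\varphi \in \Mor(u, \1)$. Each of these three is a direct unwinding of definitions once the leg-numbering conventions are fixed, so I would present them compactly.

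The only point requiring genuine care is (4), the equivalence between $\al$ being $*$-preserving and $u$ being unitary with respect to the GNS inner product $\langle x, y\rangle = \tr(y^*x)$. Here is where I expect the main obstacle, and where the hypothesis that $\tr$ is a faithful (or at least, by positivity and traciality, suitably non-degenerate) tracial functional and that the conditions (1)–(3) already hold is used. The idea: assume $\al$ is unital, multiplicative and $\tr$-preserving; then on one hand $\al(x^*) = \al(x)^*$ for all $x$, and on the other hand unitarity of $u$ is equivalent to $u^* u = 1 = u u^*$. One shows that $u^*$ (adjoint in $\mathcal{L}(A) \ot \Pol(\G)$ using $\langle\cdot,\cdot\rangle$ on $A$ and $*$ on $\Pol(\G)$) equals $u_{\bar\al}$ where $\bar\al(x) = \al(x^*)^*$, and then that $\al$ is involutive $\iff \bar\al = \al \iff u^* = u^{-1}$. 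The existence of $u^{-1}$ (contragredient-type argument) and the identification of the adjoint with the correct conjugate representation is where the tracial and preserving hypotheses enter: traciality makes $\langle\cdot,\cdot\rangle$ compatible with the multiplication so that $u^*$ is again multiplicative, and $\tr$-invariance guarantees $u$ preserves $\langle\cdot,\cdot\rangle$ as soon as it is an isometry on a spanning set, closing the loop. I would flesh this out by a short computation comparing $\langle \al(x), \al(y)\rangle$ (computed in $A \ot \Pol(\G)$ with the tensor inner product coming from $\tr \ot h_{\G}$, say) with $\langle x, y \rangle \ot 1$, and citing \cite{B99} for the standard parts. Everything else is bookkeeping.
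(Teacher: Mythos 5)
First, a remark on the comparison itself: the paper does not prove this lemma at all -- it is imported verbatim from Banica's article [B99] -- so there is no in-paper argument to measure your proposal against. Your verifications of (1)--(3) are correct and are the standard unwinding of the correspondence $\al\leftrightarrow u_{\al}$ (your displayed formula for the action associated to $u_{\al}\ot u_{\al}$ is slightly garbled -- it should be $(\id_{A\ot A}\ot m_{\Pol(\G)})\circ(\id_A\ot\flip\ot\id)\circ(\al\ot\al)$ -- but the intent is clear and the conclusion is right).

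Part (4) is where there is a genuine gap, and in one place a slip. Granting your intermediate claim that the adjoint $u^{*}$ of $u=u_{\al}$ equals $u_{\bar{\al}}$ with $\bar{\al}(x)=\al(x^*)^*$, the condition $\bar{\al}=\al$ translates to $u^{*}=u$ (self-adjointness), \emph{not} to $u^{*}=u^{-1}$; so the chain ``$\al$ involutive $\iff \bar{\al}=\al \iff u^{*}=u^{-1}$'' does not close as written. What is actually needed differs in the two directions. For ``involutive $\Rightarrow$ unitary'', the computation $(\tr\ot\id)(\al(y)^*\al(x))=(\tr\ot\id)(\al(y^*x))=\tr(y^*x)1$ yields only the isometry relation $u^{*}u=1$; to upgrade this to unitarity you must invoke the fact that every finite-dimensional counital corepresentation is invertible, with inverse $(\id\ot S)u$, so that a one-sided inverse is two-sided. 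Your proposal alludes to a ``contragredient-type argument'' but never states this step, and it is precisely where the Hopf-$*$-structure (rather than linear algebra) enters. For the converse ``unitary $\Rightarrow$ involutive'', the isometry identity alone is insufficient: one must use the second relation $uu^{*}=1$ together with traciality to identify $u^{*}$ correctly, and faithfulness of $\tr$ to deduce $\al(x^*)=\al(x)^*$ from the vanishing of $(\tr\ot\id)\left((\al(x^*)-\al(x)^*)^*(\al(x^*)-\al(x)^*)\right)$; in that expansion the term $(\tr\ot\id)(\al(x)\al(x)^*)$ is not controlled by $u^{*}u=1$. None of these computations are carried out, so (4) remains a plan with the hard steps asserted rather than proved; the complete argument is in [B99] and in Wang's [Wa98].
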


\begin{rem}
In \cite{Wa98}, it was shown that, given a faithful state $\varphi$ on the finite-dimensional $C^*$-algebra $A$, there exists a universal compact quantum group $\G = \G_{aut}(A,\varphi)$ acting $\varphi$-preservingly on $A$. Turn $A$ into a Hilbert space $\cH$ by defining an inner product on $A$ through $\langle x,y \rangle = \varphi(y^*x)$ for all $x,y \in A$. In view of Lemma \ref{actionlemma}, $C(\G)$ can be described as the universal $C^*$-algebra generated by the coefficients of $u \in B(\cH) \ot C(\G)$ subject to the relations which make $ u$ unitary, $m \in \Mor(u \ot u, u),$ and $ \eta \in \Mor(\1, u)$.
\end{rem}

A fixed point of an action $\al: A \to A \ot \Pol(\G)$ is an element $x \in A$ such that $\al(x) = x \ot \1$. We will refer to the set of fixed points of $\al$ as $\Fix(\al)$. An action $\al$ is \emph{ergodic} if $\Fix(\al) = \C 1_A$ and \emph{centrally ergodic} if $Z(A) \cap \Fix(\al) = \C 1_A$. Moreover, it is \emph{faithful} if the coefficients of $u_{\al}$ generate $C(\G)$. \\
Recall that, given an inclusion $A_0 \subset A_1$ of finite-dimensional $C^*$-algebras such that $Z(A_0) \cap Z(A_1) = \C 1_{A_1}$, there exists a 'particularly good' choice of trace $\tr$ on $A_1$ which is commonly referred to as the \emph{Markov trace} of the inclusion. This trace extends to the basic construction $ A_2 = \langle A_1, e_1 \rangle $ with Jones projection $e_1$ and is uniquely determined under mild irreducibility assumptions. For a thorough discussion of inclusions of finite-dimensional $C^*$-algebras and their traces, we recommend the book \cite{GdlHJ89}. The following observation due to Banica will be fundamental throughout this article.

\begin{prop}[\cite{B01}] \label{ban:tow}
Let $A_0 \subset A_1$ be an inclusion of finite-dimensional $C^*$-algebras such that $Z(A_0) \cap Z(A_1) = \C 1_{A_1}$ with Markov trace $\tr$. Let
\[ A_0 \subset A_1 \subset A_2 = \langle A_1, e_1 \rangle  \subset \cdots  \]
be its Jones tower. If $\al: A_1 \to A_1 \ot \Pol(\G)$ is an action leaving $A_0$ and $\tr$ invariant, there is a unique sequence $(\al_i)_{i \geq 0}$ of actions $\al_i: A_i \to A_i \ot \Pol(\G)$ with $\al_1 = \al$ such that each $\al_i$ extends $\al_{i-1}$ and leaves the Jones projection $e_{i-1}$ invariant. 
\end{prop}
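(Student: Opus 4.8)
The plan is to construct $(\al_i)_{i\ge 0}$ inductively along the Jones tower, realizing each basic construction spatially on an $L^2$-space on which the previously constructed action is unitarily implemented, and then transporting the action by conjugation by that unitary. First note that $\al_0$ and $\al_1$ are forced: one must have $\al_1=\al$, and since any $\al_1$ extending $\al_0$ restricts to it, necessarily $\al_0=\al|_{A_0}$, which is a genuine action of $\G$ on $A_0$ (because $\al$ leaves $A_0$ invariant) and preserves $\tr_0:=\tr|_{A_0}$. It therefore suffices to prove the following one-step claim and iterate it: \emph{if $\beta\colon A_k\to A_k\ot\Pol(\G)$ is an action leaving $A_{k-1}$ invariant and preserving the Markov trace $\tr_k$ on $A_k$ (with $\tr_1=\tr$), then there is a unique action $\beta'\colon A_{k+1}\to A_{k+1}\ot\Pol(\G)$ extending $\beta$ with $\beta'(e_k)=e_k\ot\1$, and moreover $\beta'$ again leaves $A_k$ invariant and preserves $\tr_{k+1}$.} Applying this with $\beta=\al_k$, starting from $\al_1=\al$ (whose hypotheses hold by assumption), produces $\al_{k+1}:=\beta'$ and reproduces the inductive hypothesis at the next level, giving the full sequence.

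For the construction of $\beta'$, recall from the structure theory of the basic construction (see \cite{GdlHJ89}) that, writing $\cH_k:=L^2(A_k,\tr_k)$ with $A_k$ acting on $\cH_k$ by left multiplication, the basic construction $A_{k+1}=\langle A_k,e_k\rangle$ is realized inside $B(\cH_k)$ with $e_k$ the orthogonal projection onto $\cH_{k-1}:=L^2(A_{k-1},\tr_{k-1})\subseteq\cH_k$; furthermore $A_{k+1}=\spanlin\{x e_k y:x,y\in A_k\}$, the Markov trace $\tr_{k+1}$ extends $\tr_k$, equals $\tr_k\circ E_k$ for the $\tr_{k+1}$-preserving conditional expectation $E_k\colon A_{k+1}\to A_k$, and $E_k(e_k)=\tau\1$ for a positive constant $\tau$ (all of this relying on the fact that $Z(A_0)\cap Z(A_1)=\C\1$, a property that persists for every inclusion in the tower). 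Let $u:=u_\beta\in B(\cH_k)\ot\Pol(\G)$ be the corepresentation of $\G$ corresponding to $\beta$; it is unitary by Lemma \ref{actionlemma}(4) since $\beta$ is a $\tr_k$-preserving action. Because $\beta$ leaves $A_{k-1}$ invariant, $u$ maps $\cH_{k-1}\ot\Pol(\G)$ into itself, i.e.\ $\cH_{k-1}$ is an invariant subspace of the unitary corepresentation $u$; consequently the orthogonal projection $e_k$ onto $\cH_{k-1}$ belongs to $\Mor(u,u)$, so $(e_k\ot\1)u=u(e_k\ot\1)$ (see \cite{NT13}). Now set $\beta'(z):=u(z\ot\1)u^*$ for $z\in A_{k+1}\subseteq B(\cH_k)$. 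Since $u$ is a unitary corepresentation, $\beta'$ is a unital $*$-homomorphism that is coassociative and counital (a routine consequence of the corepresentation identities for $u$); a direct verification on the subspace $A_k\ot\1$ of $\cH_k\ot\Pol(\G)$ gives $u(a\ot\1)u^*=\beta(a)$ for all $a\in A_k$ (identifying $A_k$ and $A_k\ot\Pol(\G)$ with subspaces of $B(\cH_k)$ and $B(\cH_k)\ot\Pol(\G)$ via the left regular representation), so $\beta'$ extends $\beta$ and in particular leaves $A_k$ invariant, while $\beta'(e_k)=u(e_k\ot\1)u^*=e_k\ot\1$ by the intertwining relation. As $A_{k+1}$ is generated as a $*$-algebra by $A_k\cup\{e_k\}$, both of which $\beta'$ sends into $A_{k+1}\ot\Pol(\G)$, we conclude $\beta'(A_{k+1})\subseteq A_{k+1}\ot\Pol(\G)$; thus $\beta'$ is an action of $\G$ on $A_{k+1}$ extending $\beta$ and fixing $e_k$.

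To close the inductive step one checks that $\beta'$ preserves $\tr_{k+1}$. For $x,y\in A_k$ write $\beta(x)=\sum_i x_i\ot p_i$ and $\beta(y)=\sum_j y_j\ot q_j$; then $\beta'(x e_k y)=\beta(x)(e_k\ot\1)\beta(y)=\sum_{i,j}x_i e_k y_j\ot p_i q_j$, so, using $\tr_{k+1}(x e_k y)=\tr_k(E_k(x e_k y))=\tau\,\tr_k(xy)$ together with multiplicativity and $\tr_k$-invariance of $\beta$,
\[
(\tr_{k+1}\ot\id)\,\beta'(x e_k y)=\tau\sum_{i,j}\tr_k(x_i y_j)\,p_i q_j=\tau\,(\tr_k\ot\id)\bigl(\beta(x)\beta(y)\bigr)=\tau\,\tr_k(xy)\,\1=\tr_{k+1}(x e_k y)\,\1 .
\]
Since the elements $x e_k y$ span $A_{k+1}$, this gives $(\tr_{k+1}\ot\id)\circ\beta'=\tr_{k+1}(\cdot)\1$, completing the induction and hence producing the sequence $(\al_i)_{i\ge 0}$.

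Uniqueness is then immediate: $\al_0=\al|_{A_0}$ and $\al_1=\al$ are forced, and inductively any action $A_{k+1}\to A_{k+1}\ot\Pol(\G)$ extending $\al_k$ and fixing $e_k$ must coincide with $\al_{k+1}$ on the $*$-algebra generators $A_k\cup\{e_k\}$ of $A_{k+1}$, hence equals $\al_{k+1}$. I expect the only genuinely delicate point to be the passage from ``$\cH_{k-1}$ is an invariant subspace'' to ``$e_k\in\Mor(u_{\al_k},u_{\al_k})$'' — the fact that invariant subspaces of unitary corepresentations of a compact quantum group are reducing — together with the bookkeeping needed to propagate the Markov-trace-invariance of the constructed actions along the tower so that Lemma \ref{actionlemma}(4) keeps applying at each stage; the remaining ingredients are standard facts about the basic construction.
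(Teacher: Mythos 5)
The paper does not prove Proposition \ref{ban:tow}; it is quoted from \cite{B01} without argument. Your proposal is a correct and complete proof, and it follows essentially the standard route of that reference: realize $A_{k+1}=\langle A_k,e_k\rangle$ spatially on $L^2(A_k,\tr_k)$, observe that $e_k\in\Mor(u_{\al_k},u_{\al_k})$ because invariant subspaces of finite-dimensional unitary corepresentations are reducing, define $\al_{k+1}=u_{\al_k}(\,\cdot\,\ot\1)u_{\al_k}^*$, and propagate Markov-trace invariance so that Lemma \ref{actionlemma}(4) keeps applying; uniqueness follows since $A_k\cup\{e_k\}$ generates $A_{k+1}$.
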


The following example shows that every compact quantum group of Kac type with finitely generated representation category admits a faithful, centrally ergodic action on some finite-dimensional $C^*$-algebra $A$ preserving the Markov trace of $\C \subset A$.

\begin{ex} \label{exampleaction}
Let $\G$ be a compact quantum group whose Haar state is a trace and let $u \in B(\cH_u) \ot \Pol(\G)$ be a finite-dimensional unitary representation. Consider the conjugation action
\[ \al_c(u): B(\cH_u) \to B(\cH_u) \ot \Pol(\G), \quad x \mapsto u (x \ot \1)u^*.  \]
The factoriality of $B(H_u)$ implies that $\al_c(u)$ is centrally ergodic and it is clear that $\al_c(u)$ preserves the unique normalized trace on $B(\cH_u)$. In addition, if $u$ contains the trivial representation as a subobject and if its coefficients generate $\Pol(\G)$, the action $\al_c(u)$ is faithful since every coefficient of $u$ (and its adjoint) appears as a coefficient of $\al_c(u)$.
\end{ex}

\subsection{Free wreath product construction} \label{freewreath}

The free wreath product of a compact quantum group $\G$ with a quantum subgroup $\F$ of Wang's quantum permutation group $S_n^+$ (see \cite{Wa98}) is a non-commutative analogue of the classical wreath product of groups and was introduced by Bichon in \cite{Bi04}. One should note that $S_n^+$ is the universal quantum group acting on the pair $(C(X), \varphi)$, where $X$ is a set consisting of $n$ points and $\varphi$ is the state on $C(X)$ obtained by integrating with respect to the uniform probability measure on $X$. In other words, the quantum subgroups of $S_n^+$ are exactly those compact quantum groups which act faithfully on $n$ points.

\begin{defi}[\cite{Bi04}] \label{DefBichon}
Let $\G$ be a compact matrix quantum group with fundamental representation $v =(v_{kl})_{1 \leq k,l \leq m}$ and let $\F$ be a quantum subgroup of $S_n^+$ with fundamental representation $u = (u_{ij})_{1 \leq i,j \leq n}$. Define the unital $C^*$-algebra $C(\G \wr_* \F)$ as the quotient of 
\[ C(\G)^{*n}* C(\F) \]
by the relations that make the $i$-th copy of $C(\G)$ inside the free product commute with the $i$-th row of $u$. Set $w = (v_{kl}^i u_{ij})_{(i,k)(j,l)}$ where $v^i_{kl}$ denotes the image of $v_{kl}$ inside the $i$-th copy of $C(\G)$. The compact matrix quantum group $\G \wr_* \F =(C(\G \wr_* \F),w) $ is called the \emph{free wreath product} of $\G = (C(\G),v)$ and $\F =(C(\F),u)$.
\end{defi} 

When $\F = S_n^+$, the representation theory of $\G \wr_* S_n^+$ has been studied in \cite{LT16}. Moreover, Bichon's construction was partially generalized in \cite{FP16} to the situation in which the right input $\F$ is replaced by the universal compact quantum group $\G_{aut}(A, \tr)$ acting on a finite-dimensional $C^*$-algebra $A$ in a Markov trace-preserving way. In fact, the definition in \cite{FP16} is slightly more general, see \cite{FP16} for details. \\
Choose for any $x \in \Irr(\G)$ a representative $u^{x} \in B(\cH_{x}) \ot \Pol(\G)$. Set $\cH = L^2(A,\tr)$.

\begin{defi}[\cite{FP16}] \label{DefFima}
Let $\G$ be a compact quantum group. Define $C(\G \wr_* \G_{aut}(A, \tr))$ to be the universal unital $C^*$-algebra generated by the coefficients of $a(x) \in B(\cH \ot \cH_{x}) \ot \Pol(\G\wr_* \G_{aut}(A, \tr)), \ x \in \Irr(\G)$, satisfying the relations:
\begin{itemize}
\item $a(x)$ is unitary for any $x \in \Irr(\G)$,
\item For all $x,y,z \in \Irr(\G)$ and all $S \in \Mor(x \ot y, z)$
\[ (m \ot S) \circ \Sigma_{23} \in \Mor(a(x) \ot a(y), a(z)), \]
where $\Sigma_{23}: \cH \ot \cH_x \ot \cH \ot \cH_y \to \cH \ot \cH \ot \cH_x \ot \cH_y$ is the tensor flip on the second and third leg.
\item $\eta \in \Mor(\1, a(\1_{\G}))$.
\end{itemize}
The compact quantum group $\G \wr_* \G_{aut}(A, \tr) = (C(\G \wr_* \G_{aut}(A, \tr)),\Delta)$ with comultiplication as defined in \cite[Proposition 2.7]{FP16} is called the \emph{free wreath product} of $\G$ and $\G_{aut}(A, \tr)$.
\end{defi}

We recall some facts on $\G \wr_* \G_{aut}(A, \tr)$ for later use. For any finite-dimensional unitary representation $u \in B(H_u) \ot \Pol(\G)$, one can define a unitary representation $a(u) \in B(\cH \ot H_u) \ot \Pol(\G \wr_* \G_{aut}(A, \tr))$ in the following way (see \cite[Section 8]{FP16}). For any $x \in \Irr(\G)$ appearing as a subobject of $u$, choose a family of isometries $S_{x,k} \in \Mor(u_x, u), \ 1 \leq k \leq \dim \Mor(u_x,u)$ such that $S_{x,k}S_{x,k}^*$ are pairwise orthogonal projections with $\sum_{x \subset u } \sum_{k} S_{x,k}S_{x,k}^* = \id_{\cH_u} $. We have
\[ u =  \sum_{x \subset u } \sum_{k} (S_{x,k} \ot \1) u_x (S_{x,k}^* \ot \1). \]
Set
\[ a(u) \ = \ \sum_{x \subset u } \sum_{k} (\id_{\cH} \ot S_{x,k} \ot \1) a(x) (\id_{\cH} \ot S_{x,k}^* \ot \1) \ \in \ B(\cH \ot \cH_u) \ot  \Pol(\G \wr_* \G_{aut}(A, \tr)). \]
Thanks to \cite[Proposition 8.1]{FP16}, $a(u)$ is a well defined finite-dimensional unitary representation.
Moreover, for all finite-dimensional unitary representations $u,v,w$ of $\G$ and for every $S \in \Mor(u \ot v, w)$ define
\[ a(S) : \cH \ot \cH_u \ot \cH \ot \cH_v \to \cH \ot \cH_w, \quad a(S) = (m \ot S) \Sigma_{23}. \]

\begin{prop} \label{wreathaction}
Let $\beta: B \to B \ot \Pol(\G)$ be a faithful, centrally ergodic $\tr_B$-preserving action of $\G$ on a finite-dimensional $C^*$-algebra $B$ with associated representation $u := u_{\beta}$. Denote the universal action of $\G_{aut}(A, \tr_A)$ on $A$ by $\al$. The linear map
\[ \beta \wr_* \al := \al_{a(u)} : A \ot B \to A \ot B \ot \Pol(\G \wr_* \G_{aut}(A, \tr)) \] 
defines a faithful, centrally ergodic $\tr_A \ot \tr_B$-preserving action of $\G \wr_* \G_{aut}(A, \tr)$ on $A \ot B$. It is called the free wreath action of $\beta$ and $\al$.
\end{prop}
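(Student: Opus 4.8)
The plan is to verify the four asserted properties of $\beta \wr_* \al = \al_{a(u)}$ one at a time, relying throughout on the basic properties of the assignment $a$ recalled above: that $a(u)$ is a finite-dimensional unitary representation of $\G \wr_* \G_{aut}(A,\tr)$ and that $a$ is functorial in the sense that $a(S) \in \Mor(a(u) \ot a(v), a(w))$ whenever $S \in \Mor(u \ot v, w)$ (\cite[Section 8]{FP16}). Write $m_A, m_B, m_{A\ot B}$ and $\eta_A, \eta_B, \eta_{A\ot B}$ for the multiplication and unit maps of $A$, $B$ and $A \ot B$. First I would treat the claim that $\al_{a(u)}$ is a $\tr_A \ot \tr_B$-preserving action. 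Since $a(u)$ is a unitary representation, $\al_{a(u)}$ is automatically a coassociative, counital vector space action; and since $u = u_\beta$ is unitary for the inner product $\langle x,y\rangle = \tr_B(y^*x)$ (Lemma \ref{actionlemma}) and $\cH \ot H_u = L^2(A,\tr_A)\ot L^2(B,\tr_B) = L^2(A\ot B,\tr_A\ot\tr_B)$, the representation $a(u)$ is unitary for the inner product on $A\ot B$ induced by $\tr_A\ot\tr_B$. By Lemma \ref{actionlemma} it then remains to check that $m_{A\ot B} \in \Mor(a(u)\ot a(u), a(u))$ and $\eta_{A\ot B} \in \Mor(\1, a(u))$, the trace-preservation then being $\tr_A\ot\tr_B = \eta_{A\ot B}^* \in \Mor(a(u),\1)$. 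The first I would obtain at once from the observation that $m_B \in \Mor(u\ot u, u)$ (Lemma \ref{actionlemma}, as $\beta$ is an action) and $m_{A\ot B} = (m_A \ot m_B)\Sigma_{23} = a(m_B)$. The second I would get by decomposing $1_B \in H_u$ along the trivial subrepresentations of $u$ (it lies in $\Fix(\beta) = \Mor(\1,u)$) and computing $\al_{a(u)}(1_A \ot 1_B)$ summand by summand, using $a(\1_\G) = u_\al$ and $\al(1_A) = 1_A \ot \1$.

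Next, for faithfulness, I would use that $C(\G\wr_*\G_{aut}(A,\tr))$ is generated as a $C^*$-algebra by the coefficients of the $a(x)$, $x \in \Irr(\G)$. As $\beta$ is a $*$-preserving action, $u_\beta$ is self-conjugate (its conjugate being implemented by $x\mapsto x^*$), so faithfulness of $\beta$ means every $x \in \Irr(\G)$ is a subrepresentation of some tensor power $u^{\ot k}$. Since $a$ respects direct sums and subrepresentations, and since $a(u\ot v)$ embeds into $a(u)\ot a(v)$ --- the embedding being $a(\id_{u\ot v})^*$, which is injective because $m_A$ is surjective and hence so is $a(\id_{u\ot v}) = (m_A\ot\id)\Sigma_{23}$ --- it follows that $a(x)$ embeds into $a(u^{\ot k})$, which in turn embeds into $a(u)^{\ot k}$. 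Thus all coefficients of all $a(x)$ lie in the algebra generated by the coefficients of $a(u)$, so $\al_{a(u)}$ is faithful.

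The crux is central ergodicity. The fixed-point algebra of $\al_{a(u)}$ equals $\Mor(\1, a(u))$, and I would compute it by decomposing $u$ into irreducibles and appealing to the representation theory of the free wreath product from \cite{FP16}: $\Mor(\1, a(x)) = 0$ for $x \in \Irr(\G)\setminus\{\1_\G\}$, whereas $\Mor(\1, a(\1_\G)) = \Mor(\1, u_\al) = \Fix(\al)$, so that $\Fix(\al_{a(u)}) = \Fix(\al)\ot\Fix(\beta)$ as $C^*$-subalgebras of $A\ot B$. Granting this, the conclusion is short: writing $z \in \Fix(\al_{a(u)}) \cap Z(A\ot B)$ as $z = \sum_k a_k \ot b_k$ with $a_k \in \Fix(\al)$ and $b_1,\dots,b_r \in \Fix(\beta)$ linearly independent, commutation of $z$ with $A\ot\1$ forces each $a_k \in Z(A)\cap\Fix(\al) = \C 1_A$ --- here I use that the universal action of $\G_{aut}(A,\tr)$ on $A$ is centrally ergodic, a standard fact for the Markov-trace-preserving quantum automorphism group (the inclusion $\C\subset A$ being connected) --- so $z = 1_A \ot b$ for some $b \in \Fix(\beta)$; then commutation of $z$ with $\1\ot B$ forces $b \in Z(B)\cap\Fix(\beta) = \C 1_B$ by central ergodicity of $\beta$, whence $z \in \C 1_{A\ot B}$. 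I expect the main obstacle to be exactly this identification of the fixed-point spaces $\Mor(\1, a(x))$, for which one genuinely needs the fusion-rule analysis of \cite{FP16}; once the functoriality of $a$ and that input are in hand, the remaining verifications are essentially bookkeeping.
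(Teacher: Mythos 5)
Your proposal is correct, and its overall skeleton coincides with the paper's: both reduce the action axioms to Lemma \ref{actionlemma} via the functoriality of $a$ from \cite[Proposition 8.1]{FP16} (in particular $m_{A\ot B}=a(m_B)$ for multiplicativity, and $\tr_A\ot\tr_B=\eta_{A\ot B}^*$ for trace preservation). The differences are in the last two items. For faithfulness the paper simply cites point 5 of \cite[Proposition 8.1]{FP16}, whereas you reprove it by embedding $a(u^{\ot k})$ into $a(u)^{\ot k}$ via the coisometry $a(\id)$ and using self-conjugacy of $u_\beta$; this is a legitimate, self-contained substitute. For central ergodicity the routes genuinely diverge: the paper argues that $a(\1_\G)$ occurs exactly once in $a(u)$ and concludes $\dim\Mor(\1,a(u))=1$, i.e.\ full ergodicity --- but the multiplicity of $a(\1_\G)$ in $a(u)$ is $\dim\Mor(\1_\G,u_\beta)=\dim\Fix(\beta)$, which equals $1$ only when $\beta$ is ergodic, not merely centrally ergodic. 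Your version instead identifies $\Fix(\al_{a(u)})=\Fix(\al)\ot\Fix(\beta)$ (using $\Mor(\1,a(x))=0$ for $x\neq\1_\G$ and $\Mor(\1,a(\1_\G))=\Fix(\al)$) and then intersects with $Z(A\ot B)=Z(A)\ot Z(B)$, invoking central ergodicity of $\al$ and $\beta$ separately. This buys robustness: your argument covers exactly the hypotheses stated in the proposition, including centrally ergodic but non-ergodic $\beta$ such as conjugation actions on matrix algebras, where the paper's one-line multiplicity claim does not literally apply. No gaps to report.
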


\begin{proof}
For the multiplication map $m_{A\ot B}$ on $A \ot B$, we have $m_{A\ot B} = (m_A \ot m_B) \Sigma_{23} = a(m_B) \in Mor(a(u) \ot a(u), a(u))$ by \cite[Proposition 8.1]{FP16}. By Lemma \ref{actionlemma}, $\beta \wr_* \al$ is multiplicative. By definition, we have $\eta_A \in \Mor(\1, a(\1_{\G}))$ for the unit map of $A$ and it follows again from \cite[Proposition 8.1]{FP16} that $\id_A \ot \eta_B \in \Mor(a(\1_{\G}), a(u))$ and hence $\eta_{A \ot B} \in \Mor(\1, a(u))$ and by Lemma \ref{actionlemma}, $\beta \wr_* \al$ is unital. Since the universal action $\al$ on $A$ is centrally ergodic and $\tr_A$-preserving, we have $\tr_A = \eta_A^* \in \Mor(a(\1_{\G}), \1)$ and $\id_A \ot \tr_B \in \Mor(a(u), a(\1_{\G}))$ by \cite[Proposition 8.1]{FP16}. It follows that $\beta \wr_* \al$ preserves the trace and is involutive by Lemma \ref{actionlemma}. By definition, $a(u)$ decomposes as $a(u) = \sum_{x \subset u} a(x)$ and by central ergodicity of $\beta$, the summand $a(\1_{\G})$ appears exactly once in this decomposition. Due to the central ergodicity of $\al$, $a(\1_{\G})$ contains the trivial representation exactly once as well (see \cite[Proposition 5.1]{FP16}). Consequently, $\dim \Mor(\1, a(u)) =1$ and $\beta \wr_* \al$ is centrally ergodic. Lastly, the fact that $\beta \wr_* \al$ is faithful follows from point 5 in \cite[Proposition 8.1]{FP16}.  
\end{proof}

\begin{rem}
If we return to Bichon's setting, that is to say, we replace the input on the right by an arbitrary faithful, centrally ergodic action of $\F$ on a set $X$ of $n$ points preserving the uniform probability measure (i.e. $A = C(X)$), one can easily see that the vector space action $\al_w$ associated to the unitary representation $w$ in Definition \ref{DefBichon} is a faithful, trace-preserving action on $C(X) \ot A$. It is however not immediate that this action is centrally ergodic and we will deal with this in section \ref{ProductSection}.
\end{rem}

\section{Preliminaries on planar algebras} \label{preplanar}
Planar algebras were introduced by Jones in \cite{J99} in order to axiomatize the standard invariant of a subfactor in a diagrammatical way. The introduction to planar algebras in this section follows the PhD thesis of the first-named author.

\subsection{Planar tangles}

\begin{defi}[\cite{J99}]
A planar tangle $T$ of degree $k\geq 0$ consists of the following data.
\begin{itemize}
\item A disk $D_{0}$ of $\mathbb{R}^{2}$, called the outer disk.
\item Some disjoint disks $D_{1},\dots,D_{n}$ in the interior of $D_{0}$ which are called the inner disks.
\item For each $0\leq i\leq n$, a finite subset $S_{i}\in\partial D_{i}$ of cardinality $2k_{i}$ (such that $k_{0}=k$) with a distinguished element $i_{*}\in S_{i}$. The elements of $S_{i}$ are called the boundary points of $D_{i}$ and numbered clockwise starting from $i_{*}$.  $k_{i}$ is called the degree of the inner disk $D_{i}$. Whenever we draw pictures, we will distinguish $i_*$ by marking the boundary region of $D_i$ preceding $i_*$ by $*$.
\item A finite set of disjoint smooth curves $\lbrace\gamma_{j}\rbrace_{1\leq j\leq r}$  such that each $\mathring{\gamma}_{j}$ lies in the interior of $D_{0}\setminus \bigcup_{i\geq 1} D_{i}$ and such that $\bigcup_{1\leq j\leq r}\partial \gamma_{j}=\bigcup_{0\leq i\leq n} S_{i}$; it is also required that each curve meets a disk boundary orthogonally, and that its endpoints have opposite (resp. same) parity if they both belong to inner disks or both belong to the outer disk (resp. one belongs to an inner disk and the other one to the outer disk).
\item A region of $P$ is a connected component of $D_{0}\setminus (\bigcup_{i\geq 1} D_{i}\cup(\bigcup \gamma_{j}))$. Give a chessboard shading on the regions of $P$ in such a way that the interval components of type $(2i-1,2i)$ are boundaries of shaded regions.
\end{itemize}
The skeleton of $T$, denoted by $\Gamma T$, is the set $(\bigcup \partial D_{i})\cup(\bigcup \gamma_{j})$.
\end{defi}

Planar tangles will always be considered up to isotopy. \\
If the degree of $T$ is $0$, then $T$ is of degree $+$ (respectively $-$) if the boundary of the outer disk is the boundary of an unshaded (respectively shaded) region. An example of a planar tangle is given in Figure \ref{Fig1PlanarTangle}.
\begin{figure}[h!]
\begin{center}
\scalebox{0.7}{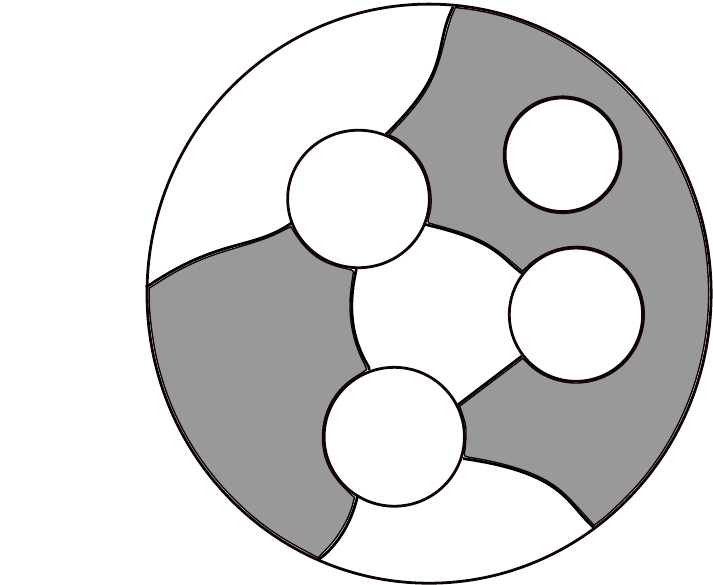}
\end{center}
\caption{\label{Fig1PlanarTangle}: Planar tangle of degree $4$ with $4$ inner disks.}
\end{figure}
\\
For clarity reasons, we will sometimes omit the numbering of the inner disks in our drawings.
A connected planar tangle is a planar tangle whose regions are simply connected; this implies that for any inner disk $D$ and any element $x\in \partial D$, there is a path from $x$ to the boundary of the outer disk which is contained in $(\bigcup \partial D_{i})\cup(\bigcup \gamma_{j})$. An irreducible planar tangle is a connected planar tangle such that each curve has an endpoint being a distinguished point of $D_{0}$ and the other one being on an inner disk. An example of a connected planar tangle (resp. irreducible planar tangle) is given in Figure \ref{Fig2ConnectIrrTangle}.
 \begin{figure}[h!]
\begin{center}
\scalebox{0.7}{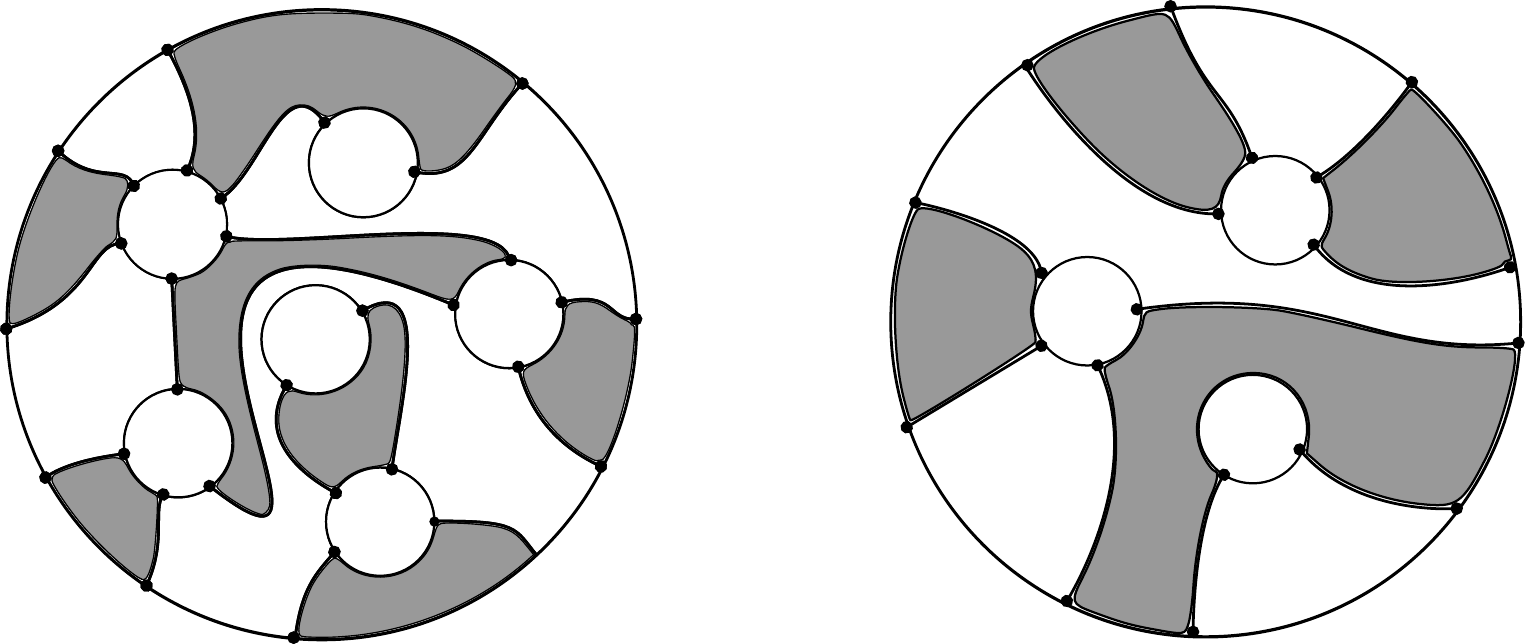}
\end{center}
\caption{\label{Fig2ConnectIrrTangle}: A connected and an irreducible planar tangle.}
\end{figure}
\subsubsection*{Composition and involution of planar tangles}
Let $T$ and $T'$ be two planar tangles of respective degree $k$ and $k'$, and let $D$ be an inner disk of $T$. We assume that the degree of $D$ is also $k'$. Then the tangle $T$ can be composed with the tangle $T'$ by isotoping $T'$ onto the inner disk $D$ in such a way that the boundary of $T'$ is mapped onto the boundary of $D$ and that the $i$-th marked boundary point of $T'$ is mapped onto the $i$-th marked boundary point of $D$. Hence, the strings adjacent to the boundary points of $T'$ connect with those adjacent to the boundary points of $D$ and by removing the boundary of $D$, we obtain a new planar tangle which is denoted by $T\circ_{D}T'$. It is called the composition of $T$ and $T'$ with respect to $D$. The composition of the planar tangle depicted in Figure \ref{Fig1PlanarTangle} with the second planar tangle shown in Figure \ref{Fig2ConnectIrrTangle} is given in Figure \ref{Fig3CompoTangle}. 
 \begin{figure}[h!]
\begin{center}
\scalebox{0.7}{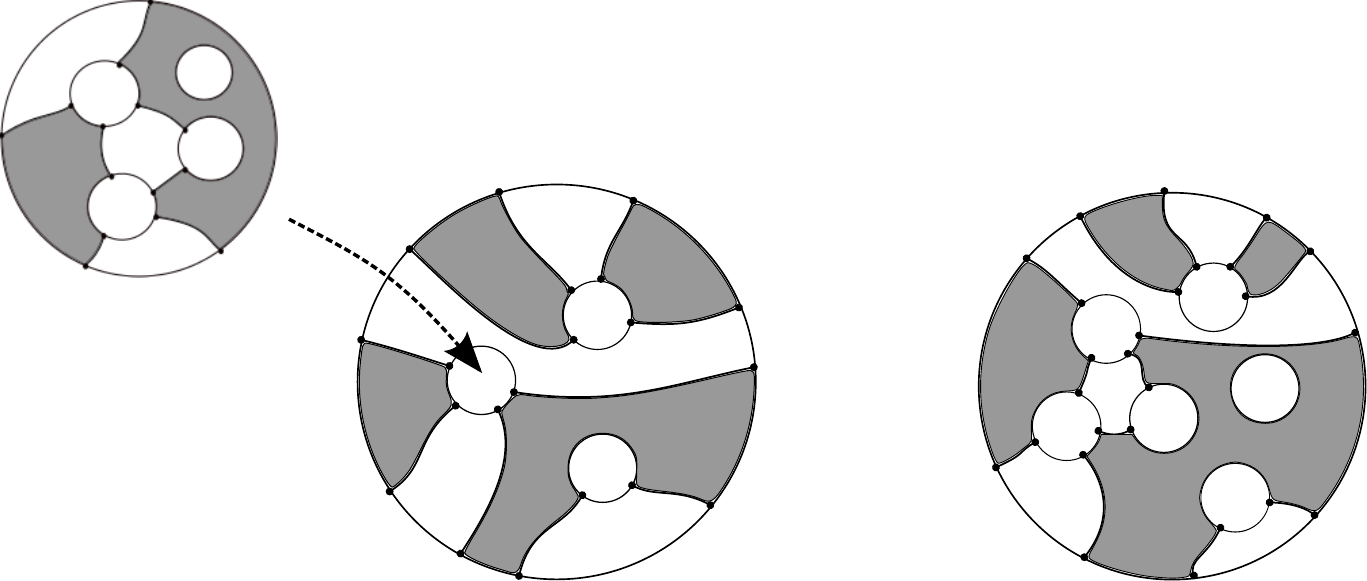}
\end{center}
\caption{\label{Fig3CompoTangle}: Composition of two planar tangles.}
\end{figure}

If $T_{1},\dots,T_{s}$ are planar tangles and $D_{i_{1}},\dots,D_{i_{s}}$ are distinct inner disks of $T$ such that $\deg T_{j}=\deg D_{i_{j}}$ for all $1\leq j \leq s$, we denote by $T\circ_{(D_{i_{1}},\dots,D_{i_{s}})}(T_{1},\dots,T_{s})$ the planar tangle obtained by iterating the composition with respect to the different inner disks. \\ \\
There also exists a natural involution operation on planar tangles. Number the regions adjacent to any disk of a tangle $T$ starting with the region preceding the first boundary point of the disk. Reflect the tangle in a diameter of the outer disk passing through its first region. Renumber the boundary point of the reflected tangle in such a way that the image of a first region is a first region again to obtain the tangle $T^*$.

\subsection{Planar algebras}

\begin{defi}[\cite{J99}]
A planar algebra $\mathcal{P}$ is a collection of finite-dimensional vector spaces $( \cP_n)_{n\in \mathbb{N}^{*}\cup\lbrace -,+\rbrace}$ such that each planar tangle $T$ of degree $k$ with $n$ inner disks $D_1, \dots, D_n $ of respective degree $k_{1},\dots,k_{n}$ yields a linear map
$$Z_{T}:\bigotimes_{1\leq i\leq n} \mathcal{P}_{k_{i}}\longrightarrow \mathcal{P}_{k}.$$
We require the composition of such maps to be compatible with the composition of planar tangles. More precisely, if $T'$ is another tangle of degree $k_{i_{0}}$ for some $1\leq i_{0}\leq n$, then 
$$Z_{T}\circ (\bigotimes_{i\not = i_{0}} \text{Id}_{\cP_{k_i}}\otimes Z_{T'})=Z_{T\circ_{D_{i_{0}}}T'}.$$
If every $\cP_n$ carries the structure of a $C^*$-algebra with multiplication induced by the natural multiplication tangles (see \cite{J99}) whose involution commutes with the involution of planar tangles, it is called a $C^*$-planar algebra.  
\end{defi}

A planar algebra is \emph{spherical} if the action of planar tangles is invariant under symmetries of the two-sphere (obtained by adding a point at infinity to $\R^2$). Note that the left and right trace tangles (as depicted in Figure \ref{tracetangles}) coincide up to a spherical symmetry.

\begin{figure}[h!]
\begin{center}
\scalebox{0.7}{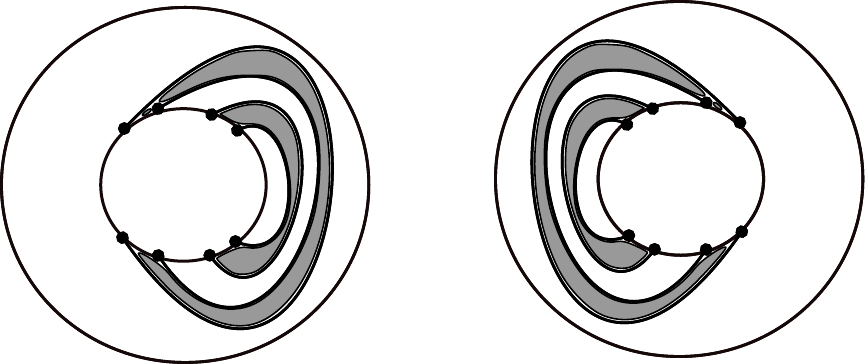}
\end{center}
\caption{\label{tracetangles}: Trace tangles of degree $4$.}
\end{figure}

\begin{defi}[\cite{J99}] \label{subfactorpa}
A subfactor planar algebra $\cP$ is a $C^*$-planar algebra such that
\begin{itemize}
\item $\cP$ is spherical,
\item $\dim \cP_+ = \dim \cP_- = 1, $
\item For every $n$, the linear map $\Tr_n$ induced by the $n$-th trace tangles satisfies $\Tr(x^*x) > 0$ for all $ 0 \neq x \in \cP_n$.
\end{itemize}
\end{defi}
The sequence of dimensions of the vector spaces $(\cP_{n})_{n\geq 1}$ of a planar algebra $\cP$ is denoted by $(\mu_{\cP}(n))_{n\geq 1}$. Note that this sequence corresponds to the sequence of moments of a real bounded measure, which we denote by $\mu_\cP$.
\subsection{Annular tangles and Hilbert modules over a planar algebra} \label{annular}

Since we will need to discuss the annular category of a planar algebra in the proof of Theorem \ref{mainA}, we introduce the relevant types of tangles in this section.

\begin{defi}[\cite{J01}]
Let $m,n \in \N_* \cup \{ +,- \}$.
An \emph{annular tangle} is a planar tangle together with a fixed choice of an inner disk which will henceforth be called the input disk. An annular $(m,n)$-tangle is an annular tangle whose outer disk has $2n$ boundary points and whose input disk has $2m$ boundary points.
\end{defi}

Similar to arbitrary tangles, annular tangles come with a natural composition operation. Given an annular $(m,n)$-tangle $T$ with distinguished disk $D$ and an annular $(l,m)$-tangle $S$ we obtain a natural composition tangle $T \circ_D S$ by isotoping $S$ into the input disk of $T$ such that boundary points meet and by erasing the common boundary afterwards. The input disk of $T \circ S$ is simply the image of the input disk of $S$ under this isotopy. If $T$ has internal disks other than the distinguished one, we can also compose it with an arbitrary tangle of the correct degree. More precisely, if $\tilde{D}$ is such an internal disk with $2k$ boundary points and $\tilde{S}$ is an arbitrary tangle with $2k$ outer boundary points the composition $T \circ_{\tilde{D}} \tilde{S}$ yields another annular tangle. \\
There also exists a natural notion of involution on annular tangles. Starting from an annular $(m,n)$-tangle $T$, we obtain an annular $(n,m)$-tangle $T^*$ by reflecting $T$ in a circle halfway between the inner and outer boundaries (after isotoping the input disk to the center of $T$). The marked points of the inner and outer disks of $T^*$ are defined as the images of the marked points of $T$ under this reflection which is well-defined since the boundary of the input disk of $T$ is mapped to the outer boundary of $T^*$ and the other way around. Moreover the reflection maps the boundary points of a non-distinguished internal disk of $T$ to the boundary points of a non-distinguished internal disk of $T^*$. See \cite{J01} for all this. \\
Let us quickly recall the definition of modules and Hilbert modules over a planar algebra.

\begin{defi}[\cite{J01}]
Let $\cP = (\cP_{n})_{n \in \N_* \cup \{ +,- \} }$ be a planar algebra. A \emph{left module} over $\cP$ is a graded vector space $V = (V_{n})_{n \in \N_* \cup \{ +,- \}}$ such that, given an annular $(m,n)$-tangle $T$ with input disk $D_1$ and other internal disks $D_i, \ i=2, \dots, s$ of degree $k_i$, there is a linear map 
\[Z_T: V_m \ot (\ot_{p=2}^s P_{k_p}) \to V_n. \]
 We require the family $(Z_T)_{T}$ to be compatible with both the composition of annular tangles and the composition with arbitrary tangles.
\end{defi}

\begin{defi}[\cite{J01}]
Let $\cP = (\cP_{n})_{n \in \N_* \cup \{ +,- \} }$ be a $C^*$-planar algebra. A $\cP$-module $V = (V_k)_{n \in \N_* \cup \{ +,- \} }$ will be called a Hilbert $\cP$-module if each $V_n$ is a finite-dimensional Hilbert space with inner product $\langle \cdot, \cdot \rangle_n$ such that 
\[ \langle Z_T(v,x_2,\dots,x_s), w \rangle_n = \langle v, Z_{T^*}(w,x_2^*,\dots,x_s^*) \rangle_m  \]
whenever $T$ is an annular $(m,n)$-tangle with non-distinguished internal disks $D_i, i=2,\dots,s$ of respective degree $k_i$, $\ x_i \in \cP_{k_i}$ and $v \in V_m, \ w \in V_n$.
\end{defi}
\begin{figure}[h!]
\begin{center}
\scalebox{0.3}{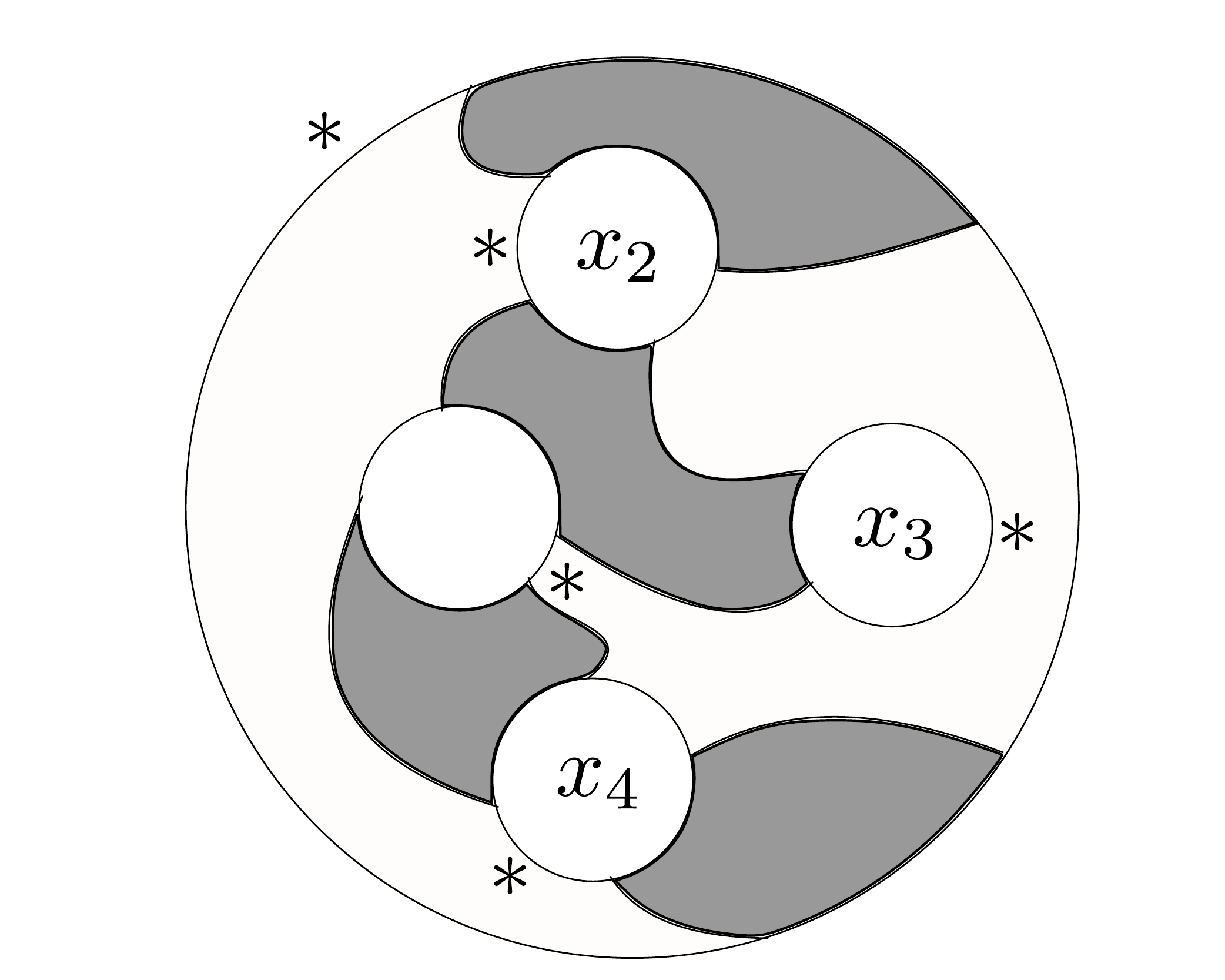}
\end{center}
\caption{\label{annulartangle}: Labelled annular $(2,2)$-tangle.}
\end{figure}
Since the Hilbert spaces in the above definition are all assumed to be finite-dimensional, an annular $(m,n)$-tangle $T$ with non-distinguished internal disks $D_i, \ i=2,\dots,s$ of respective degree $k_i$ which are labeled by $x_i \in \cP_{k_i}$ induces a bounded linear map $Z_T(\cdot, x_2,\dots,x_s): V_m \to V_n$. Lemma 3.12 in \cite{J01} provides us with an estimate on the norm of this map. We have
\[ \| Z_T(\cdot, x_2,\dots,x_s) \| \leq C_T \prod_{i=2}^s \| x_i \| \]
for some non-negative number $C_T \geq 0$ depending on $T$. 
To simplify notation, whenever we speak of labelled tangles we write $Z_T$ instead of $Z_T(\cdot, x_2,\dots,x_s)$. In order to be coherent, whenever we speak of a labelled tangle $T$, $Z_{T^*}$ means $Z_{T^*}(\cdot, x_2^*,\dots,x_p^*)$.

\begin{ex}[\cite{J01}]
Let $\cQ = (\cQ_{n})_{n \in \N_* \cup \{ +,- \} }$ be a spherical $C^*$-planar algebra with $C^*$-planar subalgebra $\cP$. The inner product 
\[ \langle v,w \rangle = \tr_n(w^* v) \qquad v,w \in \cQ_n, \ n \in \N_* \cup \{ +,- \} \]
turns $\cQ$ into a left Hilbert module over $\cP$.
\end{ex}

Let us single out a specific class of annular tangles which we will call special tangles.

\begin{defi}
A \emph{special} $(m,n)$-tangle is an annular $(m,n)$-tangle having exactly one non-distinguished disk of degree $(m+n)$. We ask for the first $2n$ boundary points of this disk to be connected by strings to the boundary points of the outer disk in such a way that marked boundary points are connected. The other $2m$ boundary points are required to be connected to the input disk with the marked boundary point of the input disk being connected to the last boundary point of the non-distinguished disk.
\end{defi}

Let $\cP = (\cP_{n})_{n \in \N_* \cup \{ +,- \} }$ be a $C^*$-planar algebra and let $\cH = (\cH_n)_{n \in \N_* \cup \{ +,- \} }$ be a Hilbert module over $P$. We say that a special $(m,n)-$tangle is labelled by $x \in \cP_{m+n}$ if the non-distinguished disk is. Let $T$ be such a special $(m,n)$-tangle labelled by $x \in \cP_{m+n}$ and let $S$ be a special $(l,m)$-tangle labelled by $y \in \cP_{l+m}$. Figure \ref{specialcomposition} defines a special $(l,n)$-tangle $T \circ S$ such that $Z_T \circ Z_S = Z_{T \circ S} : \cH_l \to \cH_n$.
\begin{figure}[h!]
\begin{center}
\scalebox{0.3}{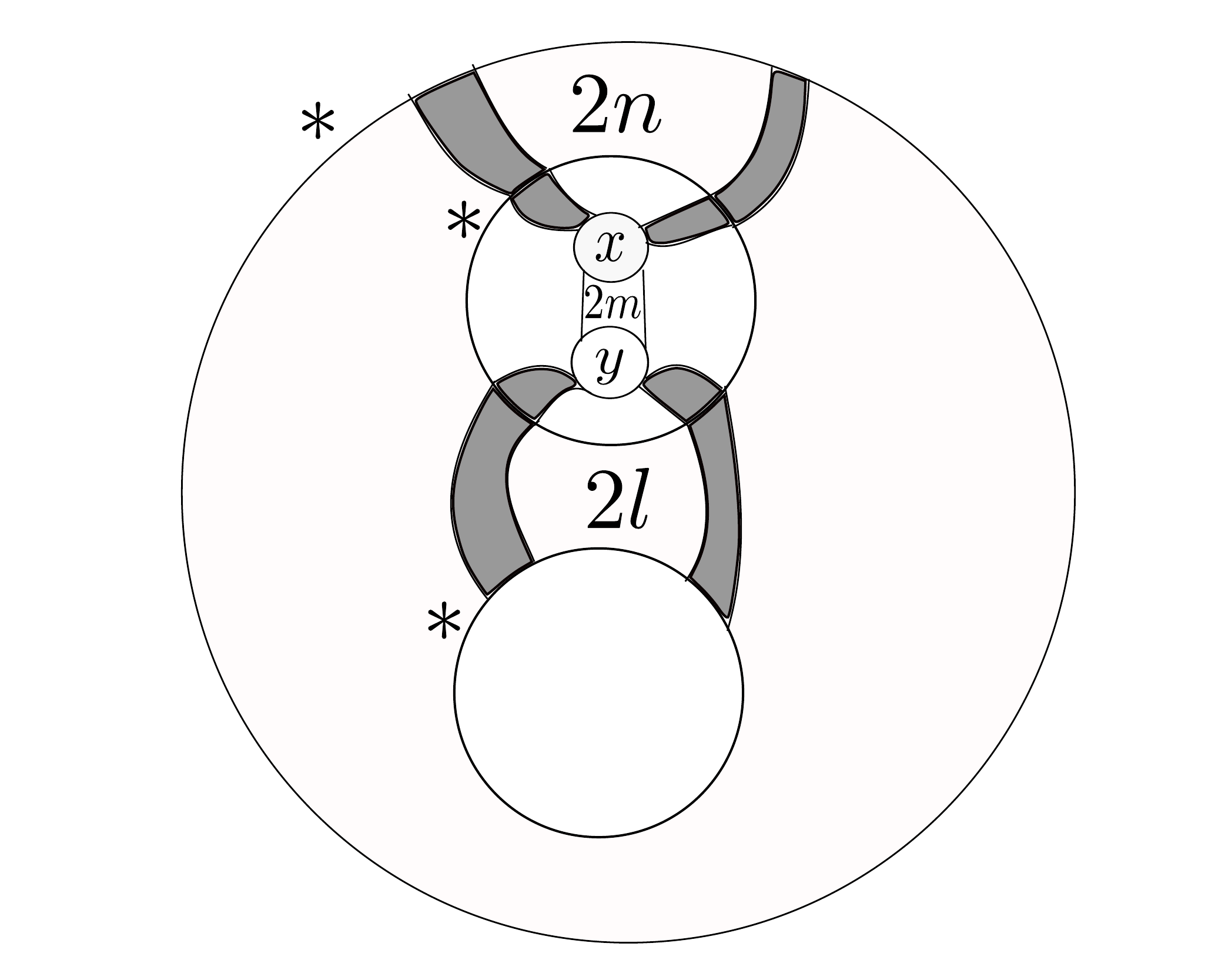}
\end{center}
\caption{\label{specialcomposition}: Composition $T \circ S$ of special tangles $T$ and $S$.}
\end{figure}
\\
Note that, since $\cH$ is a Hilbert module, we have $Z_T^* = Z_{T^*}$.
Given another special $(i,j)$-tangle $T'$ labelled by $z$, we define a special $(m+i,n+j)$-tangle $T \ot T'$ (the \emph{tensor tangle}) through Figure \ref{tensortangle}.
\begin{figure}[h!]
\begin{center}
\scalebox{0.3}{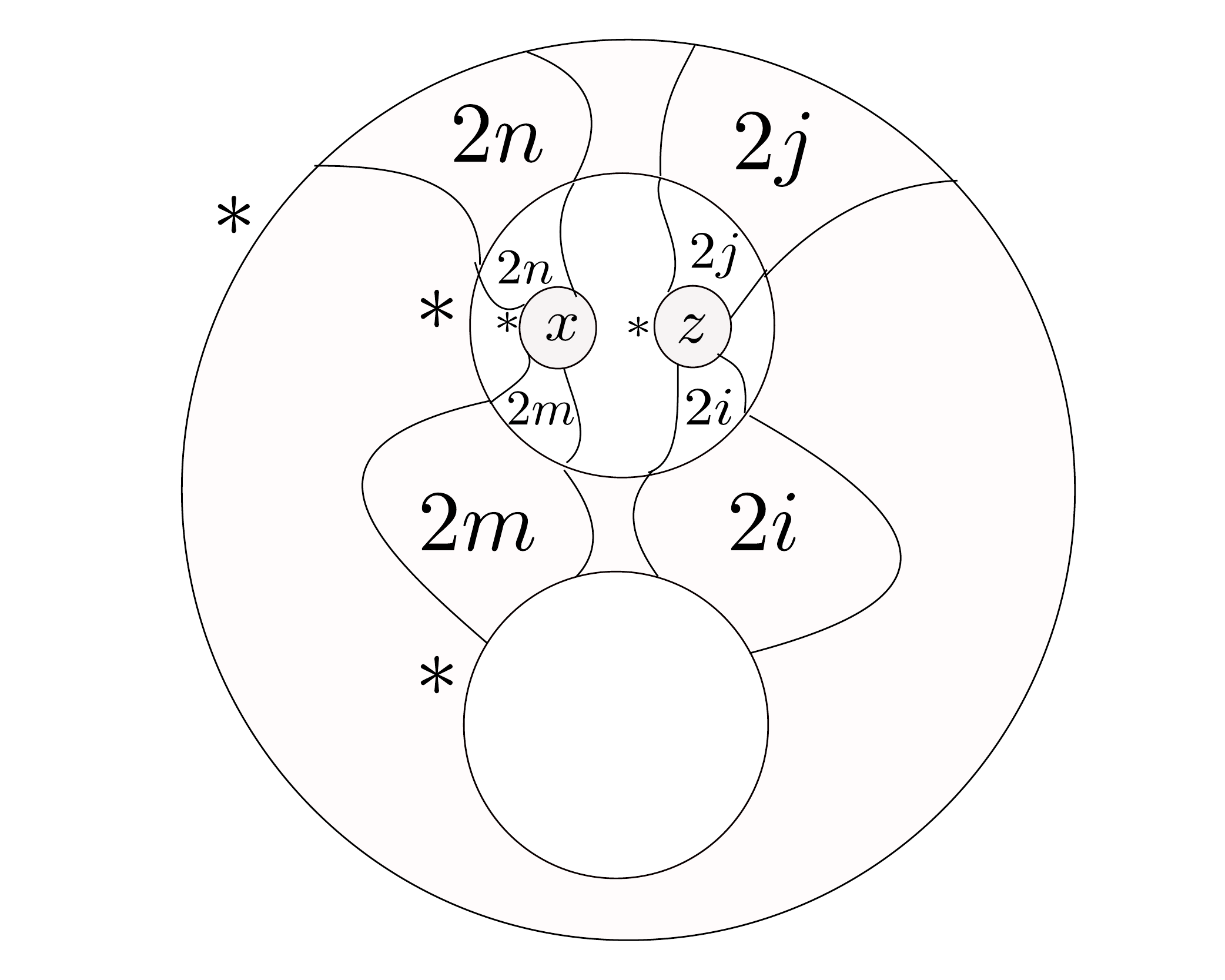}
\end{center}
\caption{\label{tensortangle}: Tensor tangle $T \ot T'$ (shading omitted in order to get a clear picture).}
\end{figure}
Clearly, in general the maps $Z_{T \ot T'}$ and $ Z_T \ot Z_{T'}$ will not coincide, however the tensor tangle will be useful in the specific case considered later on.

\subsection{The planar algebra associated to a finite dimensional $C^*$-algebra} \label{bipartitepa}

In \cite{J98}, given a finite bipartite graph $\Gamma = (V,E)$ and an eigenvector $\mu: V \to \C$ of the adjacency matrix of $\Gamma$, Jones constructs a planar algebra which satisfies all conditions in the definition of subfactor planar algebras (Definition \ref{subfactorpa}) except for the dimension condition. Every inclusion of finite dimensional $C^*$-algebras $A_0 \subset A_1$ induces a bipartite graph $\Gamma$ with vertex set $V = V_+ \cup V_-$ where the vertices in $V_+$ are the irreducible representations of $A_0$ and the vertices in $V_-$ are the irreducible representations of $A_1$. The number of edges between $a \in V_+$ and $b \in V_-$ is the multiplicity of the irreducible representation $a$ in the restriction of $b$ to $A_0$. Let $A_0 = \C$, let $\dim A_1 = d$ and decompose $A_1$ into simple components 
\[ A_1 = \oplus_{i=1}^s M_{m_i}(\C). \]
In this case, the bipartite graph $\Gamma$ has one vertex $a \in V_+$ and $s$ vertices $b_1, \dots, b_s \in V_-$ with $m_i$ edges $\beta_i^1, \dots, \beta_i^{m_i}$ connecting $a$ and $b_i$. The unique normalized Markov trace of the inclusion is given by $\tr(p_i) = \frac{m_i}{d}$ for a minimal projection $p_i \in M_{m_i}(\C)$. We define the spin vector $ \mu: V \to \C$ by setting 
\[ \mu(a) = 1, \quad \mu(b_i) = \frac{m_i}{\sqrt d} \quad i=1,\dots, s. \] This indeed defines an eigenvector $\mu$ of the adjacency matrix of $\Gamma$ with non-zero positive entries and non-zero eigenvalue $\delta = \sqrt d$ satisfying the normalization condition of \cite[Definition 3.5]{J98}. Note at this point that our notation differs from Jones's in the sense that the entries of our vector $\mu$ are the squares of the entries of the vector $\mu$ in \cite{J98}.

Let us analyze the planar algebra $\cP^{\Gamma} = (\cP^{\Gamma}_n)_{n \in \N_{*} \cup \{+, - \}}$ of this particular bipartite graph $\Gamma = \Gamma(A)$.
Recall that $\cP^{\Gamma}_n$ is defined as the vector space with basis
\[ B_n \ = \ \{\xi; \ \xi = \text{ loop of length } 2n \text{ starting and ending at } a \} \]
for $n \in \N_{\geq 1} \cup \{+ \}$, whereas $\cP^{\Gamma}_{-}$ is the $s$-dimensional vector space spanned by $b_1, \dots, b_s$. Let us represent a loop $\eta \in \cP^{\Gamma}_n$ by a tuple 
\[ \eta = (\be_{i_1}^{k_1},\be_{i_1}^{k_2},\be_{i_2}^{k_3},\dots,\be_{i_n}^{k_{2n-1}}, \be_{i_n}^{k_{2n}}). \]
Let $T$ be a planar tangle of degree $n$ with inner disks $D_1,\dots, D_l$. Let $\eta_i \in B_i, \ i =1, \dots, l$ be basis loops. To describe the action $Z_T: \bigotimes_{i=1}^l \cP^{\Gamma}_i \to \cP^{\Gamma}_n$, it suffices to specify the coefficients of $Z_T(\eta_1,\dots,\eta_l)$ in the basis expansion 
\[ Z_T(\eta_1,\dots,\eta_l) = \sum_{\eta_0 \in B_n} c(\eta_0, \eta_1, \dots, \eta_l) \  \eta_0 . \]
In order to do so, recall that a \emph{state} on $T$ is function $\sigma$ which maps strings of $T$ to edges of $\Gamma$, shaded regions of $T$ to odd vertices of $\Gamma$ and unshaded regions to even vertices of $\Gamma$ (in our case the last part can be omitted since $V_+ = \{ a \}$). A state must satisfy the following compatibility condition. If a string $S$ is adjacent to a region $R$, then $\sigma(R)$ must be an endpoint of the edge $\sigma(S)$. \\
A state $\sigma$ is compatible with a loop $\eta$ at the disk $D$ (outer or inner) if reading the output of $\sigma$ clockwise around $D$ starting from the first region produces $\eta$. \\
Isotope every string $S$ such that its $y$-coordinate function is non-constant and any singularity of the $y$-coordinate function is a local minimum or maximum. With any such singularity $s$ appearing on a string in $T$, we associate the value $\rho(s) = \sqrt{\frac{\mu(v)}{\mu(w)}}$, where $v$ is the vertex labelling the convex side and $w$ is the vertex labelling the concave side of the singularity. For a more formal discussion of this procedure, see \cite{JP10}.\\
The coefficient $c(\eta_0, \eta_1, \dots, \eta_l)$ is defined by the formula
\[ c(\eta_0, \eta_1, \dots, \eta_l) \quad = \quad \sum_{\sigma} \prod_{s \text{ singularity} \atop \text{in } T} \rho(s). \]
Here, the first sum runs over all states $\sigma$ which are compatible with $\eta_i$ at disk $D_i$ for every $i=0,\dots,l$.
Note that the partition function $Z:V \to \C$ of $\cP^{\Gamma}$ is given by $ \ Z(a) = 1, \ Z(b_i) = \mu(b_i)^2 = \frac{m_i^2}{d}$. By \cite[Theorem 3.6]{J98}, for every $k \geq 0$, the partition function induces a normalized trace $\Tr_k$ on $\cP^{\Gamma}_k$ through the formula $\Tr_k(x) = (\sqrt d)^{-k+1} Z(\hat x), \ x \in \cP_k, $ where $\hat x$ is the $0$-tangle obtained by connecting the first $k$ boundary points to the last $k$ as in Figure \ref{tracetangles}.  \\

As explained in section 5 of \cite{J98}, we can identify $\cP^{\Gamma}_{n}$ with the $n$-th algebra $A_n$ in the Jones tower of the inclusion $\C \subset A$. When $n=1$, the identification is simply given by mapping the loop $(\be_{i}^k, \be_i^l)$ to the $(k,l)$-matrix unit $e_{kl}^i$ of $b_i$. It can be easily checked that this mapping indeed identifies $\cP^{\Gamma}_1$ and $A$ as $C^*$-algebras. Moreover the trace $\Tr_1$ coincides with the normalized Markov trace on $A$ under this identification which follows immediately by applying the definition of $\Tr_1$ to a matrix unit of $A$. It is pointed out in \cite{J98} that for general inclusions of finite-dimensional $C^*$-algebras these traces need not be the same. We will denote the GNS-Hilbert spaces associated to these traces by $\cH_k := L^2(\cP^{\Gamma}_k, \Tr_k)$.

\section{The reconstruction theorem} \label{reconstruction}

In this section we will reformulate and prove Theorem \ref{mainA}.
Let $A$ be a finite-dimensional $C^*$-algebra and let $\al$ be a faithful, centrally ergodic action $\al: A \to A \ot \Pol(\G)$ of a compact quantum group $\G$ which preserves the Markov trace of the inclusion $\C \subset A$ (cf. Section \ref{actions}). Recall from Lemma \ref{ban:tow}, that $\al$ induces a sequence of actions $(\al_i)$ on the Jones tower
\[ \C \subset A_1 = A \subset A_2 = \langle A_1, e_1 \rangle  \subset \cdots  \]
of $\C \subset A$. Let $\Gamma(A)$ be the bipartite graph of this inclusion with spin vector $\mu$ and graph planar algebra $\cP^{\Gamma(A)} = (\cP^{\Gamma(A)}_n)_{n \in \N_* \cup \{ +,- \}}$ as in Section \ref{bipartitepa} where we identify $\cP^{\Gamma(A)}_n$ with $A_n$ as explained near the end of the previous section. The next result is the main result of the article \cite{B05}.

\begin{thm}[\cite{B05}]\label{resultBanica}
The graded vector space $\cP(\al) = (\Fix(\al_n))_{n \in \N_* \cup \{ +,- \}}$ is a subfactor planar subalgebra of $\cP^{\Gamma(A)}$.
\end{thm}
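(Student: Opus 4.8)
The plan is to exhibit the whole family $(\al_n)$ as a single ``action of $\G$ on the planar algebra $\cP^{\Gamma(A)}$'', from which $\cP(\al)$ emerges as its fixed-point planar algebra; the problem then collapses to checking the compatibility of the $\al_n$ with a generating family of tangles, together with two easy dimension counts. I would begin with the routine part. By Proposition~\ref{ban:tow}, $\al=\al_1$ extends to a compatible sequence $(\al_n)_{n\ge 0}$ of actions on the Jones tower of $\C\subset A$, each $\al_n$ fixing the Jones projection $e_{n-1}$; under the identification $\cP^{\Gamma(A)}_n\cong A_n$ of Section~\ref{bipartitepa} these become actions on $\cP^{\Gamma(A)}_n$, complemented by the trivial action on $\cP^{\Gamma(A)}_+\cong\C$ and by the action of $\al$ on $\cP^{\Gamma(A)}_-$ (the span of $b_1,\dots,b_s$, naturally identified with the centre $Z(A)$). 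Since each $\al_n$ is a unital $*$-homomorphism, $\Fix(\al_n)$ is a unital $*$-subalgebra of $\cP^{\Gamma(A)}_n$; and an easy induction from the case $n=1$ shows that each $\al_n$ preserves $\Tr_n$, because $\Tr_n$ is the normalised trace on $A_n=\langle A_{n-1},e_{n-1}\rangle$ singled out by $\Tr_{n-1}$ and the Markov relation for $e_{n-1}$, data that $\al_n$ respects. Granting for the moment that $\cP(\al)$ is a planar subalgebra, sphericality, the $C^*$-structure, and the positivity $\Tr_n(x^*x)>0$ on $\cP(\al)_n\setminus\{0\}$ are inherited from $\cP^{\Gamma(A)}$, where $\Tr_n$ is a faithful trace. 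This leaves the dimension axioms, both immediate: $\cP(\al)_+=\C$ since the action there is trivial, and $\cP(\al)_-=\C 1_A$ since $\al$ restricted to $Z(A)$ has trivial fixed points --- which is exactly the central-ergodicity hypothesis.

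The heart of the matter is thus the planar-subalgebra property: $Z_T\bigl(\Fix(\al_{k_1})\ot\cdots\ot\Fix(\al_{k_l})\bigr)\subseteq\Fix(\al_k)$ for every tangle $T$ of degree $k$ with inner disks of degrees $k_1,\dots,k_l$. Writing $u_n$ for the representation corresponding to $\al_n$, this amounts to $Z_T\in\Mor(u_{k_1}\ot\cdots\ot u_{k_l},\,u_k)$, and since the intertwining property is stable under composition of tangles it is enough to verify it on a generating set of the planar operad. I would take the standard generators of \cite{J99}: the multiplication tangles, the left and right inclusion tangles, the left and right conditional-expectation tangles, the Jones-projection $0$-tangles, and the unit tangle. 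For most of these the verification is a direct reading of Lemma~\ref{actionlemma} and Proposition~\ref{ban:tow}: multiplicativity of $\al_n$ handles the multiplication tangles (Lemma~\ref{actionlemma}(1)), unitality handles the unit tangle (Lemma~\ref{actionlemma}(2)), the fact that $\al_n$ extends $\al_{n-1}$ handles the inclusion tangles, and $\al_{n+1}(e_n)=e_n\ot\1$ handles the Jones-projection tangles --- so in particular the Temperley--Lieb subalgebra of each $\cP^{\Gamma(A)}_n$ is contained in $\cP(\al)_n$. The only generator needing a short argument is the conditional-expectation tangle $\cP^{\Gamma(A)}_{n+1}\to\cP^{\Gamma(A)}_n$, which realises, up to a positive scalar, the $\Tr$-preserving conditional expectation $E_{A_n}\colon A_{n+1}\to A_n$; but $E_{A_n}$ is the adjoint of the inclusion $A_n\hookrightarrow A_{n+1}$ for the inner products $\langle x,y\rangle=\Tr(y^*x)$, and since the $\al_n$ are trace-preserving $*$-homomorphisms the $u_n$ are unitary for these inner products (Lemma~\ref{actionlemma}(4)); hence the adjoint of the intertwiner implementing the inclusion is again an intertwiner, i.e.\ $E_{A_n}\in\Mor(u_{n+1},u_n)$.

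The step I expect to cost the most work is the rotation --- or, put differently, ensuring that $\cP(\al)$ is stable under the genuinely planar part of the structure rather than merely under the tower (``$\lambda$-lattice'') data handled above. If one uses that the one-click rotation is itself a composite of the generators already treated (as in \cite{J99}), nothing more is required. Alternatively one argues directly from the state-sum description in Section~\ref{bipartitepa}: on the loop basis of $\cP^{\Gamma(A)}_n\cong A_n$ the rotation only reshuffles the loop and multiplies by ratios of spin-vector entries depending solely on the $\Gamma(A)$-edge type of the loop, so it is assembled from the adjoint operation on $A_n$ and the canonical conditional expectations onto the tower subalgebras --- operations with which the trace-preserving $*$-homomorphism $\al_n$ commutes --- and this is the place where the bookkeeping is delicate. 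Either way, once all generating tangles are accounted for, $\cP(\al)$ is a planar subalgebra of $\cP^{\Gamma(A)}$, and combined with the first paragraph this makes it a subfactor planar algebra. Conceptually, the family $(u_{\al_n})_n$ turns every structure map of $\cP^{\Gamma(A)}$ into a morphism of $\G$-representations, so that $\al$ is an action of $\G$ on the planar algebra $\cP^{\Gamma(A)}$ with fixed-point planar algebra $\cP(\al)$.
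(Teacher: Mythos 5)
The paper offers no proof of this statement: it is quoted as the main result of Banica's article \cite{B05}, so there is no in-paper argument to compare yours against. Your architecture is nonetheless the standard one and, as far as I can tell, essentially Banica's: propagate $\al$ up the Jones tower via Proposition \ref{ban:tow}, identify $\cP^{\Gamma(A)}_n$ with $A_n$, obtain the two dimension axioms from triviality of the action on $\cP_+$ and from central ergodicity on $\cP_-\cong Z(A)$, inherit positivity and sphericality from the ambient graph planar algebra, and reduce closure under all tangles to closure under a generating set of the planar operad. Your verifications for the multiplication, unit, right-inclusion, Jones-projection and right-cap tangles are correct, and the device ``the adjoint of an intertwiner is an intertwiner,'' valid because the $u_{\al_n}$ are unitary by Lemma \ref{actionlemma}(4), is exactly the right tool.

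The gap sits at the step you yourself single out. The generators you have actually verified --- multiplication, unit, the inclusion $A_n\hookrightarrow A_{n+1}$, its adjoint $E_{A_n}$, and the Jones projections --- generate only the right-handed tower data, i.e.\ the one-parameter filtration $A_0\subset A_1\subset\cdots$ with its expectations; they do not generate the planar operad. The missing generator is the left cap $\cP^{\Gamma(A)}_{n+1}\to\cP^{\Gamma(A)}_{n}$, which in tower terms is (a multiple of) the expectation onto the relative commutant $A_1'\cap A_{n+1}$ followed by the shift identification, not the adjoint of the inclusion you treated; your argument therefore does not cover it, even though it appears in your list. Consequently your route (a) for the rotation (``a composite of the generators already treated'') is circular, and route (b) is not correct as stated: the rotation is not assembled from the $*$-operation and the expectations onto the subalgebras $A_k$. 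Two standard repairs are available. Either follow Banica and check that the $\al_n$ restrict compatibly to the two-parameter lattice $A_i'\cap A_j$ --- this is precisely the $\lambda$-lattice datum that the one-sided tower misses --- or note that $R:=m^*\circ\eta$ lies in $\Mor(\1,u_{\al}\ot u_{\al})$ because $m$ and $\eta$ are intertwiners and $u_{\al}$ is unitary (this is the conjugate tangle of Figure \ref{conjugatetangle}), whence every cup and cap, and with them the left expectation and the rotation, intertwines the $u_{\al_n}$. With either insertion your proof closes.
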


We will prove that the converse of this result holds as well. The following theorem is a reformulation of Theorem \ref{mainA}. 

\begin{thm} \label{main1}
Let $\cQ$ be a subfactor planar subalgebra of $\cP^{\Gamma(A)}$.\\
There exists a compact quantum group $\G$ and a faithful, $\tr$-preserving, centrally ergodic action $\al: A \to A \ot \Pol(\G)$ such that $\cQ = \cP(\al)$. Moreover, the map
\begin{align*}  
\{\text{faithful } \tr \text{-pres., centrally erg. actions on } A \} &\to \{ \text{subfactor planar subalg. of } \cP^{\Gamma(A)}  \}, \\
 \al &\mapsto \cP(\al),
\end{align*}
is one-to-one with inverse map denoted by $\cP \mapsto \al(\cP)$.
\end{thm}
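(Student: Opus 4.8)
The plan is to run a Tannaka--Krein reconstruction: out of $\cQ$ I would build a concrete rigid $C^*$-tensor category $\cC_{\cQ}$, feed it into Woronowicz's theorem to obtain $\G$ together with a distinguished unitary representation $u$ on $L^2(A,\tr)$, check that $u$ equips $A$ with a faithful, centrally ergodic, $\tr$-preserving action $\al=\al_u$, and finally verify that $\cP(\al)=\cQ$; injectivity of $\al\mapsto\cP(\al)$ will then come out of the uniqueness part of Woronowicz's theorem. For the category: its objects are the non-negative integers, with fibre functor $F(n):=\cH_n=L^2(\cP^{\Gamma(A)}_n,\Tr_n)$, recalling that $(\cH_n)_n$ is a Hilbert module over $\cP^{\Gamma(A)}$ and hence over $\cQ$. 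I would set $\Mor_{\cC_{\cQ}}(m,n)$ to be the linear span inside $B(\cH_m,\cH_n)$ of the operators $Z_T$, where $T$ runs over the special $(m,n)$-tangles labelled by elements of $\cQ_{m+n}$. Composition is then implemented by the composition of special tangles (Figure~\ref{specialcomposition}), the $*$-operation by $Z_T\mapsto Z_{T^*}$ (legitimate because $\cH$ is a Hilbert module and $\cQ$ is a $*$-planar algebra), and the monoidal structure $m\ot n=m+n$ by the tensor tangle (Figure~\ref{tensortangle}) together with the canonical unitaries $\cH_{m+n}\cong\cH_m\ot\cH_n$ coming from concatenation of loops at the base vertex, under which the tensor tangle really does implement the tensor product of operators. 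Since $\cQ$ is a planar subalgebra it is closed under every tangle, so these morphism spaces contain all identity morphisms, the cup/cap morphisms exhibiting the object $1$ as self-conjugate, and -- viewing $A=\cP^{\Gamma(A)}_1$ -- the multiplication $m_A\colon A\ot A\to A$ and unit $\eta\colon\C\to A$ of $A$, which are images of label-free tangles. As the morphism spaces are honest spaces of Hilbert-space operators they are automatically closed under adjoints and satisfy $C^*$-positivity, and sphericality of $\cQ$ supplies the standard symmetric solution of the conjugate equations. Thus $\cC_{\cQ}$ is a rigid $C^*$-tensor category and $F$ a unitary tensor functor.

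By Woronowicz's Tannaka--Krein duality (\cite{Wo88}; see \cite{NT13}) there is then a compact quantum group $\G$, unique up to isomorphism and carrying the universal $C^*$-norm, with a unitary representation $u$ on $\cH_1=L^2(A,\tr)$ whose coefficients generate $\Pol(\G)$ and such that $\Mor(u^{\ot m},u^{\ot n})=\Mor_{\cC_{\cQ}}(m,n)$ for all $m,n$. Since $m_A\in\Mor(u\ot u,u)$, $\eta\in\Mor(\1,u)$ and $u$ is unitary, Lemma~\ref{actionlemma} shows that $\al:=\al_u\colon A\to A\ot\Pol(\G)$ is a unital $*$-homomorphic action; it preserves $\tr$ because $\eta^*=\tr\in\Mor(u,\1)$, and it is faithful since the coefficients of $u=u_\al$ generate $\Pol(\G)$. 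For central ergodicity, note $\Fix(\al)=\Mor(\1,u)=\Mor_{\cC_{\cQ}}(0,1)=\cQ_1$ as a subspace of $A$, so $\Fix(\al)\cap Z(A)=\cQ_1\cap Z(A)$; describing $Z(A)$ planarly as the image of $\cP^{\Gamma(A)}_-$ under the ``doubling'' tangle, which has a one-sided inverse tangle (the scaled conditional expectation onto the center), one gets that $\cQ_1\cap Z(A)$ is the image of $\cQ_-$, hence one-dimensional because $\cQ$ is a subfactor planar algebra, hence equal to $\C 1_A$.

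It remains to identify $\cP(\al)$ with $\cQ$ and to pin down the bijection. By Banica's Theorem~\ref{resultBanica} applied to $\al$, the graded space $\cP(\al)=(\Fix(\al_n))_n$ is a subfactor planar subalgebra of $\cP^{\Gamma(A)}$, and the analysis behind that theorem identifies $\Fix(\al_n)$, inside $\cP^{\Gamma(A)}_n$, with $\Mor(\1,u^{\ot n})$, the Jones tower actions $\al_n$ realizing the tensor powers $u^{\ot n}$. Hence $\cP(\al)_n=\Mor(\1,u^{\ot n})=\Mor_{\cC_{\cQ}}(0,n)=\cQ_n$ for every $n$, i.e. $\cP(\al)=\cQ$; this gives surjectivity of $\al\mapsto\cP(\al)$, and we set $\al(\cQ):=\al$. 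For injectivity up to conjugacy, let $\al'$ be any faithful, centrally ergodic, $\tr$-preserving action with $\cP(\al')=\cQ$; the same identification gives $\Mor_{\G_{\al'}}(\1,u_{\al'}^{\ot n})=\cQ_n$ inside $\cP^{\Gamma(A)}_n$ for all $n$, and by rigidity this determines all intertwiner spaces $\Mor_{\G_{\al'}}(u_{\al'}^{\ot m},u_{\al'}^{\ot n})$, which therefore coincide with those of $\G$, with the same fibre functor onto the $\cH_n$. The uniqueness part of Woronowicz's theorem then yields an isomorphism $\G_{\al'}\cong\G$ carrying $u_{\al'}$ to $u$, hence conjugating $\al'$ to $\al(\cQ)$. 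So the correspondence is a bijection with inverse $\cQ\mapsto\al(\cQ)$.

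\emph{Main obstacle.} The delicate point is to make the dictionary between the Jones tower of $\C\subset A$ equipped with its lifted quantum symmetry and the tensor powers of $u$ fully precise: one must verify that the canonical identifications $\cH_{m+n}\cong\cH_m\ot\cH_n$ intertwine $u_{m+n}$ with $u_m\ot u_n$ -- so that $F$ is genuinely a tensor functor and the tensor tangle implements $\ot$ -- and that under these identifications $\Fix(\al_n)$ sits inside $\cP^{\Gamma(A)}_n$ exactly as $\Mor_{\cC_{\cQ}}(0,n)$. Carefully tracking the shading conventions of the graph planar algebra and checking that every tangle operation is compatible with the $*$-structure (so that $\cC_{\cQ}$ is honestly a $C^*$-category) is where the bulk of the work lies; much of it is contained in, or adapted from, \cite{B05} and \cite{J01}.
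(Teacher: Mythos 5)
Your proposal follows essentially the same route as the paper: build the rigid $C^*$-tensor category whose morphisms are induced by labelled special tangles (with the concatenation unitaries $\cH_m\ot\cH_n\cong\cH_{m+n}$ making the tensor tangle implement $\ot$, which is the paper's Lemma \ref{helplemma}), apply Woronowicz's Tannaka--Krein theorem, verify the action properties via the multiplication/unit/trace tangles and Lemma \ref{actionlemma}, get central ergodicity from $\dim\cQ_-=1$, and deduce $\cP(\al)=\cQ$ and the bijectivity from the identification $\cP(\al)_n=\Mor(\1,u_\al^{\ot n})$ together with Frobenius duality and Tannaka--Krein uniqueness. The only cosmetic deviation is that you attribute the conjugate equations to sphericality, whereas the paper exhibits the self-conjugating cup/cap morphism $R=\bar R$ directly; this does not affect correctness.
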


In order to prove this theorem, we will apply Woronowicz's Tannaka-Krein duality (\cite{Wo88}) to a concrete rigid $C^*$-tensor category whose morphisms are induced by special tangles. We refer to the book \cite{NT13} for details on the definition of rigid $C^*$-tensor categories and the precise statement of Woronowicz's theorem. \\
Let $\cQ = (\cQ_n)_{n \in \N_* \cup \{ +,- \}}$ be a subfactor planar subalgebra of $\cP^{\Gamma(A)}$. Recall from section \ref{annular} that, since $(\cH_n = L^2(\cP^{\Gamma(A)}_n, \tr_n))_{n \in \N_* \cup \{ +,- \}}$ is a Hilbert module over $\cQ$, every special $(k,l)$-tangle $T$ induces a bounded linear map $Z_T: \cH_k \to \cH_l$ such that $Z_T^* = Z_{T^*}$. \\
During the rest of this section, we identify the index value $+$ with $0$ if not explicitly specified otherwise. Set $\cH = \cH_1$.\\
For $k,m \geq 0$, define the \emph{concatenation tangle} $M_{k,m}$ as in Figure \ref{concatenation}.
\begin{figure}[h!]
\begin{center}
\scalebox{0.3}{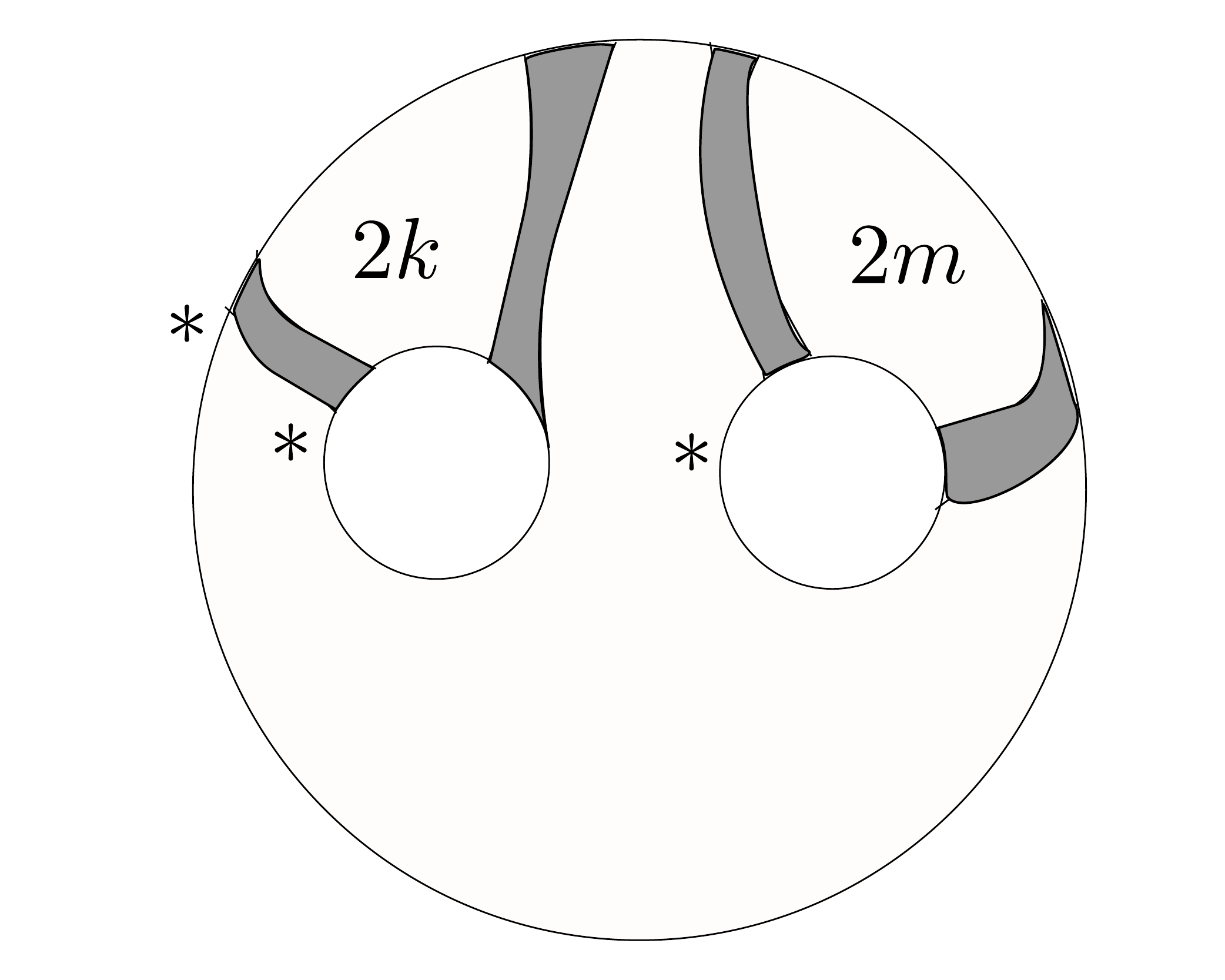}
\end{center}
\caption{\label{concatenation}: Concatenation tangle $M_{k,m}$.}
\end{figure}  

\begin{lem} \label{helplemma}
The maps $U_{k,m}:= Z_{M_{k,m}}: \cH_k \ot \cH_m \to \cH_{k+m}$ define a family of unitary operators $U = (U_{k,m})_{k,m \geq 0}$ satisfying
\[ U_{l,n} \circ (Z_T \ot Z_S ) = Z_{T \ot S} \circ U_{k,m},  \]
whenever $S$ is a special $(m,n)$-tangle, $T$ is a special $(k,l)$-tangle and $T \ot S$ is the tensor tangle defined in Figure \ref{tensortangle}.
\end{lem}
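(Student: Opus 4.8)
The plan is to work throughout in the loop model of $\cP^{\Gamma(A)}$ from Section~\ref{bipartitepa}, in which $\cP^{\Gamma(A)}_n$ carries the basis $B_n$ of loops of length $2n$ based at the unique even vertex $a$, and to read off both assertions from an explicit description of the concatenation tangle on this basis.

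\emph{Action on loop bases.} First I would observe that the concatenation tangle $M_{k,m}$ of Figure~\ref{concatenation} has no local maximum or minimum of the $y$-coordinate: it merely places the two inner disks side by side and joins their boundary points to the outer disk by monotone strings, with the shadings matching. Hence its state sum is trivial, and for $\eta\in B_k$, $\xi\in B_m$ one gets $U_{k,m}(\eta\otimes\xi)=Z_{M_{k,m}}(\eta\otimes\xi)=\eta\xi$, the loop of length $2(k+m)$ obtained by first traversing $\eta$ and then $\xi$ (which is legitimate since both are loops based at $a$). Since $a$ is the \emph{only} even vertex of $\Gamma(A)$, every loop of length $2(k+m)$ based at $a$ sits at $a$ after its first $2k$ edges, so $(\eta,\xi)\mapsto\eta\xi$ is a bijection $B_k\times B_m\to B_{k+m}$; thus $U_{k,m}$ is a linear bijection $\cH_k\otimes\cH_m\to\cH_{k+m}$. (When $k=0$ or $m=0$, i.e.\ for the index $+$, this is just the canonical identification $\C\otimes\cH_m\cong\cH_m$.)

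\emph{Unitarity.} Being bijective, $U_{k,m}$ is unitary as soon as it is isometric, i.e.\ once $\langle\eta_1\xi_1,\eta_2\xi_2\rangle_{k+m}=\langle\eta_1,\eta_2\rangle_k\,\langle\xi_1,\xi_2\rangle_m$ for basis loops. Using $\langle x,y\rangle_n=\Tr_n(y^*x)$ and $\Tr_n(x)=\delta^{-n+1}Z(\hat x)$ with $\delta=\sqrt d$, the left-hand side equals $\delta^{-(k+m)+1}Z\bigl(\widehat{(\eta_2\xi_2)^*(\eta_1\xi_1)}\bigr)$. Because each of the four loops sits at $a$ at its midpoint, the closed diagram $\widehat{(\eta_2\xi_2)^*(\eta_1\xi_1)}$ decomposes into the diagrams computing $Z(\widehat{\eta_2^*\eta_1})$ and $Z(\widehat{\xi_2^*\xi_1})$, together with a single additional closed string bounding an unshaded ($a$-labelled) region, which contributes a factor $\delta$ to the state sum; hence $Z\bigl(\widehat{(\eta_2\xi_2)^*(\eta_1\xi_1)}\bigr)=\delta\,Z(\widehat{\eta_2^*\eta_1})\,Z(\widehat{\xi_2^*\xi_1})$. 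Combined with the elementary identity $\delta^{-(k+m)+1}\cdot\delta=\delta^{-k+1}\cdot\delta^{-m+1}$, this yields the required multiplicativity of the inner products. I expect this state-sum computation to be the only step of the proof with genuine content: making the decomposition of the closed diagram and the counting of the spurious circle precise — keeping careful track of the shading — is where the work lies; everything else is formal. Note also that this step uses only that $\cP^{\Gamma(A)}$ is a spherical $C^*$-planar algebra, not the subfactor property of $\cQ$.

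\emph{The intertwining relation.} This is immediate from the planar-algebra axioms once the relevant tangles are identified. Given a special $(k,l)$-tangle $T$ and a special $(m,n)$-tangle $S$ (labelled by elements of $\cQ$), the map $U_{l,n}\circ(Z_T\otimes Z_S)$ is induced by the tangle $R_1$ obtained from $M_{l,n}$ by inserting $T$ into its degree-$l$ disk and $S$ into its degree-$n$ disk, while $Z_{T\otimes S}\circ U_{k,m}$ is induced by the tangle $R_2$ obtained from the tensor tangle $T\otimes S$ of Figure~\ref{tensortangle} by inserting $M_{k,m}$ into its input disk. By the very definition of $T\otimes S$ — namely $T$ placed next to $S$ with their input disks joined and their outer disks joined — the tangles $R_1$ and $R_2$ are isotopic, so compatibility of $Z$ with composition of tangles gives $U_{l,n}\circ(Z_T\otimes Z_S)=Z_{T\otimes S}\circ U_{k,m}$.
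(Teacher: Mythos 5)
Your proposal is correct and takes essentially the same route as the paper's proof: both arguments identify $U_{k,m}$ on the loop basis with concatenation of loops based at the even vertex $a$ (whence bijectivity of $U_{k,m}$), establish isometry by a closed-tangle/state-sum computation, and obtain the intertwining relation by observing that the two composed tangles are isotopic. The only difference is cosmetic — you pin the scalar down to $1$ and spell out the $\delta$-bookkeeping, whereas the paper is content with ``a non-zero scalar multiple'' and with composing the norm tangle of Figure \ref{normtangle} with $M_{k,m}$ and its adjoint.
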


\begin{proof}
By definition of the action of tangles on $\cP^{\Gamma(A)}$, the operator $U_{k,m}$ maps the tensor product $\xi \ot \eta$ of a basis loop $\xi \in \cH_k$ of length $2k$ and a basis loop $\eta \in \cH_m$ of length $2m$ to a non-zero scalar multiple of the concatenated loop $(\xi,\eta) \in \cH_{k+m}$. On the other hand, since any basis loop of length $2(k+m)$ in $\cH_{k+m}$ reaches the base vertex $a$ after each even number of steps and in particular after $2k$ steps, it follows that $U_{k,m}$ is surjective. Recall that the square of the norm $\| x \|_{\cH_{i}}^2 = \Tr_i(x^* x), \ x \in \cH_i$ can be implemented by the tangle shown in Figure \ref{normtangle}.
\begin{figure}[h!]
\begin{center}
\scalebox{0.3}{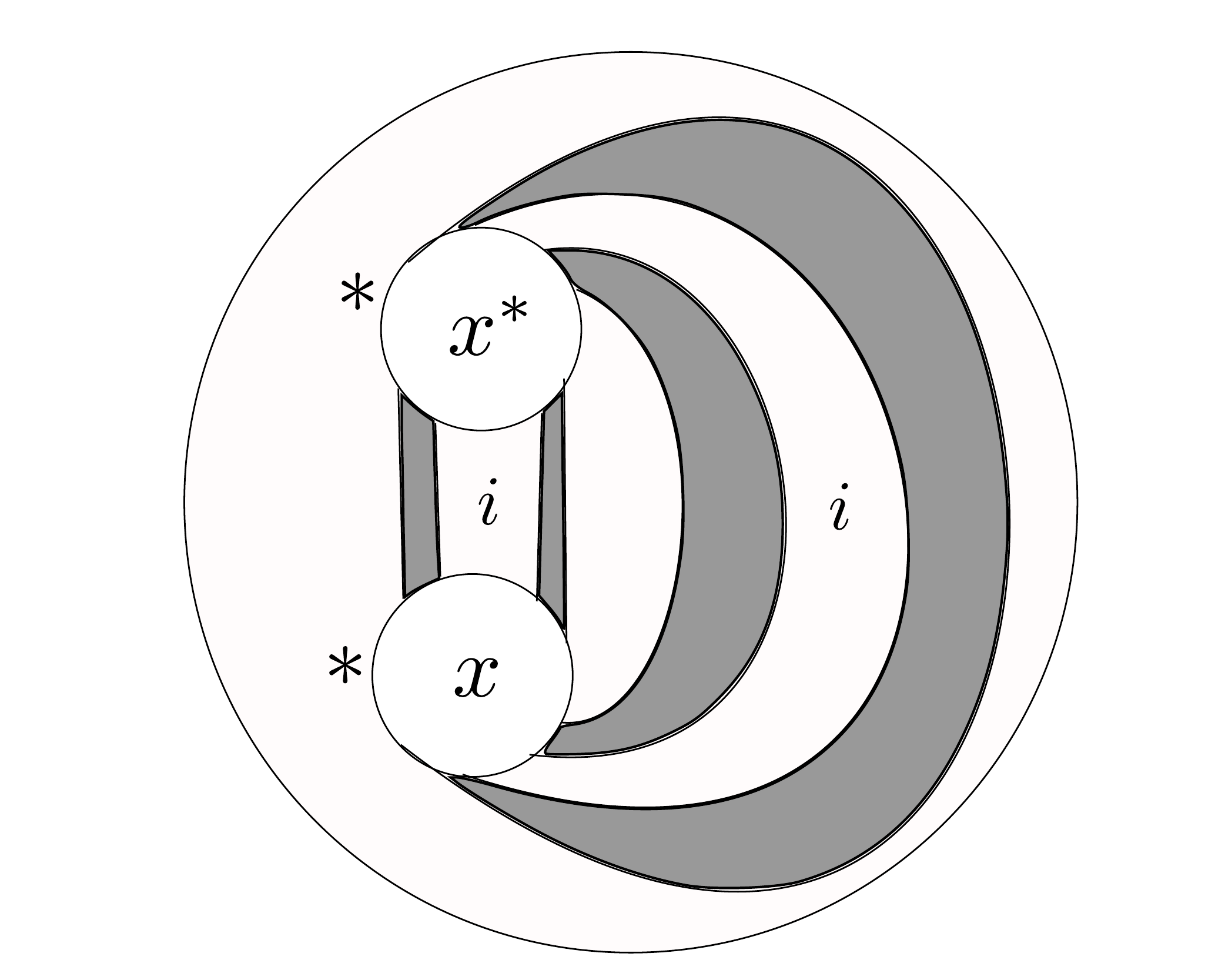}
\end{center}
\caption{\label{normtangle}: Tangle implementing the norm on $\cH_i$.}
\end{figure}
The fact that $U_{k,m}$ is isometric follows by composing this tangle with $M_{k,m}$ and its adjoint in the obvious way. The fact that the family $U$ conjugates the tensor product of maps into the tensor product of tangles also follows immediately by drawing the associated tangles. 
\end{proof}

By the previous lemma, we inductively obtain a sequence of unitaries $(U_m)_{m \geq 0}$ such that $U_m: \cH^{\ot m} \to \cH_m$ implements a natural isomorphism of Hilbert spaces $\cH^{\ot m} \cong \cH_m$ and such that $U_{k+m} = U_{k,m} (U_k \ot U_m)$.

\begin{lem} \label{mapslemma}
The category $\cC_{\cQ}$ with $\Obj(\cC_{\cQ}) = \N$ and morphism spaces
\[ \Mor(k,l) =  \{ U_l^* Z_T U_k : \cH^{\ot k} \to \cH^{\ot l} \ ; \ T \text{ labelled special } (k,l)-\text{tangle}  \} \subset B(\cH^{\ot k},\cH^{\ot l}) \]
is a rigid $C^*$-tensor category with tensor functor given by $k\ot l = k+l$ on objects and by the tensor product of linear maps on Hilbert spaces on morphisms. In particular, there exists a compact matrix quantum group $\G$ with fundamental corepresentation $u \in B(\cH) \ot \Pol(\G)$ such that $\Mor(u^{\ot k}, u^{\ot l}) = \Mor(k,l)$.
\end{lem}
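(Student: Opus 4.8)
The plan is to verify directly that $\cC_{\cQ}$ satisfies the axioms of a rigid $C^*$-tensor category with tensor unit $0$, and then to invoke Woronowicz's Tannaka--Krein reconstruction theorem \cite{Wo88} (see also \cite{NT13}) to produce $\G$ and $u$.

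The linear, $\ast$-, and monoidal structure of $\cC_{\cQ}$ is essentially bookkeeping on top of Section~\ref{annular} and Lemma~\ref{helplemma}. First, each $\Mor(k,l)$ is a linear subspace of $B(\cH^{\ot k},\cH^{\ot l})$, since $x\mapsto U_l^*Z_{T_x}U_k$ (with $T_x$ the special $(k,l)$-tangle labelled by $x\in\cQ_{k+l}$) is linear in the label $x$ and $\Mor(k,l)$ is exactly its image. Closure under composition follows from Figure~\ref{specialcomposition}: for a special $(m,n)$-tangle $T$ and a special $(l,m)$-tangle $S$ the tangle $T\circ S$ is again special $(l,n)$ with $Z_T\circ Z_S=Z_{T\circ S}$, its label lying in $\cQ$ because $\cQ$ is closed under the planar operad action; conjugating by the unitaries $U_{\bullet}$ and using $U_mU_m^*=\id$ gives $(U_n^*Z_TU_m)(U_m^*Z_SU_l)=U_n^*Z_{T\circ S}U_l\in\Mor(l,n)$. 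The $\ast$-structure is analogous: reflecting a labelled special $(k,l)$-tangle produces a labelled special $(l,k)$-tangle whose label still lies in $\cQ$, and $Z_{T^*}=Z_T^*$ because $\cH$ is a Hilbert module over $\cQ$, so $\Mor(k,l)^*=\Mor(l,k)$ and the $C^*$-identity is automatic inside $B(\cH^{\ot\bullet})$. For the monoidal structure one combines Lemma~\ref{helplemma} with $U_{k+m}=U_{k,m}(U_k\ot U_m)$ to obtain $(U_l^*Z_TU_k)\ot(U_n^*Z_SU_m)=U_{l+n}^*Z_{T\ot S}U_{k+m}$ for special tangles $T,S$, where $T\ot S$ is the special $(k+m,l+n)$-tangle of Figure~\ref{tensortangle}, whose label again lies in $\cQ$; hence $\Mor(k,l)\ot\Mor(m,n)\subseteq\Mor(k+m,l+n)$. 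Associativity and the unit constraints are strict, and $\Mor(0,0)=\cQ_+=\C\,\id$ by the dimension axiom of Definition~\ref{subfactorpa}, so $0$ is a tensor unit.

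The substantive point is rigidity. Here I would use that $\cQ$, being a \emph{subfactor} planar algebra whose closed loops evaluate to $\delta=\sqrt d$, contains a copy of the Temperley--Lieb--Jones planar algebra $\TLJ_{\delta}$ \cite{J99}; in particular $\cQ_2$ contains the cup element $e$ (a scalar multiple of the Jones projection) and its adjoint cap. Labelling a special $(0,2)$-tangle by $e$ produces a morphism $R\in\Mor(0,1\ot 1)$, and the reflected tangle produces $\bar R\in\Mor(0,1\ot 1)$; the zig-zag identities valid in $\TLJ_{\delta}$ translate into the conjugate equations for the pair $(R,\bar R)$ (after the appropriate normalisation of $e$), so the generating object $1$ is its own conjugate and hence every object $k=1^{\ot k}$ has a conjugate in $\cC_{\cQ}$. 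Thus $\cC_{\cQ}$ is rigid, and the conjugate equations moreover yield the identity morphisms $\id_{\cH^{\ot k}}\in\Mor(k,k)$, completing the check that $\cC_{\cQ}$ is a genuine category.

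Finally, $\cC_{\cQ}$ is a rigid $C^*$-tensor category generated by the single object $1$, realised concretely by bounded operators on the finite-dimensional Hilbert spaces $\cH^{\ot k}$, so Woronowicz's Tannaka--Krein duality \cite{Wo88} (see \cite{NT13}) yields a compact matrix quantum group $\G$ with a unitary fundamental corepresentation $u\in B(\cH)\ot\Pol(\G)$ such that $\Mor(u^{\ot k},u^{\ot l})=\Mor(k,l)$ for all $k,l\geq0$; the unitarity of $u$ reflects the Hilbert-space inner products carried by the objects of $\cC_{\cQ}$ (equivalently, $\G$ is of Kac type, being a quantum subgroup of $\G_{aut}(A,\tr)$ with $\tr$ a trace). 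I expect the main obstacle to be the rigidity step, i.e.\ verifying that the duality morphisms coming from the Temperley--Lieb part of $\cQ$ satisfy the conjugate equations with the right normalisation; the remaining axioms reduce, via Lemma~\ref{helplemma} and Figures~\ref{specialcomposition}--\ref{tensortangle}, to routine diagrammatic manipulations.
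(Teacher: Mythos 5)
Your proposal is correct and follows essentially the same route as the paper: the $C^*$-category structure comes from the composition and involution of labelled special tangles, the tensor structure from Lemma \ref{helplemma}, rigidity from a (cup-labelled) special $(0,2)$-tangle satisfying the conjugate equations for the self-conjugate generator, and then Woronowicz's Tannaka--Krein duality. The paper's own proof is just a terser version of exactly these steps, with the conjugate morphism $R=\bar R\in\Mor(0,2)$ given by the tangle of Figure \ref{conjugatetangle}.
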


\begin{proof}
The observations prior to Lemma \ref{helplemma} show that $\cC_{\cQ}$ is a $C^*$-category and Lemma \ref{helplemma} itself shows that $\cC_{\cQ}$ is a $C^*$-tensor category.
The map $R = \bar{R} \in \Mor(0,2)$ induced by the special $(0,2)-$tangle shown in Figure \ref{conjugatetangle}
\begin{figure}[h!]
\begin{center}
\scalebox{0.3}{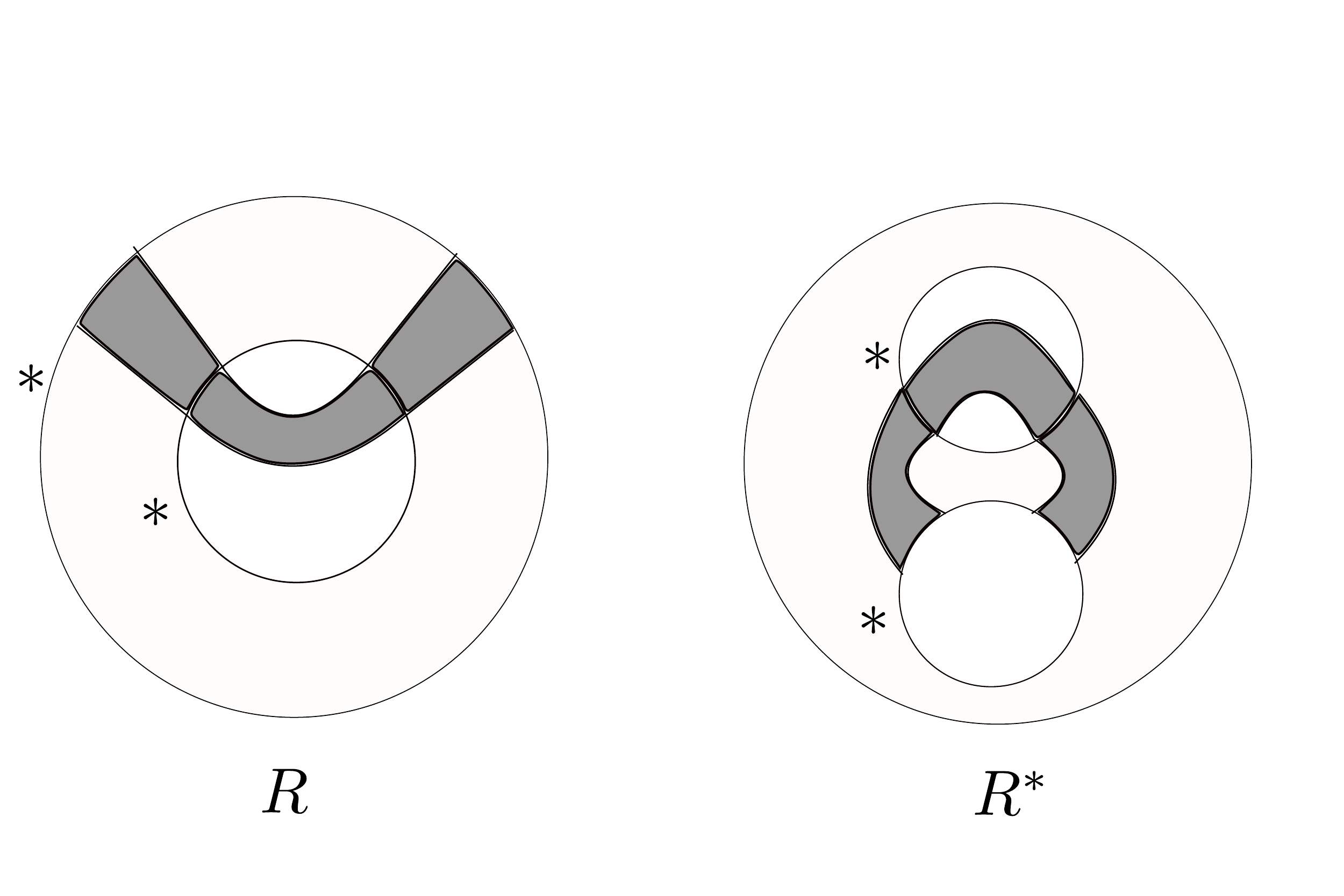}
\end{center}
\caption{\label{conjugatetangle}: The conjugate tangle $R$ and its adjoint.}
\end{figure}
satisfies the conjugate equations for the generating object which is therefore self-conjugate. By \cite{Wo88}, the category is rigid and Tannaka-Krein duality applies.
\end{proof}

\begin{proof}[Proof of Theorem \ref{main1}]
Let $u \in B(\cH) \ot \Pol(\G)$ be the fundamental representation of the quantum group $\G$ obtained in the previous lemma. Consider the linear map $\al_u: A \to A \ot \Pol(\G)$ as in Subsection \ref{actions}. We have to prove that $\al_u$ is a faithful, $\tr$-preserving, ergodic coaction such that $\cQ = \cP(\al_u)$. Note that, since the coefficients of $u$ generate $\Pol(\G)$ by definition, the faithfulness of $\al_u$ is immediate. We invoke Lemma \ref{actionlemma} to establish the missing properties of $\al_u$. The multiplication map $m: A \ot A \to A$, the unit map $\eta: \C \to A$ and the Markov trace $\tr:A \to \C$ are induced by the special tangles drawn in Figure \ref{multiplicationunit}.
\begin{figure}[h!]
\begin{center}
\scalebox{0.3}{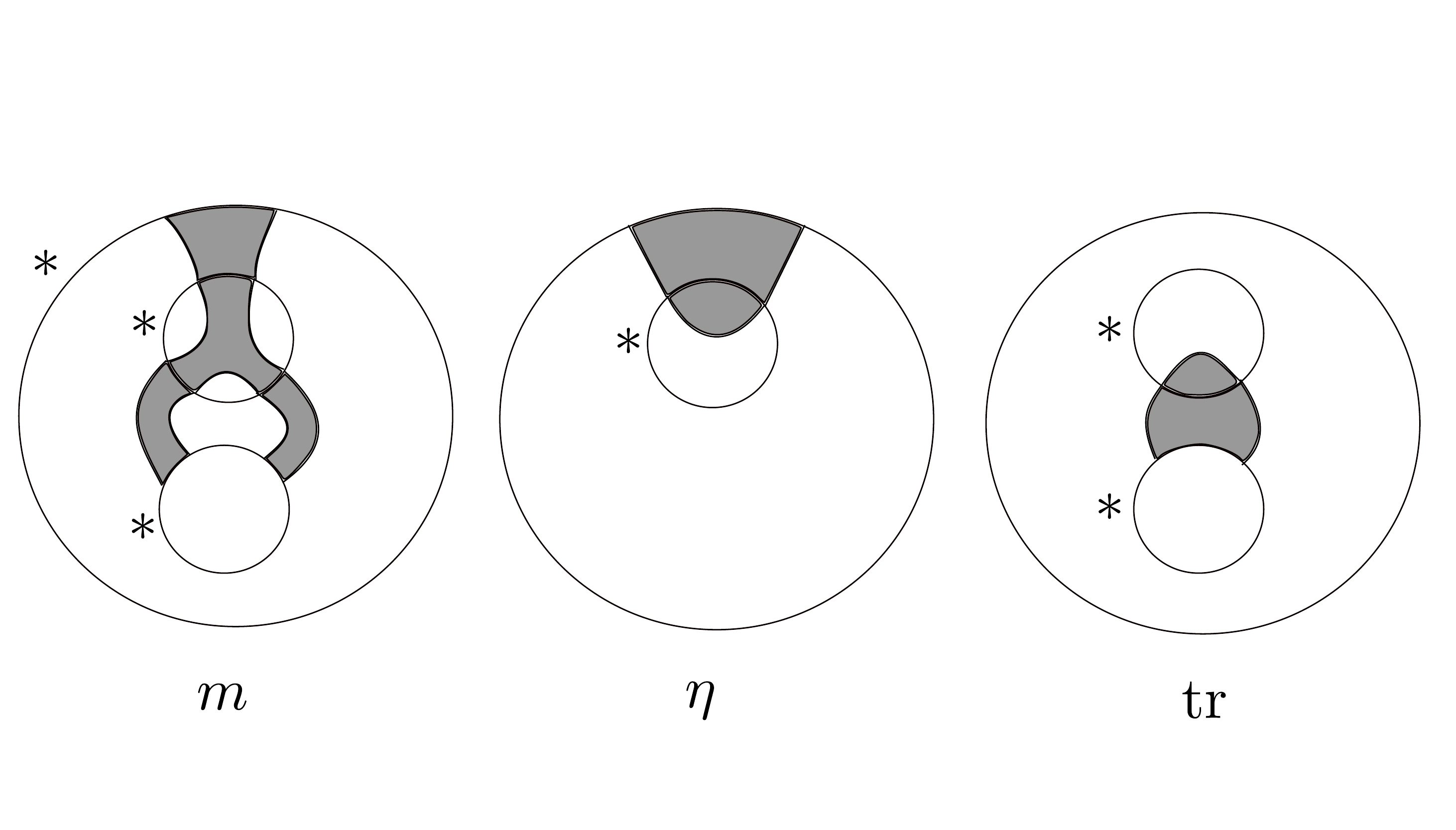}
\end{center}
\caption{\label{multiplicationunit}: Multiplication tangle $m$, unit tangle $\eta$ and trace tangle $\tr$.}
\end{figure}
Hence, $m \in \Mor(2,1)$, $\eta \in \Mor(0,1)$ and $\tr \in \Mor(1,0)$ and by Lemma \ref{actionlemma}, $\al_u$ is multiplicative, unital and $\tr$-preserving. Since $\cQ$ is a subfactor planar algebra, we have $ \dim Z(A) \cap \Fix(\al_u) = \dim \cQ_- = 1$ which implies the central ergodicity of $\al_u$. \\
If we start with a faithful, centrally ergodic, $\tr$ preserving action $\al$ on $A$, it is shown in \cite[Section 2]{B05} that $\cP(\al)_n = \Mor(\1,u_{\al}^n)$ for all $n \geq 0$. As a consequence, if we associate to $\cP(\al)$ another action $\beta$ of a compact quantum group $\mathbb H$ through the first part of Theorem \ref{main1}, we have $\Mor(\1,u_{\al}^n) = \Mor(\1, u_{\beta}^n)$ for all $n$ and by Frobenius duality and the Tannaka-Krein theorem we get an isomorphism $\Pol(G) \cong \Pol(\mathbb H)$ intertwining the actions. On the other hand, if we start with a subfactor planar algebra $\cQ$ of $\cP^{\Gamma(A)}$ and produce an action $\al(\cQ)$, then $\cQ$ is completely determined by $\al(\cQ)$ since $\cQ_n = \Mor(\1,u_{\al(\cQ)}^n)$ for all $n \geq 0$ and $\cQ_{-} = Z(A) \cap \Fix(\al(\cQ))$.
\end{proof}

\begin{ex} \label{TLJ}
If $A$ is a finite-dimensional $C*$-algebra, every quantum group $\G$ acting faithfully and $\tr$-preservingly on $A$ is contained in $\G_{aut}(A,\tr)$ as a quantum subgroup through and isomorphism $C(\G_{aut}(A,\tr)) \to C(\G)$ which intertwines the universal action of $\G_{aut}(A,\tr)$ and the action of $\G$. On the other hand, every subfactor planar algebra $\cP^{\Gamma(A)}$ contains the Temperley-Lieb-Jones $\rm{TLJ}$ algebra of modulus $\delta= \sqrt{\dim A}$ (see for instance \cite{J99}) as a planar subalgebra. Therefore, the universal action of $\G_{aut}(A,\tr)$ corresponds to the Temperley-Lieb-Jones algebra through Theorem \ref{main1}.
\end{ex}
\section{Planar tangles and non-crossing partitions} \label{partitionsection}

We will start this paragraph by recapitulating useful facts from the study of non-crossing partitions. Non-crossing partitions are recurring tools in the theory of compact quantum groups (see for instance \cite{TW15}) and in free probability \cite{NS06} as they emulate the absence of commutativity relations quite nicely. We will describe an underlying partition structure of planar tangles in order to express freeness of planar tangles in a simpler manner in Section \ref{freeproductsection}.
\subsection{Partial partitions}
A partition $p$ of order $k$ is a partition of $\llbracket 1,k\rrbracket := \{ 1, \dots, k \}$ into disjoint subsets. The subsets of $\llbracket 1,k\rrbracket$ which appear in the partition are called the blocks of $p$. The number of blocks of $p$ is denoted by $\vert p\vert$. A partition $p$ of order $k$ yields an equivalence relation $\sim_{p}$ on $\llbracket 1,k\rrbracket$ by setting $x\sim_{p}y$ whenever $x$ and $y$ are in the same block of $p$. We get a bijection between equivalence relations on $\llbracket 1,k\rrbracket$ and partitions of order $k$. The set of partitions of order $k$ is denoted by $P(k)$.\\
A partition of $k$ is depicted by drawing $k$ points on an horizontal line numbered from $1$ to $k$ and by linking two points whenever they belong to the same block of the partition. An example is drawn below.
\begin{figure}[h!]
\centering
\begin{tikzpicture}
\node (1) at (1,0){$1$};
\node (2) at (2,0){$2$};
\node (3) at (3,0){$3$};
\node (4) at (4,0){$4$};
\node (5) at (5,0){$5$};
\node (6) at (6,0){$6$};
\node (7) at (7,0){$7$};
\node (8) at (8,0){$8$};

\draw [double] (1) -- (1,1.25);
\draw [double]  (1,1.25)--(4,1.25);
\draw [double]  (3,1.25)--(3);
\draw [double] (4,1.25)--(4);
\draw [double] (2)--(2,1);
\draw [double] (2,1)--(7,1);
\draw [double] (7,1)--(7);
\draw [double] (5)--(5,0.75);
\draw [double] (5,0.75)--(8,0.75);
\draw [double] (8,0.75)--(8);
\draw [double] (6)--(6,0.5);
\end{tikzpicture}
\caption{\label{figPart}Partition $\left\lbrace \lbrace 1,3,4\rbrace, \lbrace 2,7\rbrace, \lbrace 5,8\rbrace,\lbrace 6\rbrace\right\rbrace$ with $4$ blocks.}
\end{figure}\\
We define an order relation on $P(k)$ by setting $p\leq p'$ whenever each block of $p$ is included in a block of $p'$. This order relation turns $P(k)$ into a lattice, and we denote by $p\wedge p'$ (resp. $p\vee p'$) the infimum (resp. the supremum) of two partitions $p$ and $p'$. See \cite{NS06} for details on this lattice structure.\\ 
A partition $p$ of order $n$ is called non-crossing whenever the following condition holds. If $i<j<k<l$ are integers between $1$ and $n$ such that $i\sim_{p}k$ and $j\sim_{p}l$, then $i\sim_{p}j\sim_{p}k\sim_{p}l$. The set of non-crossing partitions of order $k$ is denoted by $NC(k)$. This set forms a sublattice of $P(k)$. A partition of $k$ is called an interval partition if all its blocks are intervals of integers. For example, $\lbrace \lbrace 1,2,3\rbrace,\lbrace 4,5\rbrace,\lbrace 6,7,8\rbrace\rbrace$ is an interval partition. The set of interval partitions is denoted by $\mathcal{I}(k)$.\\
Finally, we call a partition even if all its blocks have even cardinalities.

\subsubsection{Partial partitions and Kreweras complement}\label{notaKreweras}
\begin{defi}
Let $k\geq 1$. A partial partition of $\llbracket 1,k\rrbracket$ is a pair $(p,S)$ consisting of a subset $S$ of $\llbracket 1,k\rrbracket$ and a a partition $p$ of order $\vert S\vert$.
\end{defi}
Equivalently, a partial partition is a set $\lbrace B_{1},\dots,B_{r}\rbrace$ of disjoint subsets of $\llbracket 1,n\rrbracket$, with $S=\bigcup B_{i}$.
We denote by $P(S) \ (NC(S))$ the set of partial partitions (non-crossing partial partitions) with support $S$. 
Two partial partitions $(p,S)$ and $(p',S^{c})$ yield a partition $(p,S)\vee(p',S^{c})$ of $\llbracket 1,n\rrbracket$ by identifying $ \llbracket 1,|S| \rrbracket$ with $S$ (resp. $ \llbracket 1,|S^c| \rrbracket$ with $S^c$) as ordered sets and by considering the union of $p$ and $p'$. 
\begin{defi}
Let $(p,S)$ be a non-crossing partial partition of $S$, the Kreweras complement $\kr(p)$ of $p$ is the biggest partial partition $(p',S^{c})$ such that $(p,S)\vee(p',S^{c})$ is non-crossing.
\end{defi}
In the above definition, the order on $NC(S^{c})$ is the refinement order coming from the canonical bijection between $NC(S^{c})$ and $NC(\vert S^{c}\vert)$. The proof of the following lemma is straightforward.
\begin{lem}\label{characKreweras}
Let $(p,S)$ be a partial partition of $ \llbracket 1,n \rrbracket$. Then $\kr(p)$ is the partial partition with support $S^{c}$ defined by
$$i\sim_{\kr(p)}j\Leftrightarrow k\not\sim_{(p,S)}l,\text{ for all }k\in \llbracket i,j\rrbracket \cap S, l\in S\setminus \llbracket i,j\rrbracket.$$
\end{lem}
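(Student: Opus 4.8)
The plan is to show directly that the formula in the statement defines a partial partition $q = (p', S^c)$, that $(p,S) \vee q$ is non-crossing, and that $q$ is the \emph{largest} partial partition on $S^c$ with this property; by the definition of $\kr(p)$ this forces $q = \kr(p)$. For $i < j$ in $S^c$, write $i \approx j$ for the condition displayed in the statement, extended reflexively and symmetrically; since $i, j \notin S$, it says precisely that every block of $p$ meeting the open interval $\{i+1,\dots,j-1\}$ is entirely contained in it. Reflexivity and symmetry of $\approx$ are clear. Transitivity follows from a short case analysis on the relative order of $i,j,l$ (reduced by the $i \leftrightarrow l$ symmetry to three cases): a block of $p$ meeting the outer open interval must, because the separating point lies outside $S$, already meet one of the two inner open intervals, and the two hypotheses then confine it there. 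Thus the $\approx$-classes form a partial partition $q$ of $S^c$, and an entirely analogous interval argument shows $q$ is non-crossing.

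Next I would verify that $(p,S) \vee q$ is non-crossing. Suppose $a < b < c < d$ with $a$ and $c$ in the same block and $b$ and $d$ in the same block of $(p,S)\vee q$. Every block of this partition lies entirely in $S$ or entirely in $S^c$, so each of the pairs $\{a,c\}$ and $\{b,d\}$ lies inside $S$ or inside $S^c$. If both pairs lie in $S$ (resp.\ both in $S^c$) we conclude from the non-crossing property of $p$ (resp.\ of $q$). In a mixed case, say $b,d \in S^c$ with $b \approx d$ and $a,c \in S$ with $a \sim_p c$: then $c \in \{b+1,\dots,d-1\}$, so the block of $p$ through $a$ and $c$ meets the open interval $(b,d)$, hence lies inside it by $b \approx d$, forcing $b < a$, which contradicts $a < b$. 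The other mixed case is symmetric, so no crossing arises.

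Finally, for maximality, let $(p',S^c)$ be any partial partition with $(p,S)\vee(p',S^c)$ non-crossing, and suppose $i \sim_{p'} j$ with $i<j$ in $S^c$; I claim $i \approx j$. If not, some block $B$ of $p$ has a point $k \in \{i+1,\dots,j-1\}$ and a point $l \in S$ with $l<i$ or $l>j$; by symmetry take $l>j$. Then $i<k<j<l$, with $i$ and $j$ in one block and $k$ and $l$ in one block of $(p,S)\vee(p',S^c)$, and the non-crossing condition would then require $i$ to lie in the same block as $k$ --- impossible, since $i \in S^c$ and $k \in S$ belong to different blocks of the join. Hence $i \approx j$, so every block of $p'$ sits inside a block of $q$, i.e.\ $(p',S^c) \le q$. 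Therefore $q$ is the largest partial partition on $S^c$ making the join non-crossing, which is exactly $\kr(p)$. The only genuine work is the bookkeeping in the transitivity step and in the mixed-case crossing check, and both go through smoothly precisely because the interval endpoints, lying outside $S$, can never be straddled by a block of $p$.
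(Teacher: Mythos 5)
Your proof is correct: you verify that the displayed condition defines a non-crossing partial partition $q$ on $S^c$, that $(p,S)\vee q$ is non-crossing, and that $q$ dominates any other such partial partition, which is exactly what the definition of $\kr(p)$ as the biggest such complement requires. The paper gives no argument here (it declares the lemma straightforward), and your direct verification is precisely the intended routine check, with the key observation correctly identified -- that the interval endpoints $i,j$ lie in $S^c$ and so can never be straddled by a block of $p$.
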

 \subsubsection{Non-crossing partitions and free probability}\label{nonCrossFreeProba}
We briefly review here the relations between non-crossing partitions and free probability which will be required in this paper. The interested reader should refer to \cite{NS06} for a thorough reading on the subject.\\
For any sequence $(c(n))_{n\geq 1}$ of real numbers and any partition $p$ with blocks $B_{1},\dots,B_{r}$, we denote by $c(p)$ the quantity 
$$c(p)=\prod_{1\leq i\leq r}c(\vert B_{i}\vert).$$
Let $\mu$ be a probability measure on $\R$ whose moments $(m_{\mu}(n))_{n\geq 1}$ are finite. Then, there exist two real sequences constructed from $(m_\mu(n))_{n\geq 1}$, both of which completely characterize the measure $\mu$.
\begin{itemize}
\item The sequence $(c_{\mu}(n))_{n\geq 1}$ of free cumulants of $\mu$ is defined by induction with the formula
$$m_{\mu}(n)=\sum_{p\in NC(n)}c_{\mu}(p).$$
\item The sequence $(b_{\mu}(n))_{n\geq 1}$ of Boolean cumulants of $\mu$ is defined by induction with the formula 
$$m_{\mu}(n)=\sum_{I\in \mathcal{I}(n)}b_{\mu}(p).$$
\end{itemize}
Suppose that $\nu$ is another probability measure with finite moments. Then, the moments of the free convolution $\mu\boxtimes\nu$ of the measures $\mu$ and $\nu$ are finite as well, and its sequence of free cumulants satisfies the relation $c_{\mu\boxtimes \nu}(n)=\sum_{p\in NC(n)}c_{\mu}(p)c_{\nu}(K(p)),$ where $K(p)$ denotes the usual Kreweras complement of a non-crossing partition $p$ (see \cite{NS06} or Section \ref{surgeryNoncrossPart} for a definition of the Kreweras complement for a non-crossing partition). Surprisingly, Belinschi and Nica discovered in \cite{BN08} that the Boolean cumulants of a free convolution of probability measures satisfies the same relation. More precisely,
\begin{equation}\label{booleanFreeNB}
b_{\mu\boxtimes \nu}(n)=\sum_{p\in NC(n)}b_{\mu}(p)b_{\nu}(K(p)).
\end{equation}

\subsection{Non-crossing partitions and irreducible planar tangles}\label{noncrossNotation}
In this subsection, we associate to each planar tangle $T$ a non-crossing partition $\pi_{T}$ and express the corresponding Kreweras complement in terms of shaded regions of $T$. 
\subsubsection{Non-crossing partition associated to a planar tangle}
This non-crossing partition plays an important role in the description of the free product of planar algebras.
\begin{defi}
Let $T$ be a planar tangle of degree $k$. The partition of $T$, denoted by $\pi_{T}$, is the set partition of $\llbracket 1, 2k\rrbracket$ such that  
$i\sim_{\pi_{T}}j$ if and only if the distinguished points $i$ and $j$ of the outer disk $D_{0}$ belong to the closure of the same connected component of $\Gamma T\setminus \partial D_{0}$.
\end{defi}
Note that $\pi_{T}$ is well defined, since a distinguished point of the outer disk $D_{0}$ is in the closure of a unique connected component of $\Gamma T\setminus \partial D_{0}$.
\begin{lem}
$\pi_{T}$ is an even non-crossing partition of $2k$.
\end{lem}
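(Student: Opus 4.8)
The plan is to read off both properties directly from the topology of the skeleton. Set $Y := \Gamma T \setminus \partial D_{0}$ and, for a connected component $C$ of $Y$, write $B_{C} := \overline{C}\cap\partial D_{0}$ for the associated block; by the remark preceding the lemma these blocks partition $\{1,\dots,2k\}$. I would first record three elementary bookkeeping facts. (i) For distinct components $C\neq C'$ one has $\overline{C}\cap\overline{C'}=\emptyset$: the only points adjoined in passing to the closure lie on $\partial D_{0}$, and those are sorted out by well-definedness, while $C\cap C'=\emptyset$ and $C,C'\subseteq Y$ are disjoint from $\partial D_{0}$. (ii) Each boundary point $p\in\partial D_{0}$ is the endpoint of exactly one string $\gamma_{p}$ and carries no other part of $\Gamma T$, so inside the finite graph $\overline{C}$ it has a single incident edge, i.e. it is a \emph{leaf} of $\overline{C}$; hence any two points of $B_{C}$ are joined inside $\overline{C}$ by a simple arc meeting $\partial D_{0}$ only at its two endpoints. (iii) If a string $\gamma$ with $\mathring{\gamma}\subseteq C$ has an endpoint on the boundary circle $\partial D_{m}$ of an inner disk, then $\partial D_{m}$ is a connected subset of $Y$ meeting $C$, so $\partial D_{m}\subseteq C$.

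For the non-crossing property, suppose $i<j<k<l$ with $i\sim_{\pi_{T}}k$ and $j\sim_{\pi_{T}}l$, and assume towards a contradiction that $j$ is not equivalent to $i$. Let $C$ be the component with $i,k\in B_{C}$, and let $p$ (resp.\ $q$) be the first element of $B_{C}$ encountered when moving along $\partial D_{0}$ away from $j$ in the decreasing (resp.\ increasing) cyclic direction, so that $i\le p<j<q\le k$ and the open arc $A\subseteq\partial D_{0}$ from $p$ to $q$ through $j$ contains no point of $B_{C}$. By (ii) choose a simple arc $\alpha\subseteq\overline{C}$ from $p$ to $q$ with $\alpha\cap\partial D_{0}=\{p,q\}$; closing it up with the arc of $\partial D_{0}$ from $q$ to $p$ yields a Jordan curve, and $\overline{D_{0}}\setminus\alpha$ has exactly two connected pieces, one whose closure contains $A$ and one whose closure contains the complementary arc $A'$. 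The component $C'$ of $j$ is connected and, by (i), disjoint from $\alpha\subseteq\overline{C}$, hence $\overline{C'}$ lies in one of these two closures; since $j\in B_{C'}\cap A$ it must be the $A$-side, so $B_{C'}\subseteq A$. But $l>k\ge q$ forces $l\notin A$, contradicting $l\in B_{C'}$. Therefore $j\sim_{\pi_{T}}i$, whence all four of $i,j,k,l$ are equivalent, and $\pi_{T}\in NC(2k)$.

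For evenness, fix a component $C$ and let $\mathcal{D}_{C}$ be the set of inner disks $D_{m}$ with $\partial D_{m}\subseteq C$. If $\mathcal{D}_{C}=\emptyset$, then since distinct strings are disjoint $C$ is the interior of a single string, which by (iii) cannot have an endpoint on an inner disk, so $|B_{C}|\in\{0,2\}$. If $\mathcal{D}_{C}\neq\emptyset$, then by (iii) every string $\gamma$ of $C$ has at least one endpoint on a disk of $\mathcal{D}_{C}$, and its other endpoint lies either on a disk of $\mathcal{D}_{C}$ or on $\partial D_{0}$ (it cannot lie on an inner disk outside $\mathcal{D}_{C}$, again by (iii)); in the latter case $\gamma\mapsto(\text{its }\partial D_{0}\text{-endpoint})$ is a bijection onto $B_{C}$. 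Sorting the $\sum_{D_{m}\in\mathcal{D}_{C}}2k_{m}$ boundary points of the disks in $\mathcal{D}_{C}$ by which string they belong to gives
\[ \sum_{D_{m}\in\mathcal{D}_{C}}2k_{m} \ = \ 2\cdot\#\{\text{strings of }C\text{ with both endpoints on }\mathcal{D}_{C}\text{-disks}\} \ + \ |B_{C}|, \]
so $|B_{C}|$ is even. Hence every block of $\pi_{T}$ has even cardinality.

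I expect the non-crossing step to be the only genuinely delicate point: one must ensure that the connecting arc $\alpha$ can be chosen to avoid $\partial D_{0}$ except at its endpoints — which is exactly where fact (ii), that boundary points are leaves of $\overline{C}$, is used — and then one invokes the Jordan curve theorem to get the ``two sides'' dichotomy for the remaining components. The evenness step is, by contrast, a purely combinatorial parity count once (iii) is available, and relies only on the fact that every (inner) disk has an even number of boundary points.
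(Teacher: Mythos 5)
Your proof is correct and follows essentially the same route as the paper: the non-crossing property via the planarity/Jordan-curve fact that arcs joining interleaved boundary points of the disk must meet (you phrase it as a separation argument, which is the contrapositive of the paper's "the two paths intersect"), and evenness via the parity count over the boundary points of the inner disks contained in a given connected component. Your write-up merely makes explicit the bookkeeping that the paper leaves implicit.
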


\begin{proof}
Let $1\leq i<j<k<l\leq 2k$ be such that $i\sim_{T}k$ and $j\sim_{T}l$. Thus, there exist a path $\rho_{1}$ in $\overline{\Gamma T\setminus D_{0}}$ between $i$ and $k$ and a path $\rho_{2}$ in $\overline{\Gamma T\setminus D_{0}}$ between $j$ and $l$. Since the distinguished points on the outer disk are numbered counterclockwise, $\gamma_{1}$ and $\gamma_{2}$ intersect. Therefore, the four points are in the closure of the same connected component of $\Gamma T\setminus D_{0}$ and $i\sim_{T}j\sim_{T}k\sim_{T}l$. $\pi_{T}$ is thus non-crossing. \\
Since each inner disk has an even number of distinguished points and each curve connects two distinguished points, a counting argument yields the parity of the size of the blocks.
\end{proof}
Reciprocally, an even non-crossing partition $\pi$ of $2k$ yields an irreducible planar tangle $T_{\pi}$ such that $\pi_{T_{\pi}}=\pi$. The construction is done recursively on the number of blocks as follows.
\begin{enumerate}
\item If $\pi$ is the one block partition of $2k$, $T_{\pi}$ is the planar tangle with one outer disk $D_{0}$ of degree $2k$, one inner disk $D_{1}$ of degree $2k$, and a curve between the point $i$ of $D_{0}$ and the point $i$ of $D_{1}$.
\item Let $r\geq 2$, and suppose that $P_{p}$ is constructed for all even non-crossing partitions $p$ having less than $r$ blocks. Let $\pi$ be an even non-crossing partition with $r$ blocks. Let $B=\llbracket i_{1},i_{2}\rrbracket$ be an interval block of $\pi$, and let $\pi'$ be the non-crossing partition obtained by removing this block (and relabelling the integers). $\pi'$ has also even blocks. Let $T_{\pi,B}$ be the planar tangle consisting of an outer disk $D_{0}$ of degree $2k$ and two inner disks $D_{1}$ and $D_{2}$ of respective degree $i_{2}-i_{1}+1$ and $2k-(i_{2}-i_{1}+1)$, and curves connecting 
\begin{itemize}
\item $i_{D_{0}}$ to $i_{D_{2}}$ for $i<i_{1}$.
\item $i_{D_{0}}$ to $(i-i_{1}+1)_{D_{1}}$ for $i_{1}\leq i\leq i_{2}$ if $i_{1}$ is odd, and $i_{D_{0}}$ to $(i-i_{1})_{D_{1}}$ for $i_{1}\leq i\leq i_{2}$ if $i_{1}$ is even. 
\item $i_{D_{0}}$ to $(i-(i_{2}-i_{1}+1))$ for $i> i_{2}$.
\end{itemize}
Set $T_{\pi}=T_{\pi,B}\circ_{D_{2}}T_{\pi'}$. Note that the resulting planar tangle does not depend on the choice of the interval block $B$ in $\pi$.
\end{enumerate}
By construction, $T_{\pi}$ is irreducible and $\pi_{T_{\pi}}=\pi$. The inner disk of $T_{\pi}$ corresponding to the block $B$ of $\pi$ is denoted by $D_{B}$. An outer point $i$ is linked to $D_{B}$ in $T_{\pi}$ if and only if $i\in B$ in $\pi$. The distinguished boundary point of $D_{B}$ is the one linked to the smallest odd boundary point on the outer disk. The next lemma is a straightforward deduction from the previous description of $T_{\pi}$.
\begin{lem}\label{orderPartitionCompoTangles}
Suppose that $\pi$ and $\pi'$ are two even non-crossing partitions such that $\pi'\leq \pi$. Denotes by $B_{1},\dots,B_{r}$ the blocks of $\pi$. Then,
$$T_{\pi'}=T_{\pi}\circ_{D_{B_{1}},\dots,D_{B_{r}}}(T_{\pi'_{\vert B_{1}}},\dots,T_{\pi'_{\vert B_{r}}}).$$
Reciprocally, if $\pi$ is an even non-crossing partition, $B$ is a block of $\pi$ and $(\tau,B)$ is an even non-crossing partial partition of $B$, then 
$$T_{\pi}\circ_{D_{B}}T_{\tau}=T_{\pi'},$$
where $\pi'$ is the non-crossing partition $\pi\setminus\lbrace B\rbrace\cup (\tau,B)$. In particular, $\pi'\leq \pi$.
\end{lem}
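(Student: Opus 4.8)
The plan is to exploit the recursive construction of $T_\pi$ recorded just above the statement, under which gluing a tangle into an inner disk of $T_\pi$ amounts to refining the corresponding block; the two assertions of the lemma are then two packagings of this single fact. The only input I would take from the construction is the combinatorial description of $T_\pi$ that follows it: $T_\pi$ is irreducible with one inner disk $D_B$ of degree $|B|$ for each block $B$ of $\pi$; the outer point $i$ is joined by a string to $D_B$ exactly when $i\in B$; these strings respect the cyclic orders on $\partial D_0$ and on $\partial D_B$; and the marked point of $D_B$ is the one joined to the smallest odd outer point of $B$. Since an irreducible planar tangle is determined up to isotopy by such data --- the number and degrees of its inner disks, the planar matching of the $2k$ outer boundary points onto the inner boundary points, and the positions of the marked points --- it suffices to compute this data for the tangles appearing in the lemma.

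I would prove the ``reciprocal'' statement first, as it is the atomic step. Set $\pi'=\pi\setminus\{B\}\cup(\tau,B)$, so that the blocks of $\pi'$ are the blocks of $\pi$ other than $B$ together with the blocks of $\tau$; then $\pi'$ is even, $\pi'\leq\pi$, and $\pi'$ is non-crossing because $\tau$ is non-crossing inside the block $B$ while $\pi$ is non-crossing. The degree $|B|$ of $D_B$ equals the outer degree of $T_\tau$, so $T_\pi\circ_{D_B}T_\tau$ is defined. Every string of $T_\pi$ incident to $D_B$ runs from an outer point lying in $B$ to $D_B$ (irreducibility of $T_\pi$), and every string of $T_\tau$ runs from its outer disk to one of its inner disks $D_C$, $C$ a block of $\tau$ (irreducibility of $T_\tau$); gluing along $\partial D_B$ concatenates these into strings from an outer point of the composite to some $D_C$, so the composite is again irreducible, with inner disks $\{D_{B'}:B'\in\pi,\ B'\neq B\}\cup\{D_C:C\in\tau\}$. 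Tracking the boundary-point identification through the gluing shows that in the composite the outer point $i$ is joined to $D_{B'}$ iff $i\in B'$, and to $D_C$ iff $i\in C$ (viewing $C\subseteq B\subseteq\llbracket 1,2k\rrbracket$), with the marked points of the $D_C$ located as prescribed by the construction of $T_{\pi'}$. This matches the combinatorial data of $T_{\pi'}$, hence $T_\pi\circ_{D_B}T_\tau=T_{\pi'}$; the inequality $\pi'\leq\pi$ was already observed.

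Then I would deduce the first statement by iterating the reciprocal one block at a time. With $B_1,\dots,B_r$ the blocks of $\pi$, put $\pi^{(0)}=\pi$ and $\pi^{(j)}=\pi^{(j-1)}\setminus\{B_j\}\cup(\pi'_{\vert B_j},B_j)$; since each block of $\pi'$ is contained in exactly one $B_j$, one has $\pi^{(r)}=\pi'$, and each $\pi'_{\vert B_j}$ is even and non-crossing. The block $B_j$ of $\pi$ is still a block of $\pi^{(j-1)}$, the earlier steps having altered only $B_1,\dots,B_{j-1}$, so the reciprocal statement gives $T_{\pi^{(j)}}=T_{\pi^{(j-1)}}\circ_{D_{B_j}}T_{\pi'_{\vert B_j}}$. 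The $j$-th composition leaves the disks $D_{B_{j+1}},\dots,D_{B_r}$ untouched, so iterating and invoking associativity of tangle composition yields $T_{\pi'}=T_\pi\circ_{(D_{B_1},\dots,D_{B_r})}(T_{\pi'_{\vert B_1}},\dots,T_{\pi'_{\vert B_r}})$, as desired. (Equivalently, one could run the induction the other way, proving the first statement by induction on $|\pi'|-|\pi|$ and obtaining the reciprocal as the case in which all but one of the $\pi'_{\vert B_i}$ is the one-block partition; the order is immaterial.)

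The step I expect to give the most trouble is the bookkeeping condensed into ``tracking the boundary-point identification'': one must check that the marked points and the chessboard shading transform as claimed under composition, and --- more subtly --- that $T_\pi\circ_{D_B}T_\tau$, assembled through one particular chain of elementary compositions, coincides with $T_{\pi'}$, which the construction may assemble through a different chain. Here the already-established independence of $T_\pi$ on the choice of interval block is the crucial lever: it permits choosing, for each tangle in play, the order of elementary compositions that makes the identification transparent. The remainder is routine diagram-chasing, which is why the statement can reasonably be called straightforward.
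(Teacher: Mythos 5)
Your argument is correct and follows exactly the route the paper intends: the paper gives no explicit proof, stating only that the lemma is ``a straightforward deduction from the previous description of $T_{\pi}$'', and your proof is a careful elaboration of precisely that deduction (characterizing $T_{\pi}$ by its irreducibility, block-to-disk incidence data and marked points, proving the reciprocal gluing statement as the atomic step, and iterating). No gaps; the parity and marked-point bookkeeping you flag does go through because, for an even non-crossing partition, the order-preserving identification of a block with $\llbracket 1,|B|\rrbracket$ preserves parity.
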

The planar tangles $(T_{\pi})_{\pi\in NC(k)}$ yield a decomposition of connected planar tangles of degree $k$. Let $T$ be a connected planar tangle of degree $k$. Let $B_{1},\dots,B_{r}$ be the blocks of $\pi_{T}$ ordered lexicographically and let $C_{1},\dots,C_{r}$ be the corresponding connected components of $T$. For $1\leq i\leq r$, $T_{i}$ is defined as the planar tangle $T\setminus (\bigcup_{j\not=i}C_{j})$, where the distinguished points of the outer boundary of $T$ which are in $B_{i}$ have been counterclockwise relabeled in such a way that the first odd point is labeled $1$. The planar tangle $T_{1}$ of the connected planar tangle of Figure \ref{Fig2ConnectIrrTangle} is depicted in Figure \ref{Fig4ConnectCompo}. \\
 \begin{figure}[h!]
\begin{center}
\scalebox{0.7}{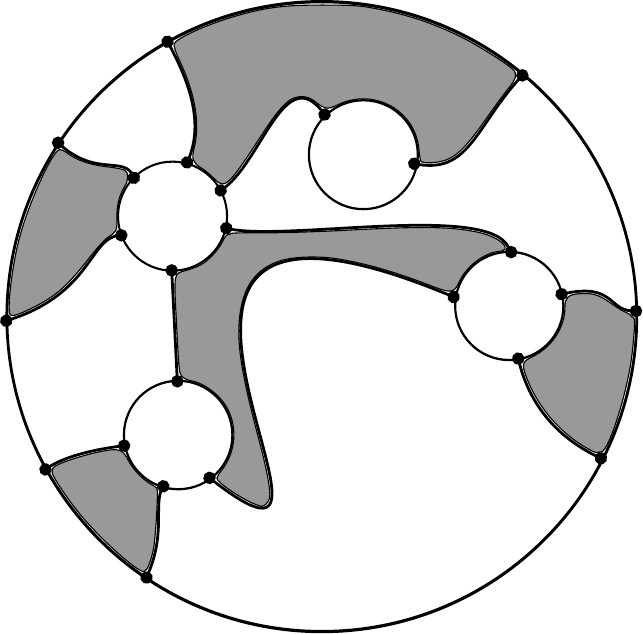}
\end{center}
\caption{\label{Fig4ConnectCompo}: First connected component of the first planar tangle of Figure \ref{Fig2ConnectIrrTangle}.}
\end{figure}\\
\begin{prop}\label{irredFactoriz}
Let $T$ be a connected planar tangle, and set $\pi=\pi_{T}$. Then
$$T=T_{\pi}\circ_{D_{B_{1}},\dots,D_{B_{r}}}(T_{1},\dots,T_{r}).$$
\end{prop}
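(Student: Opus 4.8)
The plan is to argue by induction on the number $r=\vert\pi_{T}\vert$ of blocks of $\pi_{T}$, after first checking that the right-hand side is a legitimate composition. By the construction of $T_{\pi}$, the inner disk $D_{B_{i}}$ of $T_{\pi_{T}}$ has degree $\vert B_{i}\vert/2$, while $T_{i}$ has exactly $\vert B_{i}\vert$ boundary points on its outer disk (namely the boundary points of $T$ lying in $B_{i}$), hence also degree $\vert B_{i}\vert/2$; the shadings agree because consecutive outer points of $T$ in $B_{i}$ keep their relative parity after relabelling, and the marked points match because $i_{*}$ of $D_{B_{i}}$ is, by definition, joined to the smallest odd outer point of $T$ in $B_{i}$, which is exactly the point relabelled $1$ in $T_{i}$. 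So $T_{\pi_{T}}\circ_{(D_{B_{1}},\dots,D_{B_{r}})}(T_{1},\dots,T_{r})$ is a well-defined planar tangle of degree $k$, recalling that composition into distinct inner disks is independent of the order in which it is performed.

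If $r=1$, then $\pi_{T}$ is the one-block partition of $2k$, so $T_{\pi_{T}}$ is the tangle with a single inner disk $D_{1}$ of degree $k$ joined to the outer disk by the $k$ parallel strings $i_{D_{0}}\leftrightarrow i_{D_{1}}$; moreover $T_{1}=T$ (the relabelling being trivial here), and erasing $\partial D_{1}$ from $T_{\pi_{T}}\circ_{D_{1}}T$ returns $T$ by the very definition of composition of tangles.

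Now suppose $r\geq 2$. Since $\pi_{T}$ is non-crossing it has an interval block $B=\llbracket i_{1},i_{2}\rrbracket$, say $B=B_{j}$. Because $B$ is an interval and $T$ is connected, the connected component $C_{j}$ of $\Gamma T\setminus\partial D_{0}$ attached to $B$ is enclosed in a closed disk whose boundary meets $\partial D_{0}$ exactly along the arc from $i_{1}$ to $i_{2}$; cutting along this circle exhibits $T$ as $T_{\pi_{T},B}\circ_{(D_{1},D_{2})}(T_{j},T')$, where $T_{\pi_{T},B}$ is the tangle appearing in the recursive construction of $T_{\pi_{T}}$ and $T'$ is the connected planar tangle obtained from $T$ by deleting $C_{j}$ and relabelling the surviving outer points. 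One checks directly that $\pi_{T'}=\pi'$, the partition obtained from $\pi_{T}$ by deleting $B$, and that the connected components of $T'$ are exactly the family $(T_{i})_{i\neq j}$. Applying the induction hypothesis to $T'$ and using $T_{\pi_{T}}=T_{\pi_{T},B}\circ_{D_{2}}T_{\pi'}$ together with associativity of composition, one gets
\begin{align*}
T_{\pi_{T}}\circ_{(D_{B_{1}},\dots,D_{B_{r}})}(T_{1},\dots,T_{r}) &= T_{\pi_{T},B}\circ_{(D_{1},D_{2})}\bigl(T_{j},\ T_{\pi'}\circ(T_{i})_{i\neq j}\bigr) \\
&= T_{\pi_{T},B}\circ_{(D_{1},D_{2})}(T_{j},T')\ =\ T.
\end{align*}

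The routine part of the argument is a direct inspection of the definitions: the degrees, shadings and marked points in the first paragraph, and the identities $\pi_{T'}=\pi'$ and $(T')_{i}=T_{i}$ in the inductive step. The one genuinely non-trivial point, and the main obstacle, is the planarity statement used above: that an interval block of the non-crossing partition $\pi_{T}$ of a \emph{connected} tangle isolates the corresponding component $C_{j}$ inside a disk, so that deleting $C_{j}$ again produces a connected tangle and re-inserting it through the parallel strings of $T_{\pi_{T},B}$ reproduces $T$ up to isotopy. This is precisely the step where both connectedness of $T$ (all regions simply connected) and non-crossingness of $\pi_{T}$ are essential, and it should be phrased carefully in terms of the surgery on planar tangles rather than purely combinatorially.
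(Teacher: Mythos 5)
Your argument is correct, but it is organized quite differently from the paper's proof. The paper proves the factorization in a single global step: it draws $r$ disjoint Jordan curves $\rho_{1},\dots,\rho_{r}$, one around each connected component $C_{i}$, each crossing exactly the strings of $C_{i}$ that reach the outer boundary; the insides of these curves are then isotopic copies of the $T_{i}$, and the complement of their interiors is asserted to be isotopic to $T_{\pi}$, which gives the factorization at once. You instead induct on the number of blocks of $\pi_{T}$, peeling off one interval block $B$ at a time and using the identity $T_{\pi}=T_{\pi,B}\circ_{D_{2}}T_{\pi'}$ that is built into the recursive definition of $T_{\pi}$. Both arguments ultimately rest on the same topological fact, namely that a connected component of $\Gamma T\setminus\partial D_{0}$ attached to an interval block of the non-crossing partition $\pi_{T}$ can be enclosed in a disk meeting $\partial D_{0}$ only along the corresponding arc; you correctly isolate this as the only non-routine point, and the paper is no more detailed here, since it simply asserts that the Jordan curves can be drawn. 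What your recursion buys is that the identification of the outer tangle with $T_{\pi}$ comes for free from its recursive definition (you only ever need to recognize the two-disk tangle $T_{\pi,B}$ after one cut), at the price of the bookkeeping $\pi_{T'}=\pi'$ and $(T')_{i}=T_{i}$ and of one isolation argument per block; the paper's simultaneous surgery avoids that bookkeeping entirely but must recognize all of $T_{\pi}$ in the complement of the $\Gamma_{i}$ in one step. Either route is acceptable, and your explicit verification that degrees, shadings and marked points match in the composition is a welcome addition that the paper leaves implicit.
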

\begin{proof}
It is possible to draw $r$ disjoint Jordan curves $\lbrace\rho_{i} \rbrace _{1\leq i\leq r}$ such that $\rho_{i}$ intersects $T$ $\vert B_{i}\vert $ times, once at each curve of $C_{i}$ having an endpoint on the outer disk (or two times at a curve joining two distinguished points of the outer boundary). The intersection points are labeled counterclockwise around $\rho_{i}$, in such a way that the intersection point with the curve coming from the first odd point of $B_{i}$ is labeled $1$. \\
Let $\Gamma_{i}$ be the bounded region whose boundary is $\rho_{i}$ and set $\tilde{T}_{i}=(T\cap \Gamma_{i})\cup\rho_{i}$, with the labeling of the distinguished points of $\rho_{i}$ given above. Then $\tilde{T}_{i}$ is a planar tangle which is an isotopy of $T_{i}$. Figure \ref{Fig5ChoiceJordan} shows a possible choice of Jordan curves for the connected planar tangle of Figure \ref{Fig2ConnectIrrTangle}.\\
 \begin{figure}[h!]
\begin{center}
\scalebox{0.7}{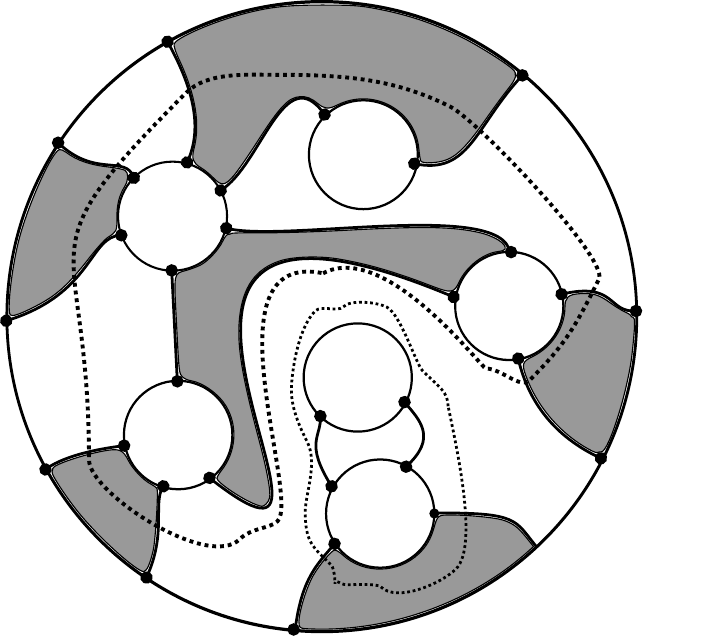}
\end{center}
\caption{\label{Fig5ChoiceJordan}: Jordan curves surrounding the connected components of a planar tangle.}
\end{figure}
Let $\tilde{T}$ be the planar tangle whose inner disks are $(\Gamma_{i})_{1\leq i\leq r}$, with the distinguished points being the ones of $\rho_{i}$, and whose skeleton is $\Gamma T\setminus \left(\Gamma T\cap(\bigcup \mathring{\Gamma}_{i})\right)$. Then $\tilde{T}$ is isotopic to $T_{\pi}$ and, by construction, $\tilde{T}\circ_{\Gamma_{1},\dots,\Gamma_{r}}(\tilde{T}_{1},\dots,\tilde{T}_{r})=T.$
\end{proof}
\subsubsection{Shaded regions and Kreweras complement} \label{Krewerassec}
If $S$ is a set of cardinality $k$, $f:\llbracket 1,k\rrbracket\longrightarrow S$ is a bijective function and $\pi\in P(k)$, then $f(\pi)$ is the partition of $S$ defined by $f(i)\sim_{f(\pi)}f(j)$ if and only if $i\sim_{\pi}j$. 
For $i\geq 1$, set $\delta(i)=1$ if $i$ is odd and $0$ else. A partial partition $(\tilde{\pi},S)$ of $4k$ is associated to each $\pi\in NC(2k)$ as follows.
\begin{itemize}
\item $S=\lbrace 2i-\delta(i)\rbrace_{1\leq i\leq 2k}$.
\item $\tilde{\pi}=f(\pi)$ where $f:\llbracket 1,2k\rrbracket\longrightarrow \llbracket 1,4k\rrbracket$ given by $f(i)=2i-\delta(i)$.
\end{itemize}
Note that $S$ is the set $\lbrace 1,4,5,8,\dots,4k-3,4k\rbrace$. Let $\tilde{f}$ be the map from $\llbracket 1,2k\rrbracket$ to $\llbracket 1,4k\rrbracket\setminus S$ defined by $\tilde{f}(i)=2i-(1-\delta(i))$.
\begin{defi}
Let $\pi\in NC(2k)$. The nested Kreweras complement of $\pi$, denoted by $\kr'(\pi)$, is the partition of $2k$ such that $\tilde{f}(\kr'(\pi))=\kr(\tilde{\pi},S)$.
\end{defi}
The nested Kreweras complement of the partition $\lbrace \lbrace 1,3,4\rbrace,\lbrace 2\rbrace,\lbrace 5,6\rbrace\rbrace$ is given in Figure \ref{Figure6Kreweras}.\\
\begin{figure}[h!]
\centering
\begin{tikzpicture}
\node (1) at (1,0){$1$};
\node (2) at (2,0){$1'$};
\node (3) at (3,0){$2'$};
\node (4) at (4,0){$2$};
\node (5) at (5,0){$3$};
\node (6) at (6,0){$3'$};
\node (7) at (7,0){$4'$};
\node (8) at (8,0){$4$};
\node (9) at (9,0){$5$};
\node (10) at (10,0){$5'$};
\node (11) at (11,0){$6'$};
\node (12) at (12,0){$6$};

\draw [double]  (1) -- (1,1.25);
\draw [double] (1,1.25)--(8,1.25);
\draw [double] (5,1.25)--(5);
\draw [double] (8,1.25)--(8);

\draw [double] (4,1)--(4);

\draw [double] (2,1)--(3,1);
\draw [double] (3,1)--(3);

\draw [double] (9)--(9,1.25);
\draw [double] (9,1.25)--(12,1.25);
\draw [double] (12,1.25)--(12);

\draw [double] (2)--(2,1);
\draw [double] (2,1)--(3,1);
\draw [double] (3,1)--(3);

\draw [double] (6)--(6,1);
\draw [double] (6,1)--(7,1);
\draw [double] (7,1)--(7);

\draw [double] (10)--(10,1);
\draw [double] (10,1)--(11,1);
\draw [double] (11,1)--(11);

\end{tikzpicture}
\caption{\label{Figure6Kreweras} The partition $\lbrace \lbrace 1,3,4\rbrace,\lbrace 2\rbrace,\lbrace 5,6\rbrace\rbrace$ and its nested Kreweras complement $\lbrace \lbrace 1,2\rbrace,\lbrace 3,4\rbrace,\lbrace 5,6\rbrace\rbrace$.}
\end{figure}\\
Contrary to the usual Kreweras complement, the nested Kreweras complement is not bijective, as we will see in the next lemma. Let $\pi_{0}$ (resp $\pi_{1}$) be the partition of $2k$ with $k$ blocks being $\{2i,2i+1\}$ (resp. $\{2i-1,2i\}$) for $1\leq i\leq k$.
\begin{lem}\label{krewerPio}
We have $\kr'(\pi)=\kr'(\pi\vee\pi_{0})$.
\end{lem}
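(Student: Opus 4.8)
The plan is to pull everything back through the interleaving maps defining $\kr'$ and reduce to the combinatorial description of the ordinary Kreweras complement given in Lemma~\ref{characKreweras}. Since $\tilde{f}$ is a (strictly order-preserving) bijection from $\llbracket 1,2k\rrbracket$ onto $\llbracket 1,4k\rrbracket\setminus S$, the desired identity $\kr'(\pi)=\kr'(\pi\vee\pi_0)$ is equivalent to the identity of partial partitions $\kr(\tilde{\pi},S)=\kr(\widetilde{\pi\vee\pi_0},S)$. The first observation I would record is that $f$ transports joins of partitions to joins and, being order-preserving onto $S$, preserves non-crossingness; hence $\widetilde{\pi\vee\pi_0}=\tilde{\pi}\vee\rho$ with $\rho:=f(\pi_0)$, and in particular $\pi\vee\pi_0$ is again non-crossing (alternatively: $\pi_0$ is a matching of cyclically adjacent points, so its arcs can be drawn in a thin collar along the boundary without crossing any non-crossing diagram of $\pi$), so that $\kr'(\pi\vee\pi_0)$ is defined. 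A direct computation with the formula $f(i)=2i-\delta(i)$ shows that the blocks of $\rho$ are exactly the pairs $\{4j,4j+1\}$ for $1\le j\le k$, the last one $\{4k,4k+1\}$ being read cyclically as $\{4k,1\}$; crucially, $4j$ and $4j+1$ both lie in $S$ (the elements of $S$ being those congruent to $0$ or $1$ modulo $4$, and those of $S^c$ those congruent to $2$ or $3$).

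By Lemma~\ref{characKreweras} the claim reduces to the following statement: for $u<v$ in $S^c$, some block of $\tilde{\pi}$ meets both $\llbracket u,v\rrbracket\cap S$ and $S\setminus\llbracket u,v\rrbracket$ if and only if some block of $\tilde{\pi}\vee\rho$ does. The ``only if'' half is immediate because $\tilde{\pi}\le\tilde{\pi}\vee\rho$, so a $\tilde{\pi}$-block meeting both sides is contained in a $(\tilde{\pi}\vee\rho)$-block meeting both sides. For the converse I would prove the contrapositive: assume no block of $\tilde{\pi}$ meets both sides of the interval, and deduce the same for $\tilde{\pi}\vee\rho$. The engine is the observation that \emph{no} block of $\rho$ can meet both $\llbracket u,v\rrbracket\cap S$ and $S\setminus\llbracket u,v\rrbracket$ when $u,v\in S^c$: a $\rho$-block is a pair $\{m,m+1\}$ of cyclically consecutive integers, both lying in $S$; for it to straddle the interval $\llbracket u,v\rrbracket$ one of the endpoints $u,v$ would have to equal $m$ or $m+1$ and thus lie in $S$, which is absurd. (The wrap-around block $\{4k,1\}$ is handled identically: straddling $\llbracket u,v\rrbracket$ would force $v=4k$ or $u=1$, again an element of $S$, not of $S^c$.)

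It remains to pass from single blocks to blocks of the join. Every block of $\tilde{\pi}\vee\rho$ is a union of $\tilde{\pi}$-blocks and $\rho$-blocks that is connected through common points. If such a block met both $\llbracket u,v\rrbracket\cap S$ and $S\setminus\llbracket u,v\rrbracket$, then choosing a point of each and a chain of $\tilde{\pi}$-blocks and $\rho$-blocks joining them (each consecutive pair sharing a point), the inside/outside status must change somewhere along the chain, hence within a single $\tilde{\pi}$-block or $\rho$-block, which would then itself meet both sides. By the previous paragraph it cannot be a $\rho$-block, and by our standing assumption it cannot be a $\tilde{\pi}$-block either, a contradiction. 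Therefore no block of $\tilde{\pi}\vee\rho$ meets both sides, which closes the equivalence and yields $\kr(\tilde{\pi},S)=\kr(\tilde{\pi}\vee\rho,S)$, i.e.\ $\kr'(\pi)=\kr'(\pi\vee\pi_0)$.

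The only points requiring care are bookkeeping ones: keeping straight which residues modulo $4$ land in $S$ versus $S^c$, and treating the single wrap-around block $\{2k,1\}$ of $\pi_0$ (equivalently $\{4k,1\}$ of $\rho$) on the same footing as the others. I do not expect any genuine difficulty: the entire content of the lemma is that the extra blocks introduced by $\pi_0$ become, after interleaving, pairs lying inside $S$, and are therefore invisible to the Kreweras-complement test, which only probes intervals whose endpoints lie in $S^c$.
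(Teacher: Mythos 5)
Your proof is correct and follows essentially the same route as the paper's: both reduce via Lemma \ref{characKreweras} to the observation that the interleaved blocks of $\pi_0$ are pairs of consecutive integers lying in $S$, and hence can never straddle an interval whose endpoints lie in $S^c$. You merely make explicit two points the paper leaves implicit — the chain argument passing from individual blocks to blocks of the join, and the treatment of the wrap-around block — so the argument is the same, just slightly more carefully bookkept.
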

\begin{proof}
Since $\pi\leq(\pi\vee\pi_{0})$, it follows that $\kr'(\pi\vee\pi_{0})\leq \kr'(\pi)$. \\
Suppose that $i\sim_{\kr'(\pi)}j$. Then $2i-(1-\delta(i))\sim_{\kr(\tilde{\pi},S^{c})}2j-(1-\delta(j))$. By Lemma \ref{characKreweras}, this implies that for all $k\in\llbracket 2i-(1-\delta(i)),2j-(1-\delta(j))\rrbracket\cap S$, $l\in S\setminus\llbracket 2i-(1-\delta(i)),2j-(1-\delta(j))\rrbracket$, $k\not\sim_{\pi}l$. If $k\in \llbracket 2i-(1-\delta(i)),2j-(1-\delta(j))\rrbracket\cap S$ and $l\not\in\llbracket 2i-(1-\delta(i)),2j-(1-\delta(j))\rrbracket$, then $l\not=k\pm1$ and thus  $k\not\sim_{\pi_{0}}l$; therefore, for all $k\in\llbracket 2i-(1-\delta(i)),2j-(1-\delta(j))\rrbracket\cap S$, $l\in S\setminus\llbracket 2i-(1-\delta(i)),2j-(1-\delta(j))\rrbracket$, $k\not\sim_{\pi\vee \pi_{0}}l$. By Lemma \ref{characKreweras}, this implies that $i\sim_{\kr'(\pi\vee\pi_{0})}j$ and therefore $\kr'(\pi)\leq kr'(\pi\vee\pi_{0})$.
\end{proof}
The nested Kreweras complement is involved in the description of planar tangles in the following way.
\begin{prop}\label{kewerasShaded}
Suppose that $T$ is a planar tangle of degree $k$. Then, $i$ and $j$ are in the same block of $\kr'(\pi_{T})$ if and only if $i_{D_{0}}$ and $j_{D_{0}}$ are boundary points of the same shaded region.
\end{prop}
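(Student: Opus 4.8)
The plan is to identify the set partition $\rho_{T}$ of $\llbracket 1,2k\rrbracket$ determined by ``$i\sim_{\rho_{T}}j$ if and only if $i_{D_{0}}$ and $j_{D_{0}}$ lie on a common shaded region'' with $\kr'(\pi_{T})$, by extracting both partitions directly from the combinatorics of $\pi_{T}$. As a preliminary I would set up a dictionary between the $4k$ positions appearing in the definition of $\kr'$ and a doubling of $\partial D_{0}$: after isotoping $T$ so that its strings meet $\partial D_{0}$ transversally, place for each $i$ the two positions $f(i),\tilde{f}(i)$ immediately on either side of the point $i_{D_{0}}$, with $\tilde{f}(i)$ on the side of the shaded boundary interval incident to $i_{D_{0}}$ and $f(i)$ on the side of the unshaded one. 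Using $f(i)=2i-\delta(i)$ and $\tilde{f}(i)=2i-(1-\delta(i))$ one reads off that the pairs of consecutive positions both belonging to $S=\{f(i)\}_{i}$ (resp.\ both belonging to $S^{c}=\{\tilde{f}(i)\}_{i}$) are exactly those flanking an unshaded (resp.\ shaded) boundary interval of $D_{0}$; in particular $\{\tilde{f}(2l-1),\tilde{f}(2l)\}$ flanks the shaded interval $(2l-1,2l)$, and $\tilde{f}(i)$ always lies in the unique shaded interval incident to $i_{D_{0}}$.

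Next I would establish the purely combinatorial criterion: for distinct odd $m<m'$, the shaded intervals $(m,m+1)$ and $(m',m'+1)$ lie on a common shaded region of $T$ if and only if no block of $\pi_{T}$ meets both $\{m+1,\dots,m'\}$ and its complement in $\llbracket 1,2k\rrbracket$. If a block $B$ straddles, pick $p\in B\cap\{m+1,\dots,m'\}$ and $q\in B$ in the complement; the component $C$ of $\Gamma T\setminus\partial D_{0}$ containing $p$ and $q$ is connected and meets $\partial D_{0}$ in a set containing $\{p,q\}$, so it separates $\overline{D_{0}}$, with the two shaded intervals --- whose endpoints lie on different arcs of $\partial D_{0}\setminus(C\cap\partial D_{0})$ --- in different components of $\overline{D_{0}}\setminus C\supseteq\overline{D_{0}}\setminus\Gamma T$, hence on different shaded regions. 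Conversely, if no block straddles, then by the non-crossing property of $\pi_{T}$ every component of $\Gamma T\setminus\partial D_{0}$ attached to a point of $\{m+1,\dots,m'\}$ is confined over the corresponding arc of $\partial D_{0}$, so one may slide a path along $\partial D_{0}$ from the interval $(m,m+1)$ to the interval $(m',m'+1)$ going around each such component on its far side, and the parities of the boundary points of $D_{0}$ ensure this path stays in shaded regions throughout; thus the two intervals lie on one shaded region. Since the shaded interval incident to $i_{D_{0}}$ is $(m,m+1)$ for $m$ the odd element of the pair $\{m,m+1\}\ni i$, and since $i,j$ share a shaded interval exactly when $\{i,j\}=\{m,m+1\}$ for some odd $m$, this criterion determines $\rho_{T}$ completely.

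It then remains to match the criterion with $\kr'(\pi_{T})$. For $i<j$, let $m$ and $m'$ be the odd numbers with $i\in\{m,m+1\}$ and $j\in\{m',m'+1\}$. A short computation with $\delta$ gives $\llbracket\tilde{f}(i),\tilde{f}(j)\rrbracket\cap S=\{f(i')\colon m<i'\leq m'\}$, hence $S\setminus\llbracket\tilde{f}(i),\tilde{f}(j)\rrbracket=\{f(i')\colon i'\notin\{m+1,\dots,m'\}\}$. Since $\tilde{f}(\kr'(\pi_{T}))=\kr(f(\pi_{T}),S)$ by definition and $f(i')\sim_{f(\pi_{T})}f(j')$ if and only if $i'\sim_{\pi_{T}}j'$, Lemma \ref{characKreweras} applied to the partial partition $(f(\pi_{T}),S)$ shows that $i\sim_{\kr'(\pi_{T})}j$, i.e.\ $\tilde{f}(i)\sim_{\kr(f(\pi_{T}),S)}\tilde{f}(j)$, holds precisely when $i'\not\sim_{\pi_{T}}j'$ for all $i'\in\{m+1,\dots,m'\}$ and all $j'$ in its complement --- exactly the condition of the criterion, with the case $m=m'$ giving a vacuously satisfied condition on both sides. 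Therefore $\kr'(\pi_{T})=\rho_{T}$, which is the statement.

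The hard part is the geometric criterion, especially its converse: one must check carefully that the interplay of the chessboard shading, the parities of the boundary points of $D_{0}$, and the non-crossing layout of $\Gamma T$ keeps the sliding path inside the shaded regions (and that the straddling component really does separate the two intervals). Granting this, the identification with the Kreweras complement is a routine unwinding of the definitions of $f$, $\tilde{f}$ and of Lemma \ref{characKreweras}.
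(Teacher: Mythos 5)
Your argument is correct and is essentially the paper's own proof in a different packaging: the geometric heart in both cases is the same dichotomy, namely that a connected component of $\Gamma T$ meeting both complementary boundary arcs separates the two shaded intervals, while the absence of such a component forces them onto a single shaded region. The only difference is bookkeeping --- you identify the resulting ``no straddling block'' condition with $\kr'(\pi_{T})$ via Lemma \ref{characKreweras}, whereas the paper checks directly that the shaded-region partition on $S^{c}$ has non-crossing join with $(\pi_{T},S)$ and is maximal with that property; both routes rest on the same lightly sketched planar-topology step (the converse of the separation claim), which the paper dispatches in one sentence and you at least outline.
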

\begin{proof}
Let $T$ be a planar tangle. Relabel each distinguished point $i_{D_{0}}$ with $2i-\delta(i)$.With this new labelling each interval of type $(4i+1,4i+4)$ is the boundary interval of a shaded region, and $\pi_T$ becomes a partial non-crossing partition of $4k$ with support $S$.  Add two points $4i+2$ and $4i+3$ clockwise on each segment $(4i+1,4i+4)$ of the outer disk. We define the equivalence relation $\sim$ on $S^{c}$ as follows: $i\sim j$ if and only if the boundary points $i$ and $j$ are boundary points of a same shaded region. Let $(\pi',S^{c})$ be the partial partition associated to $\sim$. $\pi'$ is non-crossing since two regions that intersect are the same. \\
Let $\pi=\left((\pi_{T},S)\vee (\pi',S^{c})\right)$. Suppose that $1\leq i<j<r<s\leq 4k$ are such that $i\sim_{\pi}r$ and $j\sim_{\pi}s$. Since $\pi_{T}$ is non-crossing, if $i,j,r,s$ are all in $S$ then $i\sim_{\pi}j\sim_{\pi}r\sim_{\pi}s$. Assume from now on that they are not all in $S$, and suppose without loss of generality that $i\in S^{c}$. We have $i\sim_{\pi}r$,  thus $r$ is also in $S^{c}$ and $i$ and $r$ are boundary points of a same shaded region $\sigma$. Since $j\in(i,r)$ and $s\in(r,i)$, any path on $\Gamma T$ between $j$ and $s$ would cut $\sigma$ in two distinct regions. Thus, if $j,s\in S$, then $j\not\sim_{\pi}s$. Therefore, the hypothesis $j\sim_{\pi}s$ yields that $j,s\in S^{c}$. As $\pi'$ is non-crossing, $i,j,r,s$ are in the same block of $\pi$. Finally, $\pi_{T}\vee\pi'$ is non-crossing, which yields that $\pi'\leq kr'(\pi_{T})$.\\
Let $\pi_{2}$ be a partial partition with support $S^{c}$, such that $\pi_{T}\vee \pi_{2}$ is non-crossing. Suppose that $i\sim_{\pi_{2}}j$, with $i,j\in S^{c}$. Let $\sigma_{i}$ (resp. $\sigma_{j}$) be the shaded region having $i$ (resp. $j$) as boundary point. $\pi_{T}\vee \pi_{2}$ is non-crossing, thus for all $r,s\in S$ such that $i\leq r\leq j$ and $j\leq s\leq i$, $r\not\sim_{\pi_{T}}s$. Thus, there is no path in $\Gamma T$ between $(i,j)$ and $(j,i)$, and $\sigma_{i}=\sigma_{j}$. This yields $i\sim_{\pi'}j$. Therefore, $(\pi_{2},S^{c})\leq (\pi',S^{c})$ and $(\pi',S^{c})=kr(\pi_{T},S)$.\\
Let $1\leq i,j\leq 2k$. By the two previous paragraphs, $i$ and $j$ are in the same block of $\kr'(\pi_{T})$ if and only if $2i-(1-\delta(i))$ and $2j-(1-\delta(j))$ are boundary points of the same shaded region. Since the points $2i-(1-\delta(i))$ and $2i-\delta(i)$ both belong to the interval $(2(i+\delta(i))-3,2(i+\delta(i)))$ (which is part of the boundary of a shaded region), $2i-(1-\delta(i))$ and $2j-(1-\delta(j))$ are boundary points of the same shaded region if and only if $2i-\delta(i)$ and $2j-\delta(j)$ are boundary points of the same shaded region which yields the result.
\end{proof}

\section{Tensor products and free products of planar algebras} \label{freeproductsection}

\begin{defi}
Let $\cP$ and $\cQ$ be two planar algebras. The tensor product planar algebra $\mathcal{P}\otimes \mathcal{Q}$ is the collection of vector spaces $(\mathcal{P}\otimes \mathcal{Q})_{i}=\mathcal{P}_{i}\otimes \mathcal{Q}_{i}$, with the action of any planar tangle being given by the tensor product of the action on each component.
More precisely, for a planar tangle $T$, 
$$Z_{T}(\bigotimes_{D_{i}}(x_{i}\otimes y_{i}))=Z_{T}(\bigotimes_{D_{i}}x_{i})\otimes Z_{T}(\bigotimes_{D_{i}}y_{i}),$$
where $x_{i}\in \mathcal{P}_{k_{i}}$ and $y_{i}\in \mathcal{Q}_{k_{i}}$.
\end{defi}

\begin{lem}
Let $A,B$ be finite-dimensional $C^*$-algebras. Then
\[ \cP^{\Gamma(A)} \ot \cP^{\Gamma(B)} \cong \cP^{\Gamma(A \ot B)}. \]
\end{lem}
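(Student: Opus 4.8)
The plan is to construct an explicit isomorphism at the level of the loop bases and then to verify that it intertwines the state-sum action of every planar tangle. First I would fix decompositions $A = \bigoplus_{i=1}^s M_{m_i}(\C)$ and $B = \bigoplus_{j=1}^t M_{n_j}(\C)$, so that $A \ot B = \bigoplus_{i,j} M_{m_i n_j}(\C)$ with $\dim(A \ot B) = \dim A \cdot \dim B$; write $b_1,\dots,b_s$ and $b'_1,\dots,b'_t$ for the odd vertices of $\Gamma(A)$ and $\Gamma(B)$. The bipartite graph $\Gamma(A \ot B)$ then has a single even vertex, odd vertices $c_{ij}$, and $m_i n_j$ edges joining the even vertex to $c_{ij}$, and I would identify each such edge-set with the product of the corresponding edge-sets of $\Gamma(A)$ and $\Gamma(B)$. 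The purpose of this identification is the multiplicativity of the spin vectors: $\mu_{A \ot B}(c_{ij}) = \mu_A(b_i)\,\mu_B(b'_j)$ and $\mu_{A \ot B} \equiv 1 \equiv \mu_A \cdot \mu_B$ on the even vertex, which is immediate from $m_i n_j/\sqrt{\dim A \dim B} = (m_i/\sqrt{\dim A})(n_j/\sqrt{\dim B})$; in particular the moduli satisfy $\delta_{A \ot B} = \delta_A\,\delta_B$. With these identifications, a loop of length $2n$ based at the even vertex of $\Gamma(A \ot B)$ is exactly the datum of a loop of length $2n$ in $\Gamma(A)$ together with a loop of length $2n$ in $\Gamma(B)$ (match the sequences of visited odd vertices and, edge by edge, use the product structure of the edge-sets). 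This bijection of bases extends linearly to isomorphisms $\Phi_n : \cP^{\Gamma(A)}_n \ot \cP^{\Gamma(B)}_n \to \cP^{\Gamma(A \ot B)}_n$; on $\cP_1$ the map $\Phi_1$ is, under the loop-to-matrix-unit identification, just the canonical identity of $A \ot B$, and since the $*$-operation on $\cP^{\Gamma}_n$ is reversal of loops (up to a $\mu$-normalization, itself multiplicative), each $\Phi_n$ is $*$-preserving.

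The heart of the argument is to show that the family $(\Phi_n)_n$ commutes with the action $Z_T$ of an arbitrary planar tangle $T$. By the definition of the tensor product planar algebra this reduces to the multiplicativity of the structure constants of the bipartite graph action, namely $c_{A \ot B}(\eta_0, \dots, \eta_l) = c_A(\eta_0^A, \dots, \eta_l^A)\, c_B(\eta_0^B, \dots, \eta_l^B)$, where $\eta_i$ is the $\Gamma(A \ot B)$-loop labelling the disk $D_i$ corresponding to the pair $(\eta_i^A, \eta_i^B)$. To prove this I would observe that a state $\sigma$ on $T$ valued in $\Gamma(A \ot B)$ is precisely a pair $(\sigma_A, \sigma_B)$ of states valued in $\Gamma(A)$ and $\Gamma(B)$: a shaded region receives a vertex $c_{ij}$, i.e. a pair of odd vertices, a string receives an edge of $\Gamma(A \ot B)$, i.e. a pair of edges, and the adjacency compatibility condition for $\sigma$ holds iff it holds for both $\sigma_A$ and $\sigma_B$ (the even vertex being the only unshaded label in all three graphs); likewise $\sigma$ is compatible with $\eta_i$ at $D_i$ iff $\sigma_A$ is compatible with $\eta_i^A$ and $\sigma_B$ with $\eta_i^B$. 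At each singularity $s$ of a string in $T$, the convex-side and concave-side vertices for $\sigma$ are the corresponding pairs of vertices for $\sigma_A$ and $\sigma_B$, so by multiplicativity of the spin vectors $\rho_{A \ot B}(s) = \sqrt{\mu_{A \ot B}(v)/\mu_{A \ot B}(w)} = \rho_A(s)\,\rho_B(s)$, whence $\prod_s \rho_{A \ot B}(s) = \big(\prod_s \rho_A(s)\big)\big(\prod_s \rho_B(s)\big)$. Summing over all states and using that the sum over $\sigma$ factors as a sum over $\sigma_A$ times a sum over $\sigma_B$ yields $c_{A \ot B} = c_A \cdot c_B$, and therefore $Z_T \circ \Phi = \Phi \circ Z_T$ for every $T$. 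This exhibits $\Phi$ as an isomorphism of $C^*$-planar algebras.

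I expect the main obstacle to be purely the bookkeeping in this last step: carefully setting up the decoupling $\sigma \leftrightarrow (\sigma_A, \sigma_B)$ of states and tracking shadings and the convex/concave sides of the singularities so that the weights $\rho(s)$ genuinely factor. The conceptual input is minimal --- it all comes down to $\mu_{A \ot B} = \mu_A \cdot \mu_B$ --- but one must be scrupulous with conventions. Two smaller points also deserve a line: that $\Phi_1$ is honestly a $*$-isomorphism $A \ot B \to A \ot B$ (so that the $C^*$-structures, not merely the planar structures, are matched), and that the construction is independent of the chosen decompositions and edge labellings up to the obvious isomorphisms.
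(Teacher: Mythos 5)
Your proposal is correct and follows essentially the same route as the paper: identify $\Gamma(A\ot B)$ with the product graph so that the spin vector is multiplicative, define $\Phi_n$ on loop bases by pairing, and verify compatibility with tangles by decoupling a state into a pair of states and factoring the structure constants. The only difference is that you spell out the factorization of the singularity weights $\rho(s)$ explicitly, which the paper leaves implicit in its claim that the coefficients multiply.
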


\begin{proof}
Let $\Gamma(A) = (V_1, E_1)$ and $\Gamma(B) = (V_2, E_2)$ be the bipartite graphs associated to $A$ and $B$. Note first that the bipartite graph $\Gamma(A\ot B) = (V,E)$ is given by gluing $\Gamma(B)$ (at the even vertex of $\Gamma(B)$) onto the uneven endpoints of every edge of $\Gamma(A)$ and deleting the glued vertices. Therefore, $\Gamma(A\ot B)$ can be identified with the direct product $\Gamma(A) \times \Gamma(B)$, that is to say, $V_+ = V_{1,+} \times V_{2,+}, \ V_- = V_{1,-} \times V_{2,-}$ and $E = E_1 \times E_2$. As the Markov trace of $A \ot B$ is the tensor product of the Markov traces of the components, the spin vector $\mu_{A \ot B}: V \to \C$ is given by 
\[ \mu_{A\ot B}((x,y)) \ = \ \mu_A(x) \mu_B(y), \qquad x \in V_1, \ y \in V_2. \]
For any $n \geq 0$, the map
\begin{align*}
\Phi_n: \cP^{\Gamma(A)}_n \ot \cP^{\Gamma(B)}_n \quad &\to \quad \cP^{\Gamma(A \ot B)}_n \\
(\beta_1,\dots,\beta_{2n}) \ot (\gamma_1,\dots,\gamma_{2n}) \quad &\mapsto \quad ((\beta_1,\gamma_1),\dots, (\beta_{2n},\gamma_{2n}))
\end{align*}
is a linear isomorphism of vector spaces. If $n = -$, the corresponding isomorphism $\Phi_-$ is given by the identification $V_- = V_{1,-} \times V_{2,-}$. We have to show that the isomorphism $\Phi =(\Phi_n)_{n \in \N_* \cup \{ +,- \}}$ of graded vector spaces commutes with the action of planar tangles. Let $T$ be such a planar tangle and let $\sigma$ be a state on $T$ for $\Gamma(A\ot B)$. Clearly, we can decompose $\sigma$ as $\sigma_1 \times \sigma_2$ where $\sigma_1$ is a state on $T$ for $\Gamma(A)$ and $\sigma_2$ is a state on $T$ for $\Gamma(B)$. Note further that $\sigma$ is compatible with the loop $((\beta_1,\gamma_1),\dots, (\beta_{2n},\gamma_{2n}))$ at a disk $D$ if and only if $\sigma_1$ is compatible with $(\beta_1,\dots,\beta_{2n})$ at $D$ and $\sigma_2$ is compatible with $(\gamma_1,\dots,\gamma_{2n}) $ at $D$. This implies that, if $T$ has outer disk $D_0$ and inner disks $D_1, \dots, D_s$ of degrees $k_i$ and if $\xi_i \in \cP^{\Gamma(A)}_{k_i}, \ \eta_i \in \cP^{\Gamma(B)}_{k_i}, \ i=0,\dots,s$ are basis loops, the coefficients of the action of $T$ satisfy
\[ c((\xi_0,\eta_0),\dots,(\xi_s,\eta_s)) = c(\xi_0,\dots,\xi_s) c(\eta_0,\dots, \eta_s).  \]
Therefore, $\Phi$ commutes with the action of $T$.
\end{proof}

The free product of two planar algebras $\mathcal{P}$ and $\mathcal{Q}$ is a planar subalgebra of $\mathcal{P}\otimes \mathcal{Q}$ defined by the image of certain planar tangles. For planar algebras the free product operation has appeared first in \cite{J99}. On the level of subfactors, however, the construction goes back to Bisch and Jones's celebrated article \cite{BJ95}.  \\
A pair $(T,T')$ of planar tangles of degree $k$ is called free if there exists a planar tangle $R$ of degree $2k$ and two isotopies $\phi_{1}$ and $\phi_{2}$, respectively of $T$ and $T'$, such that
\begin{itemize}
\item $\Gamma R=\Gamma\phi_{1}(T)\cup\Gamma\phi_{2}(T')$, and the set of distinguished points of $R$ is the image through $\phi_{1}$ and $\phi_{2}$ of the set of distinguished points of $T$ and $T'$.
\item $\phi_{1}(T\setminus D_{0}(T))\cap \phi_{2}(T'\setminus D_{0}(T'))=\emptyset$. This means that a connected component of $R$ is the image of a connected component of either $T$ or $T'$.
\item The distinguished point numbered $i$ of $\partial D_{0}(T)$ is sent by $\phi_{1}$ to the distinguished point numbered $2i-\delta(i)$ of $\partial D_{0}(R)$.
\item  The distinguished point numbered $i$ of $\partial D_{0}(T')$ is sent by $\phi_{2}$ to the distinguished point numbered $2i-(1-\delta(i))$, where $\delta(i) = i \mod 2$.
\item A distinguished point of an inner disk coming from $T$ is labelled as in $T$; a distinguished point $i$ of an inner disk coming from $T'$ is labelled $i-1$.
\end{itemize}
The last condition ensures that curves of $R$ have endpoints with correct parities. If $T$ and $T'$ are connected planar tangles and $R$ exists, then $R$ is unique up to isotopy. This planar tangle is called the \emph{free composition} of $T$ and $T'$ and denoted by $T*T'$. An example of a free pair of planar tangles, with the resulting free composition, is drawn in Figure \ref{Fig13FreeCompo}.\\
 \begin{figure}[h!]
\begin{center}
\scalebox{0.7}{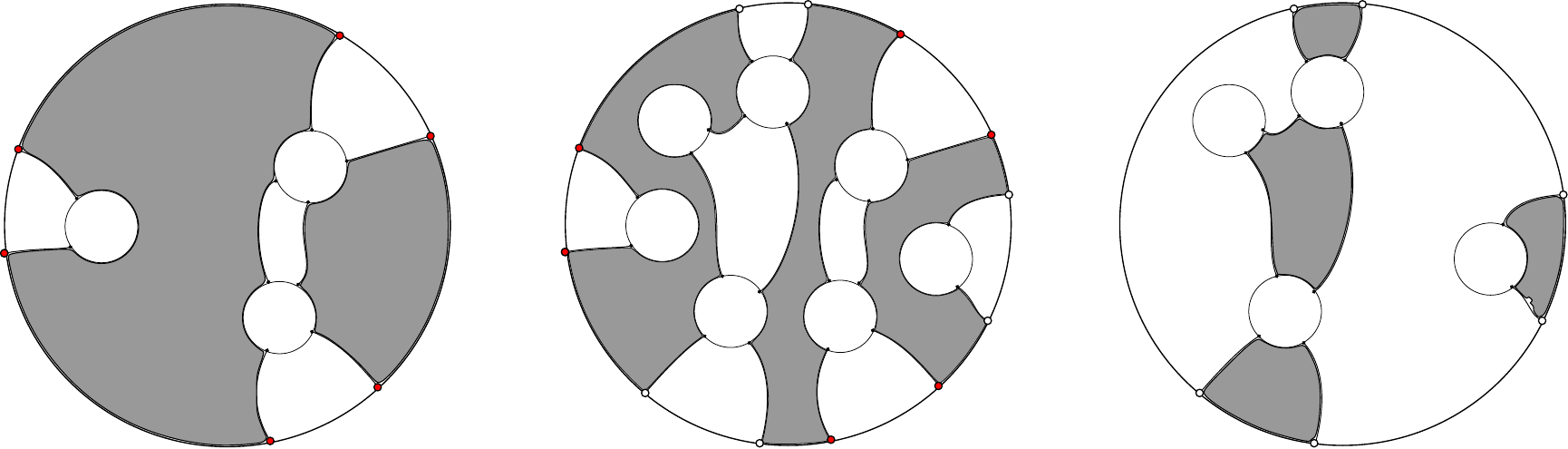}
\end{center}
\caption{\label{Fig13FreeCompo}: Free composition of two planar tangles.}
\end{figure}\\

\begin{defi}\label{freeProduct}
Let $\mathcal{P}$ and $\mathcal{Q}$ be two planar algebras. The free product planar algebra $\mathcal{P}*\mathcal{Q}$ is the collection of vector subspaces $(\mathcal{P}*\mathcal{Q})_{k}$ of $(\mathcal{P}\otimes\mathcal{Q})_{k}$ spanned by the image of the maps $Z_{T}\otimes Z_{T'}$ for all free pairs of planar tangles of degree $k$.
\end{defi}

The following result is certainly well known within the planar algebra community. Since we could not find a precise reference, we include a proof.

\begin{lem}
$\mathcal{P}*\mathcal{Q}$ is a planar subalgebra of $\mathcal{P}\otimes\mathcal{Q}$, that is to say $\cP* \cQ$ is stable under the action of planar tangles.
\end{lem}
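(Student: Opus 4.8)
The plan is to reduce the statement, by multilinearity, to a purely combinatorial claim about tangles, and then to prove that claim by an explicit ``doubling'' construction of the free composition. Fix a planar tangle $S$ of degree $k$ with inner disks $D_1,\dots,D_n$ of degrees $k_1,\dots,k_n$. By Definition \ref{freeProduct}, $(\cP*\cQ)_{k_i}$ is spanned by the elements $Z_{T_i}(\vec a^{(i)})\otimes Z_{T_i'}(\vec b^{(i)})$ over free pairs $(T_i,T_i')$ of degree $k_i$ and arbitrary labellings of their inner disks; since $Z_S$ is multilinear, it suffices to show that $Z_S\bigl(\bigotimes_{i=1}^n (Z_{T_i}(\vec a^{(i)})\otimes Z_{T_i'}(\vec b^{(i)}))\bigr)$ lies in $(\cP*\cQ)_k$ for any such data. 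By the definition of the tensor product planar algebra this element equals $Z_S\bigl(\bigotimes_i Z_{T_i}(\vec a^{(i)})\bigr)\otimes Z_S\bigl(\bigotimes_i Z_{T_i'}(\vec b^{(i)})\bigr)$, and iterating the compatibility of the planar algebra structure with composition of tangles it equals
\[ Z_{\widetilde{S}}(\vec a)\ \otimes\ Z_{\widetilde{S}'}(\vec b), \qquad \widetilde{S} := S\circ_{(D_1,\dots,D_n)}(T_1,\dots,T_n),\quad \widetilde{S}' := S\circ_{(D_1,\dots,D_n)}(T_1',\dots,T_n'), \]
where $\vec a$ (resp.\ $\vec b$) is the concatenation of the $\vec a^{(i)}$ (resp.\ $\vec b^{(i)}$) and labels the inner disks of $\widetilde{S}$ (resp.\ $\widetilde{S}'$). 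Thus the whole statement reduces to the claim: \emph{if $(T_i,T_i')$ is a free pair for every $i$, then $(\widetilde{S},\widetilde{S}')$ is a free pair.}

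To prove this claim I would construct a witnessing tangle $R$ of degree $2k$ directly. For each $i$, let $R_i := T_i * T_i'$ be the free composition realising the freeness of $(T_i,T_i')$, with isotopies $\phi_1^{(i)},\phi_2^{(i)}$; recall $\deg R_i = 2k_i$, and that the $2k_i$ boundary points of $\partial D_0(R_i)$ coming from $T_i$ sit at the positions $\{2j-\delta(j)\}$ while those coming from $T_i'$ sit at $\{2j-(1-\delta(j))\}$. Next form the ``doubling'' $\widehat{S}$ of $S$: replace $\Gamma S$ by two disjoint parallel copies $S^{\mathrm{out}}$ and $S^{\mathrm{in}}$ (so each string of $S$ becomes a pair of parallel strings, and each disk of degree $m$ becomes one of degree $2m$), with the two copies of the boundary points of every disk interleaved exactly in the pattern $\{2j-\delta(j)\}$ (for $S^{\mathrm{out}}$) and $\{2j-(1-\delta(j))\}$ (for $S^{\mathrm{in}}$), the shading then being forced by the planar-tangle axiom; in particular the boundary of the doubled disk $\widehat{D}_i$ has $4k_i$ points interleaved precisely as on $\partial D_0(R_i)$. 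Finally put
\[ R\ :=\ \widehat{S}\ \circ_{(\widehat{D}_1,\dots,\widehat{D}_n)}\ (R_1,\dots,R_n). \]
Then $\Gamma R = \bigl(S^{\mathrm{out}}\cup\bigcup_i\Gamma\phi_1^{(i)}(T_i)\bigr)\cup\bigl(S^{\mathrm{in}}\cup\bigcup_i\Gamma\phi_2^{(i)}(T_i')\bigr)$, where the first bracket is an isotoped copy of $\Gamma\widetilde{S}$ and the second an isotoped copy of $\Gamma\widetilde{S}'$; these are disjoint away from the outer disk because $S^{\mathrm{out}},S^{\mathrm{in}}$ are disjoint and each $(T_i,T_i')$ is a free pair, so every connected component of $R$ comes from a component of either $\widetilde{S}$ or $\widetilde{S}'$. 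The distinguished points of $R$ are the images of those of $\widetilde{S}$ and $\widetilde{S}'$, placed at the positions $2i-\delta(i)$ and $2i-(1-\delta(i))$ by construction; and the inner disks of $R$ are exactly those of the $R_i$, so their boundary points carry the labels of the inner disks of the $T_i$ unchanged and those of the $T_i'$ shifted by $-1$ --- which, since composition of tangles does not relabel inner disks, is precisely the convention required for the free pair $(\widetilde{S},\widetilde{S}')$. Hence $R$ witnesses the freeness of $(\widetilde{S},\widetilde{S}')$, and by Definition \ref{freeProduct} the element $Z_{\widetilde{S}}(\vec a)\otimes Z_{\widetilde{S}'}(\vec b)$ belongs to $(\cP*\cQ)_k$. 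Note that we only need such an $R$ to \emph{exist}, so we do not require $\widetilde{S}$ or $\widetilde{S}'$ to be connected.

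I expect the only real difficulty to be the careful set-up of the doubling $\widehat{S}$: one has to fix conventions for the marked point and the shading of every doubled disk and check that the two parallel copies of each string of $S$ join the doubled boundary points with the correct parities, so that $R$ is a legitimate planar tangle and all five defining conditions of a free pair hold simultaneously. This is routine combinatorial bookkeeping with no conceptual content; everything else is a formal consequence of the compatibility of the planar algebra action with composition of tangles together with the defining properties of the $R_i$.
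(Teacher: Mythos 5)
Your proposal is correct and takes essentially the same route as the paper: both reduce, via multilinearity and compatibility of the planar algebra action with composition, to showing that $\bigl(S\circ_{(D_1,\dots,D_n)}(T_1,\dots,T_n),\,S\circ_{(D_1,\dots,D_n)}(T_1',\dots,T_n')\bigr)$ is a free pair, and both witness this by doubling the skeleton of the outer tangle and gluing the free compositions $T_i * T_i'$ into the doubled inner disks. The paper's proof is precisely this doubling construction, with the same parity bookkeeping ($0,1$ versus $2,3$ modulo $4$) that you flag as the only delicate point.
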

\begin{proof}
It suffices to check the stability on the generating sets of the vector spaces $(\mathcal{P}*\mathcal{Q})_{k}$ given in Definition \ref{freeProduct}. Let $T$ be a planar tangle, and for each inner disk $D_{i}$ of $T$, let $v_{i}$ be an element of $(\mathcal{P}*\mathcal{Q})_{k_{i}}$ of the form $(Z_{T_{i}}\otimes Z_{T'_{i}})((\bigotimes_{D_{j}(T_{i})} v^{i}_{j})\otimes (\bigotimes_{D_{j}(T'_{i})} w^{i}_{j})$ where $(T_{i},T'_{i})$ is a free pair for any $i$. The compatibility condition on the composition of actions of planar tangles yields
\begin{align*}
Z_{T}(\bigotimes_{D_{i}(T)}v_{i})=&Z_{T\circ_{(D_{1},\dots D_{n})}(T_{1},\dots,T_{n})}(\bigotimes_{D_{i}(T)} \bigotimes_{D_{j}(T_{i})} v_{j}^{i})\\
&\otimes Z_{T\circ_{(D_{1},\dots D_{n})}(T'_{1},\dots,T'_{n})}(\bigotimes_{D_{i}(T)} \bigotimes_{D_{j}(T'_{i})} w_{j}^{i}).\\
\end{align*}
Thus, it is enough to prove that the tangles $S=T\circ_{(D_{1},\dots D_{n})}(T_{1},\dots,T_{n})$ and $S'=T\circ_{(D_{1},\dots D_{n})}(T'_{1},\dots,T'_{n})$ form a free pair. Let $\tilde{T}$ be the planar tangle of order $2k$ obtained from $T$ by doubling all the curves of $T$ and all the distinguished points (in such a way that the tangle still remains planar). By construction, a curve joining the point $j$ of $D_{i}$ to the point $j'$ of $D_{i'}$ in $T$ yields two curves in $\tilde{T}$: one joining the point $2j-1$ of $D_{i}$ to the point $2j'-1$ of $D_{i'}$ and the other one joining the point $2j$ of $D_{i}$ to the point $2j'$ of $D_{i'}$. Since $T$ is a planar tangle, the conditions on the parities of $j$ and $j'$ yield that in $\tilde{T}$, the curves join points labelled $0$ or $1$ modulo $4$ (resp. $2$ or $3\mod 4$) to points labelled $0$ or $1$ modulo $4$ (resp. $2$ or $3 \mod 4$). Reciprocally, by removing all the distinguished points labelled $0$ and $1$ modulo $4$ and the curves joining them from $\tilde{T}$, we recover the planar tangle $T$. The same holds for the distinguished points labelled $2$ and $3$.
Therefore, if we compose the tangle $T_{i}*T'_{i}$ inside each disk $D_{i}$, the resulting tangle is exactly $S*S'$ (after relabelling). Thus, $\tilde{T}\circ_{D_{1},\dots, D_{n}}(T_{1}*T'_{1},\dots,T_{n}*T'_{n})=S*S'$ and $\mathcal{P}*\mathcal{Q}$ is stable under the action of planar tangles.\\
\end{proof}
The remaining part of this section aims at simplifying the description of a free product of planar algebras.

\subsection{Reduced free pairs}
We now introduce a set of pairs of planar tangles which is smaller than the set of free pairs and whose image in $\cP\otimes \cQ$ still spans $\cP*\cQ$. Let us begin by characterizing free pairs of planar tangles in terms of their associated non-crossing partitions. In this subsection, all planar tangles are assumed to be connected.
\begin{lem}\label{freePairIrredu}
If $(T,T')$ is a free pair, then $\pi_{T* T'} =(\pi_{T},S)\vee(\pi_{T'},S^{c})$.\\
In particular $(T,T')$ is a free pair if and only if $(T_{\pi_{T}},T_{\pi_{T'}})$ is a free pair. 
\end{lem}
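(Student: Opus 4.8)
The plan is to unwind the definitions of the free composition $T*T'$ and of the partition $\pi_{(\cdot)}$, and to reduce the second statement to the first one. Recall that $\pi_{T*T'}$ records which distinguished points of the outer disk of $T*T'$ lie in the closure of the same connected component of $\Gamma(T*T')$ minus the outer boundary. By the defining properties of a free pair, $\Gamma R = \Gamma\phi_1(T) \cup \Gamma\phi_2(T')$ is a disjoint union (away from the outer disk) of the skeletons of $T$ and $T'$, with $\phi_1$ sending the $i$-th outer point of $T$ to the $(2i-\delta(i))$-th outer point of $R$ and $\phi_2$ sending the $i$-th outer point of $T'$ to the $(2i-(1-\delta(i)))$-th outer point of $R$. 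Using the maps $f, \tilde f: \llbracket 1,2k\rrbracket \to \llbracket 1,4k\rrbracket$ from Subsection \ref{notaKreweras}, $f(i) = 2i-\delta(i)$ records exactly the $T$-points and $\tilde f(i) = 2i-(1-\delta(i))$ records exactly the $T'$-points among the $4k$ outer points of $R$. Since $\phi_1(T)$ and $\phi_2(T')$ share no connected component, two outer points of $R$ are in the closure of the same component iff they are either both of the form $f(i), f(j)$ with $i \sim_{\pi_T} j$, or both of the form $\tilde f(i), \tilde f(j)$ with $i \sim_{\pi_{T'}} j$. This is precisely the partition $(\pi_T, S) \vee (\pi_{T'}, S^c)$ in the notation of Subsection \ref{notaKreweras} (with $S$ the image of $f$, i.e. $\{2i - \delta(i)\}$), which proves the first claim.

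For the second claim, I would argue as follows. The partition $\pi_T$ depends only on which outer points of $T$ are connected through the skeleton, so by construction $\pi_{T_{\pi_T}} = \pi_T$ and $\pi_{T_{\pi_{T'}}} = \pi_{T'}$ (this is exactly the statement recorded right after Lemma in Subsection \ref{noncrossNotation} that an even non-crossing partition $\pi$ yields an irreducible tangle $T_\pi$ with $\pi_{T_\pi} = \pi$). Now suppose $(T,T')$ is a free pair. I want to show $(T_{\pi_T}, T_{\pi_{T'}})$ is a free pair, and conversely. The key observation is that whether a candidate $R$ of degree $2k$ witnesses freeness of a pair $(S,S')$ of connected tangles depends, modulo isotopy, only on the combinatorial gluing data: the interleaving pattern of the outer points of $S$ and $S'$ (fixed once and for all by $f$ and $\tilde f$), the parity constraints on curves of $S$ and $S'$ (automatic since both are genuine tangles), and the requirement that connected components of $S$ and $S'$ can be realized disjointly in the plane. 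Planarity of such a disjoint realization is governed by the non-crossing partitions $\pi_S$ and $\pi_{S'}$: the curves of $S$ near the outer boundary partition the disk into the regions dictated by $\pi_S$, and a disjoint planar realization of $S'$ inside is possible iff the outer connectivity of $S'$, i.e. $\pi_{S'}$, is compatible with the nesting structure left available by $\pi_S$ — and this compatibility is exactly the condition that $(\pi_S, S) \vee (\pi_{S'}, S^c)$ is a non-crossing partition of $\llbracket 1, 4k\rrbracket$.

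So the plan is: show that for connected tangles $S, S'$ of degree $k$, the pair $(S,S')$ is free if and only if $(\pi_S, S_{\mathrm{supp}}) \vee (\pi_{S'}, S_{\mathrm{supp}}^c)$ is a non-crossing partition of $4k$, where $S_{\mathrm{supp}} = \{2i-\delta(i)\}$. One direction is immediate from the first part of the lemma (if the pair is free, $\pi_{S*S'}$ is this join and is non-crossing because it is the partition of an actual tangle). For the converse, given that the join is non-crossing, I would explicitly build $R$: place the $4k$ outer points, realize $S$ using the $f$-labelled points (possible since $\pi_S$ is non-crossing), and then realize $S'$ using the $\tilde f$-labelled points in the complementary regions — the non-crossing compatibility of the join is exactly what guarantees the components of $S'$ fit disjointly from those of $S$ while respecting the prescribed outer point identifications. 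Then the claim follows at once: $(T,T')$ free $\iff$ the join of $\pi_T, \pi_{T'}$ is non-crossing $\iff$ the join of $\pi_{T_{\pi_T}}, \pi_{T_{\pi_{T'}}}$ is non-crossing (the partitions are literally the same) $\iff$ $(T_{\pi_T}, T_{\pi_{T'}})$ free.

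The main obstacle I anticipate is making the converse direction of the freeness criterion rigorous: one must argue carefully that non-crossingness of the join $(\pi_S, S_{\mathrm{supp}})\vee(\pi_{S'}, S_{\mathrm{supp}}^c)$ is not merely necessary but sufficient for a disjoint planar embedding witnessing a free pair to exist. This is essentially a planarity/Jordan-curve argument — one wants to say that a non-crossing partition on the circle can always be realized by non-crossing chords, and that two non-crossing partitions whose join is non-crossing can be realized simultaneously without crossings — together with the bookkeeping that the inner-disk relabelling conventions (last bullet in the definition of a free pair, $i \mapsto i$ for $T$-disks and $i \mapsto i-1$ for $T'$-disks) are consistent. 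The partition-level statement itself is clean; the care lies in translating "compatible non-crossing partitions" into "simultaneously planarly embeddable skeletons," which is where the isotopy uniqueness of $R$ for connected tangles (stated just before Definition \ref{freeProduct}) does most of the work.
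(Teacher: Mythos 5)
Your treatment of the first statement is essentially the paper's: since a connected component of $T*T'$ is the image of a component of either $T$ or $T'$, connectivity among the outer points of $T*T'$ splits according to the supports $S=f(\llbracket 1,2k\rrbracket)$ and $S^{c}=\tilde f(\llbracket 1,2k\rrbracket)$, and restricts to $\pi_T$ and $\pi_{T'}$ respectively. That part is correct and needs no further comment.

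For the second statement you take a genuinely different route: you propose to first establish the criterion ``$(S,S')$ is free iff $(\pi_S,\mathrm{supp})\vee(\pi_{S'},\mathrm{supp}^{c})$ is non-crossing'' for \emph{arbitrary} connected tangles, and then observe that the joins for $(T,T')$ and $(T_{\pi_T},T_{\pi_{T'}})$ are literally the same partition. The logic is sound, but the sufficiency direction of your criterion --- given that the join is non-crossing, produce the witness tangle $R$ by embedding the full skeleton of $S'$ (inner disks, internal strings and all) disjointly into the complementary regions carved out by $S$ --- is exactly the step you flag as the ``main obstacle,'' and as written it is not closed. For arbitrary connected tangles this is a nontrivial simultaneous-embedding claim, and the isotopy \emph{uniqueness} of $R$ recorded before Definition \ref{freeProduct} gives you nothing about its \emph{existence}. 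The paper avoids having to prove this at all: for the direction $(T,T')$ free $\Rightarrow$ $(T_{\pi_T},T_{\pi_{T'}})$ free, it notes that $T_{\pi_{T*T'}}$ is itself a free composition of $T_{\pi_T}$ with $T_{\pi_{T'}}$ (the blocks of the join are the blocks of the two partial partitions, so the irreducible tangle of the join visibly splits); and for the converse it invokes Proposition \ref{irredFactoriz} to write $T=T_{\pi_T}\circ(T_1,\dots,T_r)$ and $T'=T_{\pi_{T'}}\circ(T'_1,\dots,T'_{r'})$, and then composes $T_{\pi_T}*T_{\pi_{T'}}$ with the pieces $T_i,T'_j$ to exhibit $T*T'$. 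In other words, the only geometric realization ever needed is for the explicitly constructed partition tangles $T_\pi$, and everything else is pure tangle composition. If you want to keep your architecture, the honest fix is to prove your sufficiency criterion only for the irreducible tangles $T_{\pi}$ (this is what the paper does later in Proposition \ref{freePairKreweras}, using the present lemma) and to handle general connected tangles by the factorization of Proposition \ref{irredFactoriz} rather than by a direct embedding argument.
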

\begin{proof}
The first statement of the lemma is a direct consequence of the definition of $S$ and the fact that $i\sim_{T}j$ if and only if $2i-\delta(i)\sim_{T*T'}2j-\delta(j)$ and $i\sim_{T'}j$ if and only if $2i-(1-\delta(i))\sim_{T*T'} 2j-(1-\delta(j)$. Thus, if $(T,T')$ is a free pair, then $T_{\pi(T*T')}$ is exactly the free composition of $T_{\pi_{T}}$ with $T_{\pi_{T'}}$ and $(T_{\pi_{T}},T_{\pi_{T'}})$ is also a free pair.\\
Suppose that $(T_{\pi_{T}},T_{\pi_{T'}})$ is a free pair. By Proposition \ref{irredFactoriz}, there exist $T_{1},\dots,T_{r},T'_{1},\dots,T'_{r'}$ such that $T=T_{\pi_{T}}\circ_{D_{1},\dots,D_{R}}(T_{1},\dots,T_{r})$ and $T'=P_{\pi_{T'}}\circ_{D'_{1},\dots,D'_{r'}}(T'_{1},\dots,T'_{r'})$. Therefore, $(T_{\pi_{T}}*T_{\pi_{T'}})\circ_{D_{1},\dots,D_{r},D'_{1},\dots,D'_{r'}}(T_{1},\dots,T_{r},T'_{1},\dots,T'_{r'})$ gives the free composition of $T$ and $T'$.
\end{proof}
\begin{prop}\label{freePairKreweras}
Let $T$ and $T'$ be two connected planar tangles. Then $(T,T')$ is a free pair if and only if $\pi_{T'}\leq \kr'(\pi_{T})$. In particular if $T,U$ and $T',U'$ satisfy $\pi_{T}=\pi_{U}$ and $\pi_{T'}=\pi_{U'}$, then $(T,T')$ is a free pair if  and only if $(U,U')$ is a free pair.
\end{prop}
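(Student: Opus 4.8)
The plan is to reduce to the irreducible tangles $T_{\pi_T}$ and $T_{\pi_{T'}}$ and then to rephrase the whole statement in the language of non-crossing partitions. By Lemma~\ref{freePairIrredu}, $(T,T')$ is a free pair if and only if $(T_{\pi_T},T_{\pi_{T'}})$ is one, so I will set $\pi=\pi_T$ and $\pi'=\pi_{T'}$ (even non-crossing partitions of $2k$, where $k$ is the common degree) and reduce to showing that $(T_\pi,T_{\pi'})$ is a free pair if and only if $\pi'\le\kr'(\pi)$. Using the order-preserving bijections $f\colon\llbracket 1,2k\rrbracket\to S$ and $\tilde f\colon\llbracket 1,2k\rrbracket\to S^{c}$ from Section~\ref{Krewerassec} (with $S=\{2i-\delta(i)\}$, $f(i)=2i-\delta(i)$, $\tilde f(i)=2i-(1-\delta(i))$), the definitions of the Kreweras and nested Kreweras complements give
\[ \pi'\le\kr'(\pi)\iff \tilde f(\pi')\le\kr(f(\pi),S)\iff \rho:=(f(\pi),S)\vee(\tilde f(\pi'),S^{c})\ \text{is non-crossing}; \]
I also note in passing that $\rho$ is automatically even, its blocks being exactly the $f$-images of the blocks of $\pi$ together with the $\tilde f$-images of the blocks of $\pi'$.

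The implication ``$(T_\pi,T_{\pi'})$ a free pair $\Rightarrow$ $\rho$ non-crossing'' will be the easy one. Indeed, if $(T_\pi,T_{\pi'})$ is a free pair then the free composition $R:=T_\pi*T_{\pi'}$ exists as a genuine planar tangle of degree $2k$, and by the first assertion of Lemma~\ref{freePairIrredu} its partition is $\pi_R=(\pi,S)\vee(\pi',S^{c})=\rho$ (the set $S$ occurring there is precisely $\{2i-\delta(i)\}$, by the point-placement conventions in the definition of a free pair). Since $\pi_R$ is non-crossing for every planar tangle, $\rho$ is non-crossing. No further geometric input is needed for this direction.

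For the converse I will assume $\rho$ non-crossing. As $\rho$ is also even, it gives rise to an irreducible planar tangle $T_\rho$ of degree $2k$, and the claim will be that $T_\rho$ realises the free composition $T_\pi*T_{\pi'}$. The inner disks of $T_\rho$ split into the ``$\pi$-type'' disks $D_{f(\bar B)}$, one per block $\bar B$ of $\pi$, and the ``$\pi'$-type'' disks $D_{\tilde f(\bar B')}$, one per block $\bar B'$ of $\pi'$. Erasing the $\pi'$-type disks, their strings, and the $S^{c}$ points of the outer disk, and then relabelling the surviving outer points along $f^{-1}$, yields an irreducible planar tangle whose partition is $\pi$; since an irreducible planar tangle is, up to isotopy, determined by its partition (this is implicit in the construction of $T_\pi$), this sub-picture is an isotopic copy $\phi_1(T_\pi)$ — and because $f$ preserves parity and cyclic order, the string-parity and marked-point rules used to build $T_\rho$ restrict to exactly those of $T_\pi$. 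Symmetrically, erasing the $\pi$-type disks and relabelling along $\tilde f^{-1}$ produces a copy $\phi_2(T_{\pi'})$; here $\tilde f$ reverses parity, and the shift $i\mapsto i-1$ on inner-disk points of $T_{\pi'}$ demanded in the definition of a free pair is exactly what corrects for this. It then remains to check the five defining conditions of a free pair for $(\phi_1,\phi_2,R=T_\rho)$: the skeleton equality $\Gamma R=\Gamma\phi_1(T_\pi)\cup\Gamma\phi_2(T_{\pi'})$ and the two outer-boundary assignments $i\mapsto 2i-\delta(i)$, $i\mapsto 2i-(1-\delta(i))$ are built into the construction; the disjointness $\phi_1(T_\pi\setminus D_0)\cap\phi_2(T_{\pi'}\setminus D_0)=\emptyset$ holds because distinct inner disks and distinct strings of a planar tangle are disjoint; and the inner-disk relabelling condition is the parity bookkeeping just described. (An equivalent route is an induction on $|\pi|+|\pi'|$ following the recursive construction of $T_\rho$, peeling off one interval block of $\rho$ at a time and matching it with the corresponding block of $\pi$ or of $\pi'$.) I expect this last verification — that the parity, shading and relabelling conventions of $T_\rho$ agree with those hard-wired into the definition of the free composition, in particular through the parity-reversing map $\tilde f$ — to be the only genuine obstacle; the rest is routine. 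The concluding ``in particular'' assertion is then immediate: the displayed equivalence shows that freeness of a pair of connected tangles depends only on $(\pi_T,\pi_{T'})$.
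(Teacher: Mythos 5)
Your proposal is correct and follows essentially the same route as the paper: reduce to the irreducible tangles $T_{\pi_T}$, $T_{\pi_{T'}}$ via Lemma \ref{freePairIrredu}, get the forward implication from the fact that the partition of the free composition is $(\pi_T,S)\vee(\pi_{T'},S^c)$ and hence non-crossing, and for the converse build the irreducible tangle of the joined partition and recognize it (after deleting the components coming from each factor in turn) as the free composition of $T_{\pi_T}$ and $T_{\pi_{T'}}$. The extra parity and relabelling bookkeeping you spell out is exactly the verification the paper leaves implicit in "up to a relabelling of the distinguished points."
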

\begin{proof}
If $(T,T')$ is a free pair, then by the previous lemma $(\pi_{T},S)\vee(\pi_{T'},S^{c})$ is non-crossing. Therefore $\pi_{T'}\leq kr'(\pi_{T})$.\\
If $\pi_{T'}\leq kr'(\pi_{T})$, then $\tilde{\pi}=(\pi_{T},S)\vee(\pi_{T'},S^{c}) $ is even and non-crossing. Therefore, $T_{\tilde{\pi}}$ is a well-defined planar tangle. Let $\lbrace C_{i}\rbrace$ be the connected components of $T_{\tilde{\pi}}$ coming from blocks of $(\pi_{T},S)$ and $\lbrace D_{i}\rbrace$ the ones coming from blocks of $(\pi_{T'},S^{c})$. Then, $T_{\tilde{\pi}}\setminus \bigcup C_{i}$ is an irreducible planar tangle and $\pi(T_{\tilde{\pi}}\setminus \bigcup C_{i})=\pi_{T'}$. Thus, $T_{\tilde{\pi}}\setminus \bigcup C_{i}=T_{\pi_{T'}}$ up to a relabelling of the distinguished points, and similarly $T_{\tilde{\pi}}\setminus \bigcup D_{i}=T_{\pi_{T}}$ up to relabelling.  Therefore, $T_{\tilde{\pi}}$ is, up to a relabelling, the free composition of $T_{\pi_{T}}$ with $T_{\pi_{T'}}$. Finally, $(T_{\pi_{T}},T_{\pi_{T'}})$ is a free pair and by the previous lemma, $(T,T')$ is also a free pair.
\end{proof}
Recall that $\pi_{0}$ (resp. $\pi_{1}$) is the pair partition of $2k$ with blocks $\lbrace (2i,2i+1)\rbrace$ (resp. $\lbrace 2i+1,2i+2)\rbrace$).
\begin{defi}
A free pair $(T,T')$ of planar tangles is called reduced if $T=T_{\pi}$ and $T'=T_{kr'(\pi)}$ for some non-crossing partition $\pi$ such that $\pi\geq \pi_{0}$.
\end{defi}
 An example of reduced free pair is given in Figure \ref{Fig14ReducedFreePair} with $\pi=\lbrace \lbrace 1,6\rbrace,\lbrace 2,3,4,5\rbrace\rbrace$.
 \begin{figure}[h!]
\begin{center}
\scalebox{0.7}{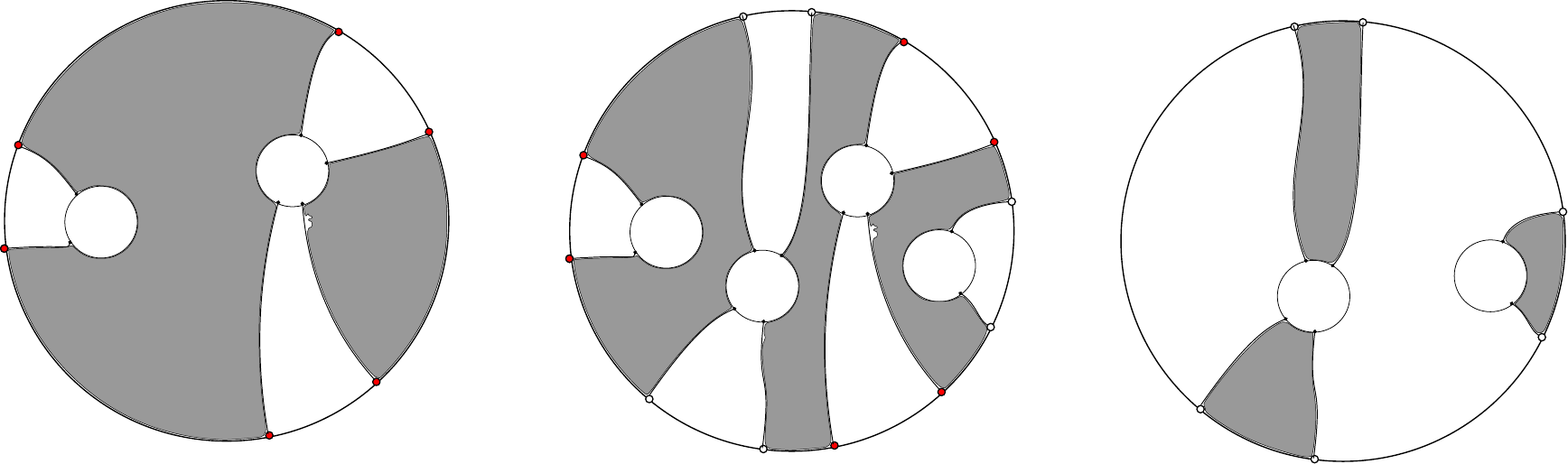}
\end{center}
\caption{\label{Fig14ReducedFreePair}: Example of reduced free pair.}
\end{figure}\\
Note that the number of reduced free pairs of degree $k$ is exactly the cardinality of non-crossing partition of $k$. Despite the small number of reduced free pairs, only considering these pairs is nonetheless enough to describe free product of planar algebras. 
\begin{prop}\label{spinReducedPairs}
The free planar algebra $\mathcal{P}\otimes \mathcal{Q}$ is spanned by the images of $Z_{T}\otimes Z_{T'}$ for all reduced free pairs $(T,T')$.
\end{prop}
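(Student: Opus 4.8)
The plan is to verify the spanning property directly on the generators of $(\mathcal P*\mathcal Q)_k$ furnished by Definition \ref{freeProduct}: I will show that for every free pair $(T,T')$ of (connected) planar tangles of degree $k$, with the inner disks of $T$ labelled by elements of $\mathcal P$ and those of $T'$ labelled by elements of $\mathcal Q$, the vector $Z_T(\cdot)\otimes Z_{T'}(\cdot)\in(\mathcal P\otimes\mathcal Q)_k$ already lies in the image of the map attached to a single \emph{reduced} free pair. The mechanism is that the three reparametrizations needed to turn $(T,T')$ into a reduced pair -- collapsing $T$ to the irreducible tangle $T_{\pi_T}$, enlarging $\pi_T$ to $\sigma:=\pi_T\vee\pi_0$, and enlarging $\pi_{T'}$ to $\kr'(\sigma)$ -- each only pre-compose the action of the governing tangle with the action of smaller tangles, and hence do not leave the image of a fixed (ultimately reduced) free pair.

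First I would reduce to the irreducible case. Put $\pi=\pi_T$ and $\pi'=\pi_{T'}$, with blocks $B_1,\dots,B_r$ of $\pi$. By Proposition \ref{irredFactoriz}, $T=T_\pi\circ_{(D_{B_1},\dots,D_{B_r})}(T_1,\dots,T_r)$, so compatibility of the planar-algebra action with composition of tangles gives $Z_T(\bigotimes_i x_i)=Z_{T_\pi}(\bigotimes_{\ell=1}^r a_\ell)$ with $a_\ell=Z_{T_\ell}(\cdots)\in\mathcal P_{|B_\ell|/2}$, and likewise $Z_{T'}(\bigotimes_j y_j)=Z_{T_{\pi'}}(\bigotimes_m b_m)$ with $b_m\in\mathcal Q_{|B'_m|/2}$. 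By Proposition \ref{freePairKreweras}, $(T,T')$ being a free pair is equivalent to $\pi'\le\kr'(\pi)$, and the same characterization applies to $(T_\pi,T_{\pi'})$. So it suffices to treat $Z_{T_\pi}(\bigotimes_\ell a_\ell)\otimes Z_{T_{\pi'}}(\bigotimes_m b_m)$ under the hypothesis $\pi'\le\kr'(\pi)$.

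Next I would push the left partition up to $\sigma:=\pi\vee\pi_0$. Since a block of $\sigma$ is a union of blocks of $\pi$, it has even cardinality; thus $\sigma$ is even non-crossing and $\sigma\ge\pi_0$. From $\pi\le\sigma$, Lemma \ref{orderPartitionCompoTangles} gives $T_\pi=T_\sigma\circ_{(D_{C_1},\dots,D_{C_p})}(T_{\pi|_{C_1}},\dots,T_{\pi|_{C_p}})$ over the blocks $C_q$ of $\sigma$, whence $Z_{T_\pi}(\bigotimes_\ell a_\ell)=Z_{T_\sigma}(\bigotimes_q a'_q)$ with $a'_q=Z_{T_{\pi|_{C_q}}}(\cdots)\in\mathcal P_{|C_q|/2}$. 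The key point is that $\kr'(\sigma)=\kr'(\pi\vee\pi_0)=\kr'(\pi)$ by Lemma \ref{krewerPio}, so the constraint from the previous step now reads $\pi'\le\kr'(\sigma)$. I then repeat the argument on the right: $\kr'(\sigma)$ is even -- its blocks index, by Proposition \ref{kewerasShaded}, the shaded regions of $T_\sigma$ meeting the outer disk, and each such region abuts $\partial D_0$ in a union of shaded boundary intervals, hence in an even number of boundary points -- so Lemma \ref{orderPartitionCompoTangles} applies once more to $\pi'\le\kr'(\sigma)$ and yields $Z_{T_{\pi'}}(\bigotimes_m b_m)=Z_{T_{\kr'(\sigma)}}(\bigotimes_s b'_s)$ with $b'_s\in\mathcal Q_{|C'_s|/2}$. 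Combining, $Z_T(\cdot)\otimes Z_{T'}(\cdot)=Z_{T_\sigma}(\bigotimes_q a'_q)\otimes Z_{T_{\kr'(\sigma)}}(\bigotimes_s b'_s)$; by Proposition \ref{freePairKreweras} the pair $(T_\sigma,T_{\kr'(\sigma)})$ is free (trivially $\kr'(\sigma)\le\kr'(\sigma)$) and it is reduced (since $\sigma\ge\pi_0$), which finishes the argument.

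The step I expect to be the real obstacle is making the two lifts compatible: the left lift replaces $\pi_T$ by $\pi_T\vee\pi_0$, and for the result to be a \emph{single} reduced free pair one needs the right lift to be taken relative to the same partition, which is exactly the identity $\kr'(\pi)=\kr'(\pi\vee\pi_0)$ of Lemma \ref{krewerPio} -- without it the two halves of the tensor would end up pairing with different partitions. The remaining subtlety is essentially bookkeeping: one must check that $\sigma$, $\kr'(\sigma)$, and all the restricted partitions $\pi|_{C_q}$ and $\pi'|_{C'_s}$ are even non-crossing, so that every tangle written down is legitimate and Lemma \ref{orderPartitionCompoTangles} genuinely applies; the evenness of $\kr'(\sigma)$ is the one place where Proposition \ref{kewerasShaded} is invoked.
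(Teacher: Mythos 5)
Your proposal is correct and follows essentially the same route as the paper: reduce to the irreducible pair $(T_{\pi_T},T_{\pi_{T'}})$ via Proposition \ref{irredFactoriz}, lift $\pi_T$ to $\pi_T\vee\pi_0$ and $\pi_{T'}$ to $\kr'(\pi_T\vee\pi_0)$ using the image containment for $\mu\le\nu$ (Lemma \ref{orderPartitionCompoTangles}), with Lemma \ref{krewerPio} supplying the crucial identity $\kr'(\pi)=\kr'(\pi\vee\pi_0)$ that makes the two lifts land on a single reduced pair. Your extra care about the evenness of $\kr'(\pi\vee\pi_0)$ is a point the paper leaves implicit, but it does not change the argument.
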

\begin{proof}
It suffices to prove that the image of $Z_{T}\otimes Z_{T'}$ with $(T,T')$ a free pair is contained in the image of $(U,U')$ with $(U,U')$ a reduced free pair. \\
The image of $Z_{T}\otimes Z_{T'}$ is contained in the image of $Z_{T_{\pi_{T}}}\otimes Z_{T_{\pi_{T'}}}$ by Proposition \ref{irredFactoriz} and by Lemma \ref{freePairIrredu} $(T_{\pi_{T}},T_{\pi_{T'}})$ is again a free pair. We can therefore assume that $T=T_{\pi}$ and $T=T_{\pi'}$, with the condition $\pi'\leq kr'(\pi)$ being given by Proposition \ref{freePairKreweras}.\\
Suppose that $\mu\leq \nu$ are two noncrossing partitions of $k$. Let $B_{1},\dots,B_{r}$ be the blocks of $\nu$ in the lexicographical order. Since $\mu\leq \nu$, $\mu=\bigvee (\mu_{\vert B_{i}},B_{i})$. Therefore, 
$$T_{\mu}=T_{ \nu}\circ_{D_{1},\dots,D_{r}}(T_{\mu_{\vert B_{1}}},\dots,T_{\mu_{\vert B_{r}}}),$$
and the image of $Z_{T_{\mu}}$ is contained in the one of $Z_{T_{ \nu}}$. \\
Since $kr'(\pi)=kr'(\pi_{0}\vee\pi)$, $\pi'\leq kr'(\pi_{0}\vee\pi)$. $\pi\leq \pi\vee\pi_{0}$ and $\pi'\leq kr'(\pi\vee \pi_{0})$, thus the image of $Z_{T_{\pi}}$ is included in the image of $Z_{T_{\pi\vee\pi_{0}}}$ and the image of $Z_{T_{\pi'}}$ is included in the image of $Z_{T_{kr'(\pi\vee\pi_{0})}}$. By definition, $(T_{\pi\vee\pi_{0}},T_{kr'(\pi_{0}\vee\pi)})$ is reduced.
\end{proof}
Thanks to the previous proposition, there is a simpler way to describe the vector space $(\mathcal{P}*\mathcal{Q})_{n}$ for $n\geq 1$. Let $\pi = \lbrace B_{1},\dots,B_{r}\rbrace$ be an even non-crossing partition, such that its blocks are ordered lexicographically. For any planar algebra $\mathcal{P}$, we define the space $\mathcal{P}_{\pi}$ as the vector space $\mathcal{P}_{\vert B_{1}\vert/2 }\otimes \dots\mathcal{P}_{\vert B_{r}\vert/2}$. Proposition \ref{spinReducedPairs} then precisely translates to the statement that $(\mathcal{P}*\mathcal{Q})_{n}$ is spanned by 
$$\lbrace Z_{T_{\pi}}(v)\otimes Z_{T_{kr'(\pi)}}(w)\vert v\in \mathcal{P}_{\pi},w\in\mathcal{Q}_{kr'(\pi)},\pi\in NC(2n),\pi\geq \pi_{0}\rbrace.$$

\subsection{A generating subset of a free product of planar algebras}
Let $\cP$ be a planar algebra, and fix a particular subset $X_{n}$ of $\cP_{n}$ for each $n\in \mathbb{N}_{*}\cup\{+,-\}$. We say that $(X_{n})_{n\in\N_{*}\cup\{+,-\}}$ generates $\cP$ (or that $(X_{n})_{n\in\N_{*}\cup\{+,-\}}$ is a generating subset of $\cP$) if each $\cP_{n}$ with $n\in\N_{*}\cup\{+,-\}$ is spanned by the union of all images $Z_{T}(X_{k_{1}},\dots,X_{k_{m}})$, where $T$ is any planar tangle of degree $n$ and $k_{1},\dots,k_{m}$ are the respective degrees of the inner disks of $T$.\\ 
The goal of this subsection is to introduce a simple generating subset of the free product $\cP*\cQ$. For this purpose let us introduce the planar tangle $S_{k}$ (resp. $U_{k}$) which is the tangle without inner disk order $k$ where $2i-1$ is linked to $2i$ (resp. $2i$ is linked to $2i+1$) for all $1\leq i\leq k$. Both tangles are drawn in Figure \ref{Fig15SpecialTangles} for $k=4$. We simply denote by $S_{\mathcal{P}}(k)$ (resp. $U_{\mathcal{P}}(k)$) the image of the element $Z_{S_{k}}$ in $\mathcal{P}_{k}$ (resp. $Z_{U_{k}}$).
\begin{figure}[h!]
\begin{center}
\scalebox{0.7}{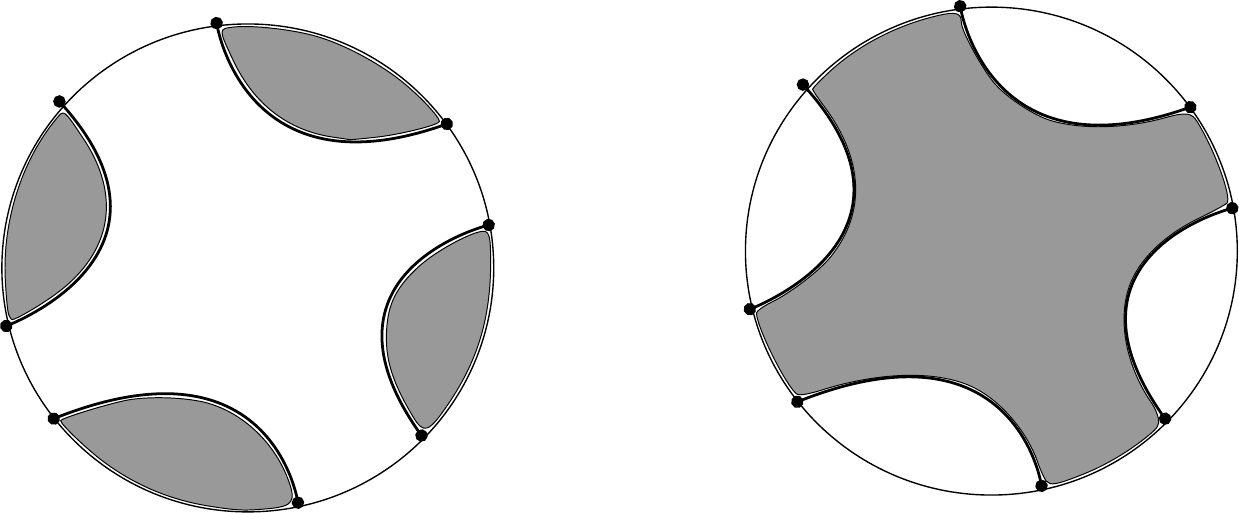}
\end{center}
\caption{\label{Fig15SpecialTangles}: Tangles $S_{4}$ and $U_{4}$.}
\end{figure}
\begin{lem}\label{constReducedSU}
Let $(T,T')$ be a reduced free pair of degree $k$. There exists a planar tangle $R$ of degree $k$ with $r$ inner disks $D_{i}$ of respective degree $k_{i}$, such that $T=R_{D_{1},\dots,D_{r}}(X_{1},\dots,X_{r})$ and $T'=R_{D_{1},\dots,D_{r}}(\tilde{X}_{1},\dots,\tilde{X}_{r})$, where for each $1\leq i\leq r$, $(X_{i},\tilde{X}_{i})$ is either $(U_{k_{i}},Id_{k_{i}})$ or $(Id_{k_{i}},S_{k_{i}})$.
\end{lem}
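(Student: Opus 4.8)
The plan is to use the explicit description of reduced free pairs obtained just above: by the definition of a reduced free pair, $T = T_{\pi}$ and $T' = T_{\kr'(\pi)}$ for some $\pi \in NC(2k)$ with $\pi \geq \pi_{0}$. The tangle $R$ will be constructed by drawing $T_{\pi}$ and $T_{\kr'(\pi)}$ simultaneously, exploiting the fact that $\pi$ and its nested Kreweras complement tile the disk in a tree-like fashion. Concretely $R$ will have two sorts of inner disks: one disk of degree $|B|/2$ for each block $B$ of $\pi$, and one disk of degree $|C|/2$ for each block $C$ of $\kr'(\pi)$, so that $r = |\pi| + |\kr'(\pi)|$ and $\sum_i k_i = 2k$. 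The disk attached to a block $C$ of $\kr'(\pi)$ will receive the pair $(X,\tilde X) = (U_{|C|/2}, Id_{|C|/2})$ and the disk attached to a block $B$ of $\pi$ will receive $(Id_{|B|/2}, S_{|B|/2})$; thus after filling, the inner disks of $T$ are exactly the disks indexed by blocks of $\pi$ and the inner disks of $T'$ are exactly those indexed by blocks of $\kr'(\pi)$, matching the inner disks of $T_{\pi}$ and of $T_{\kr'(\pi)}$.

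I would carry out the construction by induction on $k$ (equivalently on the number of blocks of $\pi$). In the base case $\pi = 1_{2k}$ one computes from the definitions in Section \ref{Krewerassec} that $\kr'(1_{2k}) = \pi_{1}$; here $T_{\pi}$ is the identity tangle with its single inner disk and has precisely $k$ shaded regions, and $R$ is obtained by carving a degree-one disk out of each of these shaded regions and connecting it appropriately to the surrounding strings. For the inductive step one picks a proper interval block $B = \llbracket a,b \rrbracket \subsetneq \llbracket 1,2k \rrbracket$ of $\pi$; the recursive construction of $T_{\pi}$ recalled in Section \ref{noncrossNotation}, together with Lemma \ref{orderPartitionCompoTangles}, gives $T_{\pi} = T_{\pi,B} \circ_{D_{2}} T_{\pi'}$ with $\pi'$ having one block fewer, while Proposition \ref{kewerasShaded} and Lemma \ref{characKreweras} tell one exactly which shaded region is created or modified when $B$ is split off, hence how $\kr'(\pi)$ and $T_{\kr'(\pi)}$ change. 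Inserting the tangle $R'$ produced for $\pi'$ by the induction hypothesis into the appropriate inner disk of a small local tangle handling $B$ yields $R$ and the list $(X_i, \tilde X_i)$.

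The verification then consists of the two identities $T = R(X_1,\dots,X_r)$ and $T' = R(\tilde X_1,\dots,\tilde X_r)$. Filling each $\kr'(\pi)$-disk with the corresponding $U_{k_i}$ re-merges the strings around it and collapses it back into the shaded region it was carved from, so that nothing is left except the disks indexed by $\pi$, connected as in $T_{\pi}$; filling each $\pi$-disk with $S_{k_i}$ does the mirror-image thing and leaves $T_{\kr'(\pi)}$, using that — after the relabelling built into the definition of $T_{\kr'(\pi)}$ — the shaded regions of $T_{\kr'(\pi)}$ are exactly the regions which were expanded into the $\pi$-disks. Most of this is bookkeeping: one must track the distinguished point $i_{*}$ of each disk, the chessboard shading, and the parity condition on endpoints of curves, so that the filled tangles coincide with $T_{\pi}$ and $T_{\kr'(\pi)}$ on the nose rather than merely up to an isotopy permuting marked points.

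I expect the interaction of $\kr'$ with the recursion to be the main obstacle. Because the nested Kreweras complement is not injective (Lemma \ref{krewerPio}), detaching an interval block of $\pi$ does not simply detach a block of $\kr'(\pi)$, so one must argue through the shaded-region description of $\kr'$ (Proposition \ref{kewerasShaded}) that the local modification performed on $T_{\pi}$ induces precisely the modification of $T_{\kr'(\pi)}$ that is compatible with the inductively built $R'$. The degenerate small cases $k_i \in \{0,1\}$, where $U_{k_i}$, $S_{k_i}$ and $Id_{k_i}$ partly coincide, also need a little care but create no genuine difficulty.
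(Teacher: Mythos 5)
Your target object is the right one and coincides with the paper's: $R$ has one inner disk for each block of $\pi$ and one for each block of $\kr'(\pi)$, the $\kr'(\pi)$-disks receive $(U_{k_i},Id_{k_i})$ and the $\pi$-disks receive $(Id_{k_i},S_{k_i})$, and the verification proceeds by collapsing one family of disks at a time. The difference lies in how $R$ is built, and that is where your proposal has a genuine gap: the whole construction rests on an induction whose inductive step you do not carry out, and the difficulty you yourself flag --- that peeling an interval block $B$ off $\pi$ does not peel a single block off $\kr'(\pi)$, but rather deletes the $\vert B\vert/2-1$ blocks of $\kr'(\pi)$ nested under $B$ and contracts the enveloping one, all after a relabelling --- is exactly the content that is missing. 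Without a precise statement and proof of how $\kr'(\pi)$ is assembled from $\kr'(\pi')$ together with these local blocks (a recursion in the spirit of Lemma \ref{characKreweras}, which the paper never needs in this form), the recursion does not actually define $R$, and the subsequent bookkeeping of marked points, shadings and parities cannot even begin. As written, this is a plan whose central step is acknowledged but unresolved.

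The paper's proof avoids the induction entirely. It starts from the free composition $T*T'$, which exists by hypothesis and already contains all the disks of $T$ and of $T'$ correctly interleaved; the hypothesis that the pair is reduced is used precisely here, to guarantee that every inner disk of $T*T'$ meets only the outer boundary and that the two outer points $4i$ and $4i+1$ (resp.\ $4i-2$ and $4i-1$) lead to the same $T$-disk (resp.\ $T'$-disk). Then $R$ is produced in one global surgery: inside each shaded region having $(4i-1,4i)$ as a boundary interval one adds a curve joining the two inner-disk endpoints $\overline{4i-1}$ and $\overline{4i}$ and erases the corresponding outer points and strings, leaving a degree-$k$ tangle with $r$ inner disks. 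All the combinatorics of $\kr'$ that your induction would have to track is thereby absorbed into the single fact that $T*T'$ exists, which you already have from Proposition \ref{freePairKreweras}. The cleanest repair of your argument is to replace the induction by this direct construction; otherwise you must actually prove the recursion formula for $\kr'$ under removal of an interval block before your outline becomes a proof.
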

\begin{proof}
Since $(T,T')$ is a free pair, the free composition $T*T'$ exists; since this pair is reduced, each inner disk of $T*T'$ is only connected to the outer boundary, and, by definition, $\pi_{T}\geq \pi_{0}$ and $\pi_{T'}\geq \pi_{1}$. Thus, for each $1\leq i\leq k$, both elements of $\lbrace 4i,4i+1\rbrace$ (resp. $\lbrace 4i-2,4i-1\rbrace$) are connected to the same inner disk coming from $T$ (resp. $T'$). Color an inner disk $D_{i}$ of $T*T'$ with $1$ if it comes from $T$ and with $2$ if it comes from $T'$. We denote by $\gamma_{i}$ the curve arriving on the boundary point $i$ of the outer boundary and by $\bar{i}$ the boundary point of an inner disk which is connected to $i$. \\
We use the following operation on $T*T'$. For each interval $(4i-1,4i)$, let $\sigma$ be the region having $(4i-1,4i)$ as a boundary interval. Add a curve $\tilde{\gamma}$ within this region connecting $\overline{4i-1}$ to $\overline{4i}$ and erase $\gamma_{4i-1},\gamma_{4i}$ and the boundary points $4i-1$ and $4i$. \\
The degrees of the inner disks do not change and this yields a planar tangle $R$ with $2k$ boundary points and $r$ inner disks (where $r$ is the sum of the number of inner disks in $T$ and in $T'$).  In the resulting planar tangle $R$, an odd point $i$ of the outer boundary is still connected to the point $\bar{i}$ on a disk colored $1$, and an even point $i$ of the outer boundary is still connected to the point $\bar{i}$ on a disk colored $2$. The construction of the tangle $R$ is shown in Figure \ref{Fig16ConstructionTangleR}. \\
\begin{figure}[h!]
\begin{center}
\scalebox{0.9}{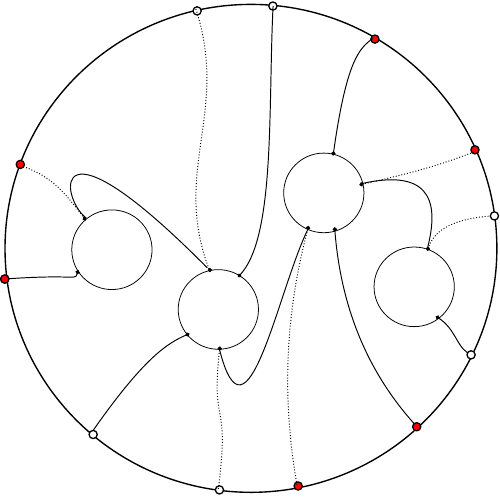}
\end{center}
\caption{\label{Fig16ConstructionTangleR}: Construction of the planar tangle $R$ for the reduced free pair of Figure \ref{Fig14ReducedFreePair}.}
\end{figure}\\
Set $X_{i}=U_{k_{i}},\tilde{X}_{i}=Id_{k_{i}}$ if $D_{i}$ is colored $2$, and $X_{i}=Id_{k_{i}},\tilde{X}_{i}=S_{k_{i}}$ if $D_{i}$ is colored $1$. Consider $R_{1}=R_{D_{1},\dots,D_{r}}(X_{1},\dots,X_{r})$. Each disk of $R$ colored $2$ is replaced by a planar tangle without inner disk, and thus disappears in $R_{1}$. A disk of $R$ colored  $1$ is composed with the identity, and thus remains the same in $R_{1}$. An odd point $4i+1$ is already connected to $\overline{4i+1}$. An even point $4i+2$ is connected to an odd point $\overline{4i+2}$ of a disk $D$ colored $2$. Therefore, since $D$ is composed with $U_{k_{i}}$, $\overline{4i+2}$ is connected in $R_{1}$ by a curve to the following point of $D$ in clockwise direction. Since $4i+2$ and $4i+3$ are in the same connected component, the following point is exactly $\overline{4i+3}$. By the modification we made on $P*Q$, $\overline{4i+3}$ is connected by a curve to the point $\overline{4i+4}$. Therefore, in $R_{1}$, $4i+2$ is connected to $\overline{4i+4}$. Thus, relabelling the outer boundary point $4i+1$ by $2i+1$ and $4i+2$ by $2i+2$ in $R_{1}$ yields exactly the image of $T$ in $T*T'$. This reconstruction of $T$ is shown in Figure \ref{Fig17ReconstructionP}.\\
\begin{figure}[h!]
\begin{center}
\scalebox{0.7}{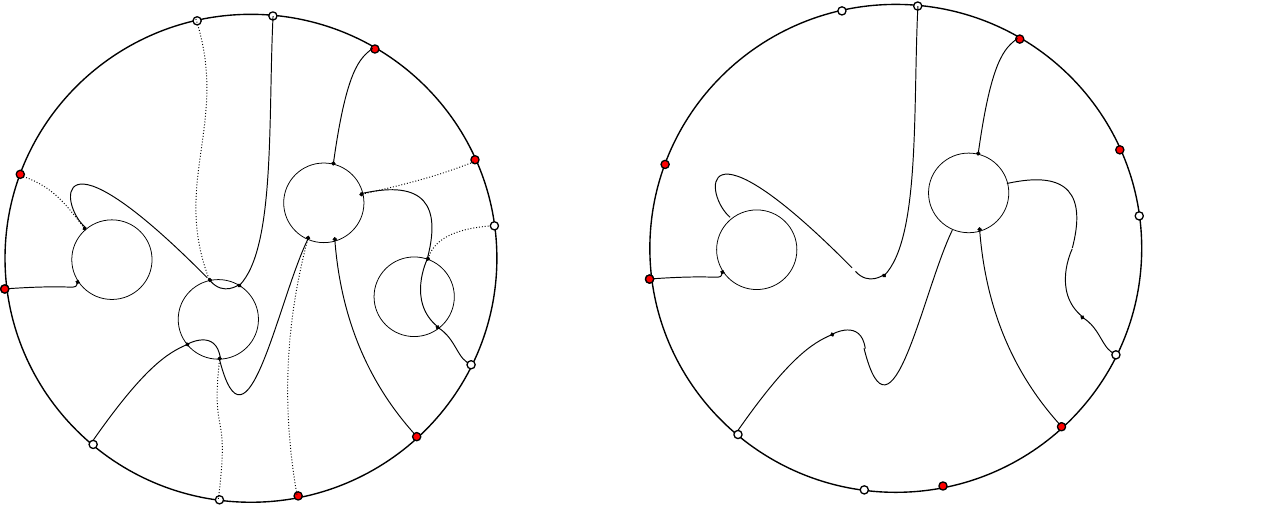}
\end{center}
\caption{\label{Fig17ReconstructionP}: Reconstruction of $T$ from $R$.}
\end{figure}\\
Likewise, $R_{2}=R_{D_{1},\dots,D_{r}}(\tilde{X}_{1},\dots,\tilde{X}_{r})$ is equal to the image of $T'$ in $T*T'$.
\end{proof}
\begin{prop}\label{generatTangle}
Let $\mathcal{P}$ and $\mathcal{Q}$ be two planar algebras. Then, $\lbrace U_{\mathcal{P}}(k)\otimes \mathcal{Q}_{k}\rbrace_{k\geq 1}\cup\lbrace \mathcal{P}_{k}\otimes S_{\mathcal{Q}}(k)\rbrace_{k\geq 1}$ is a generating subset of $\cP*\cQ$.
\end{prop}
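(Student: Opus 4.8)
The plan is to reduce the statement to reduced free pairs via Proposition~\ref{spinReducedPairs}, and then to combine Lemma~\ref{constReducedSU} with the defining property of the tensor product planar algebra. First I would record that the two families $\{U_{\cP}(k)\otimes\cQ_{k}\}_{k}$ and $\{\cP_{k}\otimes S_{\cQ}(k)\}_{k}$ actually lie inside $\cP*\cQ$: the pair $(U_{k},\id_{k})$ is free --- one checks $\pi_{\id_{k}}\leq\kr'(\pi_{U_{k}})$ and applies Proposition~\ref{freePairKreweras} --- so $U_{\cP}(k)\otimes y$ is the image of a free pair of tangles and hence lies in $(\cP*\cQ)_{k}$ for all $y\in\cQ_{k}$, and symmetrically $(\id_{k},S_{k})$ is free, whence $x\otimes S_{\cQ}(k)\in(\cP*\cQ)_{k}$ for all $x\in\cP_{k}$. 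Since $\cP*\cQ$ is stable under the action of planar tangles, the linear span of all images $Z_{T}$ of tuples built from these elements is automatically contained in $\cP*\cQ$; the real content of the proposition is therefore the reverse inclusion.

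For that, fix $n\geq 1$. By Proposition~\ref{spinReducedPairs}, $(\cP*\cQ)_{n}$ is spanned by the vectors $Z_{T_{\pi}}(v)\otimes Z_{T_{\kr'(\pi)}}(w)$ with $\pi\in NC(2n)$, $\pi\geq\pi_{0}$, $v\in\cP_{\pi}$, $w\in\cQ_{\kr'(\pi)}$, and by multilinearity I may assume $v$ and $w$ are elementary tensors over the blocks of $\pi$ and of $\kr'(\pi)$. For such a reduced free pair, Lemma~\ref{constReducedSU} supplies a planar tangle $R$ of degree $n$ with inner disks $D_{1},\dots,D_{r}$ of degrees $k_{1},\dots,k_{r}$, equipped with a $2$-colouring, such that $T_{\pi}=R\circ_{(D_{1},\dots,D_{r})}(X_{1},\dots,X_{r})$ and $T_{\kr'(\pi)}=R\circ_{(D_{1},\dots,D_{r})}(\tilde X_{1},\dots,\tilde X_{r})$, where $(X_{i},\tilde X_{i})=(\id_{k_{i}},S_{k_{i}})$ on the colour-$1$ disks and $(X_{i},\tilde X_{i})=(U_{k_{i}},\id_{k_{i}})$ on the colour-$2$ disks; here the colour-$1$ disks are exactly those that survive as inner disks of $T_{\pi}$ (one per block of $\pi$), and the colour-$2$ disks those that survive in $T_{\kr'(\pi)}$.

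I would then invoke the compatibility of tangle actions with composition, using that $U_{k}$ and $S_{k}$ have no inner disks while $\id_{k}$ is the identity tangle: plugging $U_{k_{i}}$ (resp.\ $S_{k_{i}}$) into a slot of $R$ amounts, for the action on $\cP$ (resp.\ on $\cQ$), to freezing that argument to the \emph{constant} $U_{\cP}(k_{i})$ (resp.\ $S_{\cQ}(k_{i})$), whereas plugging $\id_{k_{i}}$ leaves the slot free. This yields
\[
Z_{T_{\pi}}^{\cP}(v)=Z_{R}^{\cP}(c_{1},\dots,c_{r}),\qquad Z_{T_{\kr'(\pi)}}^{\cQ}(w)=Z_{R}^{\cQ}(d_{1},\dots,d_{r}),
\]
where $c_{i}$ is the corresponding block-component of $v$ if $D_{i}$ has colour $1$ and $c_{i}=U_{\cP}(k_{i})$ if it has colour $2$, and dually $d_{i}=S_{\cQ}(k_{i})$ if $D_{i}$ has colour $1$ and $d_{i}$ is the corresponding block-component of $w$ if it has colour $2$. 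By the very definition of the tensor product planar algebra,
\[
Z_{T_{\pi}}(v)\otimes Z_{T_{\kr'(\pi)}}(w)=Z_{R}^{\cP\otimes\cQ}(c_{1}\otimes d_{1},\dots,c_{r}\otimes d_{r}),
\]
and each factor $c_{i}\otimes d_{i}$ lies either in $\cP_{k_{i}}\otimes S_{\cQ}(k_{i})$ (colour $1$) or in $U_{\cP}(k_{i})\otimes\cQ_{k_{i}}$ (colour $2$), hence in the proposed generating set. Thus every spanning vector of $(\cP*\cQ)_{n}$ is the image under the tangle $R$ of a tuple of generators, which is exactly what the definition of a generating subset asks for; the degenerate degrees $n\in\{+,-\}$, where $(\cP*\cQ)_{\pm}=\cP_{\pm}\otimes\cQ_{\pm}$, are handled directly by capping off the degree-$1$ generators.

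The step I expect to be the main obstacle is the bookkeeping in the displays above: one has to match, disk by disk and simultaneously on the $\cP$- and the $\cQ$-side, the two colours produced by Lemma~\ref{constReducedSU} with the two behaviours a slot of $R$ can have under the action --- ``filled by a closed tangle and thus frozen to a constant'' versus ``filled by $\id$ and thus still an argument''. Once this correspondence is set up carefully, the remainder is a formal consequence of the planar algebra axioms and of the construction of $\cP\otimes\cQ$.
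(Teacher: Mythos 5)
Your proposal is correct and follows essentially the same route as the paper's own proof: both establish containment of the two families in $\cP*\cQ$ via the freeness of the pairs $(U_k,\id_k)$ and $(\id_k,S_k)$, then reduce to reduced free pairs by Proposition~\ref{spinReducedPairs} and apply Lemma~\ref{constReducedSU} together with the definition of the tensor product planar algebra to express each spanning vector as $Z_R$ applied to a tuple of generators. The disk-by-disk colour bookkeeping you flag as the main obstacle is handled in the paper exactly as you describe, by setting $\chi_i = Z_{X_i}(\cP)\otimes Z_{\tilde X_i}(\cQ)$.
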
 
\begin{proof}
First, note that $(U_{k},Id_{k})$ and $(Id_{k},S_{k})$ are two free pairs of planar tangles. Thus, for all $k\geq 1$, $U_{\mathcal{P}}(k)\otimes \mathcal{Q}_{k}$ and $\mathcal{P}_{k}\otimes S_{\mathcal{Q}}(k)$ are subspaces of $(\mathcal{P}*\mathcal{Q})_{k}$. In particular the subplanar algebra generated by $\lbrace U_{\mathcal{P}}(k)\otimes \mathcal{Q}_{k}\rbrace_{k\geq 1}\cup\lbrace \mathcal{P}_{k}\otimes S_{\mathcal{Q}}(k)\rbrace_{k\geq 1}$ is also a subplanar algebra of $\mathcal{P}*\mathcal{Q}$.\\
Reciprocally,  let $(T,T')$ be a reduced free pair. By Lemma \ref{constReducedSU}, there exists a planar tangle $R$ of degree $k$ with $r$ inner disks $D_{i}$ of respective degree $k_{i}$, such that $T=R_{D_{1},\dots,D_{r}}(X_{1},\dots,X_{r})$ and $T'=R_{D_{1},\dots,D_{r}}(\tilde{X}_{1},\dots,\tilde{X}_{r})$, where for each $1\leq i\leq r$, $(X_{i},\tilde{X}_{i})$ is either $(U_{k_{i}},Id_{k_{i}})$ or $(Id_{k_{i}},S_{k_{i}})$. Thus, the image $Z_{T}\otimes Z_{T'}$ of the reduced free pair $(T,T')$ is equal to $Z_{R}(\chi_{1},\dots,\chi_{r})$, where $\chi_{i}=Z_{X_{i}}(\cP)\otimes Z_{\tilde{X}_{i}}(\cQ)$ is either equal to $U_{\cP}(k_{i})\otimes \cQ_{k_{i}}$ or  to $\cP_{k_{i}}\otimes S_{\cQ}(k_{i})$. 
Since $\cP*\cQ$ is spanned by the images $Z_{T}\otimes Z_{T'}$ where $(T,T'  )$ is any reduced free pair, the result follows.
\end{proof}

\begin{rem}
In \cite{L02}, the free product of two planar algebras $\mathcal{P}*\mathcal{Q}$ is directly defined as the subplanar algebra of $\mathcal{P}\otimes\mathcal{Q}$ generated by $\lbrace U_{\mathcal{P}}(k)\otimes \mathcal{Q}_{k}\rbrace_{k\geq 1}$ and $\lbrace \mathcal{P}_{k}\otimes S_{\mathcal{Q}}(k)\rbrace_{k\geq 1}$.
\end{rem}

\section{Free wreath products vs. free products of planar algebras} \label{ProductSection}

The goal of this section is to show that, under the bijective correspondence established in Theorem \ref{main1}, the free wreath product of a compact matrix quantum group $(\G,u)$ 
with an \emph{arbitrary} action on $\C^n$ satisfying the conditions of \ref{main1} (see Subsection \ref{freewreath}) is mapped to the free product of the associated planar algebras. We will also prove that the same results holds true for the free wreath product of $(\G,u)$ with the \emph{universal} action on an arbitrary finite dimensional $C^*$-algebra $A$. As a consequence, our approach in terms of planar algebras gives rise to a free wreath product construction generalizing both previous definitions. \\ \\
Recall from Proposition \ref{wreathaction}, that if $\beta: B \to B \ot \Pol(\G)$ is a faithful, centrally ergodic $\tr_B$-preserving action of $\G$ on a finite dimensional $C^*$-algebra $B$ and if $\al$ is the universal action of $\G_{aut}(A, \tr_A)$ on $A$, the free wreath product $\G \wr_* \G_{aut}(A, \tr_A)$ admits a faithful, centrally ergodic, trace-preserving action $\beta \wr_* \al$ on $A \ot B$. Consider the associated subfactor planar algebras $\cP(\beta) \subset \cP^{\Gamma(B)}, \ \cP(\al) = \rm{TLJ}_{\delta} \subset \cP^{\Gamma(A)} $ with $\delta = \sqrt{\dim A}$ (see Example \ref{TLJ}) and $\cP(\beta \wr_* \al)$.

\begin{thm} \label{main2}
We have 
\[ \cP(\beta \wr_* \al) = \cP(\al) * \cP(\beta) =  \TLJ_{\delta} * \cP(\beta). \]

\end{thm}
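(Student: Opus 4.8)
The plan is to combine the reconstruction theorem (Theorem \ref{main1}) with the categorical description of the free wreath product from \cite{FP16}, using the combinatorial machinery developed in Sections \ref{partitionsection} and \ref{freeproductsection}. By Theorem \ref{main1}, $\cP(\beta \wr_* \al)$ is the planar algebra associated to the action $\beta \wr_* \al = \al_{a(u)}$; by the formula $\cP(\gamma)_n = \Mor(\1, u_\gamma^{\ot n})$ valid for any faithful, centrally ergodic, trace-preserving action (established in \cite[Section 2]{B05}, used in the proof of Theorem \ref{main1}), we must therefore identify $\Mor(\1, a(u)^{\ot n})$ inside $\cH_{A \ot B}^{\ot n} = (\cH_A \ot \cH_B)^{\ot n}$ with the $n$-th space of the free product $\cP(\al) * \cP(\beta)$ sitting inside $\cP^{\Gamma(A \ot B)}_n \cong \cP^{\Gamma(A)}_n \ot \cP^{\Gamma(B)}_n$ (the latter identification being the lemma on tensor products of bipartite graph planar algebras).

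First I would make precise what $a(u)^{\ot n}$ looks like. Using the definitions $a(u) = \sum_{x \subset u} \sum_k (\id_\cH \ot S_{x,k}) a(x)(\id_\cH \ot S_{x,k}^*)$ and $a(S) = (m \ot S)\Sigma_{23}$, together with \cite[Proposition 8.1]{FP16}, the representation $a(u)^{\ot n}$ of $\G \wr_* \G_{aut}(A,\tr)$ decomposes over words in $\Irr(\G)$, and its fixed vectors are built by plugging the isometries $a(S)$ for $S \in \Mor$-spaces of $\G$ into the multiplication maps on $A = \cP^{\Gamma(A)}_1$; concretely, the intertwiner space $\Mor(\1, a(u)^{\ot n})$ is spanned by vectors obtained by first choosing a fixed vector of $u^{\ot n}$ for $\G$ (equivalently an element of $\cP(\beta)_n$, on the $B$-legs) and then distributing the $\cH_A$-legs according to the combinatorial recipe encoded by the $\Sigma_{23}$-flips and multiplication maps $m_A$ — which is exactly the pattern of a \emph{free pair} of planar tangles. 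The cleanest route is to use Proposition \ref{generatTangle}: $\cP(\al) * \cP(\beta)$ is generated by $\{U_{\cP(\al)}(k) \ot \cP(\beta)_k\}_k \cup \{\cP(\al)_k \ot S_{\cP(\beta)}(k)\}_k$, and since $\cP(\al) = \TLJ_\delta = \cP^{\Gamma(A)}$-Temperley–Lieb, the elements $U_{\cP(\al)}(k)$, $S_{\cP(\al)}(k)$, $\id_k$ are nothing but the Temperley–Lieb diagrams generated by the multiplication/comultiplication tangles on $\cH_A$. So the task reduces to checking, on generators, that the vectors $Z_{U_k}^{\cP^{\Gamma(A)}} \ot (\text{element of } \cP(\beta)_k)$ and $(\text{element of } \TLJ) \ot Z_{S_k}^{\cP^{\Gamma(B)}}$ both lie in $\cP(\beta \wr_* \al)$ and conversely generate it.

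The two inclusions I would prove separately. For ``$\supseteq$'': using Lemma \ref{actionlemma} and Proposition \ref{wreathaction}, the planar algebra $\cP(\beta \wr_* \al)$ contains all intertwiners built from $m_A$, $\eta_A$, $m_B$, $\eta_B$, $\tr_A$, $\tr_B$ and the fixed vectors of $\beta$; translating the tangles $S_k, U_k, M_{k,m}$ of Section \ref{freeproductsection} into these basic morphisms (via the dictionary of Section \ref{bipartitepa}/Lemma \ref{helplemma}) shows the generating set of Proposition \ref{generatTangle} is contained in $\bigcup_n \Mor(\1, a(u)^{\ot n})$. For ``$\subseteq$'': I would show $a(u)$ itself, as a representation of $\G \wr_* \G_{aut}(A,\tr)$, is an object of the category $\cC_{\cP(\al)*\cP(\beta)}$ attached by Lemma \ref{mapslemma} to the free product, i.e. that all the defining relations of $C(\G \wr_* \G_{aut}(A,\tr))$ in Definition \ref{DefFima} (unitarity of the $a(x)$, the intertwiner relations $(m \ot S)\Sigma_{23} \in \Mor(a(x) \ot a(y), a(z))$, and $\eta \in \Mor(\1, a(\1_\G))$) are satisfied by the fundamental representation of the quantum group $\al(\cP(\al)*\cP(\beta))$ produced from the free product planar algebra; by universality of $C(\G \wr_* \G_{aut}(A,\tr))$ this gives a surjection, hence a containment of the intertwiner spaces in the right direction, and combined with ``$\supseteq$'' yields equality of the Tannaka–Krein categories and therefore $\cP(\beta \wr_* \al) = \cP(\al)*\cP(\beta)$. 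The uniqueness clause of Theorem \ref{main1} then closes the argument.

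The main obstacle, I expect, is the bookkeeping in matching the $\Sigma_{23}$-flips and iterated multiplication maps of Definition \ref{DefFima}/\cite[Section 8]{FP16} with the free-composition tangle $T * T'$ and, more specifically, with the structure of reduced free pairs $(T_\pi, T_{\kr'(\pi)})$ from Definition \ref{spinReducedPairs} — one has to see that the non-crossing partition $\pi$ records precisely how the $\cH_A$-legs (carrying $\TLJ$) interleave with the $\cH_B$-legs (carrying fixed vectors of $\beta$), and that the Kreweras-complement relation $\pi_{T'} \le \kr'(\pi_T)$ of Proposition \ref{freePairKreweras} is exactly the compatibility forced by $\Sigma_{23}$ and Proposition \ref{kewerasShaded}. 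Getting this correspondence right, rather than any deep analytic input, is the heart of the proof; once it is in place the rest is an application of Woronowicz duality and the results already assembled. I would also need to address the side point flagged in the Remark after Proposition \ref{wreathaction} — that in Bichon's original $C(X)$ setting the action $\al_w$ is centrally ergodic — which should follow from the $n=\dim C(X)$, $A = C(X)$ case of the present theorem together with the fact that $\TLJ_{\sqrt n}$ has one-dimensional $-$ component, but it is worth stating explicitly as a corollary here.
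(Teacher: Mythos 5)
Your strategy is sound and would prove the theorem, but it is organized quite differently from the paper's proof of Theorem \ref{main2}; it is in fact much closer to the strategy the paper deploys later for Theorem \ref{main3}. The paper proves Theorem \ref{main2} by a direct two-sided comparison of \emph{all} morphism spaces $\Mor(a(v)^{\ot k},a(v)^{\ot l})$ versus $\Mor_{\cQ}(k,l)$: it quotes the Fima--Pittau description of the former as the span of the maps $Z_{p,S}$ attached to well-decorated non-crossing partitions (extended from irreducibles to arbitrary $v$ by a short lemma), then uses the fattening map $p\mapsto\psi(p^*)$ together with Proposition \ref{freePairKreweras} and Proposition \ref{irredFactoriz} to identify each pair $(p,S)$ with a special tangle labelled by an element of $(\TLJ_{\delta}*\cP(\beta))_{k+l}$, and conversely. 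Your route --- checking that the generating set of Proposition \ref{generatTangle} lands in $\Fix(a(v)^{\ot k})$ for one inclusion, and invoking the universal property of $C(\G\wr_*\G_{aut}(A,\tr))$ for the other --- trades the full intertwiner classification for the defining relations, which is a legitimate economy; the Kreweras/shaded-region dictionary you correctly identify as the heart of the matter is needed in either version.

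The one step you should not leave implicit is the universality argument itself: the relations of Definition \ref{DefFima} are imposed on a whole family $\{a(x)\}_{x\in\Irr(\G)}$, not on a single fundamental representation. To apply universality you must first construct, inside the category $\cC_{\cQ}$ of Lemma \ref{mapslemma}, unitaries on $\cH\ot\cH_x$ for every $x\in\Irr(\G)$ --- e.g.\ by exhibiting $a(v^{\ot n})$ as a subobject of $u_{\gamma}^{\ot n}$ via the iterated multiplication tangles and cutting down by $\id_{\cH}\ot S_{x,k}S_{x,k}^*$, after checking these are morphisms of $\cC_{\cQ}$ --- and then verify unitarity, $(m\ot S)\Sigma_{23}\in\Mor(a(x)\ot a(y),a(z))$ and $\eta\in\Mor(\1,a(\1_{\G}))$ for this family. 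None of this fails, but it is genuine content rather than bookkeeping, and it is precisely the work the paper's $Z_{p,S}$-based argument performs explicitly. Your final point about central ergodicity in the $C(X)$ setting is well taken; the paper defers it to Step 1 of the proof of Theorem \ref{main3}.
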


In order to prove Theorem \ref{main2}, we will review some facts on the representation theory of $\G_{aut}(A,\tr)$ and $\G \wr_* \G_{aut}(A, \tr_A)$. Recall first that the morphisms between tensor powers of the fundamental representation $u \in B(\cH) \ot \Pol(\G_{aut}(A,\tr))$ of $\G_{aut}(A,\tr)$ are given by
\[ \Mor(u^{\ot k}, u^{\ot l}) = \{ Z_p \ ; \ p \in NC(k,l) \}, \]
where $NC(k,l) $ denotes the set of non-crossing partitions with $k$ upper points and $l$ lower points and where $Z_p: \cH^{\ot k} \to H^{\ot l}$ denotes the linear map given in \cite[Definition 1.11]{FP16}. The 'fattening' isomorphism used in \cite[Proposition 5.2]{LT16} identifies any noncrossing partition $p \in NC(k,l) $ with a non-crossing pair partition $\psi(p) \in NC_2(2k,2l) $ ( i.e. a Temperley-Lieb-Jones diagram with first boundary point being the first upper point of $\psi(p)$) by drawing boundary lines around each block. If $T$ is the special $(k,l)$-tangle labelled by the diagram $\psi(p^*)$, then the map $\tilde{Z}_T := U_l^* Z_T U_k : \cH^{\ot k} \to \cH^{\ot l}$ introduced in \ref{mapslemma} is exactly the map $Z_p$ as can easily be checked. \\ \\
The representation category of $\G \wr_* \G_{aut}(A, \tr_A)$ is described in \cite[Section 3]{FP16}. Label the upper points of a partition $p \in NC(k,l)$ by unitary representations $u_1,\dots, u_k \in \Rep(\G)$ and the lower points by $v_1,\dots, v_l \in \Rep(\G)$. If $b$ is a block of $p$, $u_{U_b}$ will represent the tensor products of the representations labelling the upper part of $b$ read from left to right and $v_{L_p}$ will be the tensor product of the representations labelling the lower part of $B$. Similarly $\cH_{U_b}$ and $\cH_{L_b}$ will denote the tensor product of the Hilbert spaces belonging to these representations. If $b$ does not have an upper (lower) part, we set $u_{U_b} = \1_{\G}$ ($v_{L_b} = \1_{\G}$) by convention. A non-crossing partition $p \in NC(k,l)$ is called well-decorated w.r.t $u_1,\dots, u_k; v_1,\dots, v_l$, if for every block $b$ of $p$, we have $\Mor(u_{U_b}, v_{L_b}) \neq \{ 0 \}$. The set of such partitions is denoted by $NC_{\G}(u_1,\dots, u_k; v_1,\dots, v_l)$.\\ \\
If $p \in NC_{\G}(u_1,\dots, u_k; v_1,\dots, v_l)$ is a well-decorated partition and if $S_b \in \Mor(u_{U_b}, v_{L_b})$ is a morphism for any block $b$, we obtain a well-defined map 
\[ S = \otimes_{b \in p} S_b: \cH_{u_1} \ot \dots \ot \cH_{u_k} \to \cH_{v_1} \ot \dots \ot \cH_{v_l}, \]
by applying the maps $S_b$ iteratively on the legs of the tensor product belonging to $b$. 
To any well-decorated partition $p \in NC_{\G}(u_1,\dots, u_k; v_1,\dots, v_l)$ consisting of blocks $b$ and any such morphism $S = \bigotimes_{b \in p} S_b \in \bigotimes_{b \in p} \Mor( u_{U_{b}}, v_{L_{b}})$, one associates a map 
\[ Z_{p,S} = s_{p,L}^{-1} (Z_p \ot S) s_{p,U}: \bigotimes_{i=1}^k \cH \ot \cH_{u_i} \to \bigotimes_{j=1}^l \cH \ot \cH_{v_j}.  \]
Here, $s_{p,U}: \bigotimes_{i=1}^k \cH \ot \cH_{u_i} \to \cH^{\ot k} \ot \bigotimes_{i=1}^r \cH_{U_{b_i}}$ is the map reordering the spaces on the upper part. The map $s_{p,L}$ is defined analogously. It is shown in \cite{FP16} that
\begin{align}
\Mor(\bigotimes_{i=1}^k a(u_i), \bigotimes_{j=1}^l a(v_j) ) \ &= \nonumber
\\ \spann \{ Z_{p,S} \ &; \ p \in NC_{\G}(u_1,\dots, u_k; v_1,\dots, v_l), \ S \in \bigotimes_{i=1}^r \Mor( u_{U_{b_i}}, v_{L_{b_i}})  \}, \label{equa1}
\end{align}
whenever $u_1,\dots, u_k; v_1,\dots, v_l$ are irreducible. \\
Now, let $v = u_{\beta}$ be the unitary representation of $\G$ corresponding to the action $\beta$. Recall the definition of the unitary representation 
\begin{align} a(v) = \ \sum_{x \subset v } \sum_{k} (\id_{\cH} \ot S_{x,k} \ot \1) a(x) (\id_{\cH} \ot S_{x,k}^* \ot \1) \label{equa2}
\end{align}
from Section \ref{freewreath}. We would like to give a description of the intertwiner spaces of tensor powers of $a(v)$.

\begin{lem} The formula \ref{equa1} holds whenever $x_1,\dots, x_k, y_1, \dots, y_l \in \Rep(\G)$ are arbitrary unitary representations. In particular,
\[ \Mor(a(v)^{\ot k}, a(v)^{\ot l}) = \spann \{ Z_{p,S} \ ; \ p \in NC_{\G}(v^{k}; v^{l}), \ S \in \bigotimes_{b \in p} \Mor( v^{\ot |U_b|}, v^{\ot |L_b|})  \}. \] 
\end{lem}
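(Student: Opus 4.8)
The plan is to reduce the general statement to the case of irreducible representations, where formula \eqref{equa1} is already available from \cite{FP16}. The key point is that everything in sight -- the representations $a(v)^{\otimes k}$, the intertwiner spaces $\Mor(v^{\otimes |U_b|}, v^{\otimes |L_b|})$, and the maps $Z_{p,S}$ -- is compatible with direct sum decompositions, so decomposing $v$ into irreducibles and expanding multilinearly should do the job.

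\begin{proof}
Write $v = \bigoplus_{x \subset v} \sum_k (S_{x,k} \otimes \1) u_x (S_{x,k}^* \otimes \1)$ as in \eqref{equa2}, with isometries $S_{x,k} \in \Mor(u_x, v)$ whose ranges are pairwise orthogonal and sum to $\id_{\cH_v}$. By construction (see Section \ref{freewreath}), $a(v)^{\otimes k}$ decomposes as an orthogonal direct sum of the representations $a(u_{x_1}) \otimes \cdots \otimes a(u_{x_k})$ with $x_i \subset v$, the corresponding isometric intertwiners being $\id_{\cH} \otimes S_{x_1,k_1} \otimes \cdots$ suitably arranged; this uses \cite[Proposition 8.1]{FP16} to see that $a(\cdot)$ is additive and that $a(S_{x,k})$ are isometries intertwining $a(u_x)$ and $a(v)$. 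Consequently
\[
\Mor(a(v)^{\otimes k}, a(v)^{\otimes l}) = \bigoplus_{\substack{x_1,\dots,x_k \subset v \\ y_1,\dots,y_l \subset v}} \bigl( \text{block of } \Mor(\textstyle\bigotimes_i a(u_{x_i}), \bigotimes_j a(u_{y_j})) \bigr),
\]
where the sum is over all choices of irreducible subrepresentations and the identification is through the isometries $a(S_{x,k})$. For each such tuple, \eqref{equa1} applies, so the right-hand block is spanned by the maps $Z_{p,S}$ with $p \in NC_{\G}(x_1,\dots,x_k; y_1,\dots,y_l)$ and $S_b \in \Mor((x_\bullet)_{U_b}, (y_\bullet)_{L_b})$.

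It remains to check that, after composing with the isometries $a(S_{x,k})$, these maps assemble to the maps $Z_{p,S}$ appearing in the statement. Fix $p \in NC_{\G}(v^k; v^l)$ and a morphism $S = \bigotimes_{b \in p} S_b$ with $S_b \in \Mor(v^{\otimes |U_b|}, v^{\otimes |L_b|})$. Each $v^{\otimes |U_b|}$ decomposes into irreducibles, and correspondingly $S_b$ expands as a sum of morphisms between the irreducible pieces; because $Z_p$ and the reordering maps $s_{p,U}, s_{p,L}$ are insensitive to these decompositions (they only depend on the combinatorics of the block structure, not on the labels), the map $Z_{p,S} = s_{p,L}^{-1}(Z_p \otimes S) s_{p,U}$ expands linearly over the choices of irreducible constituents of each $v^{\otimes|U_b|}$, $v^{\otimes|L_b|}$, i.e.\ over the choices of $x_1,\dots,x_k, y_1,\dots,y_l \subset v$ making $p$ well-decorated with respect to those labels. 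Each summand is precisely a map $Z_{p,S'}$ for the corresponding irreducible labelling, conjugated by the isometries $a(S_{x,k})$, hence lands in the appropriate block above. Conversely, summing the generators $Z_{p,S'}$ of each block over all irreducible labellings compatible with a fixed block structure $p$ recovers exactly the maps $Z_{p,S}$ with arbitrary $S \in \bigotimes_{b \in p} \Mor(v^{\otimes|U_b|}, v^{\otimes|L_b|})$. Therefore the span of the $Z_{p,S}$, $p \in NC_{\G}(v^k; v^l)$, equals $\Mor(a(v)^{\otimes k}, a(v)^{\otimes l})$, which is the claimed identity; the same argument with arbitrary representations $x_i, y_j$ in place of tensor powers of $v$ gives the first assertion.
\end{proof}

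The main obstacle I expect is purely bookkeeping: tracking the reordering maps $s_{p,U}, s_{p,L}$ and the isometries $a(S_{x,k})$ through the direct-sum decomposition and verifying that $Z_{p,S}$ really does expand multilinearly in the block-morphisms $S_b$ without any cross-terms. This is where one must use that $a(\cdot)$ is a genuine tensor functor (additivity plus $a(S \otimes T) = a(S) \otimes a(T)$ up to the canonical flips), which is exactly the content of \cite[Proposition 8.1]{FP16} and the definition of $Z_{p,S}$ via the factorization $s_{p,L}^{-1}(Z_p \otimes S)s_{p,U}$; no genuinely new idea is needed beyond carefully invoking these.
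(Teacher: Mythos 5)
Your proposal is correct and follows exactly the route the paper takes: its proof is the one-line observation that the claim "follows directly by decomposing the unitary representation using \eqref{equa2} and applying the result \eqref{equa1} to the irreducible components," and your argument is simply a careful expansion of that same decomposition-and-multilinearity bookkeeping.
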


\begin{proof}
This result follows directly by decomposing the unitary representation using \ref{equa2} and applying the result \ref{equa1} to the irreducible components. 
\end{proof}

\begin{proof}[Proof of Theorem \ref{main2}]
Let us write $\cQ = \TLJ_{\delta} * \cP(\beta)$ and consider the annular category $\cC_{\cQ}$ defined in Lemma \ref{mapslemma} with morphism spaces $\Mor_{\cQ}(k,l) \subset B((\cH \ot \cH_v)^{\ot k}, (\cH \ot \cH_v)^{\ot l} )$. By Theorem \ref{main1} and its proof, it is enough to show that
\[ \Mor(a(v)^{\ot k}, a(v)^{\ot l}) \ = \ \Mor_{\cQ}(k,l). \]
To prove this, let us choose a well-decorated partition $p \in NC_{\G}(v^{k}; v^{l})$ with blocks labelled by $b \mapsto S_b \in \Mor( v^{\ot |U_b|}, v^{\ot |L_b|}) = \Mor_{\cC_{\cP}}(|U_b|, |L_b|)$. For any $b \in p$, choose an element $\eta_b \in \cP_{|b|}$ such that $S_b = Z_{T_b}$, where $T_b$ is the special $(|U_b|, |L_b|)$-tangle labelled by $\eta_b$.  Let $\psi$ be the 'fattening' isomorphism from the discussion above, mapping a noncrossing partition $p \in NC(k,l)$ to a Temperley-Lieb-Jones diagram $\psi(p) \in NC_2(2k,2l)$. In particular the map $ \phi: p \mapsto \psi(p^*)$ maps a block of $p$ to a shaded region of $\phi(p)$. Hence we can identify the pair $(p, S = \bigotimes_{b\in p} S_b)$ with the Temperley-Lieb-Jones diagram $\phi(p)$ together with a labelling of its shaded regions $r = \phi(b)$ by elements in $ \Mor_{\cC_{\cP}}(|U_b|, |L_b|)$. Consider the unique irreducible planar tangle $T$ of degree $k+l$ with $\pi_T = \kr'(\phi(p))$. Then, $(\phi(p), T)$ constitutes a free pair by Proposition \ref{freePairKreweras}. Note that $T$ has an inner disk $D_b$ of degree $|b|$ for every block $b$ in a canonical manner. Consider $ \eta = Z_T((\eta_b)_{b \in p}) \in \cP_{k+l}$. The special $(k,l)$-tangle $T_{\eta}$ labelled by $\eta$ exactly induces the map $S$, i.e. $Z_{T_{\eta}} = S$. In conclusion, we have that the special $(k,l)$-tangle $W$ labelled by $\phi(i) \ot \eta \in \cQ_{k+l}$ induces a morphism in $\Mor_{\cQ}(k,l)$ which can easily be checked to coincide with $Z_{p,S}$. It follows that $\Mor(a(v)^{\ot k}, a(v)^{\ot l}) \subset \Mor_{\cQ}(k,l).$   \\ \\
On the other hand, consider a special $(k,l)$-tangle labelled by $Z_{p'} \ot \eta \in \cQ_{k+l}$ with $p' \in \TLJ_{\delta, k+l}$ a pair partition and $\eta \in \cQ_{k+l}$. By Proposition \ref{irredFactoriz}, we can write $\eta = Z_{T'}\circ_{D_1,\dots, D_r}(\eta_1, \dots, \eta_r)$ where $(p',T')$ is a reduced free pair. In particular, $\pi_{T'} = \kr'(p')$ and $T'$ has an inner disk $D_i$ for any shaded region of $p'$ or equivalently for every block $b_i$ of $\phi^{-1}(p')$ labelled by $\eta_i \in \cP_{|b_i|}$. In particular, we get a morphism $S_{b_i} = Z_{T_{\eta_i}} \in \Mor_{\cP}(k,l)$ induced by the special $(k,l)$-tangle labelled by $\eta_i$ and by construction $ Z_{T_{\eta}} = \bigotimes_{b \in \phi^{-1}(p')} S_b$. Consequently, $\Mor_{\cQ}(k,l) \subset \Mor(a(v)^{\ot k}, a(v)^{\ot l})$.


\end{proof}

Let us move towards the setting of Bichon's initial definition (see \ref{DefBichon}).
Let $X$ be a set of $d$ points and let $\tr: C(X) \to \C$ be the trace on $C(X)$ given by integrating against the uniform probability measure. Moreover, let $\al: C(X) \to C(X) \ot \Pol(\F)$ be a faithful, centrally ergodic, $\tr$-preserving action. Recall that the compact matrix quantum group $(\F,u)$ with $u = u_{\al}$ is a quantum subgroup of the quantum permutation group $S_d^+ = \G_{aut}(C(X),\tr)$. Let $\beta: B \to B \ot \Pol(\G)$ be a faithful, centrally ergodic $\tr_B$-preserving action of a compact quantum group $\G$ on $B$ and put $v=u_{\beta}$.

\begin{thm} \label{main3}
Consider the subfactor planar algebra $ \cQ = \cP(\al)* \cP(\beta) \subset \cP^{\Gamma(C(X) \ot  B)}$ and let $\gamma = \al_{\cQ}: C(X) \ot  B \to C(X) \ot B \ot \Pol(\mathbb H)$ be the action corresponding to $\cQ$ in the sense of Theorem \ref{main1}. We have an isomorphism of compact matrix quantum groups 
\[ (\mathbb H, u_{\gamma}) \cong (\G \wr_* \F, w) \]
with $w$ as in Definition \ref{DefBichon}.

\end{thm}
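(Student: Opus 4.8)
The plan is to show that the compact matrix quantum group $(\mathbb H, u_\gamma)$ produced by Theorem \ref{main1} from $\cQ = \cP(\al) * \cP(\beta)$ has exactly the same intertwiner spaces between tensor powers of its fundamental representation as $(\G \wr_* \F, w)$, and then invoke Woronowicz's Tannaka--Krein duality (combined with the uniqueness part of Theorem \ref{main1}) to conclude the isomorphism. The key point is that $\F$ is a quantum subgroup of $S_d^+ = \G_{aut}(C(X),\tr)$, so Bichon's free wreath product $\G \wr_* \F$ is a quantum subgroup of $\G \wr_* \G_{aut}(C(X),\tr)$, and we may compare $w$ with the representation $a(v)$ studied in Theorem \ref{main2}.

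\textbf{Step 1: Identify $w$ inside $a(v)$.} First I would recall from \cite{FP16} and from the setup of Definition \ref{DefBichon} that, under the inclusion $C(\G\wr_*\F) \hookrightarrow$ (an appropriate quotient of) $C(\G \wr_* \G_{aut}(C(X),\tr))$, the fundamental representation $w = (v^i_{kl}u_{ij})$ is unitarily equivalent to the restriction of $a(v)$; more precisely, since $C(X) = \C^d$ and $B$ are the relevant $C^*$-algebras, $C(X)\ot B \cong \cH \ot \cH_v$ as the carrier space, and the matrix coefficients of $w$ are precisely the coefficients of $a(v)$ pushed to the quotient $C(\F)$-algebra. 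This is essentially the content of \cite[Section 8]{FP16} restricted to the permutation-group case; I would cite it rather than reprove it. Consequently $\Mor_{\G\wr_*\F}(w^{\ot k}, w^{\ot l})$ is obtained from $\Mor_{\G\wr_*\G_{aut}(C(X),\tr)}(a(v)^{\ot k}, a(v)^{\ot l})$ by enlarging the morphism spaces: we replace the Temperley--Lieb--Jones morphisms coming from $\G_{aut}(C(X),\tr)$ (i.e. $\cP(\al_{\mathrm{univ}}) = \TLJ_\delta$) by the (potentially larger) morphisms coming from $\F$, i.e. by $\cP(\al)$.

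\textbf{Step 2: Match with the free product on the planar algebra side.} On the planar-algebra side, Theorem \ref{main1} tells us $\cQ_n = \Mor(\1, u_\gamma^{\ot n})$, and by Frobenius reciprocity it suffices to identify $\Mor_\cQ(k,l)$ (the morphism spaces of the category $\cC_\cQ$ of Lemma \ref{mapslemma}) with $\Mor_{\G\wr_*\F}(w^{\ot k}, w^{\ot l})$. Here I would run the exact argument of the proof of Theorem \ref{main2}, but with $\TLJ_\delta = \cP(\al_{\mathrm{univ}})$ systematically replaced by the planar subalgebra $\cP(\al) \subset \cP^{\Gamma(C(X))}$: a special $(k,l)$-tangle labelled by $Z_{p'} \ot \eta \in (\cP(\al)*\cP(\beta))_{k+l}$, with $p'$ now an element of $\cP(\al)_{k+l}$ (not merely a $\TLJ$ diagram) and $\eta$ obtained via the reduced-free-pair decomposition of Proposition \ref{irredFactoriz}, induces exactly a morphism of the form $Z_{p',S}$ where now $p'$ is decorated by $\F$-data on its ``outer'' structure and by $\G$-data $S_{b_i} = Z_{T_{\eta_i}}$ on the blocks $b_i$. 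The generating-subset description of $\cP(\al)*\cP(\beta)$ from Proposition \ref{generatTangle} guarantees that both inclusions $\Mor_\cQ(k,l) \subseteq \Mor_{\G\wr_*\F}(w^{\ot k},w^{\ot l})$ and its reverse hold, exactly as the two halves of the proof of Theorem \ref{main2}.

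\textbf{Step 3: Conclude via Tannaka--Krein.} Once the intertwiner spaces of $u_\gamma$ and of $w$ coincide as concrete rigid $C^*$-tensor categories with the same generating object $\cH \ot \cH_v$, Woronowicz's theorem \cite{Wo88} (as applied in Lemma \ref{mapslemma}) produces an isomorphism $C(\mathbb H) \cong C(\G\wr_*\F)$ intertwining $u_\gamma$ and $w$, i.e. an isomorphism of compact matrix quantum groups. Finally I should note this isomorphism respects the canonical actions, but this is automatic from the uniqueness statement in Theorem \ref{main1} together with Proposition \ref{wreathaction} (and the final Remark in Section \ref{freewreath}, which is where the central ergodicity of the Bichon wreath action was deferred --- it now follows from $\cQ$ being a genuine subfactor planar algebra, $\dim \cQ_- = \dim Z(A)\cap \Fix = 1$). \textbf{The main obstacle} I expect is Step 1: carefully verifying that the matrix coefficients of Bichon's $w$ really coincide, after passing to the correct quotient, with those of $a(v)$ in the permutation-group specialization --- the bookkeeping between the $C^*$-algebraic presentation of Definition \ref{DefBichon} and the categorical/representation-theoretic description of \cite{FP16} must be done with care, since the two definitions were historically given in rather different languages; but all the needed comparisons are already contained in \cite{FP16}, so this is a matter of assembling citations correctly rather than new mathematics.
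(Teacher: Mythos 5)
Your overall strategy (identify the intertwiner spaces of $u_\gamma$ and $w$, then conclude by Tannaka--Krein) is in the right spirit, but the proposal has a genuine gap at its core: you assume that the morphism spaces of Bichon's $\G\wr_*\F$ are ``obtained from those of $\G\wr_*\G_{aut}(C(X),\tr)$ by replacing the Temperley--Lieb--Jones morphisms by $\cP(\al)$,'' and that this is ``already contained in \cite{FP16}.'' It is not. The results of \cite{FP16} (the description \eqref{equa1} of $\Mor(a(v)^{\ot k},a(v)^{\ot l})$ in terms of decorated non-crossing partitions) apply only when the right-hand input is the \emph{universal} quantum automorphism group; for a proper quantum subgroup $\F\subsetneq S_n^+$, Bichon's $C(\G\wr_*\F)$ is defined purely by generators and relations, and computing its intertwiner spaces is exactly the content of Theorem \ref{main3}. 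The quotient map $C(\G\wr_* S_n^+)\to C(\G\wr_*\F)$ only gives you one inclusion of intertwiner spaces for free; the identification of the larger space with $\cP(\al)*\cP(\beta)$ cannot be obtained by ``running the proof of Theorem \ref{main2} with $\TLJ_\delta$ replaced by $\cP(\al)$,'' because one half of that proof starts from the known partition description of $\Mor(a(v)^{\ot k},a(v)^{\ot l})$, which is unavailable here.

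Concretely, the paper fills this gap with two explicit computations that your proposal omits. First, using the algebraic relations of Proposition \ref{algebraicDescriptionFWP} to recover candidates $\tilde u_{ij}$ and $\tilde v^{(i)}_{kl}$ from the coefficients of $u_\gamma$, one must show that the resulting $*$-homomorphism $C(S_n^+)\to C(\mathbb H)$ factors through the quotient $C(\F)$; this is done by verifying, via a direct computation of $u_\gamma$ applied to vectors of the form $\sum\lambda_{\vec\imath}(X_{i_1}\ot 1_B)\ot\cdots\ot(X_{i_r}\ot 1_B)$, that $\tilde u$ satisfies every relation $\sum_{\vec\imath}\lambda_{\vec\imath}\,\tilde u_{j_1i_1}\cdots\tilde u_{j_ri_r}=\lambda_{\vec\jmath}$ coming from $\Mor(\1,u_\al^{\ot r})$. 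This yields the surjection $C(\G\wr_*\F)\to C(\mathbb H)$ and, as a byproduct, the central ergodicity of $\beta\wr_*\al$. Second, for the reverse inclusion one must check by hand that the generators $\cP(\al)_k\ot S_{\cP(\beta)}(k)$ of the free product (Proposition \ref{generatTangle}) are fixed by $w^{\ot k}$ --- the generators $U_{\cP(\al)}(k)\ot\cP(\beta)_k$ being handled by Theorem \ref{main2} and the subgroup inclusion. Without these two computations the argument does not close; they are new mathematics relative to \cite{FP16}, not a matter of assembling citations.
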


The reason that the formulation of Theorem \ref{main3} is somewhat inverse to the one of Theorem \ref{main2} is that we have not shown yet that the action $\al_w$ is centrally ergodic and therefore $\cP(\G \wr_* \F)$ is not yet well-defined. However, this will follow from the proof of the theorem and therefore a posteriori we can rephrase Theorem \ref{main3} in exactly the same way as Theorem \ref{main2}. Theorem \ref{mainB} corresponds to this rephrasing.

Before proving Theorem \ref{main3}, we first explain why it yields Corollary \ref{corD}. Suppose that $(\F,\alpha)$ and $(\G,\beta)$ are two pairs of compact quantum groups and faithful, centrally ergodic actions acting respectively on $C(X)$ and $C(Y)$, where $X$ and $Y$ are finite sets. Let us use the notations of Theorem \ref{main3}. On the one hand, the law of $\chi_{\beta\wr_{*}\alpha}$ is exactly the law of $\chi_{w}$ in this theorem. By a standard result of Woronowicz \cite{Wo88}, $h_{\G\wr_{*}\F}(\chi_{w}^{k})=\dim \Fix(w^{\otimes k})$, and by Theorem \ref{resultBanica}, $\Fix(w^{\otimes k})=\cP(w)_{k}$. Thus, $h_{\G\wr_{*}\F}(\chi_{w}^{k})=\dim(\cP(w)_{k})$, and therefore $\chi_{w}$ is distributed as $\mu_{\cP(w)}$. Likewise, $\chi_{\alpha}$ is distributed as $\mu_{\cP(\alpha)}$ and $\chi_{\beta}$ is distributed as $\mu_{\cP(\beta)}$. On the other hand, Theorem \ref{main3} yields that $\cP(w)=\cP(\alpha)*\cP(\beta)$. Thus, 
$$\mu_{\cP(w)}=\mu_{\cP(\alpha)*\cP(\beta)}=\mu_{\cP(\alpha)}\boxtimes\mu_{\cP(\beta)},$$
where the last equality is given by \cite{BJXX} or \cite[Chapter 7.3]{Ta15}. Finally, we get
$$\chi_{\beta\wr_{*}\alpha}=\chi_{\alpha}\boxtimes\chi_{\beta},$$
which is the result of Corollary \ref{corD}.

We will now turn to the proof of Theorem \ref{main3}. We compute first some basic relations satisfied by the coefficients of the fundamental representation $w = (u_{ij} v_{kl}^{(i)})_{\substack{1\leq i,j\leq n\\1\leq k,l\leq m}} $ of the free wreath product $\G\wr_{*}\F$. We expand the unique invariant vector $1_{B}$ of the action $\beta$ as $1_{B}=\sum_{i=1}^{m}\nu_{i}f_{i}$ in the canonical orthonormal basis.
\begin{prop}\label{algebraicDescriptionFWP}
The fundamental representation $w = (w_{(i,k),(j,l)}) = (u_{ij} v_{kl}^{(i)})_{\substack{1\leq i,j\leq n\\1\leq k,l\leq m}} $ of $\G\wr_{*}\F$ satisfies the following properties.
\begin{enumerate}
\item For each $1\leq i,j\leq n$ and  each $1\leq k\leq m$ such that $\nu_{k}\not=0$, we have $u_{ij}=\frac{1}{\nu_{k}}\sum_{l=1}^{m}\nu_{l} w_{(i,k),(j,l)}$. 
\item For all $1\leq i\leq n$, we have $v_{k,l}^{(i)}=\sum_{j=1}^{n} w_{(i,k),(j,l)}$. 
\item For each $1\leq i,j,j'\leq n$ and $1\leq k,l\leq m$, $v_{k,l}^{(i)}$ commutes with $u_{ij}$ and $u_{ij} w_{(i,k)(j',l)}=\delta_{jj'} w_{(i,k),(j,l)}$.
\end{enumerate}
\end{prop}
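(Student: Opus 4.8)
The plan is to isolate the three structural facts underlying all of (1)--(3) and then read off each identity by a one-line computation; no serious difficulty is involved.

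The first ingredient is that, since $\F$ is a quantum subgroup of $S_n^+$, the matrix $u = (u_{ij})$ is a magic unitary: its entries are projections, each row sums to the unit, $\sum_{j=1}^n u_{ij} = 1$, and the entries in a fixed row $i$ are mutually orthogonal, $u_{ij}u_{ij'} = \delta_{jj'}u_{ij}$. The second ingredient is the unitality of the action $\beta$: by Lemma \ref{actionlemma} this means $1_B \in \Mor(\1, v)$, and unwinding the correspondence $\al \mapsto u_{\al}$ in the $\tr_B$-orthonormal basis $(f_k)_{k=1}^m$ together with the expansion $1_B = \sum_k \nu_k f_k$, the intertwining relation is precisely the identity $\sum_{l=1}^m \nu_l v_{kl} = \nu_k 1$ for every $k$. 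The third ingredient is the defining relation of $C(\G \wr_* \F)$ (Definition \ref{DefBichon}): the $i$-th copy of $C(\G)$ commutes with the $i$-th row of $u$, so each $v_{kl}^{(i)}$ commutes with each $u_{ij}$. This last fact is already the first assertion of (3), and it also lets us write $w_{(i,k),(j,l)} = u_{ij}v_{kl}^{(i)} = v_{kl}^{(i)}u_{ij}$ interchangeably.

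I would then compute directly. For (2), using only the row-sum relation, $\sum_{j=1}^n w_{(i,k),(j,l)} = \bigl(\sum_{j=1}^n u_{ij}\bigr)v_{kl}^{(i)} = v_{kl}^{(i)}$. For (1), fixing $k$ with $\nu_k \neq 0$ and applying $\sum_l \nu_l v_{kl} = \nu_k 1$ inside the $i$-th copy of $C(\G)$, $\sum_{l=1}^m \nu_l w_{(i,k),(j,l)} = u_{ij}\sum_{l=1}^m \nu_l v_{kl}^{(i)} = \nu_k u_{ij}$, whence the formula after dividing by $\nu_k$. For the second half of (3), the orthogonality of the row entries of $u$ gives $u_{ij}w_{(i,k),(j',l)} = u_{ij}u_{ij'}v_{kl}^{(i)} = \delta_{jj'}u_{ij}v_{kl}^{(i)} = \delta_{jj'}w_{(i,k),(j,l)}$.

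The main thing requiring care is purely a matter of conventions: pinning down the exact form of $u_{\beta}$ with respect to the chosen orthonormal basis so that $1_B \in \Mor(\1, v)$ translates into $\sum_l \nu_l v_{kl} = \nu_k 1$ and not some transposed or conjugated variant, and checking that (1) and (2) invoke nothing beyond the magic-unitary row relations --- the quantum-group factor always appearing as a common one-sided factor --- while the commutation relations of Definition \ref{DefBichon} are used only where stated. One should also observe that faithfulness and central ergodicity of $\beta$, and the structure of $S_n^+$ beyond magic unitarity, are not needed here.
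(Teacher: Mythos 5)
Your proof is correct and follows essentially the same route as the paper: the magic-unitary relations for $u$, the invariance of $1_B=\sum_k\nu_k f_k$ under $\beta$ translated into $\sum_{l}\nu_l v_{kl}=\nu_k 1$, and the defining commutation of the $i$-th copy of $C(\G)$ with the $i$-th row of $u$. If anything your derivation of (1) is slightly cleaner, since the paper's write-up additionally invokes $u_{ij}^2=u_{ij}^*=u_{ij}$ there, which is not actually needed once the quantum-group factor is pulled out as a common one-sided factor.
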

\begin{proof}
For all $1\leq i,j\leq n$, we compute 
\begin{equation}\label{relation1}
\sum_{l=1}^{m}\nu_{l} w_{(i,k)(j,k)}=\sum_{l=1}^{m}\nu_{l}u_{ij} v_{kl}^{(i)}=\nu_{k}u_{ij},
\end{equation}
where the last equality is follows from the fact that $u_{ij}$ commutes with $ v_{kl}^{(i)}$ (and thus also with $ v_{kl}^{(i)*}$), and from the equalities $u_{ij}^{2}=u_{ij}^{*}=u_{ij}$ and $\sum_{l=1}^{m} v_{kl}^{(i)}=\nu_{k}$.\\
Similarly,
\begin{equation}\label{relation2}
\sum_{j=1}^{n}w_{(i,k)(j,l)}=\sum_{j=1}^{n}u_{ij} v_{kl}^{(i)}= v_{kl}^{(i)},
\end{equation}
where the last equality is following from $\sum_{j=1}^{n}u_{ij}=\1$. We have the relation $u_{ij} v_{kl}^{(i)}= v_{kl}^{(i)}u_{ij}$ by definition of the free wreath product and hence we get
$$u_{ij}w_{(i,k)(j,l)}=u_{ij}u_{ij'} v_{kl}^{(i)}=\delta_{jj'}u_{ij} v_{kl}^{(i)}=\delta_{jj'}w_{(i,k)(j,l)}.$$ 
\end{proof}

\begin{proof}[Proof of Theorem \ref{main3}]
The compact matrix quantum groups $(\mathbb H,u_{\gamma})$ and $(\G\wr_{*}\F,w)$ both act on the same Hilbert space, since we are considering the action of two compact quantum groups on the same $C^{*}$-algebra $C(X)\otimes B$. The proof of the theorem is done in two steps.

\textit{Step 1}: Let us show that there exists a $*$-homomorphism $ C(\G \wr_* \F) \to C(\mathbb H)$ which maps $w$ onto $u_{\gamma}$ by showing that all the relations satisfied by the coefficients of $w$ are also satisfied by the coefficients of $u_{\gamma}$. In particular, this will show that the free wreath action $\beta \wr_* \al$ is centrally ergodic. \\
By Theorem \ref{main2}, we have $\TLJ_{\sqrt{n}}*\cP(\beta)=\cP(\al_{\tilde{w}})$, where $\tilde{w}$ is the fundamental representation of the free wreath product $\G \wr_* S_n^+$ of $(\G, v)$ with the quantum permutation group $(S_{n}^{+},s)$. Since $\TLJ_{\sqrt{n}}\subset \cP(\alpha)$, we obtain
 \[ \cP(\al_{\tilde{w}})=\TLJ_{\sqrt{n}}*\cP(\beta)\subset \cP(\alpha)*\cP(\beta)=\cP(\gamma). \] Therefore, there exists a $*$-homomorphism $ \Phi: C(\G \wr_* S_n^+) \to C(\mathbb H)$ mapping $\tilde{w}$ to $u_{\gamma}$.\\
Let us write $\tilde{u}_{ij}=\frac{1}{\nu_{k}}\sum_{l} (u_{\gamma})_{(i,k)(j,l)}$ for $k$ such that $\nu_{k}\not=0$ and $\tilde{ v}_{kl}^{(i)}=\sum_{j}(u_{\gamma})_{(i,k)(j,l)}$. By the aforementioned quantum group inclusion, the matrix $\tilde{ v}^{(i)}$ is the image of $v$ under the composition $\pi \circ \iota_i: C(\G) \to C(\G)^{*n} * C(S_n^+) \to C(\G \wr_* S_n^+),$ where $\iota_i$ denotes the $i$-th embedding into the free product and $\pi$ denotes the quotient map. Moreover the matrices $\tilde{u},\tilde{ v}^{(i)}$ satisfy the relations appearing in part (3) of the previous proposition. By the same reasoning as before, we obtain a $*$-homomorphism $ \varphi: C(S_n^+) \to C(\mathbb H)$ mapping the fundamental representation $s$ onto the matrix $\tilde{u}$ by composing the natural embedding with the quotient map. Let us show that $\varphi$ factors through the quotient $C(\F)$.  Note that $\Pol(\F)$ is the universal $*-$algebra generated by the coefficients of $(u_{ij})_{1\leq i,j\leq n}$ such that all the relations $\sum_{i_{1},\dots,i_{r}}\lambda_{i_{1}\dots i_{r}}u_{i_{1}j_{1}}\dots u_{i_{r}j_{r}}=\lambda_{j_{1}\dots j_{r}}$
are satisfied for each vector $(\lambda_{i_{1}\dots i_{r}})_{1\leq i_{1},\dots,i_{r}\leq n}\in \Mor(\1,u^{ \ot r})$, $r\geq 1$ and all tuples $\vec{i}$ and $\vec{j}$. Fix a vector $(\lambda_{i_{1}\dots i_{r}})_{1\leq i_{1},\dots,i_{r}\leq n}\in \Mor(\1,u^{ \ot r})$. Then, $\lambda\in U_k^{*}(\cP(\alpha))$ and thus $U_k(\lambda)\otimes S_{\cP(\beta)}\in \cP(\gamma)$. Applying $U_k^{*}$ on the latter vector yields that
\begin{equation}\label{invariantVectorStep2}
x:=\sum_{i_{1},\dots,i_{r}=1}^{n}\lambda_{i_{1}\dots i_{r}}(X_{i_{1}}\otimes 1_{B})\otimes \dots\otimes (X_{i_{r}}\otimes 1_{B})\in \Mor(\1,u_{\gamma}^{ \ot r})).
\end{equation}
On the other hand, the action of $\mathbb H$ on this vector yields
\begin{align*}
u_{\gamma}(x \ot \1)=&\sum_{\substack{\vec{i},\vec{j}\in\llbracket 1,n\rrbracket^{r}\\\vec{k},\vec{l}\in\llbracket 1,m\rrbracket^{r}}}\lambda_{\vec{i}}\prod_{s=1}^{r}\nu_{k_{s}}(X_{j_{1}}\otimes f_{l_{1}})\otimes\dots\otimes (X_{j_{r}}\otimes f_{l_{r}})\otimes (u_{\gamma})_{(j_{1},i_{1})(l_{1},k_{1})}\dots(u_{\gamma})_{(j_{r},i_{r})(l_{r},k_{r})}\\
=&\sum_{\substack{\vec{i},\vec{j}\in\llbracket 1,n\rrbracket^{r}\\\vec{l}\in\llbracket 1,m\rrbracket^{r}}}\lambda_{\vec{i}}(X_{j_{1}}\otimes f_{l_{1}})\otimes\dots\otimes (X_{j_{r}}\otimes f_{l_{r}})\\
&\quad\quad\quad\quad\otimes \left(\sum_{k_{1}=1}^{m}\nu_{k_{1}}(u_{\gamma})_{(j_{1},i_{1})(l_{1},k_{1})}\right)\dots\left(\sum_{k_{r}=1}^{m}\nu_{k_{r}}(u_{\gamma})_{(j_{r},i_{r})(l_{r},k_{r})}\right).
\end{align*}
By definition of $\tilde{u}_{ji}$, we have $\sum_{k=1}^{m}\nu_{k}(u_{\gamma})_{(j,i)(l,k)}=\nu_{l}\tilde{u}_{ji}$ for $1\leq i,j\leq n,1\leq k,l\leq m$ and thus
\begin{align*}
u_{\gamma}(x)=&\sum_{\substack{\vec{i},\vec{j}\in\llbracket 1,n\rrbracket^{r}\\\vec{l}\in\llbracket 1,m\rrbracket^{r}}}\lambda_{\vec{i}}(X_{j_{1}}\otimes f_{l_{1}})\otimes\dots\otimes (X_{j_{r}}\otimes f_{l_{r}})\otimes (\nu_{l_{1}})\tilde{u}_{j_{1}i_{1}}\dots\nu_{l_{r}}\tilde{u}_{j_{r}i_{r}}\\
=&\sum_{\vec{i},\vec{j}\in\llbracket 1,n\rrbracket^{r}}\lambda_{\vec{i}}\left((X_{j_{1}}\otimes\left(\sum_{l_{1}=1}^{m}\nu_{l_{1}} f_{l_{1}}\right)\right)\otimes\dots\otimes \left(X_{j_{r}}\otimes \left(\sum_{l_{r}=1}^{m}\nu_{l_{r}}f_{l_{r}}\right)\right)\otimes \tilde{u}_{j_{1}i_{1}}\dots\tilde{u}_{j_{r}i_{r}}\\
=&\sum_{\vec{j}\in\llbracket 1,n\rrbracket^{r}}(X_{j_{1}}\otimes 1_{B})\otimes\dots\otimes (X_{j_{r}}\otimes 1_{B})\otimes \sum_{\vec{i}\in\llbracket 1,n\rrbracket^{r}}\lambda_{\vec{i}}\tilde{u}_{j_{1}i_{1}}\dots\tilde{u}_{j_{r}i_{r}}.
\end{align*}
The last equality together with \eqref{invariantVectorStep2} implies that 
\[\sum_{\vec{i}\in\llbracket 1,n\rrbracket ^{r}}\lambda_{\vec{i}}\tilde{u}_{j_{1}i_{1}}\dots\tilde{u}_{j_{r}i_{r}}=\lambda_{\vec{j}},
\]
for any $\vec{j}\in\llbracket 1,n\rrbracket ^{r}$. Hence, $\varphi$ factors through $C(\F)$ and by the universal properties of the free product and of the quotient, we obtain a surjective $*$-homomorphism $C(\G \wr_* \F) \to C(\mathbb H)$ mapping $w$ to $u_{\gamma}$. \\
\textit{Step 2}: Since we know now that $\beta \wr_* \al$ is centrally ergodic, we can build the corresponding planar algebra $\cP(\beta \wr_* \al)$. Let us prove that $\cP(\gamma)$ is a planar subalgebra of $\cP(\beta \wr_* \al)$. It suffices to prove that a generating subset of $\cP(\gamma)$ is contained in $\cP(\beta \wr_* \al)$. Since $\cP(\gamma) =\cP(\alpha)*\cP(\beta)$, we can consider the generating subset given in Proposition \ref{generatTangle}. For each $k\geq 1$, this subset is given by elements of two kinds:
\begin{itemize}
\item $U_{\cP(\alpha)}(k)\otimes \cP(\beta)_{k}$: note that $U_{\cP(\alpha)}(k)\otimes \cP(\beta)_{k}\subset \TLJ_{\sqrt{n}}*\cP(\beta)$. By Theorem \ref{main2}, $\TLJ_{\sqrt{n}}*\cP(\beta)=\cP(\beta\wr_{*}\alpha')$, where $\alpha'$ is the action of $\G_{aut}(C(X),\tr) = S_n^+$ on $C(X)$. Since $(\F,\alpha)$ is a quantum subgroup of $(S_n^+,\alpha')$ we get $\cP(\beta\wr_{*}\alpha')\subset \cP(\beta \wr_* \al)$, and $U_{\cP(\alpha)}(k)\otimes \cP(\beta)_{k}\subset \cP(w)$.
\item $\cP(\alpha)_{k}\otimes S_{\cP(\beta)}(k)$: let $x\in \cP(\alpha)_k$ and let us prove that $x\otimes S_{\cP(\beta)}(k)\in \cP(\beta \wr_* \al)_{k}$ by showing that $U_k^{*}(x\otimes S_{\cP(\beta)}(k))\in U_k^{*}(\cP(\beta \wr_* \al)_{k})=\Mor(\1, w^{\otimes k})$. Write $U_k^{*}(x)=\sum_{i_{1},\dots,i_{k}=1}^{d}\lambda_{i_{1}\dots i_{k}}X_{i_{1}}\otimes \dots\otimes X_{i_{k}}$, where $(X_i)_{i=1}^n$ denotes the canonical orthonormal basis of $C(X)$ with respect to the Markov trace. Since 
$U_k^{*}(S_{\cP(\beta)}(k))= \eta_{B}^{\otimes k}$,
\[ U_k^{*}(x\otimes S_{\cP(\beta)}(k))=\sum_{i_{1},\dots,i_{k}=1}^{d}\lambda_{i_{1}\dots i_{k}}(X_{i_{1}}\otimes \eta_{B})\otimes \dots\otimes (X_{i_{k}}\otimes \eta_{B}). \]
Applying the action of $\G\wr_{*}\F$ on $U_k^{*}(x\otimes S_{\cP(\beta)}(k))$ yields
\begin{align*}
&w^{\otimes k}( \sum_{i_{1},\dots,i_{k}=1}^{d}\lambda_{i_{1}\dots i_{k}}(X_{i_{1}}\otimes \eta_{B})\otimes \dots\otimes (X_{i_{k}}\otimes \eta_{B}) \ot \1_{\G\wr_{*}\F})\\
&\quad\quad=\sum_{i_{1},\dots,i_{k}=1}^{d}\lambda_{i_{1}\dots i_{k}}w^{\otimes k}\left((X_{i_{1}}\otimes \eta_{B})\otimes \dots\otimes (X_{i_{k}}\otimes \eta_{B}) \ot \1_{\G\wr_{*}\F}\right).
\end{align*}
Since $\eta_{B}\in \Fix(\beta)$, $w(X_{i}\otimes \eta_{B} \1_{\G\wr_{*}\F})=\sum_{j=1}^{d}X_{j}\otimes \eta_{B})\otimes u_{ji}$. Thus, 
\begin{align*}
&\sum_{i_{1},\dots,i_{k}=1}^{d}\lambda_{i_{1}\dots i_{k}}w^{\otimes k}\left((X_{i_{1}}\otimes \eta_{B})\otimes \dots\otimes (X_{i_{k}}\otimes \eta_{B}) \ot \1_{\G\wr_{*}\F} \right)\\=&\sum_{i_{1},\dots,i_{k}=1}^{d}\lambda_{i_{1}\dots i_{k}}\sum_{j_{1},\dots,j_{k}=1}^{d}(X_{j_{1}}\otimes \eta_{B})\otimes \dots\otimes (X_{j_{k}}\otimes \eta_{B})\otimes u_{j_{1}i_{1}}\dots u_{j_{k}i_{k}}\\
=&\sum_{j_{1},\dots,j_{k}=1}^{d}(X_{j_{1}}\otimes \eta_{B})\otimes \dots\otimes (X_{j_{k}}\otimes \eta_{B})\otimes(\sum_{i_{1},\dots,i_{k}=1}^{d}\lambda_{i_{1}\dots i_{k}}u_{j_{1}i_{1}}\dots u_{j_{k}i_{k}})\\
=&\sum_{j_{1},\dots,j_{k}=1}^{d}\lambda_{j_{1}\dots j_{k}}(X_{j_{1}}\otimes \eta_{B}\otimes \dots\otimes (X_{j_{k}}\otimes \eta_{B}))\otimes \1_{\G\wr_{*}\F},
\end{align*}
the last equality being due to the fact that $\sum_{i_{1},\dots,i_{k}=1}^{d}\lambda_{i_{1}\dots i_{k}}X_{i_{1}}\otimes \dots\otimes X_{i_{k}}\in \Mor(\1_{\F}, u^{\otimes k})$, which is equivalent to the equality $(\sum_{i_{1},\dots,i_{k}=1}^{d}\lambda_{i_{1}\dots i_{k}}u_{j_{1}i_{1}}\dots u_{j_{k}i_{k}})=\lambda_{j_{1}\dots j_{k}}$. Hence,
\begin{align*}
&w^{\otimes k}\left( \sum_{i_{1},\dots,i_{k}=1}^{d}\lambda_{i_{1}\dots i_{k}}(X_{i_{1}}\otimes \eta_{B})\otimes \dots\otimes (X_{i_{k}}\otimes \eta_{B})\ot \1_{\G\wr_{*}\F} \right)\\
&\quad\quad= \sum_{i_{1},\dots,i_{k}=1}^{d}\lambda_{i_{1}\dots i_{k}}(X_{i_{1}}\otimes \eta_{B})\otimes \dots\otimes (X_{i_{k}}\otimes \eta_{B})\otimes \1_{\G\wr_{*}\F},
\end{align*}
which means that $U_k^{*}(x\otimes S_{\cP(\beta)}(k))\in \Mor(\1_{\G \wr_* \F}, w^{\otimes k})$. Therefore, $x\otimes S_{\cP(\beta)}(k)\in U_k(\Mor(\1_{\G \wr_* \F}, w^{\otimes k}))=\cP(\beta \wr_* \al)$.
\end{itemize}
Finally, $U_{\cP(\alpha)}(k)\otimes \cP(\beta)_{k}\subset \cP(\beta \wr_* \al)$ and $\cP(\alpha)_{k}\otimes S_{\cP(\beta)}(k)\subset \cP(\beta \wr_* \al)$ for all $k\geq 1$. By Proposition \ref{generatTangle}, this yields that $\cP(\alpha)*\cP(\beta)\subset \cP(\beta \wr_* \al)$. Since $\cP(\alpha)*\cP(\beta)=\cP(\gamma)$, applying the isomorphism $U_k^{*}$ yields that $\Mor(\1_{\mathbb H}, u_{\gamma}^{\otimes k})\subset \Mor(\1_{\G \wr_* \F}, w^{\otimes k})$ for all $k\geq 1$. By Tannaka-Krein duality, this means that $(\G\wr_{*}\F,w)\subset (\mathbb H,u_{\gamma})$. 

\end{proof}

Due to the theorems \ref{main2} and \ref{main3}, the following definition is consistent with both previous definitions.

\begin{defi} \label{wreathnewdef}
Let $\al$ (respectively $\beta$) be an action of the compact quantum group $\F$ (resp. $\G$) on the finite-dimensional $C^*$-algebra $A \ (B)$. Assume that both $\al$ and $\beta$ are faithful, centrally ergodic and preserve the Markov trace on $A$, respectively $B$. The action $\beta \wr_* \al: A \ot B \to A \ot B \ot C(\mathbb H)$ corresponding to 
\[ \cP(\al) * \cP(\beta) \ \subset \ \cP^{\Gamma(A \ot B)} \]
is called the \emph{free wreath action} of $\beta$ and $\al$ and the compact matrix quantum group $(\mathbb H, u_{\beta \wr_* \al})$ is called the free wreath product of $(\G,u_{\beta})$ and $(\F, u_{\al})$.
\end{defi}

\begin{rem} \label{defirem}
Recall that by Example \ref{exampleaction}, every compact quantum group of Kac type whose representation category $\Rep(\G)$ is finitely generated, admits a faithful, centrally ergodic, $\tr$-preserving action on \emph{some} finite-dimensional $C^*$-algebra. Moreover, due to the correspondence of actions and representations (see Section \ref{actions}), choosing such an action is the same as choosing a well-behaved generator of $\Rep(\G)$. Hence if $\beta$ and $\tilde{\beta}$ are different choices of actions of the same quantum group $\G$, the resulting compact \emph{matrix} quantum groups $(\mathbb H, u_{\beta \wr_* \al})$ and $(\tilde{\mathbb H}, u_{\tilde{\beta} \wr_* \al})$ will \emph{not} necessarily be isomorphic. However, as compact quantum groups, we have an isomorphism $\mathbb H \cong \tilde{\mathbb H}$, since the concrete rigid $C^*$-tensor categories generated by $u_{\beta \wr_* \al}$ and $u_{\tilde{\beta} \wr_* \al}$ are the same. In fact, one can generalize Definition \ref{wreathnewdef} even further. We can drop the assumption that $\Rep(\G)$ is finitely generated. We do so by prescribing a concrete rigid $C^*$-tensor category $\cC$ as we will briefly sketch now. \\
Let $\Obj(\cC) = \langle \Rep(\G) \rangle$ be the set of words with letters in $\Rep(\G)$. The tensor operation will be the concatenation of words and therefore we will denote such a word by $a(x_1) \ot a(x_2) \dots \ot a(x_s), \ x_1, \dots, x_m \in \Rep(\G)$. To describe the morphism space 
\[ \Mor( a(x_1) \ot a(x_2) \dots \ot a(x_m), a(y_1) \ot a(y_2) \dots \ot a(y_n) ), \]
consider a planar tangle of degree $k_0 = m+n$ with inner disks $D_i$ of degree $k_i$ and elements $v_i \in \cP(\al)_{k_i}, \ i = 1, \dots, s$. By definition, we have $ \eta := Z_T(v_1, \dots, v_s) \in \cP(\al)_{k_0}$ and therefore we can consider the special $(m,n)$-tangle $T_{\eta}$ labelled by $\eta$. If $\cH$ is the Hilbert space associated to the action $\al$ of $\F$, this special tangle induces a map $Z(T,v_1,\dots, v_s) := Z_{T_{\eta}} : \cH^{\ot m} \to \cH^{\ot n}$. Now, mark the boundary intervals of the outer disk of $T$ which are adjacent to shaded regions of the tangle clockwise by $y_1,\dots,y_n,x_m,\dots,x_1$. Every shaded region $r$ of the tangle is now marked on its outer boundary by elements $x_{i_1}, \dots x_{i_v}, y_{j_1}, \dots, y_{j_w}, i_1 < i_2 \dots < i_v, \ j_1 < j_2 \dots < j_w$. Associate with any such region $r$ a morphisms 
\[ S_r \ \in \ \Mor(x_{i_1} \ot \dots \ot x_{i_v}, y_{j_1} \ot \dots \ot y_{i_w}) \]
with the usual convention that we replace $x_{i_1} \ot \dots \ot x_{i_v}$ by $\1_{\G}$ if no boundary interval of $r$ is marked with a representation in $\{ x_1,\dots, x_m \}$ (with a similar convention for the $y$ part). By ordering the morphisms $S_r$ appropriately, we obtain a morphism 
\[ S = \bigotimes_{r \text{ shaded region}} S_r: \ \cH_{x_1} \ot \dots \ot \cH_{x_m} \ \to \ \cH_{y_1} \ot \dots \ot \cH_{y_n},   \]
and therefore a map
\[ Z(T,v_1,\dots, v_s) \ot S: \cH^{\ot m} \ot \cH_{x_1} \ot \dots \ot \cH_{x_m} \ \to \ \cH^{\ot n} \ot \cH_{y_1} \ot \dots \ot \cH_{y_n}. \]
If we reorder as in the beginning of this section, we obtain maps
\[ Z(T,v_1,\dots, v_s; S): \bigotimes_{i=1}^m \cH \ot \cH_{x_i} \ \to \ \bigotimes_{j=1}^n \cH \ot \cH_{y_j}.  \]
 We set $\Mor( a(x_1) \ot a(x_2) \dots \ot a(x_m), a(y_1) \ot a(y_2) \dots \ot a(y_n) )$ to be the span of all maps obtained in this way. One can check that this defines a concrete rigid $C^*$-tensor category and hence a compact quantum group $\G \wr_* (\F, \al)$. If $\Rep(\G)$ is finitely generated and we choose a faithful, centrally ergodic, $\tr$-preserving action $\beta$ of $\G$, the compact matrix quantum group $(\G \wr_* (\F, \al), a(u_{\beta}))$ is isomorphic to the free wreath product of $(\G, \beta)$ and $(\F, \al)$ as in Definition \ref{wreathnewdef}, since building the reduced free pair $T * T'$ of tangles $T$ and $T'$ corresponds to labelling shaded regions of $T$ with morphisms in the annular category of $\cP(\beta)$. 
\end{rem}

\section{Approximation properties} \label{approximation}

Central approximation properties of discrete quantum groups (i.e. duals of compact quantum groups) were defined in \cite{dCFY14} and were explained more conceptually in \cite{PV15} (see also \cite{NY15}, \cite{GJ16}) in terms of representations of rigid $C^*$-tensor categories. A functional $\omega: \Pol(\G) \to \C$ is called central if
 \[ (\omega \ot \psi) \circ \Delta = (\psi \ot \omega) \circ \Delta \]
for all $\psi \in \Pol(\G)^*$. It is called positive if $\omega(x^* x) \geq 0$ for all $x \in \Pol(\G)$. Central functionals on $\G$ are in one-to-one correspondence with functions $\varphi: \Irr(\G) \to \C$ through the map 
\[ \varphi \to \omega_{\varphi}, \quad \omega_{\varphi}(u_{ij}^x) = \varphi(x) \delta_{ij}, \]
where $x \in \Irr(\G)$ and $u^x = (u_{ij}^x)$ is a representative of $x$ as a unitary matrix. In addition, $\omega_{\varphi}$ is positive if and only if $\varphi$ is positive in the sense of \cite{PV15}. In \cite{PV15}, the authors also define what it means for a function $\varphi: \Irr(\G) \to \C$ (and hence for a central functional on $\Pol(\G)$) to be completely bounded. Characterizations of this property are given in \cite{AV16} and \cite{AdLW16}.

\begin{defi}[\cite{dCFY14}, \cite{PV15}]
The discrete dual of $\G$ satisfies
\begin{itemize}
\item the central Haagerup property if there exists a net of positive functions $\varphi_i: \Irr(\G) \to \C$ in $c_0(\Irr(\G))$ such that $\varphi_i \to 1$ pointwise as $i \to \infty$;
\item the  central almost completely positive approximation property (ACPAP) if there exists a net of positive functions $\varphi_i: \Irr(\G) \to \C$ that converges to $1$ pointwise and such that for every fixed $i$, there exists a net of finitely supported completely bounded functions $\psi_i^k: \Irr(\G) \to \C$ such that $\lim_{k} \| \psi_i^k - \varphi_i \|_{cb} = 0$.
\end{itemize}
\end{defi}
Clearly, the central ACPAP implies both the central Haagerup property and the central complete metric approximation property (see \cite{dCFY14}). Also, note that for Kac type quantum groups, the central version of the Haagerup property is equivalent to the original non-central one. 
The following result mentioned in the introduction (Corollary \ref{corC}) is an immediate consequence of \cite[Proposition 9.3]{PV15}.

\begin{cor}
Let $(\F,\al)$, $(\G,\beta)$ be compact quantum groups with faithful, centrally ergodic, $\tr$- preserving actions $\al, \beta$ on finite dimensional $C^*$-algebras and let $(\G \wr_* \F, \beta \wr_* \al)$ be their free wreath product.
\begin{itemize}
\item If both $\F$ and $\G$ have the central Haagerup property, so does $\G \wr_* \F$.
\item If both $\F$ and $\G$ have the central ACPAP, so does $\G \wr_* \F$. 
\end{itemize}
\end{cor}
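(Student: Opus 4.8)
The plan is to deduce the statement directly from the structural results already established in the paper, notably Theorem \ref{main2}/\ref{main3} (equivalently Definition \ref{wreathnewdef}), which identify $\cP(\beta \wr_* \al)$ with the free product $\cP(\al) * \cP(\beta)$ of subfactor planar algebras, together with the Popa--Vaes machinery of \cite{PV15} that reads central approximation properties off the standard invariant / rigid $C^*$-tensor category. The key point is that central approximation properties of the discrete dual of a Kac-type compact quantum group $\G$ depend only on the representation category $\Rep(\G)$, hence only on the associated subfactor planar algebra $\cP(\beta)$; this is precisely the content of the tensor-categorical reformulation in \cite{PV15} (and \cite{NY15}, \cite{GJ16}).

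First I would recall that, since $\al$ and $\beta$ are faithful, centrally ergodic and Markov-trace-preserving, Banica's theorem (Theorem \ref{resultBanica}) gives subfactor planar algebras $\cP(\al)$, $\cP(\beta)$, and that by Theorem \ref{main1} the quantum groups $\F$, $\G$ are recovered from these planar algebras up to isomorphism; in particular the central Haagerup property / central ACPAP of $\hat{\F}$ (resp. $\hat{\G}$) is a property of $\cP(\al)$ (resp. $\cP(\beta)$) in the sense of \cite{PV15}. Next, by Theorem \ref{main2} and Theorem \ref{main3} (i.e. Definition \ref{wreathnewdef}) we have $\cP(\beta \wr_* \al) = \cP(\al) * \cP(\beta)$, where $*$ is the Bisch--Jones free product of planar algebras. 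Since $\F, \G$ are of Kac type, so is $\G \wr_* \F$ (all the actions involved preserve the trace, and the Haar state of a free wreath product of Kac-type quantum groups is again tracial — this follows from the construction, e.g. the tracial Markov-trace-preserving action $\beta \wr_* \al$ of Proposition \ref{wreathaction}), so the central and non-central versions of these approximation properties coincide for all three quantum groups. It then remains to invoke \cite[Proposition 9.3]{PV15}, which asserts that the central Haagerup property (resp. the central ACPAP) of the standard invariants associated to two rigid $C^*$-tensor categories is inherited by their free product; applying this to $\cP(\al) * \cP(\beta) = \cP(\beta \wr_* \al)$ yields the claim for $\G \wr_* \F$.

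Concretely, the steps I would carry out are: (1) note $\G \wr_* \F$ is of Kac type, so central $=$ non-central here; (2) cite Theorems \ref{main2}, \ref{main3}, Definition \ref{wreathnewdef} to get $\cP(\beta \wr_* \al) = \cP(\al)*\cP(\beta)$; (3) translate ``$\hat{\F}$, $\hat{\G}$ have the central Haagerup property (central ACPAP)'' into the corresponding statement for the rigid $C^*$-tensor categories $\Rep(\F) \simeq \cC_{\cP(\al)}$ and $\Rep(\G) \simeq \cC_{\cP(\beta)}$, using the dictionary of \cite{PV15}; (4) apply \cite[Proposition 9.3]{PV15} to the free product of these categories; (5) translate back to conclude for $\widehat{\G \wr_* \F}$.

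The main obstacle is step (4): one must be certain that the free product of subfactor planar algebras in the sense of Bisch--Jones corresponds, under the dictionary, to the operation on rigid $C^*$-tensor categories for which Popa--Vaes prove stability of the central Haagerup property and central ACPAP in \cite[Proposition 9.3]{PV15}. This is a matching of two a priori different ``free product'' constructions — the planar-algebraic one of \cite{BJ95} versus the categorical/subfactor one used in \cite{PV15} — and while it is standard folklore that they agree (both correspond to the amalgamated free product of the corresponding subfactors over their common Temperley--Lieb part), one should either cite a precise reference or remark that Section \ref{freeproductsection} identifies $\cP(\al)*\cP(\beta)$ with exactly this construction. Once this identification is in place, the corollary is immediate, since no quantitative estimate beyond those already contained in \cite{PV15} is needed.
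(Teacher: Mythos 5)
Your proposal is correct and follows essentially the same route as the paper, which simply combines the identification $\cP(\beta \wr_* \al) = \cP(\al) * \cP(\beta)$ from Theorems \ref{main2} and \ref{main3} with \cite[Proposition 9.3]{PV15} on stability of the central Haagerup property and central ACPAP under Bisch--Jones free products of standard invariants. Your extra step (1) on Kac type is harmless but unnecessary, since both the hypothesis and the conclusion are stated in terms of the central properties, which is exactly the setting of \cite{PV15}.
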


\section{Basis of a free product of planar algebras} \label{basissection}
In this section we will describe a basis of a free product of planar algebras using methods of free probability theory. As a corollary, this yields a basis for the intertwiner spaces of a free wreath product quantum group.
\subsection{Boolean decomposition of a planar algebra}\label{BooleanDecompoPlanarAlgebra}
We will start out by introducing a way to decompose a subfactor planar algebra into tensor products of vector subspaces. This should be compared to the introduction of the graded algebra of a planar algebra in \cite{GJS10}. In analogy with Section \ref{reconstruction}, given a subfactor planar algebra $\cP$, we consider the maps $U_{k,l}: \cP_k \ot \cP_l \to \cP_{k+l}$ induced by the tangle depicted in Figure \ref{concatenation} for $k,l \geq 1$. This is nothing but the graded multiplication defined in \cite{GJS10}.  For an interval partition $I=(i_{1},\dots,i_{r})\in \cI(n)$ (this is short for $I = (\llbracket 1, i_1 \rrbracket, \dots, \llbracket i_r, n \rrbracket)$), let us define the map $U_{I}$ by induction on the length of $I$ through the relation
$$U_{I}=U_{i_{1},n-i_{1}}\circ( \id_{i_{1}}\otimes U_{I'}),$$
where $I'=(i_{2},\dots,i_{r})\in \cI(n-i_{1})$.
We say that a sequence of vector subspaces $(L_{n})_{n\geq 1}$ is a subspace of a planar algebra $\cP$ if $L_{n}\subset\cP_{n}$ for all $n\geq 1$. We simply write $(L_{n})_{n\geq 1}\subset \cP$ when no confusion with the inclusion of planar algebras can arise. If $(L_{n})_{n\geq 1}\subset \cP$ and $I=(i_{1},\dots,i_{r})$ is an interval partition of $n$, we denote by $L(I)$ the subspace of $\cP_{n}$ given by 
$$L(I)=U_{I}\left(L(i_{1})\otimes \dots\otimes L(i_{r})\right).$$
\begin{defi}
A subspace $(L_{n})_{n\geq 1}$ of $\cP$ is called a Boolean decomposition of $\cP$ if we have the direct sum decomposition
$$\cP_{n}=\bigoplus_{I\in\mathcal{I}(n)}L(I)$$
for all $n\geq 1$.
\end{defi}
Let $\mu_{\cP}$ be the unique probability measure on $\R$ with $n$-th moment $\dim \cP_n$. The fact that this measure indeed exists follows from observations on the principal graph of $\cP$ in \cite{J99}.
Note that if $(L_{n})_{n\geq 1}$ is a Boolean decomposition of $\cP$, then necessarily $\dim L(n)=b_{\mu_{\cP}}(n)$ by the definition of $\mu_\cP$ and by the definition of the Boolean cumulants (refer to Section \ref{nonCrossFreeProba} for the definition of Boolean cumulants of a probability measure).\\
Let us show that there always exists a Boolean decomposition of a planar algebra $\cP$. Recall that each vector space $\cP_{n}$ is a Hilbert space with the hermitian product given by the map $(x,y)\mapsto \Tr_{n}(y^*x)$.
\begin{defi}
The $n$-th Boolean orthogonal subspace of $\cP$, denoted by $\cB_{\cP}(n)$, is the orthogonal complement of $\bigcup\limits_{\substack{k,m\geq 1\\k+m=n}}U_{k,m}(\cP_{k}\otimes \cP_{m})$ in $\cP_{n}$.
\end{defi}
These subspaces have been studied by Guionnet, Jones and Shlyaktenko in \cite{GJS10}. We have the following decomposition of $\mathcal{P}_{n}$ (see \cite[Theorem 1]{GJS10}).
\begin{equation}\label{resultGJS}
\mathcal{P}_{n}=\bigoplus_{I\in\mathcal{I}(n)}\cB_{\cP}(I),
\end{equation}
where the direct sum is an orthogonal direct sum. Therefore, the collection of subspaces $(\cB_{\cP}(n))_{n\geq 1}$ gives a Boolean decomposition of $\cP$, which we call the orthogonal Boolean decomposition of $\cP$.
\subsection{Surgery on non-crossing partitions}\label{surgeryNoncrossPart}
In this subsection, we describe the behavior of the Kreweras complement of a non-crossing partition when we merge or split blocks. \\
Let $p = \{ B_1, \dots, B_r \}$ be a non-crossing partition. We define a partial order $\preceq$ on $p$ by saying that a block $B=\lbrace i_{1}<\dots<i_{t}\rbrace$ is larger than a block $B'=\lbrace j_{1}<\dots<j_{t'}\rbrace$ if and only if $i_{1}<j_{1}<j_{t'}<i_{t}$: in this case, we write $B'\preceq B$.  We obtain a function $d_{p}:p\longrightarrow \mathbb{N}$, called depth function and defined as follows. If $B\in p$, $d_{p}(B)$ is the maximal length of a chain having $B$ as lower element with respect to $\preceq$.\\
Two blocks $B$ and $B'$ are called adjacent if they can be merged in such a way that the partition is still non-crossing. The following facts are some straightforward deductions of the definition of the order $\preceq$.
\begin{enumerate}
\item If $B$ and $B'$ are adjacent, then either $B \preceq B'$ or $B'\preceq B$ or $d_{p}(B)=d_{p}(B')$.
\item If we merge a block $B$ of $p$ with an adjacent block $B'$ of larger depth, we get a new partition $p' = \lbrace \tilde{B} \rbrace_{\substack{ \tilde{B}\in p\\ \tilde{B} \not=B,B'}}\cup \lbrace B\cup B'\rbrace$. If $B''$ is a different block with a depth smaller than $B$ and $B'$, then $d_{p'}(B'')\leq d_{p}(B'')$. Moreover, $d_{p'}(B\cup B')=d_{p}(B)$. The new partition $p'$ is denoted by $p_{B,B'}$.
\item If we split a block $B$ of $p$ in two interval blocks $B_{1},B_{2}$ (with $B_{1}$ sitting left of $B_{2}$),  we get a new partition $p' = \lbrace \tilde{B} \rbrace_{\substack{\tilde{B} \in p\\ \tilde{B} \not=B}}\cup \lbrace B_{1}, B_{2}\rbrace$. If $B'$ is different from $B$ in $p$, then $d_{p'}(B')\leq d_{p}(B')$. Moreover, $d_{p'}(B_{1})=d_{p'}(B_{2})=d_{p}(B)$. The new partition $p'$ is denoted by $p_{B,i}$, where $i$ is the rank in $B$ of the last element of $B_{1}$ (with respect to the natural order on integers). 
\end{enumerate}
We write $\sigma$ for the subset $\lbrace 1,3,\dots, 2n-1\rbrace$ of $\llbracket 1,2n\rrbracket$. Let $p$ be a partition of $n$; the Kreweras complement $K(p)$ of $p$ is the unique partition of $n$ such that $(K(p),\sigma^{c})$ is the Kreweras complement of $(p,\sigma)$ (see Section \ref{Krewerassec}). This yields a bijective map $K:NC(n)\longrightarrow NC(n)$. The inverse of this map is denoted by $K^{-1}$. In the sequel, the blocks of a partition $p$ are denoted by $B$, whereas the blocks of its Kreweras complement are denoted by $C$. The following result from \cite{NS06} will be useful.
\begin{lem}[\cite{NS06}]\label{KrewerasNumberBlocks}
Let $p$ be a non-crossing partition of $n$. Then, $K(p)$ is the unique partition such that $(p,\sigma)\vee(K(p),\sigma^{c})$ is non-crossing and
$$\vert p\vert +\vert K(p)\vert =n+1.$$
\end{lem}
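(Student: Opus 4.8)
The statement is, up to the conventions fixed above, a standard fact from \cite{NS06}; the plan is to separate it into an easy uniqueness part and the block-count identity $|p|+|K(p)|=n+1$, which carries all the substance.

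First I would dispose of uniqueness. By the definition of the Kreweras complement recalled just before Lemma \ref{characKreweras}, $(K(p),\sigma^{c})$ is the \emph{largest} partial partition with support $\sigma^{c}$ for which $(p,\sigma)\vee(K(p),\sigma^{c})$ is non-crossing; equivalently, any partition $q$ of $\llbracket 1,n\rrbracket$ with $(p,\sigma)\vee(q,\sigma^{c})$ non-crossing satisfies $q\leq K(p)$. Since $K(p)$ itself visibly satisfies the non-crossing condition, it remains to see that it is the \emph{only} partition satisfying both conditions of the lemma. But if $q$ does, then $q\leq K(p)$, and once the block-count identity is known we get $|q|=n+1-|p|=|K(p)|$; a partition refining $K(p)$ and having the same number of blocks as $K(p)$ must equal $K(p)$, so $q=K(p)$.

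For the identity $|p|+|K(p)|=n+1$ the plan is to use the lattice structure of $NC(n)$. I would record two facts: (i) $NC(n)$ is graded with rank function $p\mapsto n-|p|$, so all maximal chains from $\hat{0}_{n}=\{\{1\},\dots,\{n\}\}$ to $\hat{1}_{n}=\{\{1,\dots,n\}\}$ have length $n-1$ (each covering step fuses two blocks); and (ii) $K\colon NC(n)\to NC(n)$ is an order-reversing bijection. Injectivity of $K$ and the formula for $K^{-1}$ come out of the characterization in Lemma \ref{characKreweras} (applied with $\sigma$ in the role of the set $S$ there), and order-reversal is read off the same characterization: if $p\leq p'$ then each Kreweras gap of $p'$ lies inside one of $p$, hence $K(p')\leq K(p)$. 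An order-reversing bijection of a graded poset of length $n-1$ sends an element of rank $r$ to one of rank $(n-1)-r$; applied to $p$ this says $n-|K(p)|=(n-1)-(n-|p|)$, i.e. $|p|+|K(p)|=n+1$. If I preferred to avoid invoking gradedness, I would instead run an induction on $|p|$ through the surgery picture of Section \ref{surgeryNoncrossPart}: the base case $p=\hat{1}_{n}$ gives $1+n=n+1$ with $K(p)=\hat{0}_{n}$, and splitting one block of a partition into two non-crossing pieces fuses exactly two blocks of its Kreweras complement, so $|p|+|K(p)|$ is constant along any chain down to $\hat{1}_{n}$. (A third, purely combinatorial option is an Euler-characteristic count on the planar diagram carrying $p$ on $\sigma$ and $K(p)$ on $\sigma^{c}$.)

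I expect the only delicate point to be the split $\leftrightarrow$ fuse correspondence for $K$ — equivalently, verifying that $K$ is an order-reversing bijection with the stated effect on ranks. One must use the maximality defining $K(p)$ to check that splitting a block of $p$ does not merely fission a block of the complement but genuinely fuses two of its blocks while keeping the join non-crossing and maximal; the bookkeeping of which two blocks fuse is exactly what the depth function $d_{p}$ of Section \ref{surgeryNoncrossPart} is built to track, so in a final write-up I would phrase the induction in that language. The remaining verifications are routine.
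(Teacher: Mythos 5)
The paper gives no argument for this lemma at all: it is imported verbatim from \cite{NS06}, so there is no internal proof to compare yours against. Your proposal is correct and essentially self-contained. The uniqueness step is exactly right: maximality in the definition of the Kreweras complement forces $q\leq K(p)$ for any competitor $q$, and a refinement of $K(p)$ with the same number of blocks must coincide with $K(p)$. Your main route to $|p|+|K(p)|=n+1$ — $K$ is an order-reversing bijection of the graded lattice $NC(n)$ with rank function $p\mapsto n-|p|$, and the image of a maximal chain under an order-reversing bijection is a chain of the same cardinality, hence saturated in a graded poset of length $n-1$ — is sound, but note that it outsources the substance to the gradedness of $NC(n)$ with that rank function (i.e.\ that every covering relation merges exactly two blocks), which is a fact of the same depth as the lemma and also lives in \cite{NS06}; if the goal is a citation-free proof you should include that verification. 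One further caution on your fallback induction through the surgery of Section \ref{surgeryNoncrossPart}: you cannot invoke Lemma \ref{linkKrewerasDecompo} there, since its proof in the paper \emph{uses} Lemma \ref{KrewerasNumberBlocks}, so the split-versus-fuse correspondence must be established directly from the maximality characterization of Lemma \ref{characKreweras}; you correctly identify this as the delicate point, and that is indeed where all the remaining work sits.
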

If $B$ is a block of $p$, then Lemma \ref{characKreweras} yields that any adjacent block of $B$ in $(p,\sigma)\vee(K(p),\sigma^{c})$ belongs to $K(p)$. Moreover, there is a unique adjacent block of $B$ in $(p,\sigma)\vee(K(p),\sigma^{c})$ of depth lower or equal than the one of $B$. Thus, we can define $C^{B}$ without any ambiguity as the unique block of $K(p)$ which is adjacent to $B$ in $(p,\sigma)\vee(K(p),\sigma^{c})$ and has a depth smaller or equal to the one of $B$. The block $C^{B}$ is called the upper enveloping block of $B$. Similarly, we also define $C^{B}_{1},\dots,C^{B}_{r}$ (with $r$ equal to $\vert B\vert-1$) the $r$ adjacent blocks of $B$ in $(p,\sigma)\vee(K(p),\sigma^{c})$ having a larger depth. These blocks are called the lower enveloping blocks of $B$. Examples of upper and lower enveloping blocks of a given block are drawn in Figure \ref{DefinitionBlocks}.  
We define as well the upper enveloping block $B^{C}$ and the lower enveloping blocks $B^{C}_{1},\dots,B^{C}_{r'}$ of a block $C$ of $K(p)$ of size $r'+1$.
\begin{figure}
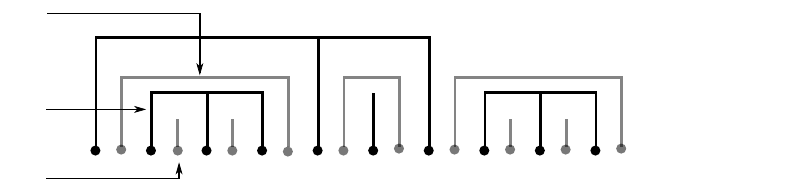
\caption{\label{DefinitionBlocks}Upper and lower enveloping blocks of a given block $B$.}
\end{figure}
\begin{lem}\label{linkKrewerasDecompo}
Let $B$ be a block of $p$ of cardinality $r$ and let $i\in \llbracket 2,r\rrbracket$. Then, 
$$K(p_{B,i})=K(p)_{C^{B},C^{B}_{i}}$$
and 
$$K(p)_{C,i}=K(p_{B^{C},B^{C}_{i}}).$$
\end{lem}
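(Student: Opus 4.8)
The plan is to prove both identities by a direct combinatorial analysis using the characterization of the Kreweras complement from Lemma \ref{characKreweras}, exploiting the symmetry between $p$ and $K(p)$ (so that the second identity follows from the first applied to $K(p)$, upon noting $K(K(p))$ is a rotation of $p$, or more cleanly, by running the same argument with the roles of odd and even points exchanged). So I would focus on proving $K(p_{B,i}) = K(p)_{C^B, C^B_i}$.

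First I would set up notation carefully: write $B = \{ b_1 < b_2 < \dots < b_r \}$ as a block of $p$, realized inside $\sigma = \{1,3,\dots,2n-1\}$ via the canonical bijection, and recall that splitting $B$ at position $i$ means replacing $B$ by the two interval sub-blocks $B_1 = \{b_1,\dots,b_i\}$ and $B_2 = \{b_{i+1},\dots,b_r\}$. The key geometric picture, which I would make precise, is that in the non-crossing partition $(p,\sigma) \vee (K(p),\sigma^c)$ the block $B$ is "surrounded" by its upper enveloping block $C^B$ (sitting just outside, of depth $\leq d_p(B)$) and its $r-1$ lower enveloping blocks $C^B_1, \dots, C^B_{r-1}$, where $C^B_j$ occupies the "gap" between $b_j$ and $b_{j+1}$. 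The point of splitting $B$ at $b_i$ is precisely that it severs the two halves of $B$, which by Lemma \ref{characKreweras} is exactly the operation that allows the gap-block $C^B_i$ (between $b_i$ and $b_{i+1}$) to merge with the enveloping block $C^B$ on the outside — because merging $C^B$ and $C^B_i$ across $b_i, b_{i+1}$ is non-crossing with $p_{B,i}$ but not with $p$.

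Concretely, the steps are: (1) Show $K(p)_{C^B,C^B_i}$ is non-crossing when paired with $p_{B,i}$ via $\sigma, \sigma^c$ — this uses that $C^B$ and $C^B_i$ are adjacent in $(p,\sigma)\vee(K(p),\sigma^c)$ with $d(C^B_i) > d(C^B)$, together with the observation that the only obstruction to their merger is the block $B$ which we have just split. (2) Count: $|p_{B,i}| = |p|+1$ and $|K(p)_{C^B,C^B_i}| = |K(p)| - 1$, so $|p_{B,i}| + |K(p)_{C^B,C^B_i}| = |p| + |K(p)| = n+1$, and by Lemma \ref{KrewerasNumberBlocks} the Kreweras complement is the \emph{unique} partition satisfying the non-crossing condition with this block count — hence $K(p_{B,i}) = K(p)_{C^B,C^B_i}$. (3) Deduce the second identity: replace $p$ by $K(p)$ (so blocks $C$ of $K(p)$ play the role of blocks $B$), use that $K^{-1}$ is a rotation/reflection conjugate of $K$ so the enveloping-block structure is symmetric under $p \leftrightarrow K(p)$, and the first identity gives $K^{-1}(K(p)_{C,i}) = p_{B^C,B^C_i}$, i.e. $K(p)_{C,i} = K(p_{B^C,B^C_i})$.

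The main obstacle I anticipate is step (1) — verifying that the merged partition is genuinely non-crossing and, more delicately, that $C^B_i$ really is the unique block that merges with $C^B$ under this split (one must rule out that some other lower enveloping block, or a block nested deeper, gets affected). This requires a clean case analysis of how the interval $\llbracket b_i, b_{i+1} \rrbracket$ interacts with all blocks of $(p,\sigma) \vee (K(p),\sigma^c)$, organized by the depth function $d_p$; the bookkeeping facts (1)–(3) listed just before Lemma \ref{linkKrewerasDecompo} about how depths change under merging and splitting are exactly what makes this tractable, and I would lean on them heavily rather than reproving them. Once non-crossingness is established, the uniqueness argument via Lemma \ref{KrewerasNumberBlocks} is essentially automatic and makes the whole proof short.
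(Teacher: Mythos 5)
Your proposal is correct and follows essentially the same route as the paper: verify that $(p_{B,i},\sigma)\vee(K(p)_{C^B,C^B_i},\sigma^c)$ is non-crossing, check the block count $|p_{B,i}|+|K(p)_{C^B,C^B_i}|=n+1$, invoke the uniqueness statement of Lemma \ref{KrewerasNumberBlocks}, and obtain the second identity by symmetry. The case analysis you fear in step (1) is avoidable: the paper simply performs the split of $B$ and then the merge of $C^B$ with $C^B_i$ directly inside the global non-crossing partition $(p,\sigma)\vee(K(p),\sigma^c)$, where each operation (splitting a block into two intervals; merging two adjacent blocks) visibly preserves non-crossingness, with adjacency of $C^B$ and $C^B_i$ after the split following from the definition of the enveloping blocks.
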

\begin{proof}
By symmetry, it suffices to prove the first equality. By Lemma \ref{KrewerasNumberBlocks}, we only have to prove that $\mathfrak{p}:=(p_{B,i},\sigma)\vee (K(p)_{C^{B},C^{B}_{i}},\sigma^{c})$ is noncrossing and that $\vert p_{B,i}\vert+\vert K(p)_{C^{B},C^{B}_{i}}\vert =n+1$. 

The partition $\mathfrak{p}$ can be constructed as follows. First, consider the partition 
$$p_{1}=\left((p,\sigma)\vee(K(p),\sigma^{c})\right)_{B,i},$$ 
which is non-crossing. Then, $p_{1}$ is also equal to $\left((p_{B,i},\sigma)\vee(K(p),\sigma^{c})\right)$. By definition of the upper and lower enveloping blocks of $B$, $C^{B}$ and $C^{B}_{i}$ are adjacent in $p_{1}$. Thus, $(p_{1})_{C^{B},C^{B}_{i}}$ is non-crossing and $(p_{1})_{C^{B},C^{B}_{i}}=(p_{B,i},\sigma)\vee(K(p)_{C^{B},C^{B}_{i}},\sigma^{c})=\mathfrak{p}$. Therefore, $\mathfrak{p}$ is non-crossing.

Since $\vert p_{B,i}\vert =\vert p\vert +1$ and $\vert K(p)_{C^{B},C^{B}_{i}}\vert=\vert K(p)\vert -1$, we have 
$$\vert p_{B,i}\vert+\vert K(p)_{C^{B},C^{B}_{i}}\vert=\vert p\vert+1+\vert K(p)\vert -1=n+1.$$
Hence, Lemma \ref{KrewerasNumberBlocks} yields that $K(p_{B,i})=K(p)_{C^{B},C^{B}_{i}}$.
 \end{proof}
 \subsection{Boolean decomposition of a free product of planar algebras}
The purpose of this subsection is to improve Proposition \ref{spinReducedPairs} by giving a Boolean decomposition of a free product of planar algebras.\\
Recall from Section \ref{Krewerassec} that $\pi_{0}$ (resp.~$\pi_{1}$) is the non-crossing partition of $2n$ defined by the relation $2i\sim_{\pi_{0}}2i+1$ (resp.~$2i\sim_{\pi_{1}} 2i-1$) for $1\leq i\leq n$.  A partition $\pi$ of $2n$ such that $\pi\geq \pi_{1}$ yields a partition $G(\pi)$ of $n$ defined by the relation $i\sim_{G(\pi)}j$ if and only if $2i\sim_{\pi }2j$. A partition $\pi$ of $2n$ such that $\pi\geq \pi_{0}$ yields a partition $F(\pi)$ of $n$ defined by the relation $i\sim_{F(\pi)}j$ if and only if $2i-1\sim_{\pi}2j-1$. Note that the maps $F$ and $G$ are clearly bijective. If $\pi\geq\pi_{1}$ (resp. $\pi\geq \pi_{0}$) and $B$ is a block of $G(\pi)$ (resp. $F(\pi)$), $\tilde{B}$ denotes the block of $\pi$ containing $2i-1$  and $2i$ (resp. $2i-2$ and $2i-1$) for each $i\in B$ . The maps $F$ and $G$ are depicted in Figure 
\begin{figure}
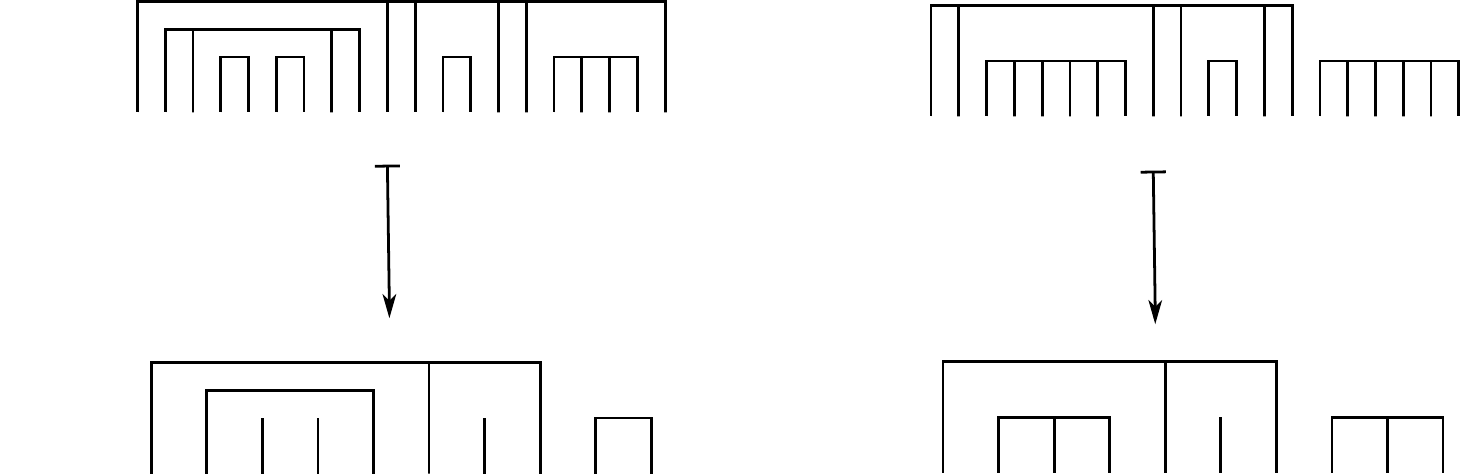
\caption{\label{mapsFG}Maps $F$ and $G$.}
\end{figure}
\begin{lem}\label{divideKreweras}
If $\pi\geq \pi_{0}$, $K(F(\pi))=G(\kr'(\pi))$.
\end{lem}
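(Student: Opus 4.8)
The plan is to establish the identity blockwise. Throughout, $\pi\in NC(2n)$. Since $K(F(\pi))$ and $G(\kr'(\pi))$ are both partitions of $\llbracket 1,n\rrbracket$, it suffices to prove that for all $1\le i<j\le n$,
\[ i\sim_{K(F(\pi))}j\ \Longleftrightarrow\ i\sim_{G(\kr'(\pi))}j. \]
As a preliminary step one must check that $G(\kr'(\pi))$ is even defined, i.e. that $\kr'(\pi)\ge\pi_{1}$: the integers $\tilde f(2i-1)=4i-2$ and $\tilde f(2i)=4i-1$ are consecutive and both lie in $S^{c}$, so $\llbracket 4i-2,4i-1\rrbracket\cap S=\emptyset$ and the criterion of Lemma \ref{characKreweras} for $\tilde f(2i-1)\sim_{\kr(\tilde\pi,S)}\tilde f(2i)$ holds vacuously.

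First I would unfold the left-hand side. Combining the definition of the Kreweras complement $K$ from Section \ref{surgeryNoncrossPart} with Lemma \ref{characKreweras} (applied to the partial partition $(F(\pi),\sigma)$ of $\llbracket 1,2n\rrbracket$) and the identity $l\sim_{F(\pi)}l'\Leftrightarrow 2l-1\sim_{\pi}2l'-1$, one finds that $i\sim_{K(F(\pi))}j$ is equivalent to
\[ (\star)\qquad 2l-1\not\sim_{\pi}2l'-1\quad\text{for all }\ l\in\{i+1,\dots,j\},\ l'\in\{1,\dots,i\}\cup\{j+1,\dots,n\}. \]
Next I would unfold the right-hand side. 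By definition $i\sim_{G(\kr'(\pi))}j$ means $2i\sim_{\kr'(\pi)}2j$, i.e. $4i-1=\tilde f(2i)\sim_{\kr(\tilde\pi,S)}\tilde f(2j)=4j-1$. Since $f$ is the order isomorphism $\llbracket 1,2n\rrbracket\to S$ and $\tilde\pi=f(\pi)$, a direct inspection of which elements of $S$ lie in the interval $\llbracket 4i-1,4j-1\rrbracket$ yields
\[ f^{-1}\big(\llbracket 4i-1,4j-1\rrbracket\cap S\big)=C,\qquad f^{-1}\big(S\setminus\llbracket 4i-1,4j-1\rrbracket\big)=D, \]
where $C:=\llbracket 2i,2j-1\rrbracket$ and $D:=\llbracket 1,2n\rrbracket\setminus C$. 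Transporting the criterion of Lemma \ref{characKreweras} back along $f$, the relation $i\sim_{G(\kr'(\pi))}j$ becomes the statement that no block of $\pi$ meets both $C$ and $D$.

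It then remains to reconcile $(\star)$ with this last statement, and this is where the hypothesis $\pi\ge\pi_{0}$ is used. Write $P_{m}=\{2m,2m+1\}$ for the $m$-th block of $\pi_{0}$, with the cyclic convention $P_{n}=\{2n,1\}$; then $\pi\ge\pi_{0}$ says precisely that every block of $\pi$ is a union of the $P_{m}$'s. One checks directly that $C=\bigcup_{m=i}^{j-1}P_{m}$ and $D=\bigcup_{m\notin\{i,\dots,j-1\}}P_{m}$, and that the unique odd element of $P_{m}$ is $2(m+1)-1$ (indices cyclic). Both implications now follow from a short case analysis: if some block of $\pi$ meets both $C$ and $D$, pick $c\in C$, $d\in D$ in that block; using $2m\sim_{\pi}2m+1$ we may assume $c,d$ are odd, whence $c=2l-1$, $d=2l'-1$ with $l\in\{i+1,\dots,j\}$ and $l'\in\{1,\dots,i\}\cup\{j+1,\dots,n\}$ — here the case $l'=1$ comes from the block $P_{n}$ — contradicting $(\star)$; conversely, $(\star)$ forbids exactly the odd–odd $\pi$-connections between $C$ and $D$, and since every $\pi$-block is a union of $P_{m}$'s, each of which lies entirely in $C$ or entirely in $D$, this already prevents any $\pi$-block from meeting both.

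The main obstacle is not conceptual but a matter of bookkeeping: one has to keep three scales straight at once — $\llbracket 1,4n\rrbracket$ (where $\kr'$ is defined via an ordinary Kreweras complement), $\llbracket 1,2n\rrbracket$ (where $\pi$ lives), and $\llbracket 1,n\rrbracket$ (the home of $F(\pi)$ and of the output) — together with the relabelings $f$ and $\tilde f$ between them, and in particular one must carefully track the cyclic nature of $\pi_{0}$, which causes the block $P_{n}$ to absorb the point $1$ and thereby shifts the relevant index ranges by one. Once the dictionary "$P_{m}\subseteq C\iff i\le m\le j-1$" and "$2l-1\in P_{l-1}$" is fixed, the equivalence of $(\star)$ with "$\pi$ separates $C$ from $D$" is elementary.
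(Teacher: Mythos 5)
Your proof is correct and follows essentially the same route as the paper's: both sides are unfolded via Lemma \ref{characKreweras}, and the hypothesis $\pi\ge\pi_{0}$ is used to pass between separation statements about odd indices and about all indices across the three scales $\llbracket 1,4n\rrbracket$, $\llbracket 1,2n\rrbracket$, $\llbracket 1,n\rrbracket$. The only difference is cosmetic: you run the equivalence symmetrically (and make the check $\kr'(\pi)\ge\pi_{1}$ explicit), whereas the paper proves one inclusion and notes that the same argument gives the converse.
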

\begin{proof}
Suppose that $i\sim_{K(F(\pi))} j$. By Lemma \ref{characKreweras}, for all $i< k\leq j,j< l\leq i$, we have $k\not\sim_{F(\pi)}l$; hence by definition of $F$, we have also $2k-1\not\sim_{\pi}2l-1$ for all $i< k\leq j,j< l\leq i$. Since $\pi\geq \pi_{0}$, it follows that $k\not\sim_{\pi} l$ for all $2i-1<k\leq 2j-1, 2j-1<l\leq 2i-1$. \\
Thus, for all $4(i-1)+1<k\leq 4(j-1)+1$, $4(j-1)+1<l\leq 4(i-1)$ with $k,l\in S$, we get $k\not\sim_{(f(\pi),S)}l$, where $f$ is the map $f(i)=2i-\delta(i)$ defined in Section \ref{noncrossNotation}. This implies that $4(i-1)+2\sim_{kr(f(\pi),S)} 4(j-1)+2$ and thus $2(i-1)+1\sim_{\kr'(\pi)}2(j-1)+1$. Since $\kr'(\pi)\geq\pi_{1}$, $2i\sim_{\kr'(\pi)}2j$ and $i\sim_{G(\kr'(\pi))}j$. Thus $K(F(\pi))\leq G(\kr'(\pi))$.
The same proof yields the converse inequality.
\end{proof}

Let $\cP, \cQ$ be subfactor planar algebras.
For $p\in NC(n)$, denote by $\hat{p}$ the map $Z_{T_{F^{-1}(p)}}$ and by $\widehat{\cB}_{\cP}(p)$ the vector subspace of $\cP_{n}$ defined by 
$$\widehat{\cB}_{\cP}(p)=\hat{p}(\cB_{\cP}(i_{1})\otimes\dots\otimes \cB_{\cP}(i_{r})),$$
where $i_{1},\dots,i_{r}$ are the respective cardinalities of the blocks $B_{1},\dots,B_{r}$ of $p$. Similarly, denote by $\tilde{p}$ the map $Z_{T_{G^{-1}(p)}}$ and by $\widetilde{\cB}_{\cQ}(p)$ the vector subspace of $\cQ_{n}$ defined by 
$$\widetilde{\cB}_{\cQ}(p)=\tilde{p}(\cB_{\cQ}(i_{1})\otimes\dots\otimes \cB_{\cQ}(i_{r})),$$
where $i_{1},\dots,i_{r}$ are the respective cardinalities of the blocks $B_{1},\dots,B_{r}$ of $p$. Note that the definition of the vector space $\widetilde{\cB}_{\cP}(I)$ given for the interval partition $I$ in Section \ref{BooleanDecompoPlanarAlgebra} coincides with the one given here.

\begin{thm}\label{main5}
Set $L_{n}=\bigoplus_{p\in NC(n)}\widehat{\cB}_{\cP}(p)\otimes \widetilde{\cB}_{\cQ}(K(p))$. The collection of vector spaces $(L_{n})_{n\geq 1}$ gives a Boolean decomposition of $\cP*\cQ$.
\end{thm}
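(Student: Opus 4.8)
The plan is to prove Theorem \ref{main5} by reducing it, via dimension-counting, to the already-established spanning statement in Proposition \ref{spinReducedPairs}, and then upgrading the span to a direct sum. First I would set up the bookkeeping: for $p \in NC(n)$ with blocks of cardinalities $i_1, \dots, i_r$, the space $\widehat{\cB}_{\cP}(p)$ has dimension $\prod_j b_{\mu_{\cP}}(i_j) = b_{\mu_{\cP}}(p)$ since $(\cB_{\cP}(m))_{m \geq 1}$ is a Boolean decomposition of $\cP$, and similarly $\dim \widetilde{\cB}_{\cQ}(K(p)) = b_{\mu_{\cQ}}(K(p))$. Hence, granting that the sum defining $L_n$ is direct and that $L(I) = U_I(L(i_1) \otimes \dots \otimes L(i_s))$ assembles $(\cP * \cQ)_n$ as a direct sum over interval partitions $I$, one gets
\[
\dim (\cP*\cQ)_n \ = \ \sum_{I \in \cI(n)} \prod_{[a,b] \in I} \dim L(b-a+1) \ = \ \sum_{I \in \cI(n)} \prod_{[a,b] \in I} \Big( \sum_{q \in NC(b-a+1)} b_{\mu_{\cP}}(q) b_{\mu_{\cQ}}(K(q)) \Big),
\]
and the innermost sum is exactly $b_{\mu_{\cP}\boxtimes\mu_{\cQ}}(m)$ by the Belinschi--Nica identity \eqref{booleanFreeNB}; summing the resulting Boolean cumulants over interval partitions recovers $m_{\mu_{\cP}\boxtimes\mu_{\cQ}}(n) = \dim(\cP*\cQ)_n$, which is the known moment identity $\mu_{\cP*\cQ} = \mu_{\cP}\boxtimes\mu_{\cQ}$ cited near Corollary \ref{corD}. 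So the dimension count is automatically consistent, and the whole theorem follows once we show $(L_n)_{n\geq 1}$ both \emph{spans} and decomposes \emph{directly}.

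For the spanning part, I would argue that each summand $\widehat{\cB}_{\cP}(p) \otimes \widetilde{\cB}_{\cQ}(K(p))$ sits inside $(\cP*\cQ)_n$ and that together they span it. Membership: by Lemma \ref{divideKreweras}, $K(F(\pi)) = G(\kr'(\pi))$, so writing $\pi = F^{-1}(p) \in NC(2n)$ with $\pi \geq \pi_0$, the map $\hat p = Z_{T_\pi}$ and the map $\tilde{q} = Z_{T_{G^{-1}(q)}}$ with $q = K(p)$ satisfy $G^{-1}(K(p)) = \kr'(\pi)$; thus $(T_\pi, T_{\kr'(\pi)})$ is a reduced free pair in the sense of the definition preceding Figure \ref{Fig14ReducedFreePair}, and by Proposition \ref{spinReducedPairs} the image $Z_{T_\pi}(v) \otimes Z_{T_{\kr'(\pi)}}(w)$ lies in $\cP*\cQ$ for all $v, w$; restricting $v$ to $\cB_{\cP}(i_1)\otimes\dots\otimes\cB_{\cP}(i_r)$ and $w$ to $\cB_{\cQ}$-factors gives $\widehat{\cB}_{\cP}(p)\otimes\widetilde{\cB}_{\cQ}(K(p)) \subset (\cP*\cQ)_n$. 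Conversely, to span $(\cP*\cQ)_n$ I would take an arbitrary reduced free pair $(T_\pi, T_{\kr'(\pi)})$ with $\pi \geq \pi_0$, set $p = F(\pi)$, and expand the $\cP$-label of $T_\pi$ in the orthogonal Boolean decomposition \eqref{resultGJS} of each $\cP_{|B_i|}$ and the $\cQ$-label of $T_{\kr'(\pi)}$ in the Boolean decomposition of each $\cQ$-factor; composing the block-tangles $T_{F^{-1}(\cdot)}$ with interval-partition concatenation tangles $U_I$ (and using Lemma \ref{orderPartitionCompoTangles} to compose nested tangles) re-expresses $Z_{T_\pi}(v)\otimes Z_{T_{\kr'(\pi)}}(w)$ as a sum of elements of $\sum_{q} \widehat{\cB}_{\cP}(q)\otimes\widetilde{\cB}_{\cQ}(K(q))$, where the $q$ occurring refine $p$. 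This yields $(\cP*\cQ)_n = \sum_{p} \widehat{\cB}_{\cP}(p)\otimes\widetilde{\cB}_{\cQ}(K(p))$.

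For directness — this is where I expect the main obstacle — I would compare dimensions: we have just shown $(\cP*\cQ)_n = \sum_p \widehat{\cB}_{\cP}(p)\otimes\widetilde{\cB}_{\cQ}(K(p))$, and $\dim(\cP*\cQ)_n = m_{\mu_\cP \boxtimes \mu_\cQ}(n)$, while $\sum_p \dim\widehat{\cB}_{\cP}(p)\dim\widetilde{\cB}_{\cQ}(K(p)) = \sum_{p\in NC(n)} b_{\mu_\cP}(p) b_{\mu_\cQ}(K(p))$. By \eqref{booleanFreeNB} this equals $b_{\mu_\cP\boxtimes\mu_\cQ}(n)$, which is in general \emph{strictly smaller} than $m_{\mu_\cP\boxtimes\mu_\cQ}(n)$ — so the sum over \emph{all} $p\in NC(n)$ is certainly not direct, and in fact $L_n \subsetneq (\cP*\cQ)_n$. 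This is as it should be: the theorem only claims $(L_n)$ is a Boolean \emph{decomposition}, i.e. that $(\cP*\cQ)_n = \bigoplus_{I\in\cI(n)} L(I)$. So the correct strategy is: first establish that $L_n$, as defined, is an \emph{internal direct sum} (using the orthogonality built into the $\cB_\cP$, $\cB_\cQ$ decompositions and injectivity of the reduced-free-pair maps on the relevant subspaces, which follows from the Hilbert-module structure and the fact that reduced free pairs give linearly independent images — traceable to the moment/dimension match), then show $\sum_{I\in\cI(n)} L(I) = (\cP*\cQ)_n$ via the spanning argument above reorganized by interval partitions, and finally conclude directness of $\bigoplus_I L(I)$ from the dimension identity $\sum_{I\in\cI(n)} \dim L(I) = \sum_{I} \prod_{[a,b]\in I} b_{\mu_\cP\boxtimes\mu_\cQ}(b{-}a{+}1) = m_{\mu_\cP\boxtimes\mu_\cQ}(n) = \dim(\cP*\cQ)_n$. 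The delicate point throughout is verifying injectivity/linear-independence of the various tangle-induced maps restricted to tensor products of Boolean orthogonal subspaces; I would handle this by invoking the nondegeneracy of the trace form on $\cP*\cQ$ together with Lemma \ref{linkKrewerasDecompo} to track how Kreweras complements interact with block-merging, ensuring the surgery arguments stay within non-crossing partitions.
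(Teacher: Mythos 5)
Your overall skeleton (spanning via reduced free pairs, then a dimension count through the Belinschi--Nica identity \eqref{booleanFreeNB} and $\mu_{\cP*\cQ}=\mu_{\cP}\boxtimes\mu_{\cQ}$ to force all sums to be direct) matches the paper's proof, and your directness endgame is essentially identical to theirs. But the spanning step, which is the real content of the theorem, has a genuine gap. You claim that starting from $Z_{T_\pi}(v)\otimes Z_{T_{\kr'(\pi)}}(w)$ one can simply expand each $v_i$ in the orthogonal Boolean decomposition \eqref{resultGJS} of $\cP_{|B_i|}$ and each $w_j$ in that of $\cQ_{|C_j|}$, landing in $\sum_q \widehat{\cB}_{\cP}(q)\otimes\widetilde{\cB}_{\cQ}(K(q))$ with ``the $q$ occurring refining $p$.'' This is false: the two sides are coupled through the Kreweras complement. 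Writing $v_i$ as a sum of concatenations $U_I$ splits the block $B_i$ of $p$, which \emph{refines} $p$ to some $q<p$; but then $K(q)$ is obtained from $K(p)$ by \emph{merging} blocks (this is exactly Lemma \ref{linkKrewerasDecompo}: $K(p_{B,i})=K(p)_{C^B,C^B_i}$), and the label of the merged $\cQ$-block is obtained by applying a tangle of type $\tilde M_{a,b,c}$ to two labels that were Boolean for the old blocks --- the result has no reason to lie in $\cB_{\cQ}$ of the merged block. So fixing the $\cP$-labels breaks the $\cQ$-labels and vice versa, and one needs a termination argument for this back-and-forth. This is precisely the paper's Lemma \ref{escape}: a double induction on a degree $\vec d=(d_1,d_2)$ built from the depth function $d_{p^\circ}$ on the blocks of $(p,\sigma)\vee(K(p),\sigma^c)$, showing that each surgery strictly decreases $\vec d$ in lexicographic order. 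Nothing in your proposal supplies this; indeed you notice the symptom (your claimed span has dimension $b_{\mu_\cP\boxtimes\mu_\cQ}(n)<m_{\mu_\cP\boxtimes\mu_\cQ}(n)$, contradicting the equality you just asserted) but attribute it to a bookkeeping issue rather than to the failure of the expansion argument itself.

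A second, smaller omission: the mechanism by which the interval partitions $\cI(n)$ enter. In the paper the first block $B_1$ of $p$ is deliberately exempted from the Boolean condition (this is the definition of $S(p,v,w)$ and of $\Sigma_{(0,0)}$), and Lemma \ref{decompoFreeProduct} shows that when $v_1=U_{s,|B_1|-s}(x\otimes y)$ the whole element $\Theta(p,v,w)$ factors as a concatenation $U_{k_1,k_2}$ of two smaller such elements, because splitting $B_1$ at the position $s$ splits \emph{both} $p$ and $K(p)$ into juxtapositions. Iterating this is what produces the decomposition over $I\in\cI(n)$. Your phrase ``the spanning argument above reorganized by interval partitions'' presupposes this, but the reorganization is not a cosmetic step --- it is the second structural lemma of the proof, and without it one cannot get from $\sum_p\widehat{\cB}_{\cP}(p)\otimes\widetilde{\cB}_{\cQ}(K(p))$ (which is only $L_n$, a proper subspace) to $\sum_{I\in\cI(n)}L(I)=(\cP*\cQ)_n$.
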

Note that it is not clear yet that the sum in the definition of $L_{n}$ is direct. The proof of this theorem needs some preliminary results. If $p\in NC(n)$, we denote by $B_{1},\dots,B_{r}$ its blocks and by $C_{1},\dots,C_{r'}$ the blocks of $K(p)$. Moreover, we denote by $p^{\circ}$ the partition $(p,\sigma)\vee(K(p),\sigma^{c})$. 

By Proposition \ref{spinReducedPairs} (and the following discussion), $(\cP*\cQ)_{n}$ is spanned by elements of the form $\hat{p}(v)\otimes \widetilde{K(p)}(w)$ for $p\in NC(n)$, $v=v_{1}\otimes\dots\otimes v_{r}\in \cP_{\vert B_{1}\vert }\otimes\dots\otimes \cP_{\vert B_{r}\vert}$ and $w=w_{1}\otimes \dots\otimes w_{r'}\in\cQ_{\vert C_{1}\vert}\otimes \dots\otimes \cQ_{\vert C_{r'}\vert}$. The goal of the proof will be to reduced the domain of definition of $\hat{p}$ and $\widetilde{K(p)}$ until getting the desired result. Let us define the set 
$$\Sigma \ = \ \lbrace (p,v,w) \ ; \  p\in NC(n),\ v\in \cP_{\vert B_{1}\vert }\otimes\dots\otimes \cP_{\vert B_{r}\vert}, \ w\in\cQ_{\vert C_{1}\vert}\otimes \dots\otimes \cQ_{\vert C_{r'}\vert}\}$$
and the map $\Theta:\Sigma\longrightarrow (\cP*\cQ)_{n}$ with $\Theta(p,v,w)=\hat{p}(v)\otimes \widetilde{K(p)}(w)$. Thus, by Proposition \ref{spinReducedPairs}, $\Theta$ is surjective.

For $(p,v,w)\in\Sigma$, we denote by $S(p,v,w)$ the set of blocks $B_{i}$ of $p$ different from $B_{1}$ such that $v_{i}\not \in \cB_{\cP}(\vert B_{i}\vert)$ and of blocks $C_{i}$ of $K(p)$ such that $w_{i}\not\in \cB_{\cP}(\vert C_{i}\vert)$. The goal of the first part of the proof of Theorem \ref{main5} is to prove that $\Theta$ is still surjective when its domain of definition is restricted to the set of elements $(p,v,w)\Sigma$ such that $S(p,v,w)=\emptyset$. We will prove it by recurrence, and to this end, we define the degree $\vec{d}(p,v,w)=(d_{1},d_{2})\in\N^{2}$ of $(p,v,w)\in\Sigma$ as $\vec{d}(p,v,w)=(0,0)$ if $S(p,v,w)=\emptyset$ and otherwise: 
\begin{itemize}
\item $d_{1}=\max_{X\in S(p,v,w)}d_{p^{\circ}}(X)$.
\item $d_{2}=\sum_{X\in S(p,v,w),d_{p^{\circ}(x)}=d_{1}}\vert X\vert$.
\end{itemize}

For $\vec{d}\in\N^{2}$, we denote by $\Sigma_{\vec{d}}$ the subset of $\Sigma$ consisting of pairs $(p,v,w)$ such that $\vec{d}(p,v,w)=\vec{d}$. Note that when $d_{1}\geq 1$, necessarily $d_{2}>1$, since $\cP_{1}=\cB_{\cP}(1)$ and $\cQ_{1}=\cB_{\cQ}(1)$. In  the following lemma, the set $\N^{2}$ is considered with the lexicographical order. This lemma is the main step of the first part of the proof of Theorem \ref{main5}.
\begin{lem}\label{escape}
Let $\vec{d}\in\N^{2},\vec{d}>(1,0)$. Then,
$$\Theta(\Sigma_{\vec{d}})\subset \underset{\vec{d}'< \vec{d}}{\spann}(\Theta(\Sigma_{\vec{d}'})).$$
\end{lem}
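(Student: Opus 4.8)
The plan is to fix a triple $(p,v,w)\in\Sigma_{\vec d}$ with $\vec d>(1,0)$, so that $d_1\ge 1$, to choose a block $X\in S(p,v,w)$ of maximal depth $d_{p^\circ}(X)=d_1$, and to rewrite $\Theta(p,v,w)$ as a linear combination of terms $\Theta(p',v',w')$ of strictly smaller degree by surgery on $X$. Since the situation is symmetric under exchanging the roles of $(\cP,p)$ and $(\cQ,K(p))$ (using the second identity of Lemma \ref{linkKrewerasDecompo} in place of the first, and the order-preservation of $F^{-1}$ in place of that of $G^{-1}$), I may assume $X=B$ is a block of $p$. Then $m:=|B|\ge 2$, because $\cP_1=\cB_\cP(1)$ forces every singleton block out of $S(p,v,w)$, and $B\ne B_1$ by definition of $S(p,v,w)$ (equivalently, the block containing $1$ is $\preceq$-maximal in $p^\circ$, hence of depth $0<d_1$).

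First I would split the $B$-component of $v$ via the orthogonal Boolean decomposition of $\cP$, equation \eqref{resultGJS}: $v_B=\sum_{I\in\cI(m)}v_B^{I}$ with $v_B^{I}\in\cB_\cP(I)$, so that $\Theta(p,v,w)=\sum_{I}\Theta(p,v^{(I)},w)$, where $v^{(I)}$ is obtained from $v$ by replacing $v_B$ with $v_B^{I}$. For the one-block partition $I=\{\llbracket 1,m\rrbracket\}$ one has $v_B^{I}\in\cB_\cP(m)$, hence $S(p,v^{(I)},w)=S(p,v,w)\setminus\{B\}$, and since $B$ sat at the maximal depth $d_1$ and $|B|=m\ge 2$, this forces $\vec d(p,v^{(I)},w)<\vec d$. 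For $I=(i_1,\dots,i_s)\in\cI(m)$ with $s\ge 2$, the vector $v_B^{I}$ is a sum of vectors $U_{I}(\xi_1\otimes\cdots\otimes\xi_s)$ with $\xi_l\in\cB_\cP(i_l)$; feeding such a vector into the $B$-disk of $\hat p=Z_{T_{F^{-1}(p)}}$ amounts, by the reciprocal statement of Lemma \ref{orderPartitionCompoTangles}, to splitting the block of $F^{-1}(p)$ attached to $B$ into $s$ consecutive subblocks. Thus $\hat p(v^{(I)})=\widehat{p'}(v')$, where $p'\le p$ is obtained from $p$ by splitting $B$ into $s$ interval blocks and $v'$ replaces $v_B$ by $\xi_1,\dots,\xi_s$ in the corresponding slots.

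It then remains to correct the $\cQ$-side and to compare degrees. Splitting $B$ into $s$ pieces is an iteration of $s-1$ elementary splits $p\mapsto p_{B,\bullet}$, each of which, by Lemma \ref{linkKrewerasDecompo}, fuses the upper enveloping block $C^B$ of $B$ with one of its lower enveloping blocks; hence $K(p')\ge K(p)$ is obtained from $K(p)$ by merging $C^B$ with $s-1$ of the blocks $C^B_{j}$ into a single block $\hat C_*$. Since $G^{-1}$ is order-preserving, $G^{-1}(K(p))\le G^{-1}(K(p'))$, so the forward statement of Lemma \ref{orderPartitionCompoTangles} expresses $T_{G^{-1}(K(p))}$ as the composition of $T_{G^{-1}(K(p'))}$ with an iterated concatenation tangle inserted in the disk $D_{\hat C_*}$ and identity tangles elsewhere; consequently $\widetilde{K(p)}(w)=\widetilde{K(p')}(w')$, where $w'$ agrees with $w$ on the unmerged blocks and $w'_{\hat C_*}$ is the concatenation (via the appropriate $U$-tangle) of the components $w_{C^B},w_{C^B_{j_1}},\dots$ of $w$. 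Hence $\Theta(p,v^{(I)},w)=\Theta(p',v',w')$, and it suffices to check $\vec d(p',v',w')<\vec d$. The new blocks of $p'$ are of three kinds: the $s$ pieces of $B$, whose $v'$-components lie in the spaces $\cB_\cP(i_l)$ and which therefore do not belong to $S(p',v',w')$; the fused block $\hat C_*$, whose depth in $p'^\circ$ equals $d_{p^\circ}(C^B)$ by the surgery facts of Subsection \ref{surgeryNoncrossPart}; and the blocks inherited unchanged from $p$, whose $S$-membership is unchanged and whose depths do not increase under splitting or fusing of blocks. Since $d_1\ge 1$, the block $B$ is enclosed in $p^\circ$ by a unique $K(p)$-block, which is forced to be $C^B$, so $d_{p^\circ}(C^B)=d_1-1<d_1$. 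Therefore every depth-$d_1$ element of $S(p',v',w')$ is an inherited block that already lay in $S(p,v,w)$ at depth $d_1$: the depth-$d_1$ stratum of $S$ has lost at least $B$ (of size $m\ge 2$) and gained nothing, so either $d_1'<d_1$, or $d_1'=d_1$ and $d_2'\le d_2-m<d_2$. In both cases $\vec d(p',v',w')<\vec d$, and the symmetric surgery on a maximal-depth block of $K(p)$ is carried out in exactly the same way.

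The difficulty lies entirely in one identification: that the Boolean splitting $v_B\mapsto U_I(\xi_1\otimes\cdots\otimes\xi_s)$ on the $\cP$-side corresponds, after the reordering built into $\hat p$ and $\widetilde{K(p)}$, to the simultaneous surgery "split $B$ in $p$ / fuse the enveloping Kreweras blocks in $K(p)$" on the $(p,K(p))$-side, and that this surgery interacts with the depth function precisely so as to drop the degree. Everything else — the GJS decomposition of a single component, the compositional behaviour of the $T_\pi$-tangles, and the lexicographic degree comparison — is routine once the surgery lemmas of Subsection \ref{surgeryNoncrossPart} and the identities of Lemma \ref{orderPartitionCompoTangles} are in place.
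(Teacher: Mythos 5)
Your proposal is correct and follows essentially the same route as the paper's proof: induction on the lexicographic degree, the GJS decomposition \eqref{resultGJS} of the offending component, the identification of block splitting with insertion of concatenation tangles via Lemma \ref{orderPartitionCompoTangles}, the split/merge duality of Lemma \ref{linkKrewerasDecompo}, and the depth bookkeeping from Subsection \ref{surgeryNoncrossPart}. The only (harmless) differences are that you work on the $p$-side rather than the $K(p)$-side and expand fully over interval partitions, handling each term by an iterated surgery with all resulting pieces Boolean, whereas the paper regroups the decomposition as $\sum_j U_{j,t-j}(w_1^j\otimes w_2^j)$ with only the first piece Boolean, so that each term needs a single elementary split on one side and a single merge (via the insertion tangle $\tilde M_{a,b,c}$, which is what your ``appropriate $U$-tangle'' must really be, since the restriction of the merged Kreweras block is interleaved rather than an interval) on the other.
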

\begin{proof}
Let $\vec{d}\in\N^{2}$ with $\vec{d}>(1,0)$, and let $z\in \Sigma_{\vec{d}}$.  Let $X$ be the first block of $S(p,v,w)$ of depth $d_{1}$ in the lexicographical order. We can suppose without loss of generality that $X=C_{i}$ for some $1\leq i\leq r'$, the case $X=B_{i}$ for $2\leq i\leq r$ being similar. Let $t$ be the cardinality of $C_{i}$. Since $w_{i}\not\in\cB_{\cQ}(t)$, by equation \eqref{resultGJS}, we can write
$$w_{i}=\sum_{I=(I_{1},\dots,I_{l})\in\mathcal{I}(t)} U_{I}(w^{I}_{1}\otimes \dots\otimes w^{I}_{l}),$$
with each $w^{I}_{j}$ in $\cB_{\cQ}(I_{j})$. Using the definition of the operators $U_{I}$ and regrouping terms yields
$$w_{i}=\sum_{j=1}^{t}U_{j,t-j}(w^{j}_{1}\otimes w_{2}^{j}),$$
with $w_{1}^{j}\in \cB_{\cQ}(j)$ and $w_{2}^{j}\in \cQ_{t-j}$. For $1\leq j\leq t$, denote by $w^{\uparrow j}$ the tensor product $w_{1}\otimes \dots\otimes w_{i-1}\otimes U_{j,t-j}(w^{j}_{1}\otimes w_{2}^{j})\otimes w_{i+1}\otimes\dots\otimes w_{r'}$. Then, $w=\sum_{j=1}^{t}w^{\uparrow j}$, and $\Theta(p,v,w)=\sum_{j=1}^{t}\Theta(p,v,w^{\uparrow j})$.\\
Note that $w_{i}^{\uparrow t}\in \cB_{\cQ}(t)$, which implies $C_{i}\not\in S(p,v,w^{\uparrow t})$. Since $S(p,v,w^{\uparrow t})\setminus \lbrace C_{i}\rbrace=S(p,v,w)\setminus \lbrace C_{i}\rbrace$, it follows that $d_{1}(p,v,w^{\uparrow t})\leq d_{1}(p,v,w)$, and when the equality holds we have $d_{2}(p,v,w^{\uparrow t})=d_{2}(p,v,w)-t$. In any case, $\vec{d}(p,v,w^{\uparrow t})<\vec{d}(p,v,w)$.\\
Let $1\leq j<t$. With the definition of  $w^{\uparrow j}$ and $\widetilde{K(p)}$, we have
\begin{align*}
\widetilde{K(p)}(w^{\uparrow j})=&Z_{T_{G^{-1}(K(p))}}(w_{1}\otimes \dots\otimes U_{j,t-j}(w^{j}_{1}\otimes w_{2}^{j})\otimes\dots\otimes w_{r'})\\
=&Z_{T_{G^{-1}(K(p))}\circ_{D_{i}}U_{j,t-j}}(w_{1}\otimes\dots\otimes w^{j}_{1}\otimes w^{j}_{2}\otimes \dots\otimes w_{r'}).
\end{align*}
Inserting the planar tangle $U_{j,t-j}$ in the disc $D_{i}$ of $T_{G^{-1}(K(p))}$ splits the disc corresponding to $\widetilde{C_{i}}$ in two after the distinguished point $2j$. Hence, $T_{G^{-1}(K(p))}\circ_{D_{i}}U_{j,t-j}=T_{G^{-1}(K(p))_{\widetilde{C_{i}},2j}}$, and thus 
\begin{align*}
\widetilde{K(p)}(w^{\uparrow j})=&Z_{T_{G^{-1}(K(p))_{\widetilde{C_{i}},2j}}}(w_{1}\otimes\dots\otimes w^{j}_{1}\otimes \dots\otimes w^{j}_{2}\otimes \dots\otimes w_{r'})\\
=&Z_{T_{G^{-1}(K(p)_{C_{i},j})}}(w_{1}\otimes\dots\otimes w^{j}_{1}\otimes \dots\otimes w^{j}_{2}\otimes \dots\otimes w_{r'})\\
=&\widetilde{K(p)}_{C_{i},j}(w_{1}\otimes\dots\otimes w^{j}_{1}\otimes \dots\otimes w^{j}_{2}\otimes \dots\otimes w_{r'}),
\end{align*}
where the position of $w_{1}^{j}$ and $w_{2}^{j}$ in the above tensor products is given by the position of the two parts of the block $C_{i}$ in the partition $K(p)_{C_{i},j}$.  We denote by $w^{\downarrow j}$ the vector $w_{1}\otimes\dots\otimes w^{j}_{1}\otimes\dots\otimes  w^{j}_{2}\otimes \dots\otimes w_{r'}$. 
Let $k_{1}$ be such that $B_{k_{1}}=B^{C_{i}}$ is the upper enveloping block of $C_{i}$ in $\pi^{\circ}$ and let $k_{2}$ be such that $B_{k_{2}}=B^{C_{i}}_{j}$ is the lower enveloping block of $C_{i}$ between the $j$-th and $j+1$-th element of $C_{i}$. 
For $a,b,c\geq 1$, let $\tilde{M}_{a,b,c}$ be the tangle displayed in Figure \ref{tensorbis}.
\begin{figure}[h!]
\begin{center}
\scalebox{0.7}{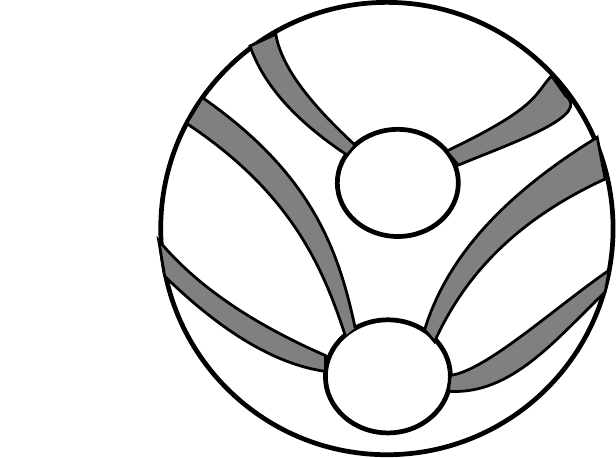}
\end{center}
\caption{\label{tensorbis}: Planar tangle $\tilde{M}_{a,b,c}$.}
\end{figure} 

Since $B_{k_{1}}$ and $B_{k_{2}}$ are adjacent in $p$, then by a reasoning similar to the one giving the expression of $\widetilde{K(p)}(w^{\uparrow j})$ we have
$$\hat{p}(v)=\hat{p}_{B_{k_{1}},B_{k_{2}}}(v_{1}\otimes \dots\otimes \tilde{v}_{k_{1}}\otimes \dots\otimes \cancel{v_{k_{2}}}\otimes \dots\otimes v_{r}),$$
where $\tilde{v}_{k_{1}}=Z_{\tilde{M}_{s,\vert B_{k_{2}}\vert,\vert B_{k_{1}}\vert-s}}(v_{k_{1}},v_{k_{2}})$ and $s$ is the number of elements in $B_{k_{1}}$ before $B_{k_{2}}$ in $p$. We denote by $v^{\downarrow j}$ the vector $v_{1}\otimes \dots\otimes \tilde{v}_{k_{1}}\otimes \dots\otimes \cancel{v_{k_{2}}}\otimes \dots\otimes v_{r}$.\\
By Lemma \ref{linkKrewerasDecompo}, $K(p_{B_{k_{1}},B_{k_{2}}})=K(p)_{C_{i},j}$, hence 
$$\Theta(p,v,w^{\uparrow j})=\Theta(p_{B_{k_{1}},B_{k_{2}}},v^{\downarrow j},w^{\downarrow j}).$$ 
Denote by $C_{i}^{1}$ (resp. $C_{i}^{2}$) the block of $K(p)_{C_{i},j}$ containing the first (resp. last) elements of $C_{i}$. Then, we observe that 
$$S(p,v,w)\setminus \lbrace C_{i},B_{k_{1}},B_{k_{2}}\rbrace=S(p_{B_{k_{1}},B_{k_{2}}},v^{\downarrow j},w^{\downarrow j})\setminus \lbrace B_{k_{1}}\cup B_{k_{2}},C_{i}^{1},C_{i}^{2}).$$ Since $C_{i}\preceq B_{k_{1}}$ in $p^{\circ},$ we have $d_{(p_{B_{k_{1}},B_{k_{2}}})^{\circ}}(B_{k_{1}}\cup B_{k_{2}})=d_{p^{\circ}}(B_{k_{1}})< d_{p^{\circ}}(C_{i})$. Moreover, $d_{(p_{B_{k_{1}},B_{k_{2}}})^{\circ}}(C_{i}^{1})=d_{(p_{B_{k_{1}},B_{k_{2}}})^{\circ}}(C_{i}^{2})=d_{p^{\circ}}(C_{i})$. Therefore, we conclude that $d_{1}(p,v,w)\geq d_{1}(p_{B_{k_{1}},B_{k_{2}}},v^{\downarrow j},w^{\downarrow j})$; whenever both sides are equal, the fact that $C_{i}^{1}$ does not belong to $S(p_{B_{k_{1}},B_{k_{2}}},v^{\downarrow j},w^{\downarrow j})$ yields that
$$d_{2}(p,v,w)\geq d_{2}(p_{B_{k_{1}},B_{k_{2}}},v^{\downarrow j},w^{\downarrow j})+j.$$ 
Therefore, $\vec{d}(p_{B_{k_{1}},B_{k_{2}}},v^{\downarrow j},w^{\downarrow j})<\vec{d}(p,v,w)$, and we have shown that $\Theta(p,v,w^{\uparrow j})\subset \bigcup_{\vec{d}'< \vec{d}}\Theta(\Sigma_{\vec{d}'})$.\\
Summing over all $1\leq j\leq t$ yields that $\Theta(p,v,w)\subset\underset{\vec{d}'< \vec{d}}{\spann}\Theta(\Sigma_{\vec{d}'})$.
\end{proof}
The second part of the proof of Theorem \ref{main5} will amount to identify the elements in $\Sigma$ which span the $n$-th vector space of the Boolean decomposition of $\cP*\cQ$ that we are looking for. The following lemma is the main step in this identification: it shows that a tensor product decomposition of $\Theta(p,v,w)$ appears when $v_{1}\not\in \cB_{\cP}(\vert B_{1}\vert)$.
\begin{lem}\label{decompoFreeProduct}
Suppose that $(p,v,w)\in \Sigma_{(0,0)}$ is such that $v_{1}=U_{s,\vert B_{1}\vert-s}(x\otimes y)$ for some $1<s<\vert B_{1}\vert$ and $x\in \cB_{\cP}(s),y\in \cP_{\vert B_{1}\vert-s}$. Then, there exists $k_{1},k_{2}>0$ such that 
$$\Theta(p,v,w)=U_{k_{1},k_{2}}(\Theta(p_{1},v^{1},w^{1})\otimes \Theta(p_{2},v^{2},w^{2})),$$
with $(p_{1},v^{1},w^{1}),(p_{2},v^{2},w^{2})\in\Sigma_{(0,0)}$ and $\hat{p}_{1}(v^{1})\in\widehat{\cB}_{\cP}(p_{1})$.
\end{lem}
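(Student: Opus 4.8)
The plan is to mimic the surgery arguments used in Lemma \ref{escape} and Lemma \ref{linkKrewerasDecompo}, but this time exploiting a splitting of the \emph{first} block $B_1$ of $p$ rather than a splitting of an internal block. The key point is that when $v_1 = U_{s, |B_1|-s}(x \otimes y)$, inserting the concatenation tangle $U_{s,|B_1|-s}$ into the disk $D_{B_1}$ of $T_{F^{-1}(p)}$ splits that disk into two disks, and because $B_1$ is the first block (hence its distinguished point is the first boundary point of the outer disk), this split disconnects the free composition $T_{F^{-1}(p)} * T_{G^{-1}(K(p))}$ into two pieces along a Jordan curve passing through the outer boundary. First I would make this precise: write $B_1 = B_1' \cup B_1''$ where $B_1'$ consists of the first $s$ elements of $B_1$ in cyclic order and $B_1''$ of the remaining $|B_1|-s$; then $p_{B_1, s}$ (the partition obtained by splitting $B_1$ after its $s$-th element, in the notation of Section \ref{surgeryNoncrossPart}) has the property that $\hat{p}(v) = \widehat{p_{B_1,s}}(v_1 \mapsto x \otimes y, v_2, \dots, v_r)$, exactly as in the computation of $\widetilde{K(p)}(w^{\uparrow j})$ in the proof of Lemma \ref{escape}.

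The second step is to understand how $K(p)$ behaves under this split and to identify the two pieces. By Lemma \ref{linkKrewerasDecompo} (the version for splitting a block of $p$), $K(p_{B_1, s}) = K(p)_{C^{B_1}, C^{B_1}_s}$, i.e.\ the upper enveloping block $C^{B_1}$ of $B_1$ gets merged with the lower enveloping block $C^{B_1}_s$ sitting between the $s$-th and $(s{+}1)$-th elements of $B_1$. Since $B_1$ is the first block, its upper enveloping block $C^{B_1}$ is the ``outermost'' block of $K(p)$, and merging it with $C^{B_1}_s$ is precisely what glues the two halves together at a single point of $K(p)$-side. Concretely, $p_{B_1,s}^{\circ} = (p_{B_1,s}, \sigma) \vee (K(p_{B_1,s}), \sigma^c)$ is a non-crossing partition of $2n$ whose ``first half'' is an interval of integers: writing $2s'$ for the position in $\llbracket 1, 2n \rrbracket$ just after the elements belonging to $B_1'$ and the half of $C^{B_1}\cup C^{B_1}_s$ nested inside it, every block of $p_{B_1,s}^{\circ}$ is contained either in $\llbracket 1, 2s' \rrbracket$ or in $\llbracket 2s'+1, 2n \rrbracket$ (except the merged block, which straddles but whose behaviour we control). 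This is exactly the condition for $T_{F^{-1}(p_{B_1,s})} * T_{G^{-1}(K(p_{B_1,s}))}$ to decompose as $U_{k_1, k_2}$ applied to the free compositions of the two halves, where $k_1 = s'$ and $k_2 = n - s'$; this is the same phenomenon already used implicitly in Proposition \ref{spinReducedPairs}.

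The third step is bookkeeping: set $p_1$ to be the restriction of $p_{B_1,s}$ to the index set corresponding to $\llbracket 1, 2s' \rrbracket$ and $p_2$ the restriction to the complement (each is a well-defined non-crossing partition after relabelling), and distribute the tensor legs of $v$ and $w$ accordingly: $v^1$ receives $x$ in the slot of $B_1'$ together with the $v_i$ whose blocks land in the first half, $v^2$ receives $y$ in the slot of $B_1''$ together with the remaining $v_i$; similarly $w^1, w^2$ split the legs of $w$, with the two merged legs of $K(p)$ going, respectively, one to each side (using that $C^{B_1}$ and $C^{B_1}_s$ each have a well-defined half in each side). Then $\Theta(p,v,w) = U_{k_1,k_2}(\Theta(p_1, v^1, w^1) \otimes \Theta(p_2, v^2, w^2))$ follows by the compatibility of the action of tangles with composition, exactly the same manipulation as at the end of the proof of Lemma \ref{escape}. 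Finally, $(p_1, v^1, w^1), (p_2, v^2, w^2) \in \Sigma_{(0,0)}$ because all legs inherited from $v, w$ already lay in the Boolean orthogonal subspaces (as $(p,v,w) \in \Sigma_{(0,0)}$) and the new leg $x$ lies in $\cB_{\cP}(s)$ by hypothesis; and $\hat{p}_1(v^1) \in \widehat{\cB}_{\cP}(p_1)$ because the first block of $p_1$ is $B_1'$, whose slot in $v^1$ is $x \in \cB_{\cP}(s)$, so \emph{all} $\cP$-legs of $v^1$ lie in Boolean subspaces.

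The main obstacle I expect is the combinatorial identification of $p_1$ and $p_2$ and, in particular, verifying that the merged block $C^{B_1} \cup C^{B_1}_s$ really does split cleanly into one half on each side with the correct cardinalities $k_1, k_2 > 0$ (the strict positivity uses $1 < s < |B_1|$). This requires a careful application of Lemma \ref{characKreweras} to locate, for the first block $B_1$, exactly where its enveloping blocks sit — essentially showing that $C^{B_1}$ nests ``just outside'' $B_1$ and contributes one boundary point to each half, while $C^{B_1}_s$ nests ``just inside'' between the $s$-th and $(s{+}1)$-th points of $B_1$ and likewise contributes one point to each half. Once that picture is pinned down, the rest is a diagrammatic manipulation of planar tangles of the type already carried out several times in Section \ref{freeproductsection} and Section \ref{basissection}.
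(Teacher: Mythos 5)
Your overall strategy --- split $B_{1}$ after its $s$-th element, cut the picture at the $(s{+}1)$-th element $k$ of $B_{1}$, and distribute the tensor legs --- is the paper's strategy, and your steps 1 and 3 are essentially what the paper does. But your step 2 contains a genuine error in the combinatorics of the Kreweras complement, and as written the factorization would not go through. You route the argument through $K(p_{B_{1},s})=K(p)_{C^{B_{1}},C^{B_{1}}_{s}}$ via Lemma \ref{linkKrewerasDecompo} and then try to handle a ``merged block which straddles'' the cut, asserting that $C^{B_{1}}$ and $C^{B_{1}}_{s}$ each contribute one point to each half. This is the wrong picture. First, $\Theta(p,v,w)=\hat{p}(v)\otimes\widetilde{K(p)}(w)$ involves the \emph{unmerged} complement $K(p)$ --- the legs of $w$ are indexed by the blocks of $K(p)$, which do not change when you split $B_{1}$ --- so $K(p_{B_{1},s})$ is not the partition governing the $\cQ$-side at all. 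Second, the merged block $C^{B_{1}}\cup C^{B_{1}}_{s}$ of $K(p_{B_{1},s})$ genuinely straddles the cut (e.g.\ for $p=\{\{1,2,4\},\{3\}\}$, $s=2$, one gets $K(p_{B_{1},2})=\{\{1'\},\{2',3',4'\}\}$ with the cut at $3\,|\,4$), so a leg attached to it could not be assigned to either tensor factor and the claimed identity $\Theta(p,v,w)=U_{k_{1},k_{2}}(\Theta(p_{1},v^{1},w^{1})\otimes\Theta(p_{2},v^{2},w^{2}))$ would fail for that leg.

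The fact you actually need, and which your proposal never states, is that $K(p)$ \emph{itself} is already a juxtaposition at the cut $k-1\,|\,k$: since $1\sim_{p}k$, Lemma \ref{characKreweras} forbids any block of $K(p)$ from joining $\{1',\dots,(k-1)'\}$ to $\{k',\dots,n'\}$, so $K(p)=q_{1}\sqcup q_{2}$ with no straddling block, and one then checks (again by Lemma \ref{characKreweras}, or by the block count of Lemma \ref{KrewerasNumberBlocks}) that $q_{1}=K(p_{1})$ and $q_{2}=K(p_{2})$. In particular $C^{B_{1}}_{s}$ lies entirely in the first half and $C^{B_{1}}$ entirely in the second --- because $B_{1}$ contains $1$, its upper enveloping block sits wholly beyond the last element of $B_{1}$, hence beyond $k$ --- contrary to your claim that each contributes a boundary point to both halves. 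Once this clean splitting of $K(p)$ is in place, the $w$-legs distribute unambiguously, both factors lie in $\Sigma_{(0,0)}$, and $k_{1}=k-1\geq s>1$ and $k_{2}=n-k+1\geq |B_{1}|-s>0$ give the required positivity; the detour through $K(p_{B_{1},s})$ should simply be dropped.
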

\begin{proof}
Let $p\in NC(n)$, and let $t$ be the cardinality of $B_{1}$. Suppose that $(p,v,w)\in \Sigma_{(0,0)}$ is such that $v_{1}=U_{s,t-s}(x\otimes y)$ for some $1<s<t$ and $x\in \cB_{\cP}(s),y\in \cP_{t-s}$. Let $k$ be the $s+1$-th element of $B_{1}$.\\
On one hand we have 
\begin{align*}
\hat{p}(v_{1}\otimes \dots\otimes v_{r})=&\hat{p}(U_{s,t-s}(x\otimes y)\otimes \dots\otimes v_{r})\\
=&\hat{p}_{B_{1},s}(x\otimes \dots\otimes v_{i} \otimes y\otimes \dots\otimes v_{r}),
\end{align*}
where $B_{i}$ is the block ending just before $k$.
Since $p$ is non-crossing and $1,k\in B_{1}$, for all $z\geq k$ and $z'\leq k-1$ such that $z\not\in B_{1}$ we have $z\not\sim_{p}z'$. Hence, $p_{B_{1},s}$ is the juxtaposition of two partition $p_{1}$ of order $k-1$ and $p_{2}$ of order $n-k+1$. Therefore,
$$\hat{p}(v_{1}\otimes \dots\otimes v_{r})=U_{k-1,n-k+1}(\hat{p}_{1}(x\otimes \dots\otimes v_{i}),\hat{p_{2}}(y\otimes \dots\otimes v_{r})).$$
On the other hand, since $k\sim_{p}1$, we have $z\not\sim_{K(p)}z'$ for $z\leq k-1,z'\geq k$ by Lemma \ref{characKreweras}. Thus, $K(p)$ is the juxtaposition of two partitions $q_{1}$ of order $k-1$ and $q_{2}$ of order $n-k+1$. Therefore,
$$\widetilde{K(p)}(w)=U_{k-1,n-k+1}(q_{1}(w_{1}\otimes \dots\otimes w_{j})\otimes q_{2}(w_{j+1}\otimes \dots \otimes w_{r'})),$$
where $j$ is the last block of $K(p)$ before the element $k$.\\
Let us write $v^{1}=x\otimes \dots\otimes v_{i}$, $v^{2}=y\otimes \dots\otimes v_{r}$, $w^{1}=w_{1}\otimes \dots\otimes w_{j}$ and $w^{2}=w_{j+1}\otimes \dots \otimes w_{r'}$. Lemma \ref{characKreweras} yields that $q_{1}=K(p_{1})$ and $q_{2}=K(p_{2})$, thus
\begin{align*}
\Theta(p,v,w) \ &= \ U_{k-1,n-k+1}((\hat{p}_{1}(v^{1})\otimes \tilde{q}_{1}(w^{1})) \otimes (\hat{p_{2}}(v^{2})\otimes \tilde{q}_{2}(w^{2}))\\
&= \ U_{k-1,n-k+1}(\Theta(p_{1},v^{1},w^{1}) \otimes \Theta(p_{2},v^{2},w^{2})).
\end{align*}
For all $a\geq 2$ we have $v_{a}\in \cB_{\cP}(\vert B_{a}\vert)$ and for all $b\geq 1$ we have $w_{b}\in\cB_{\cQ}(\vert C_{b}\vert)$, thus $(p_{1},v^{1},w^{1})$ and $(p_{2},v^{2},w^{2})$ are in $\Sigma_{(0,0)}$. Since $x\in \cB_{\cP}(s)$, $\hat{p}_{1}(v^{1})\in\widehat{\cB}_{\cP}(p_{1})$.
\end{proof}
We can now turn to the proof of Theorem \ref{main5}.
\begin{proof}[Proof of Theorem \ref{main5}]
We denote by $L_{n}$ the vector space $\sum_{p\in NC(n)}\widehat{\cB}_{\cP}(p)\otimes\widetilde{\cB}_{\cQ}(K(p))$. We have to prove that the sum in the definition of $L_{n}$ is a direct sum, and that
$$(\cP*\cQ)_{n}=\bigoplus_{I\in\mathcal{I}(n)}L(I).$$
By Proposition \ref{spinReducedPairs}, $(\cP*\cQ)_{n}$ is spanned by the set $\Theta(\Sigma)$. Lemma \ref{escape} yields that the vector space spanned by $\Theta(\Sigma)$ is equal to the vector space spanned by $\Theta(\Sigma_{(0,0)})$. Therefore, 
$$(\cP*\cQ)_{n}=\underset{\substack{p\in NC(n),w_{i}\in \cB_{\cQ}(\vert C_{i}\vert),i\geq 1\\v_{i}\in\cB_{\cP}(\vert B_{i}\vert),i\geq 2}}{\spann}\Theta(p,v,w).$$
Let $(p,v,w)\in \Sigma_{(0,0)}$ with $p\in NC(n)$. By \eqref{resultGJS}, we can write $v=\sum_{I\in\mathcal{I}(\vert B_{1}\vert )} v^{I}$, with $v^{I}\in\cB_{\cP}(I)$. Let $I=(i_{1},\dots,i_{r})\in\mathcal{I}(\vert B_{1}\vert )$. Applying Lemma \ref{decompoFreeProduct} $r-1$ times yields the existence of $K=(k_{1},\dots,k_{r})\in\mathcal{I}(n)$ and $(p_{i},v^{i},w^{i})\in \Sigma_{(0,0)}$ for each $1\leq i\leq r$ such that $p_{i}\in NC(k_{i})$, $\hat{p}_{i}(v^{i})\in\widehat{\cB}_{\cP}(p_{i})$, and such that
$$\Theta(p,v^{I},w)=U_{K}(\Theta(p_{1},v^{1},w^{1})\otimes\dots\otimes\Theta(p_{r},v^{r},w^{r})).$$
Since each $\Theta(p_{i},v^{i},w^{i})\in \widehat{\cB}_{\cP}(p_{i})\otimes \widetilde{\cB}_{\cQ}(K(p_{i}))$, it follows that $\Theta(p,v^{I},w^{I})\in L(K)$. Summing on all $I\in \mathcal{I}(\vert B_{1}\vert )$ yields that $\Theta(p,v,w)\in\sum_{K\in\mathcal{I}(n)}L(K)$. Finally, we have $(\cP*\cQ)_{n}\subset \sum_{K\in\mathcal{I}(n)}L(K)$, and since it is readily seen that $\sum_{K\in\mathcal{I}(n)}L(K)\subset (\cP*\cQ)_{n}$, we obtain
\begin{equation}\label{inclusionVectorSpace}
(\cP*\cQ)_{n}=\sum_{K\in\mathcal{I}(n)}L(K).
\end{equation}
It remains to prove that the sums involved in the theorem are direct sums. This will be proven by a dimension argument. Denote by $\tilde{b}_{\cP*\cQ}(n)$ the dimension of $L(n)$. On the one hand, the definition of $L(n)$ yields that $\tilde{b}_{\cP*\cQ}(n)\leq\sum_{p\in NC(n)}b_{\mu_{\cP}}(p)b_{\mu_{\cQ}}(K(p))$ for $n\geq 1$. On the other hand, by a result of Bisch and Jones in \cite{BJXX} (see also \cite[Chapter 7.3]{Ta15} for a combinatorial proof of this fact, which relies on the computation of the principal graph given in \cite{L02}), we have 
$$(\mu_{\cP*\cQ})_{n}=(\mu_{\cP}\boxtimes \mu_{\cQ})_{n}.$$
The latter result together with \eqref{booleanFreeNB} implies that
\begin{align*}
b_{\mu_{\cP*\cQ}}(n)=&b_{\mu_{\cP}\boxtimes\mu_{\cQ}}(n)\\
=&\sum_{p\in NC(n)}b_{\mu_{\cP}}(p)b_{\mu_{\cQ}}(K(p))\geq\tilde{b}_{\cP*\cQ}(n).
\end{align*}
Hence, 
\begin{align*}
\dim (\cP*\cQ)_{n}=&(\mu_{\cP*\cQ})(n)=\sum_{K\in\mathcal{I}(n)}b_{\mu_{\cP*\cQ}}(K)\\
\geq&\sum_{K\in\mathcal{I}(n)}\tilde{b}_{\mu_{\cP*\cQ}}(K)=\sum_{K\in\mathcal{I}(n)}\dim L(K)\geq\dim (\cP*\cQ)_{n} ,
\end{align*}
the latter equality being due to \eqref{inclusionVectorSpace}. Therefore, each inequality is an equality. This implies that $L_{n}=\bigoplus_{p\in NC(n)}\widehat{\cB}_{\cP}(p)\otimes\widetilde{\cB}_{\cQ}(K(p))$ and that we have the direct sum decomposition
$$(\cP*\cQ)_{n}=\bigoplus_{K\in\mathcal{I}(n)}L(K).$$
\end{proof}

Theorem \ref{main5} yields a natural basis of the fixed point spaces for a free wreath product of  two compact quantum groups $(\F, \al), \ (\G, \beta)$ which act on finite-dimensional $C^*$-algebras in the usual way. Using the notations of Section $7$, let us fix a basis $(\cF_{n})_{n\geq 1}$ (resp. $(\cG_{n})_{n\geq 1}$) for the collection of vector space $(\cB_{\cP(\alpha)}(n))_{n\geq 1}$ (resp. $(\cB_{\cP(\beta)}(n))_{n\geq 1}$). For $p\in NC(k)$, denote by $\widehat{\cF}(p)$ the subset of $\cP(\alpha)_{k}$ defined by 
$$\widehat{\cF}(p)=\hat{p}(\cF_{i_{1}}\otimes\dots\otimes \cF_{i_{r}}),$$
where $i_{1},\dots,i_{r}$ are the respective sizes of the blocks of $p$. Similarly, denote by $\widetilde{\cG}(p)$ the subset of $\cP(\beta)_{k}$ defined by 
$$\widetilde{\cG}(p)=\tilde{p}(\cG_{i_{1}}\otimes\dots\otimes \cG_{i_{r}}).$$
\begin{cor}
A basis of $\Fix((\beta\wr_{*}\alpha)^{n})$ is given by the set 
$$\left\lbrace U_{k_{1}}^{*}\left(\widehat{\cF}(p_{1})\otimes\widetilde{\cG}(K(p_{1}))\right)\otimes \dots \otimes U_{k_{r}}^{*}\left(\widehat{\cF}(p_{r})\otimes \widetilde{\cG}(K(p_{r}))\right)\Big\vert \vec{k}\in \mathcal{I}(n), p_{i}\in NC(k_{i})\right\rbrace.$$
\end{cor}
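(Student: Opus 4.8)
The plan is to combine Theorem~\ref{main5} with the identification, already built in the proof of Theorem~\ref{main1}, between fixed-point spaces of an action and the corresponding planar subalgebra. Write $\cP=\cP(\al)$ and $\cQ=\cP(\beta)$, and let $\cH=L^{2}(A\ot B,\tr)$ with concatenation unitaries $U_{m}\colon\cH^{\ot m}\to\cH_{m}$ and $U_{I}$ (for $I\in\mathcal{I}(m)$) attached to the graph planar algebra $\cP^{\Gamma(A\ot B)}$ as in Sections~\ref{reconstruction} and~\ref{basissection}. First I would record that, exactly as in the proof of Theorem~\ref{main1}, $\Fix((\beta\wr_{*}\al)^{n})=\Mor(\1,u_{\beta\wr_{*}\al}^{\ot n})=U_{n}^{*}\bigl(\cP(\beta\wr_{*}\al)_{n}\bigr)$, and that by Theorem~\ref{main2} (respectively Theorem~\ref{main3} together with Definition~\ref{wreathnewdef} in the general case) $\cP(\beta\wr_{*}\al)=\cP*\cQ$, realized inside $\cP^{\Gamma(A\ot B)}$. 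Hence it suffices to exhibit a basis of $(\cP*\cQ)_{n}$ and to transport it through the linear isomorphism $U_{n}^{*}$.

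Next I would upgrade Theorem~\ref{main5} to a statement about bases. By that theorem $(\cP*\cQ)_{n}=\bigoplus_{K\in\mathcal{I}(n)}L(K)$ with $L(K)=U_{K}\bigl(L(k_{1})\ot\dots\ot L(k_{r})\bigr)$ for $K=(k_{1},\dots,k_{r})$, and $L(m)=\bigoplus_{p\in NC(m)}\widehat{\cB}_{\cP}(p)\ot\widetilde{\cB}_{\cQ}(K(p))$. The chain of inequalities at the end of the proof of Theorem~\ref{main5} is forced to consist of equalities; in particular $\dim\widehat{\cB}_{\cP}(p)=b_{\mu_{\cP}}(p)=\prod_{j}b_{\mu_{\cP}}(|B_{j}|)$ and $\dim\widetilde{\cB}_{\cQ}(q)=b_{\mu_{\cQ}}(q)$. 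Since $\cF_{m}$ and $\cG_{m}$ are bases of $\cB_{\cP}(m)$ and $\cB_{\cQ}(m)$, and $\dim\cB_{\cP}(m)=b_{\mu_{\cP}}(m)$, $\dim\cB_{\cQ}(m)=b_{\mu_{\cQ}}(m)$, the maps $\hat p$ and $\tilde p$ are injective on the relevant tensor products, so $\widehat{\cF}(p)$ is a basis of $\widehat{\cB}_{\cP}(p)$ and $\widetilde{\cG}(q)$ is a basis of $\widetilde{\cB}_{\cQ}(q)$. Therefore $\bigcup_{p\in NC(m)}\bigl(\widehat{\cF}(p)\ot\widetilde{\cG}(K(p))\bigr)$ is a basis of $L(m)$; and since $U_{K}$ is the restriction of a composite of the unitaries $U_{k,l}$ of $\cP^{\Gamma(A\ot B)}$, hence injective, the union over all $K=(k_{1},\dots,k_{r})\in\mathcal{I}(n)$ and all tuples $(p_{1},\dots,p_{r})$ with $p_{i}\in NC(k_{i})$ of the sets $U_{K}\bigl(\bigotimes_{i=1}^{r}(\widehat{\cF}(p_{i})\ot\widetilde{\cG}(K(p_{i})))\bigr)$ is a basis of $(\cP*\cQ)_{n}$.

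Finally I would apply $U_{n}^{*}$. Iterating $U_{k+m}=U_{k,m}(U_{k}\ot U_{m})$ gives $U_{n}=U_{K}\circ(U_{k_{1}}\ot\dots\ot U_{k_{r}})$ for every $K=(k_{1},\dots,k_{r})\in\mathcal{I}(n)$, so $U_{n}^{*}\bigl(U_{K}(b_{1}\ot\dots\ot b_{r})\bigr)=U_{k_{1}}^{*}(b_{1})\ot\dots\ot U_{k_{r}}^{*}(b_{r})$. Feeding the basis of $(\cP*\cQ)_{n}$ from the previous paragraph into this identity and using that $U_{n}^{*}$ restricts to a bijection onto $\Fix((\beta\wr_{*}\al)^{n})$ produces exactly the set displayed in the corollary.

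Routine tangle manipulations aside, the two points that need care are the identification $\Fix((\beta\wr_{*}\al)^{n})=U_{n}^{*}((\cP*\cQ)_{n})$ — which requires tracking through the proofs of Theorems~\ref{main1} and~\ref{main2} that the planar subalgebra attached to $\beta\wr_{*}\al$ really is $\cP*\cQ$ sitting inside $\cP^{\Gamma(A\ot B)}$ — and the extraction of the precise dimension equalities from the proof of Theorem~\ref{main5} that make $\widehat{\cF}(p)$ and $\widetilde{\cG}(q)$ bases rather than merely spanning sets. I expect the latter to be the main obstacle to presenting cleanly, since it forces one to revisit the dimension count there and isolate the statement $\dim\widehat{\cB}_{\cP}(p)=b_{\mu_{\cP}}(p)$ (and its analogue for $\widetilde{\cB}_{\cQ}$).
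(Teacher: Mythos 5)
Your proposal is correct and follows the same route as the paper, which simply declares the corollary a straightforward consequence of Theorem \ref{main3} and Theorem \ref{main5}; you have filled in exactly the intended details (the identification $\Fix((\beta\wr_*\al)^{n})=U_n^*((\cP(\al)*\cP(\beta))_n)$ and the transport of the Boolean decomposition through $U_n^*$). Your observation that the dimension equalities forced at the end of the proof of Theorem \ref{main5} are what upgrade $\widehat{\cF}(p)$ and $\widetilde{\cG}(q)$ from spanning sets to bases is a worthwhile point that the paper leaves implicit.
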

\begin{proof}
This is a straightforward consequence of Theorem \ref{main3} and Theorem \ref{main5}.
\end{proof}

\end{document}